\def\mes{{\rm mes}}
\def\Vol{{\mathrm{Vol}}}
\definecolor{orange}{rgb}{1,0.5,0}
\def\eps{{\varepsilon}}
\def\bc{{\mathbf{c}}}
\def\bG{{\mathbf{G}}}
\def\bH{{\mathbf{H}}}
\def\bGamma{{\boldsymbol{\Gamma}}}
\def\bSigma{{\boldsymbol{\Sigma}}}
\def\e{{\varepsilon}}
\def\rationals{{\mathbb{Q}}}
\def\reals{{\mathbb{R}}}
\def\integers{{\mathbb{Z}}}
\def\C{{\mathbb{C}}}
\def\bD{{\mathbf{D}}}
\def\bmu{{\boldsymbol{\mu}}}
\def\blambda{{\boldsymbol{\lambda}}}
\def\cA{{\mathcal{A}}}
\def\cN{{\mathcal{N}}}
\def\bz{\bar z}
\def\T{\mathbb T}
\def\N{\mathbb N}
\def\R{\mathbb R}
\def\Z{\mathbb Z}
\def\Q{\mathbb Q}
\def\brk{\bar k}
\def\brp{\bar p}
\def\brq{\bar q}
\def\brx{\bar x}
\def\brDelta{{\bar \Delta}}
\def\cD{{\mathcal{D}}}
\def\cC{{\mathcal{C}}}
\def\cH{{\mathcal{H}}}
\def\cL{{\mathcal{L}}}
\def\cM{{\mathcal{M}}}
\def\cP{{\mathcal{P}}}
\def\cT{{\mathcal{T}}}
\def\cW{{\mathcal{W}}}
\def\hl{{\hat l}}
\def\tA{{\tilde{A}}}
\def\tc{{\tilde{c}}}
\def\talpha{{\tilde{\alpha}}}
\def\tGamma{{\tilde{\Gamma}}}
\def\a{\alpha}
\def\Tor{{\mathbb{T}}}
\def\Card{{\mathrm{Card}}}
\def\EXP{{\mathbb{E}}}
\def\Prob{{\mathbb{P}}}
\def\length{{\text{length}}}
\def\Leb{{\mathrm{Leb}}}
\def\Pois{{\mathfrak{P}}}
\def\LL{{\mathbb{L}}}
\def\Cauchy{{\mathfrak{C}}}
\def\sN{{\mathfrak{N}}}
\newtheorem{theorem}{Theorem}
\newtheorem{proposition}{Proposition}
\newtheorem{corollary}[theorem]{Corollary}
\newtheorem{question}{Question}
\theoremstyle{definition}
\newtheorem*{remark}{Remark}
\begin{document}

\title{Limit theorems for toral translations}
\author{Dmitry Dolgopyat}
\author{Bassam Fayad}

\thanks{The authors thank Mariusz Lema\'nczyk, Jens Marklof, Jean Paul Thouvenot, and
Ilya Vinogradov for useful discussions. The research of DD was partially supported by the NSF.
The research of BF was partially supported by ANR-11-BS01-0004.}

\address{Dmitry Dolgopyat Department of Mathematics University of Maryland College Park MD 20814 USA}
\email{dmitry@math.umd.edu}
\address{Bassam Fayad IMJ-PRG, CNRS, Paris, France}
\email{bassam.fayad@imj-prg.fr}

\maketitle
\tableofcontents 

\addtocontents{toc}{\protect\setcounter{tocdepth}{2}}
\section{Introduction}
One of the surprising discoveries of dynamical systems theory is that many deterministic systems with
non-zero Lyapunov exponents satisfy the same limit theorems as the sums of independent random variables.
Much less is known for the zero exponent case where only a few examples have been analyzed.
In this survey we consider the extreme case of toral translations where each map 
not only has zero exponents but is actually an isometry. 
These systems were studied extensively due to their relations to number theory, to the theory of integrable systems 
and to geometry. Surprisingly  many natural questions are still open. We review known results as well as the methods
to obtain them and present a list of open problems. Given a vast amount of work on this subject, it is impossible to provide
a comprehensive treatment in this short survey. Therefore we treat the topics closest to our research interests in more
detail while some other subjects are mentioned only briefly. Still we hope to provide the reader with the flavor of the subject
and introduce some tools useful in the study of toral translations, most notably, various renormalization techniques.

Let $X=\Tor^d,$ $\mu$ be the Haar measure on $X$ and $T_{\alpha}(x)=x+\alpha.$

The most basic question in smooth ergodic theory is the behavior of ergodic sums. Given a map $T$ and a zero mean  observable 
$A(\cdot)$ let 
\begin{equation}
\label{ErgSum}
A_N(x)=\sum_{n=0}^{N-1} A(T^n x)    
\end{equation}
If there is no ambiguity, we may write $A_N$ for $A_N(x)$. Conversely we may use the notation $A_N(\a,x)$ to indicate that the underlying map is the translation of vector $\a$. 
The uniform distribution of the orbit of $x$ by $T$ is characterized by the convergence to $0$ of $A_N(x)/N$. 
In the case of toral translations $T_\a$ with irrational frequency vector $\alpha$ 
the uniform distribution holds for all points $x.$ The study of the ergodic sums is then useful to quantify the rate of uniform distribution of the Kronecker sequence $n \a$ mod 1 as we will see in Section \ref{sec.discrepancy} where discrepancy functions are discussed. 
The question about the distribution of ergodic sums is analogous to the the Central Limit Theorem in probability theory.
One can also consider analogues of other classical probabilistic results. In this survey we treat two such questions.
In Section \ref{sec.poisson} we consider so called Poisson regime where \eqref{ErgSum} is replaced
by $\sum_{n=0}^{N-1} \chi_{\cC_N} (T_\a^n x)$ and the sets $\cC_N$ are scaled  in such a way that only finite number of terms
are non-zero for typical $x.$  Such sums appear in several questions in mathematical physics, including 
quantum chaos \cite{M-LH} and Boltzmann-Grad limit of several mechanical systems \cite{M-Bern}. They also describe the
resonances in the study of ergodic sums for toral translations as we will see in Section \ref{ScProofs}.  In Section \ref{sec.shrinking} 
we consider Borel-Cantelli type questions where one takes a sequence of shrinking sets and studies a number of times
a typical orbit hits whose sets. These questions are intimately related to some classical problems in the theory of Diophantine
approximations.

%If instead of $A$ we use a sequence $A^n=\chi_{C_n}-\mu(C_n)$ where  $\chi_{C_n}$ is the characteristic function of some (for example shrinking) sets $C_n$ the study of the ergodic sums tells us about the statistics of the number of entrance of a translation  orbit inside these moving (shrinking) targets $C_n$.  The latter will be discussed in Section \ref{sec.shrinking}. If now we consider in $(\star)$  an observable $A=A^N=\chi_{C_N}$ with $\mu(C_N)$ of order $1/N$ we expect only a finite number of the terms in the sum to be non zero which corresponds to a statistical behavior of the sums $A_N^N$ that is coined the Poisson regime  in relation to the case of $iid$ variables. The Poisson regime and related problems will be discussed in Section 

The ergodic sums above toral translations also appear in natural dynamical systems such as skew products, cylindrical cascades and special flows. Discrete time systems related to ergodic sums over translations are treated in 
Section~\ref{sec.skew} while flows are treated in Section \ref{ScFlows}. These systems give additional motivation to study the ergodic sums \eqref{ErgSum}
for smooth functions having singularities of various types: power, fractional power, logarithmic... 
Ergodic sums for functions with singularities are discussed 
in Section \ref{ScErgSum}.  Finally in Section \ref{ScHD} we present the results related to action of several translations at the same time.

\medskip 

\noindent{Ê\bf Notations.}

\medskip

We say that a vector $\a=(\a_1,\ldots, \a_d) \in \R^d$ is irrational if  $\{1,\a_1,\ldots,\a_d\}$ are linearly independent over $\Q$.

For $x \in \R^d$, we use the notation $\{x\}:=(x_1,\ldots,x_d) \text{ mod }(1)$.
We denote by
$ \| x \|$ the closest signed distance of some $x \in \R$ to the  integers.

Assuming that  $d \in \N$ is fixed, for  $\sigma >0$ we denote by $\cD(\sigma) \subset \R^d$ the set of Diophantine vectors with exponent $\sigma$, that is

\begin{equation}
\label{DefDio}
\cD(\sigma)=\{\alpha: \exists C\;\; \forall k\in \integers^d-0, m\in \integers \quad |(k,\alpha)-m|\geq C |k|^{-d-\sigma} \}
\end{equation}

Let us recall that 
$\cD(\sigma)$ has a full measure if $\sigma>0$, while $\cD(0)$ is an uncountable set of zero measure and $\cD(\sigma)$ is empty for $\sigma < 0$.  
The set $\cD(0)$ is called the set of constant type vector or badly approximable vectors. 
An irrational vector $\a$  that is not Diophantine for any $\sigma>0$ is called Liouville. 

We denote by $\Cauchy$ the standard Cauchy random variable with density $\frac{1}{\pi(1+x^2)}.$
The normal random variable with zero mean and variance $D^2$ will be denoted by
$\sN(D^2).$ Thus $\sN(D^2)$ has density $\frac{1}{2\pi D} e^{-x^2/2D^2}.$ We will write simply
$\sN$ for $\sN(1).$ Next, $\Pois(X, \mu)$ 
will denote the Poisson process on $X$ with measure $\mu$ (we refer the
reader to Section \ref{SSPois} for the definition and basic properties of Poisson processes). 

\section{Ergodic sums of smooth functions with singularities} 
\label{ScErgSum}

\subsection{Smooth observables} 
For toral translations,
the ergodic sums  of smooth observables are well understood. 
Namely if $A$ is sufficiently smooth with zero mean then for almost all $\alpha,$ $A$ is a coboundary, that is,
there exists $B(\a,x)$ such that
\begin{equation}
\label{CoB}
A(x)=B(x+\alpha, \alpha)-B(x,\alpha).
\end{equation}
Namely if $A(x)=\sum_{k\neq 0} a_k e^{2\pi i (k, x)}$ then 
$$B(\a,x)=\sum_{k\neq 0} b_k e^{2\pi i(k,x)} \text{ where }b_k=\frac{a_k}{e^{i2\pi(k,\alpha)}-1}.$$ 
The above series converges in $L^2$ provided 
$\alpha\in \cD(\sigma)$
and 
$A\in \cH^\sigma=\{A: \sum_k |a_k |k|^{(\sigma+d)}|^2<\infty\}.$ 
Note that \eqref{CoB} implies that
$$A_N(x,\a)=B(x+N\alpha, \alpha)-B(x,\alpha)$$
giving a complete description of the behavior of ergodic sums for almost all $\alpha.$ 
In particular we have
\begin{corollary} \label{CorErgSmooth}
If 
$\alpha$ is uniformly distributed 
on $\Tor^{d}$ then $A_N(x)$ has a limiting distribution as $N\to \infty,$
namely 
$$A_N\Rightarrow B(y,\alpha)-B(x,\alpha)$$ 
where $(y, \alpha)$ is uniformly distributed on $\Tor^d\times \Tor^d.$
\end{corollary}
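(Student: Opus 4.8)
The plan is to reduce everything to a joint equidistribution statement for the pair $(\a,\{x+N\a\})$ and then to feed that back through the coboundary $B$. Since $\cD(\sigma)$ has full Lebesgue measure and the coboundary identity $A_N(x,\a)=B(\{x+N\a\},\a)-B(x,\a)$ holds for every $\a\in\cD(\sigma)$, discarding the Liouville complement does not affect the law of $A_N$ under $\a$ uniform. Writing $A_N=H(\a,\{x+N\a\})$ with $H(\a,z):=B(z,\a)-B(x,\a)$, the claim follows once we establish two facts: (i) the $\Tor^d\times\Tor^d$-valued variables $\Phi_N(\a):=(\a,\{x+N\a\})$ converge in distribution, under $\a$ uniform, to $(\a,y)$ with $\a,y$ independent and uniform; and (ii) this weak convergence can be pushed through $H$. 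Granting both, $A_N=H\circ\Phi_N\Rightarrow H(\a,y)=B(y,\a)-B(x,\a)$, which is the asserted limit with $(y,\a)$ uniform on $\Tor^d\times\Tor^d$.

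For (i) I would invoke the Weyl criterion on $\Tor^{2d}$: for $k,l\in\integers^d$,
\[
\int_{\Tor^d} e^{2\pi i[(k,\a)+(l,x+N\a)]}\,d\a
= e^{2\pi i(l,x)}\,\mathbf 1\{k+Nl=0\}.
\]
For fixed $(k,l)$ with $l\neq0$ the indicator vanishes once $N>|k|/|l|$, while for $l=0$ it equals $\mathbf 1\{k=0\}$; hence the integral tends to $\mathbf 1\{k=0\}\,\mathbf 1\{l=0\}$, the Fourier coefficient of $\Leb\otimes\Leb$. Thus $\Phi_N\Rightarrow(\a,y)$, and the limit does not depend on the fixed base point $x$.

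The main obstacle is (ii): $H$ is \emph{not} continuous, so the continuous mapping theorem does not apply directly. Indeed $b_k(\a)=a_k/(e^{2\pi i(k,\a)}-1)$ blows up along the dense family of resonant hyperplanes $(k,\a)\in\integers$, so $H$ is discontinuous on a dense set and $B(x,\a)$ is genuinely heavy-tailed in $\a$ --- this is the very mechanism behind the Cauchy-type limits $\Cauchy$ appearing elsewhere, and it forbids any $L^1$ or $L^2$ estimate. I would circumvent it by truncation. Put $B_M(z,\a)=\sum_{0<|k|\le M}b_k(\a)e^{2\pi i(k,z)}$ and $A_N^{(M)}=B_M(\{x+N\a\},\a)-B_M(x,\a)$. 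The coefficients of the trigonometric polynomial $A_N^{(M)}$ are continuous off the finitely many hyperplanes $\{(k,\a)\in\integers:0<|k|\le M\}$, whose product with $\Tor^d_z$ is $\Leb\otimes\Leb$-null; so the continuous mapping theorem now does apply and gives $A_N^{(M)}\Rightarrow B_M(y,\a)-B_M(x,\a)$ as $N\to\infty$, for each fixed $M$.

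Two approximation estimates then close the argument. As $M\to\infty$, for a.e.\ Diophantine $\a$ the series for $B(\cdot,\a)$ converges absolutely and uniformly (as $A$ is smooth enough), so $B_M(y,\a)-B_M(x,\a)\to B(y,\a)-B(x,\a)$ for a.e.\ $(\a,y)$ and hence in distribution. The uniform-in-$N$ control is the crux: writing $D_N(k,\a)=\sum_{n=0}^{N-1}e^{2\pi i n(k,\a)}$ and using subadditivity of $t\mapsto t^s$ for $0<s<1$,
\[
\EXP_\a\!\left[\bigl|A_N-A_N^{(M)}\bigr|^s\right]\le\sum_{|k|>M}|a_k|^s\,\EXP_\a\!\left[|D_N(k,\a)|^s\right].
\]
Since $(k,\a)\bmod 1$ is uniform on $\Tor^1$ for $k\neq0$, one has $\Leb\{\a:\|(k,\a)\|<\delta\}=2\delta$, so $|D_N(k,\a)|\le(2\|(k,\a)\|)^{-1}$ yields $\Prob_\a(|D_N(k,\a)|>\lambda)\le\lambda^{-1}$; integrating gives $\EXP_\a[|D_N(k,\a)|^s]\le(1-s)^{-1}$ uniformly in $N$ and $k$. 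For $A$ smooth enough $\sum_k|a_k|^s<\infty$, so the right-hand side is at most $(1-s)^{-1}\sum_{|k|>M}|a_k|^s\to0$ as $M\to\infty$, uniformly in $N$; by Markov's inequality $A_N-A_N^{(M)}\to0$ in probability, uniformly in $N$. A standard converging-together lemma then upgrades the per-$M$ convergences to $A_N\Rightarrow B(y,\a)-B(x,\a)$. The delicate point throughout is precisely that the heavy tails rule out moment control of order $\ge1$, and the sub-unit fractional moment is exactly what makes the truncation uniform in $N$.
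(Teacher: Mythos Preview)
Your proof is correct, and the core idea—Weyl equidistribution of $(\alpha,\{x+N\alpha\})$ on $\Tor^d\times\Tor^d$ via Fourier characters—is exactly the paper's approach. The paper's proof is a single paragraph: it checks that $\int_{\Tor^d}\phi(\alpha,N\alpha)\,d\alpha\to\iint\phi(\alpha,\beta)\,d\alpha\,d\beta$ for smooth $\phi$ by Fourier expansion, and declares the result.

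The difference is that you take seriously a point the paper leaves implicit: the map $(\alpha,z)\mapsto B(z,\alpha)-B(x,\alpha)$ is discontinuous on a dense set (small divisors), so the continuous mapping theorem does not apply directly. Your truncation $B_M$ together with the uniform-in-$N$ fractional moment bound $\EXP_\alpha[|D_N(k,\alpha)|^s]\le(1-s)^{-1}$ is a clean and correct way to close this gap; the key observation that $\Prob_\alpha(|D_N(k,\alpha)|>\lambda)\le\lambda^{-1}$ follows from the uniform distribution of $(k,\alpha)\bmod 1$ exactly as you say, and the converging-together lemma then finishes. So your argument is the paper's argument with an honest justification added for the step the paper treats as automatic.
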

\begin{proof}
We need to show that as $N\to\infty$ the random vector  $(\alpha, N\alpha)$ converge to a vector with coordinates  independent random variables
uniformly distributed on $\Tor^d\times \Tor^d.$ To this end it suffices to check that if $\phi(x,y)$ is a smooth function on
$\Tor^d\times \Tor^d$ then 
$$ \lim_{N\to\infty} \int_{\Tor^d} \phi(\alpha, N\alpha) d\alpha=\int_{\Tor^d \times \Tor^d}
\phi(\alpha, \beta) d\alpha d\beta $$
but this is easily established by considering the Fourier series of $\phi.$
\end{proof}

We will see in Section \ref{ScFlows} how our understanding of ergodic sums for smooth functions can be used
to derive ergodic properties of area preserving flows on $\T^2$ without fixed points.

%A class of  examples of special flows  above circle rotations and under smooth ceiling functions is given by conservative flows  (that preserve a measure given by a smooth density)  on surfaces that are free of fixed points. The mixing properties of these flows are given by the statistical distribution of the ergodic sums of the ceiling function above the base rotation. The main facts are absence of mixing always, linearizability in case the base frequency is Diophantine and genericity of weak mixing in case $\a$ is Liouville. We will see in Section \ref{sec.special} how all these facts can be easily obtained from our understanding of the ergodic sums of smooth functions above rotations. 

On the other hand there are many open questions related to the case when the observable $A$ is not smooth enough
for \eqref{CoB}  to hold. Below we mention several classes of interesting observables.

\subsection{Observables with singularities}

Special flows above circle rotations and under ceiling functions that are smooth except for some singularities naturally appear in the study of conservative flows 
   on surfaces with fixed points.

    Another motivation for studying ergodic sums for functions with  singularities is the case of
meromorphic functions, whose sums appear in questions related to both number theory \cite{HL} and ergodic theory
\cite{Pe}.

\subsubsection{Observables with logarithmic  singularities.}

In the study of conservative flows 
   on surfaces, non degenerate saddle singularities are responsible for logarithmic singularities of the ceiling function. 

Ceiling functions with  logarithmic singularities also appear  in the study of multi-valued Hamiltonians on the two torus.   In \cite{Arnold}, Arnold investigated  such flows 
and showed that the torus decomposes into cells that are filled up by periodic orbits and
one open ergodic component. On this component, the flow can be represented
as a special flow over an interval exchange map  of the circle and under a ceiling
function that is smooth except for some logarithmic singularities.
The singularities can be asymmetric since the coefficient in front of the
logarithm is twice as big on one side of the singularity as the one on the other side,
due to the existence of homoclinic loops (see Figure  \ref{FigSaddle}). 

\begin{figure}[htb]
    \centering
    \resizebox{!}{6cm}{\includegraphics{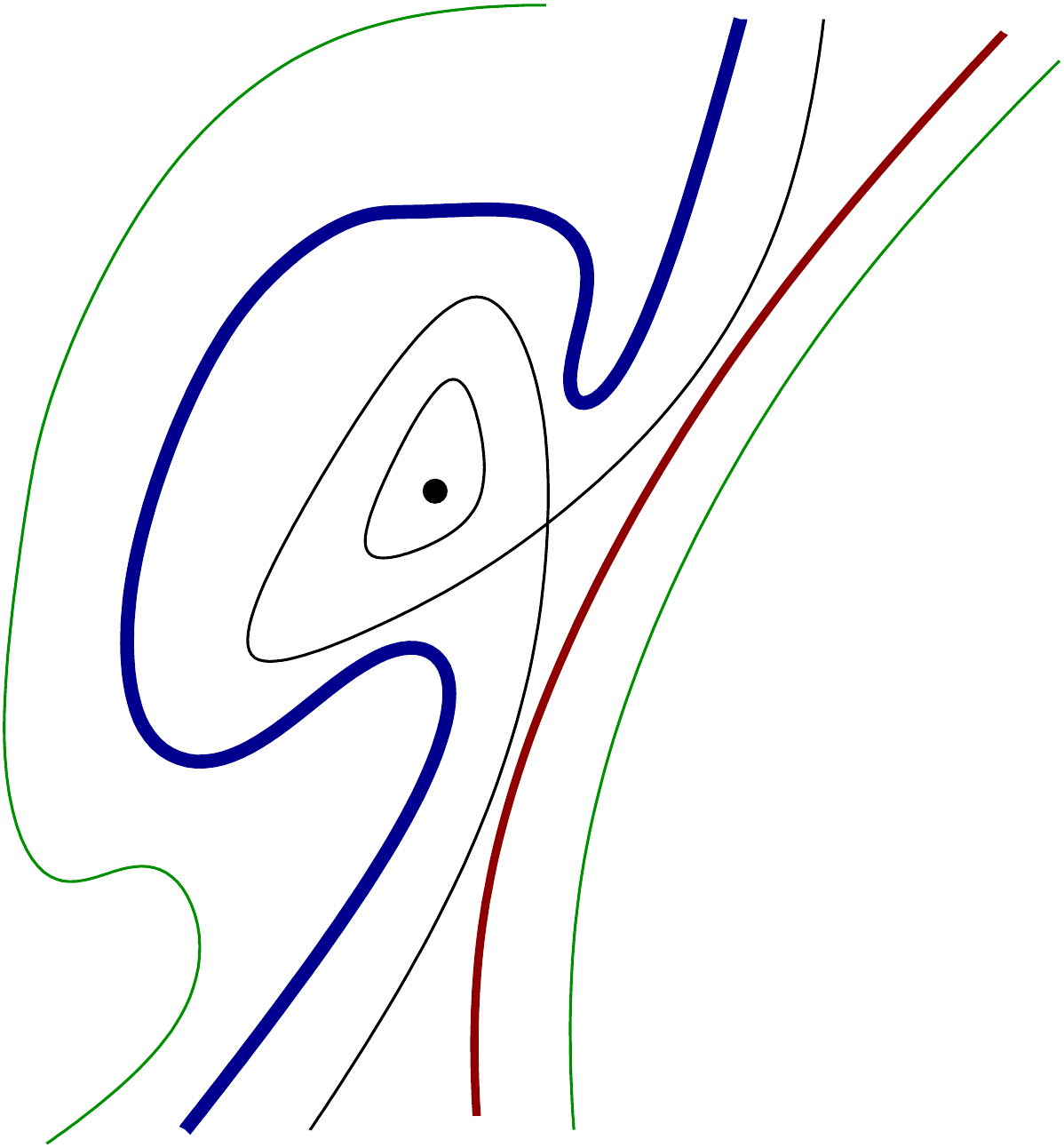}}
    \caption{Multivalued Hamiltonian flow. Note that the orbits passing to the left of the saddle 
spend approximately twice longer time comparing 
to the orbits passing to the right of the saddle and starting at the same distance from the separatrix since they pass near the saddle twice.}
    \label{FigSaddle}
\end{figure}

%{\color{Orange} More explanations. }
%   The mixing properties of these flows were studied in several papers \cite{Kocergyn76,KS,Kocergyn03,Kocergyn04,Ulicgrai1,Ulcigrai2,Lem1,Lem2}.  Ergodicity of cylindrical cascades with function with logarithmic singularities were studied in \cite{FL}. 
More motivations for studying function with logarithmic singularities as well as some numerical results for rotation
numbers of bounded type are presented in \cite{KT}.

A natural question is to understand the fluctuations of the ergodic sums for these functions as the frequency $\a$ of the underlying rotation is random as well as the base point $x$.
Since Fourier coefficients of the symmetric logarithm function have the asymptotics similar to that of the indicator function of an interval one
may expect that the results about the latter that we will discuss in Section \ref{sec.discrepancy} can be extended to the former.

\begin{question} \label{q.log}
Suppose that $A$ is smooth away from a finite set of points $x_1, x_2\dots x_k$ and near $x_j,$ 
$A(x)=a_j^\pm \ln |x-x_j|+r_j(x)$ where $-$ sign is taken if $x<x_j,$ $+$ sign is taken if $x>x_j$
and $r_j$ are smooth functions. 
What can be said about the distribution of $A_N(\a,x)/\ln N$ as $x$ and $\a$ are random? 
%What happens in the non symmetric case?
\end{question}

%The idea here is to treat the ergodic sums of a symmetric logarithm function in a similar way as indicator functions of intervals due to the resemblance between their Fourier coefficients. 
%We thus expect similar results as  Kesten's Cauchy limit law for the normalized ergodic sums of interval characteristic functions (see Section \ref{sec.discrepancy}). 

%{\color{red} In the symmetric case $a_j^+=a_j^-$ we have that the Fourier coefficients of $A$ satisfy $\widehat{A}_n = \frac{c_n}{n}+0(\frac{1}{n^2})$, with $c_n=C \sum_j \cos (2 \pi x_j)$  hence the same proof as in Kesten's Theorem (see Section \ref{sec.discrepancy}) yields that $A_N/ c(x_1,\ldots,x_k) \ln (N) $ converges to a Cauchy distribution (with $c(x_1,\ldots,x_k)$ independent of $x_1,\ldots,x_k$ if they are linearly independent over $\Q$).  What happens in the non symmetric case where we have  $\widehat{A}_n = 0(\frac{1}{n \ln n})$?}

\subsubsection{Observables with power like singularities.}

When considering conservative flows on surfaces with degenerate saddles one  is led to study the ergodic sums of observables
with integrable power like singularities (more discussion of these flows will be given in Section \ref{ScFlows}). Special flows above irrational rotations of the circle under such ceiling functions are called Kocergin flows. 

The  study of ergodic sums for smooth ergodic flows with nondegenerate hyperbolic singular points on surfaces of genus $p\geq 2$ shows that these flows are in general not mixing (see Section \ref{ScFlows}). {\it A contrario} Kocergin showed that special flows above irrational rotations and under 
ceiling functions with integrable power like singularities are always mixing. This is due to the important deceleration next to the singularity that is responsible for a {\it shear} along orbits that separates the points sufficiently to produce mixing. In other words, the mixing is due to large oscillations of the ergodic sums. In this note we will be frequently interested in the distribution properties of these sums. 

%\begin{figure}[htb]
%    \centering
%    \resizebox{!}{6cm}{\includegraphics{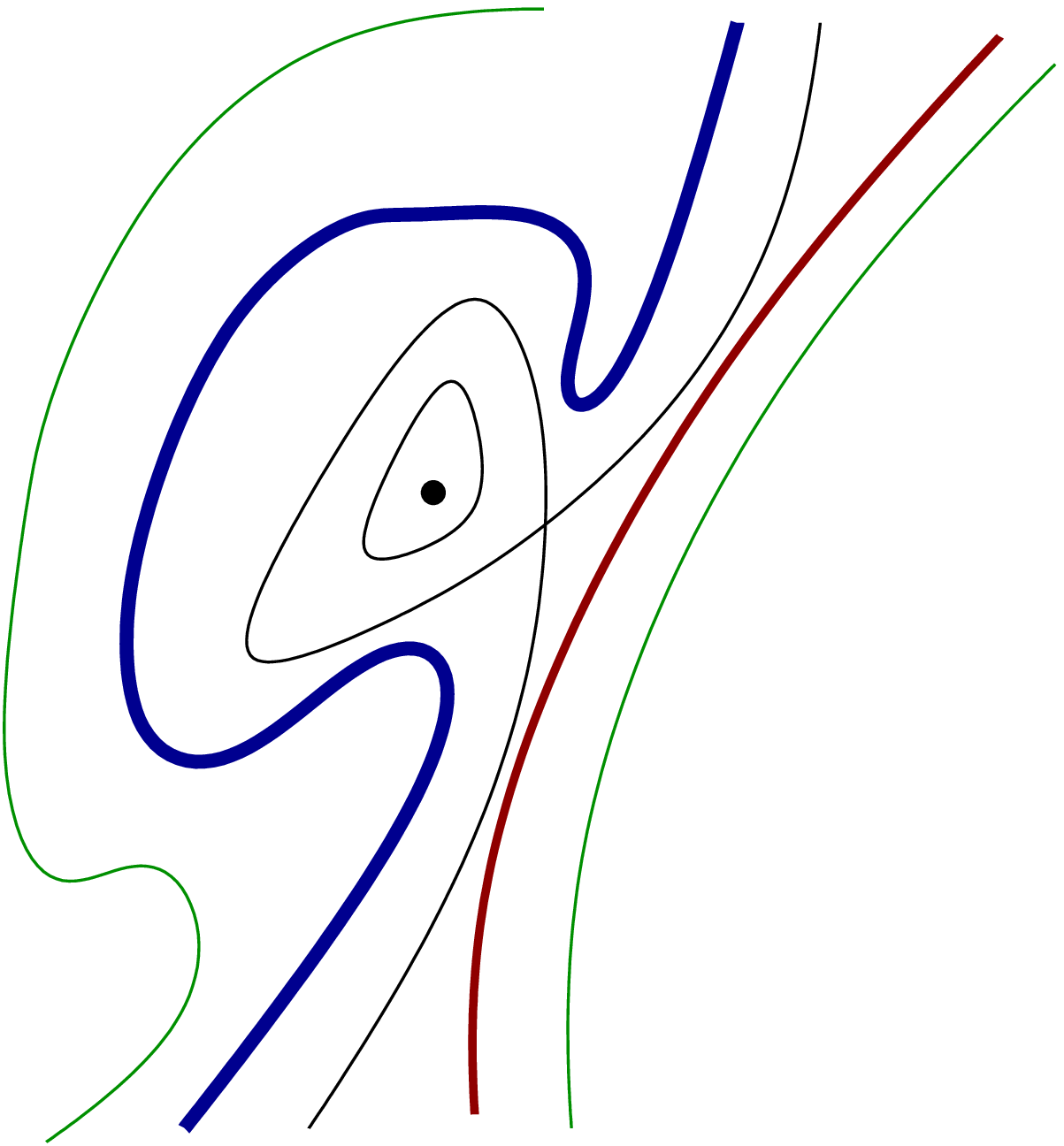}}
%    \caption{Kocergin Flow. }
%    \label{FigKocergin}
%\end{figure}

One may also consider the case of non-integrable power singularities since they naturally appear in problems of ergodic theory and number theory.
The following result answers a question of \cite{HL}. 
\begin{theorem} \label{SU}
(\cite{SU}) If $A$ has one simple pole on $\Tor^1$ and 
$(\a,x)$ is uniformly distributed 
on $\Tor^2$ then $\frac{A_N}{N}$ has a limiting distribution as $N\to \infty.$
\end{theorem}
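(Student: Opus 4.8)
The plan is to reduce to the model case of the cotangent and then renormalize using the continued fraction expansion of $\a$. First I would write $A(x)=c\,\pi\cot(\pi(x-x_0))+g(x)$, where $x_0$ is the pole, $c$ its residue, and $g$ is smooth on $\Tor^1$. By Corollary \ref{CorErgSmooth} the ergodic sums $g_N$ of the zero-mean part of $g$ converge in distribution, hence are $O(1)$ in probability and thus $o(N)$, while the mean of $g$ contributes only a deterministic constant shift to $A_N/N$. Therefore $A_N/N$ has the same limit law (up to an additive constant) as $c\,\pi\,C_N/N$, where $C_N(\a,x)=\sum_{n=0}^{N-1}\cot(\pi(x+n\a))$ after absorbing $x_0$ into $x$. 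So it suffices to treat $A=\cot(\pi\,\cdot\,)$, which has zero principal-value mean.

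The engine of the argument is the exact summation identity $\sum_{n=0}^{q-1}\cot(\pi(x+np/q))=q\cot(\pi q x)$, valid for $p/q$ in lowest terms; it follows by logarithmic differentiation in $x$ of $\prod_{n=0}^{q-1}\sin(\pi(x+n/q))=2^{1-q}\sin(\pi q x)$, together with the fact that $\{np/q\}$ is a permutation of $\{n/q\}$. I would expand $\a=[a_1,a_2,\dots]$ with convergents $p_k/q_k$, write $N$ in its Ostrowski representation $N=\sum_k b_k q_k$ relative to $\a$ with $0\le b_k\le a_{k+1}$, and split the orbit $x,x+\a,\dots,x+(N-1)\a$ into blocks of $q_k$ consecutive points. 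On a block beginning at $y$, the identity applied with $p_k/q_k$ in place of $\a$ produces a main term $q_k\cot(\pi\{q_k y\})$ plus an error stemming from $\a-p_k/q_k=O(1/(q_kq_{k+1}))$. Summing over blocks expresses $C_N$ as a sum over scales $k$ of terms of size $\asymp q_k$, dominated by the few scales with $q_k$ comparable to $N$.

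Dividing by $N$, the surviving terms have the form $\tfrac{q_k}{N}\cot(\pi\{q_k x\})$ and shifted analogues, with $\tfrac{q_k}{N}$ of order one only for the top scales. Two facts then combine: since $q_k\to\infty$, the phase $\{q_k x\}$ is asymptotically uniform for $x$ uniform on $\Tor^1$, and $\cot(\pi U)$ for uniform $U$ is exactly the standard Cauchy variable $\Cauchy$; meanwhile the ratios $q_k/N$, the incomplete-block counts $b_k$, and the tail digits $a_{k+1},a_{k+2},\dots$ are governed by the continued fraction dynamics. I would invoke the mixing of the natural extension of the Gauss map to show that, as $N\to\infty$ with $(\a,x)$ uniform on $\Tor^2$, this full array of renormalization data converges jointly to an explicit stationary law, yielding a limiting distribution for $C_N/N$ expressible as a continued-fraction-weighted combination of independent Cauchy contributions, and hence for $A_N/N$.

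The hard part is the error analysis, which is exactly where non-integrability bites. Because $\cot$ is unbounded and not of bounded variation, the classical Denjoy--Koksma estimate is unavailable, and the passage from $\sum\cot(\pi(x+n\a))$ to $\sum\cot(\pi(x+np_k/q_k))$ is delicate at the closest-approach term, where the orbit passes within $O(1/q_k)$ of the pole and a single summand has size $\asymp q_k$; there the perturbation $n(\a-p_k/q_k)$ is not negligible compared with the distance $\|x+n\a\|$ to the singularity. Controlling these near-singular summands, so that their aggregate error is $o(N)$ and the odd symmetry of the simple pole produces the cancellation that keeps $C_N/N$ tight rather than escaping to $+\infty$ (as it would for an even, positive singularity), is the crux. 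One must also check that the far tail of scales and atypically large digits $a_k$ contribute negligibly and uniformly, so that no mass is lost and the limit is a genuine probability distribution, necessarily with heavy Cauchy-type tails and no finite moments.
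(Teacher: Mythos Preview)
Your outline is essentially the original Sinai--Ulcigrai argument: reduce to $\cot$, exploit the exact renormalization identity $\sum_{n=0}^{q-1}\cot(\pi(x+np/q))=q\cot(\pi q x)$, decompose via the Ostrowski expansion, and pass to the limit using the mixing properties of the natural extension of the Gauss map. The outline is sound, and you have correctly identified the delicate point --- controlling the near-singular summands when perturbing $p_k/q_k$ to $\a$ --- as the place where the non-integrability forces real work beyond Denjoy--Koksma.

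The paper, however, takes a different route. Rather than renormalizing via continued fractions, it localizes the main contribution to the orbit points with $|x+n\a-x_0|<R/N$ and invokes the Dani correspondence: the collection $\{N(x+n\a-x_0):0\le n<N\}$ is read off from the lattice $g^{\ln N}(\Lambda_\a\Z^2+\bar x)$, and equidistribution of horocycle translates in the space of affine lattices $(SL_2(\R)\ltimes\R^2)/(SL_2(\Z)\ltimes\Z^2)$ (Theorem~\ref{ThDO-Pois}, via Theorem~\ref{th.shah}) gives the limit law directly as a function on this homogeneous space. The simple-pole case still requires care in showing the far-from-pole contribution is $o(N)$, since the $L^1$ argument available for higher-order poles fails; the paper refers back to \cite{SU} for this step. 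What the homogeneous-dynamics approach buys is uniformity: it extends with essentially no change to several poles (Theorem~\ref{ThMer}), higher-order poles (Theorem~\ref{ThPole2}), and in principle to higher-dimensional tori, whereas your cotangent identity and continued-fraction machinery are intrinsically one-dimensional and tied to the specific form of the singularity. Conversely, your approach is more self-contained, requiring no Ratner theory, and yields a more explicit description of the limit in terms of Gauss--Kuzmin statistics.
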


The function $A$ in Theorem \ref{SU} has a symmetric singularity of the form $1/x$ that is the source of cancellations in the ergodic sums. 

\begin{question} 
\label{Q1xasym}
What happens for an asymmetric singularity of the type $1/|x|$? 
\end{question}

\begin{question}
What happens in the quenched setting where $\alpha$ is fixed?
\end{question}

We now present several generalizations of Theorem \ref{SU}. 
%The first one (Theorem \ref{ThPole2})
%deals with non-integrable power singularities, while the second addresses the case of integrable power-singularities.  

\begin{theorem}
\label{ThPole2}
Let $A=\tilde{A}(x)+\frac{c_- \chi_{x<x_0}+c^+ \chi_{x>x_0}}{|x-x_0|^{a}}$ where $\tilde A$ is smooth and $a>1.$

(a) If $(\a,x)$ is uniformly distributed 
on $\Tor^2$ then $\dfrac{A_N}{N^a}$ converges in distribution. 

(b) For almost every $x$ fixed,  if $\a$ is uniformly distributed on $\Tor$ then $\dfrac{A_N(\a,x)}{N^{a}}$  
converge to the same limit as in part (a).
\end{theorem}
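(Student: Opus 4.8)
The plan is to reduce everything to the equidistribution of a family of affine lattices under the geodesic flow and then to read off the limit from a point process. Throughout take $x_0=0$. First I would discard the smooth part: writing $A=\tilde A+S$ with $S$ supported near $0$ and equal to $(c_-\chi_{x<0}+c^+\chi_{x>0})|x|^{-a}$ there, one has $|\tilde A_N|\le N\sup|\tilde A|=o(N^a)$ since $a>1$, so $\tilde A_N/N^a\to 0$ and it suffices to treat $S$. Because $a>1$ the singularity is non-integrable, so the sum is dominated by the orbit points coming within $O(1/N)$ of $0$: if $v_n:=N\,(x+n\a\bmod 1)\in(-N/2,N/2]$ denotes the rescaled signed distance of the $n$-th orbit point to $0$, then
\[
\frac{S_N}{N^a}=\sum_{0\le n<N}\frac{c_-\chi_{v_n<0}+c^+\chi_{v_n>0}}{|v_n|^{a}}+o(1),
\]
and the tail coming from $|v_n|>V$ is $O(V^{1-a})$ uniformly in $N$, which is the only place where $a>1$ is used quantitatively.

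Next I would encode the rescaled configuration $\{v_n\}$ as an affine lattice. The points $(n/N,\;N(x+n\a-m))$, $(n,m)\in\Z^2$, form the affine lattice $\Lambda_{N,\a,x}=D_{\log N}\,\Lambda^0_{\a,x}$, where $D_t=\mathrm{diag}(e^{-t},e^{t})$ is the geodesic flow on the space of affine unimodular lattices $\mathrm{ASL}(2,\R)/\mathrm{ASL}(2,\Z)$ and $\Lambda^0_{\a,x}=\left(\begin{smallmatrix}1&0\\ \a&-1\end{smallmatrix}\right)\Z^2+(0,x)$ is a fixed (that is, $N$-independent) affine lattice. Restricting to the strip $0\le u<1$ recovers exactly the numbers $v_n$ (each $n$ contributing one point of bounded height once $N$ is large), so $S_N/N^a=F(\Lambda_{N,\a,x})+o(1)$ with
\[
F(\Lambda)=\sum_{(u,v)\in\Lambda,\ 0\le u<1}\frac{c_-\chi_{v<0}+c^+\chi_{v>0}}{|v|^{a}}.
\]
The functional $F$ is finite and continuous at every affine lattice having no point on $\{v=0\}$ in the strip; this is a set of full Haar measure, and on it the series converges precisely because $a>1$.

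The heart of the matter is the equidistribution input. Both $\a$ and the offset direction $(0,x)$ lie in the unstable horocyclic directions of $D_t$, so $(\a,x)\mapsto\Lambda^0_{\a,x}$ parametrises a piece of unstable leaf. For part (a), where $(\a,x)$ is uniform on $\Tor^2$, pushing this two-dimensional unstable piece forward by $D_{\log N}$ equidistributes it towards the Haar measure on $\mathrm{ASL}(2,\R)/\mathrm{ASL}(2,\Z)$ as $N\to\infty$ (expanding-horocycle equidistribution, with no escape of mass). Hence $\Lambda_{N,\a,x}\Rightarrow\Lambda_\infty$ with $\Lambda_\infty$ Haar-distributed, and the continuous mapping theorem applied to $F$ (continuous $\Lambda_\infty$-almost surely), combined with the uniform $O(V^{1-a})$ tail bound, gives $S_N/N^a\Rightarrow F(\Lambda_\infty)=:\Xi$, a finite random variable. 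For part (b), $x$ is fixed and only $\a$ is random, so one pushes the one-dimensional expanding horocycle $\a\mapsto\Lambda^0_{\a,x}$ by $D_{\log N}$. For $x$ generic (which holds for a.e.\ $x$, and is necessary since e.g.\ $x=0$ yields the linear lattice, whose orbit stays in the proper submanifold $\mathrm{SL}(2,\R)/\mathrm{SL}(2,\Z)$) the affine expanding horocycle still equidistributes towards the same Haar measure, so the identical argument yields $A_N(\a,x)/N^a\Rightarrow\Xi$, the very limit of part (a).

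The main obstacle is precisely this equidistribution step, and in particular its fixed-$x$ version in (b): one must know that the affine expanding horocycle with a generic translation offset equidistributes to the full affine Haar measure rather than to a measure carried by a proper invariant subset. Establishing this (via mixing of the geodesic flow, Ratner--Shah type theorems, or direct horocycle equidistribution in the affine bundle) is the real content; once it is in hand, the reduction of the smooth part, the $a>1$ tail truncation, and the continuous mapping theorem are routine.
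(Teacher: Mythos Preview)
Your proposal is correct and follows essentially the same route as the paper: discard the smooth part (trivially $o(N^a)$ since $a>1$), truncate the singular part at scale $R/N$ with an $O(R^{1-a})$ tail bound, and identify the remaining contribution with a functional of the affine lattice $g^{\ln N}(\Lambda_\alpha\Z^2+\bar x)$, whose equidistribution (full unstable leaf for (a), Ratner/Shah-type input for fixed irrational $x$ in (b)) is exactly the content of Theorem~\ref{ThDO-Pois}. The only cosmetic difference is that the paper packages the Dani correspondence and horocycle equidistribution into Theorem~\ref{ThDO-Pois} and quotes it, whereas you unfold that machinery explicitly; your tail estimate should be read as an $L^1$ bound (as in the paper's $\EXP(|S_N'/N^a|)=O(R^{1-a})$) rather than a pointwise one.
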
 

\begin{theorem} \cite{LL}  \label{q.a}
If $A$ has zero mean and is smooth except for a singularity at $0$ of type $|x|^{-a}$, $a\in (0,1)$ then $A_N/N^a$ converges in distribution. 
\end{theorem}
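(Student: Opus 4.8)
Here I assume, as in Theorems \ref{SU} and \ref{ThPole2}, that $(\a,x)$ is uniformly distributed on $\Tor^2$.

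The plan is to isolate the singular part of $A$ and to rewrite the normalized sum as a functional of the point process of close returns of the orbit $\{x+n\a\}_{n=0}^{N-1}$ to the singular point. Since $a\in(0,1)$ the observable $A$ lies in $L^1(\Tor^1)$, so by unique ergodicity $A_N/N\to\int A=0$ and the fluctuations live at a smaller scale. That scale is dictated by the closest visits of the orbit to $0$: for typical $(\a,x)$ the minimum of $\|x+n\a\|$ over $0\le n<N$ is of order $1/N$, so the largest single term is of order $N^a$, which fixes the normalization. First I would write $A=(G-\bar G)+h$, where $G(y)=|y|^{-a}$ is the model singularity ($1$-periodized), $\bar G=\int G$, and $h=A-(G-\bar G)$ has a strictly milder singularity at $0$. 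One arranges $h\in\cH^\sigma$ for some $\sigma>0$; then for a.e.\ $\a$ the function $h$ is a coboundary as in \eqref{CoB}, its ergodic sums $h_N$ are bounded in $N$, and hence $h_N/N^a\to 0$ almost surely. Thus it suffices to treat the centered model $G-\bar G$.

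Denote by $y_n\in(-1/2,1/2]$ the representative of $x+n\a$ modulo $1$ and set $\xi_n=N y_n$, so that $G(x+n\a)=N^a|\xi_n|^{-a}$. A direct computation of the centering constant (the intensity of $\{\xi_n\}$) gives
\[
\frac{A_N}{N^a}=\int \phi\,\big(d\cN_N-d\bar\mu_N\big)+o(1),\qquad \phi(\xi)=|\xi|^{-a},
\]
where $\cN_N=\sum_{n=0}^{N-1}\delta_{\xi_n}$ and $\bar\mu_N=\EXP\,\cN_N$ is its intensity measure. The key analytic input is a limit law for the rescaled return configuration: as $(\a,x)$ is uniform on $\Tor^2$ and $N\to\infty$, the restriction of $\cN_N$ to any fixed window $[-R,R]$ converges in distribution to a limiting point process $\cN$ on $\R$. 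For circle rotations this process is governed by the three-distance structure and is produced, exactly as in the Poisson regime of Section \ref{sec.poisson}, from equidistribution of a suitable orbit in the space of unimodular lattices via continued-fraction renormalization; it is not Poisson, but it has a bounded intensity near $\xi=0$.

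I would then pass to the limit by a truncation argument. For fixed $R$ the centered windowed functional $\Phi_R(\cN_N)=\int_{|\xi|\le R}\phi\,(d\cN_N-d\bar\mu_N)$ is an almost surely continuous functional of $\cN_N|_{[-R,R]}$: continuity at $\xi=0$ causes no problem because almost surely no return lands exactly at $0$, and since $a<1$ the weight $|\xi|^{-a}$ is integrable at the origin against the bounded intensity of the limit, which yields uniform integrability of the near-singularity contributions. Hence $\Phi_R(\cN_N)\Rightarrow\Phi_R(\cN)$ by continuous mapping. The far part $|\xi_n|>R$ is the centered ergodic sum of a function which varies only at scales $\ge R/N$, so quantitative equidistribution (discrepancy) bounds make it small in probability, uniformly in $N$, as $R\to\infty$. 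Since $\Phi_R(\cN)\to\int\phi\,(d\cN-d\bar\mu)=:\Phi(\cN)$ as $R\to\infty$ by the same integrability, a standard approximation argument gives $A_N/N^a\Rightarrow\Phi(\cN)$.

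The main obstacle is the simultaneous control of the two truncations. Near the singularity the summand is unbounded, and one must rule out that a single very close return produces an atypically large contribution; this is exactly where $a<1$ enters, through integrability of $|\xi|^{-a}$ and the resulting uniform integrability. For $a>1$ this fails and the single closest approach dominates, which is the different mechanism behind Theorem \ref{ThPole2}. At the same time the diverging mean coming from large $\xi$ must be compensated uniformly in $N$, which forces the equidistribution of $\{x+n\a\}$ at scales between $R/N$ and $1$ to match the centering precisely. Reconciling these two regimes, together with establishing the convergence $\cN_N\Rightarrow\cN$ of the close-return process by renormalization, is the technical heart of the argument.
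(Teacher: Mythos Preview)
Your overall architecture---reduce to the model singularity, rewrite $A_N/N^a$ as a centered linear statistic of the rescaled return process $\cN_N$, pass to the limit process $\cN$ via Theorem~\ref{ThDO-Pois}, and control the truncation---is coherent, and the reduction to $G-\bar G$ via the coboundary equation for the smooth remainder is fine. But the argument has a genuine gap at the far part, and this is precisely the step that distinguishes $a<1$ from $a>1$.

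For $a>1$ (Theorem~\ref{ThPole2}) the far part is handled by a crude $L^1$ bound: $\EXP|S_N'/N^a|=O(R^{1-a})\to 0$. For $a<1$ there is no such bound. Your proposed substitute, ``quantitative equidistribution (discrepancy) bounds'', does not do the job: the far part is $N^{-a}$ times the ergodic sum of $g_{R,N}(y)=|y|^{-a}\chi_{|y|>R/N}-\int g_{R,N}$, a function of total variation $\asymp (N/R)^a$. Koksma's inequality then gives $|F_R|\le N^{-a}\cdot V(g_{R,N})\cdot D_N^*(\alpha)\asymp R^{-a} D_N^*(\alpha)$, and since $D_N^*(\alpha)$ is unbounded in $N$ for every irrational $\alpha$, this is not uniform in $N$. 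The heuristic reason is that for $a\in(0,1)$ the contributions of \emph{all} dyadic scales $|y|\sim 2^{-j}$, $0\le j\le\log_2 N$, are of comparable order after centering; the closest approach does not dominate. Thus $F_R$ is not a small error but carries part of the limit, and showing it is tight as $R\to\infty$ is essentially as hard as the theorem itself.

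This is why the paper points to the proof of Theorem~\ref{ThConvex} rather than to Theorem~\ref{ThPole2}. The Fourier coefficients of $|x|^{-a}$ decay like $|k|^{a-1}$, exactly the regime handled in Section~\ref{SSProofsD}: one expands $A_N$ in Fourier series, identifies the resonant frequencies via conditions of the type \eqref{ConvSD1}--\eqref{ConvSD2} (adapted to $d=1$ and to the decay rate $|k|^{a-1}$), and shows that these resonant harmonics, viewed through the Dani correspondence, equidistribute on the space of lattices. The non-resonant tail is then killed by an $L^2$ estimate in frequency space, which is what replaces your physical-space far-part bound. If you want to salvage the physical-space picture, you would have to prove directly that $\int_{|\xi|>R}|\xi|^{-a}(d\cN_N-d\bar\mu_N)$ is tight uniformly in $N$; unwinding that via Parseval leads you back to the same Fourier/lattice analysis.
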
 

The proof of Theorem \ref{q.a} is inspired by the proof of Theorem \ref{ThConvex} of Section \ref{sec.discrepancy} which will 
be presented in Section \ref{SSProofsD}. %It starts with the determination of the Fourier modes $k$ in the expansion of $A_N/N$ that contribute significantly to the sum. The latter satisfy resonance conditions with $\a$ similar to the ones described in \eqref{ConvSD1}-\eqref{ConvSD2}.   The Dani correspondence principle (\cite{Da}, see Section \ref{ScProofs} below) then allows to translate these resonance conditions into geometric conditions on a lattice in $\R^2$ determined by $N$ and $\a$. 

Marklof proved in \cite{M2} that if $\a\in \cD(\sigma)$ 
with $\sigma<(1-a)/a$, then for $A$ as in Theorem \ref{q.a} $A_N(\a, \a)/N \to 0$. 
\begin{question}
\label{QM2-NI}
What happens for other angles $\a$ and other type of singularities, 
including the non integrable ones for which the ergodic theorem does not necessarily hold?
\end{question}

Another natural generalization of Theorem \ref{SU} is to consider meromorphic functions. Let 
$A$ be such a function with highest pole of order $m.$ Thus $A$ can be written as
$$ A(x)=\sum_{j=1}^r \frac{c_j}{(x-x_j)^m}+\tA(x) $$
where the highest pole of $\tA$ has order at most $m-1.$

\begin{theorem}
\label{ThMer} 
(a) Let $A$ be fixed and let $\alpha$ be distributed according to a smooth density on $\T.$
Then for any $x \in \T$, $\dfrac{A_N(\a, x)}{N^{m}}$ has a limiting distribution as $N\to \infty.$

(b) Let $\tA, c_1, \dots c_r$ be fixed while  $(\a,x, x_1\dots x_r)$ are distributed according
to a smooth density on 
on $\Tor^{r+2} $ then $\dfrac{A_N(\a, x)}{N^{m}}$ has a limiting distribution as $N\to \infty.$ 

(c) If $(x_1, x_2 \dots x_r)$ is a fixed irrational vector then for almost every $x \in \T$ the limit distribution in part (a) is the same as the limit distribution in part (b).
\end{theorem}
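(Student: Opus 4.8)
\section*{Proof proposal for Theorem \ref{ThMer}}

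The plan is to combine a reduction to the principal singular part with a renormalization argument driven by the continued fraction expansion of $\a$, treating Theorem \ref{SU} as the base case $m=r=1$. First I would reduce to the top-order part $S_N(\a,x)=\sum_{j=1}^r c_j \sum_{n=0}^{N-1}(x+n\a-x_j)^{-m}$, the terms being interpreted as the genuine periodic meromorphic function on $\T$. Since $\tA$ carries only poles of order $\le m-1$, its ergodic sums grow at most like $N^{m-1}$, so $\tA_N/N^m\to 0$ and does not affect the limit; this is made precise by the same localization estimate used below, applied to each lower-order pole. Thus it suffices to identify the limiting law of $S_N/N^m$.

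Second, I would localize to closest returns. For each pole set $\eta_j(N)=\min_{0\le n<N}\|x+n\a-x_j\|$, realized at a time $n_j$. Because the summand decays like $\|x+n\a-x_j\|^{-m}$ away from $x_j$ and $m\ge 1$, the sum $\sum_n (x+n\a-x_j)^{-m}$ is controlled, up to an $o(N^m)$ error, by the finitely many returns of the orbit into a window of size $\sim N^{-1}$ about $x_j$. By the three-distance theorem these nearby returns occur at times differing by consecutive continued-fraction denominators $q_k,q_{k+1}$ of $\a$, and their positions relative to $x_j$ are governed by the renormalized data $(\{q_k\a\},\,q_k\eta_j)$. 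Rescaling distances by $N\sim q_k$, the neighboring returns land at points $\xi_j+\ell s_j$ ($\ell\in\Z$), so each pole contributes, after dividing by $N^m$, a term $c_j\,\Phi_m(\xi_j,s_j)$ with $\Phi_m=\sum_{\ell}(\xi_j+\ell s_j)^{-m}$; for $m=1$ this is the conditionally convergent cotangent-type series responsible for the cancellations in Theorem \ref{SU}.

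Third, I would feed in the equidistribution of the renormalized data. As $\a$ is drawn from a smooth density, the joint law of $(\{q_k\a\},\,q_k\eta_j,\,a_{k+1},a_{k+2},\dots)$ converges, by the ergodicity of the Gauss map and its natural extension (equivalently, by equidistribution of the associated orbit on $SL(2,\Z)\backslash SL(2,\R)$), to an explicit stationary law, which fixes the distribution of $(\xi_j,s_j)$ and hence of the single-pole contribution. This yields part (a) once one checks that for a fixed $x$ the continued-fraction randomization alone equidistributes the relevant phase, so that $S_N/N^m\Rightarrow \mu_x$ for an explicit $\mu_x$ depending a priori on $x$ and on $x_1,\dots,x_r$.

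Finally, the multi-pole structure. For part (b), where $x_1,\dots,x_r$ are independently smoothly distributed, the windows around distinct poles are asymptotically disjoint and their renormalized data become independent, so $S_N/N^m$ converges to a sum of $r$ independent copies of the single-pole limit. The main obstacle is part (c): for a fixed irrational vector $(x_1,\dots,x_r)$ the poles no longer supply their own randomness, and one must show that averaging over $x$ alone reproduces the independent model. I would establish this through a simultaneous, multi-point version of the equidistribution above: as $x$ ranges uniformly over $\T$, the renormalized closest-approach statistics at the $r$ poles jointly converge to the same product law as in part (b), the irrationality of the differences $x_i-x_j$ being exactly what prevents the resonances that would otherwise correlate the poles. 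This shows the $x$-averaged limit coincides with the part (b) law; combined with an ergodicity argument showing that the fixed-$x$ limit $\mu_x$ of part (a) is a.e.\ equal to its average in $x$, it gives (c). Proving the required \emph{joint} equidistribution for fixed irrational poles, rather than merely its $x$-average, is where the real difficulty lies.
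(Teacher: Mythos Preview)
Your reduction to the top-order part and the localization to close returns are the same first move the paper makes, and the one-pole case is indeed handled as you describe. But the multi-pole analysis contains a genuine error. In your step for part (b) you assert that when $x_1,\dots,x_r$ are independently random the renormalized data at the different poles become independent and the limit is a sum of $r$ independent single-pole contributions. This is false: the spacing parameters $s_j$ in your description all come from $\{q_k\alpha\}$, hence are \emph{identical} across poles; only the offsets $\xi_j$ differ. In the homogeneous-dynamics language the paper uses, the limit lives on $\bigl(SL_2(\R)\ltimes(\R^2)^r\bigr)/\bigl(SL_2(\Z)\ltimes(\Z^2)^r\bigr)$: a single random lattice $L$ together with $r$ affine shifts $a_1,\dots,a_r$, and the limiting variable is $\sum_j c_j\sum_{e\in L+a_j} x(e)^{-m}\chi_{[0,1]}(y(e))$. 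The shifts are independent given $L$, but the shared $L$ makes the pole contributions strongly dependent. Your continued-fraction picture can of course be made to say the same thing, but as written the independence claim is wrong and would give the wrong limit law.

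For part (c) you correctly isolate the crux: one needs joint equidistribution of the renormalized close-approach data at the $r$ poles when only $\alpha$ (and $x$) are random and $(x_1,\dots,x_r)$ is a fixed irrational vector. The paper obtains this from Ratner's theorems via Proposition~\ref{PrRatnerGroup}(a), which computes the orbit closure of the relevant unipotent action and shows it is the whole space precisely under the irrationality hypothesis; this is not something the Gauss-map ergodicity alone provides. Your final sentence acknowledges this is the real difficulty, but the mechanism you sketch---an ergodicity argument making $\mu_x$ a.e.\ equal to its $x$-average---has no obvious dynamical system in $x$ to appeal to; you would still need the Ratner-type input.
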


Proofs of Theorems \ref{ThPole2} 
and \ref{ThMer} are sketched in Section \ref{ScProofs}. 

It will be apparent from the proof of Theorem \ref{ThMer} that the limit distribution in part (a) is not the same
{\bf for all} $x_1, x_2\dots x_j.$ For example if $x_j=jx_1$ we get an exceptional distribution since a close 
approach to $x_1$ and $x_2$ by the orbit of $x$ should be followed by a close approach to $x_j$
for $j\geq 3.$ We will see that this phenomenon appears in many limit theorems (see e.g 
Theorem \ref{ThKesten}, Theorem \ref{CLTShrinkBad} and Question \ref{QSTAE},
Theorem \ref{ThChi2} and Question \ref{RWTypAlpha},  
as well as \cite{M-Bern}).   

\begin{question} \label{q6}
What can be said about more general meromorphic functions such as $\sin 2\pi x/(\sin 2\pi x+3\cos 2\pi y)$
on $\T^d$ with $d>1?$
\end{question}

\section{Ergodic sums of characteristic functions.  Discrepancies} 
\label{sec.discrepancy} 

 The case where $A=\chi_\Omega$ is a classical subject in number theory.
Define the discrepancy function 
$$D_N(\Omega, \a,x)=\sum_{n=0}^{N-1} \chi_\Omega(x+n\alpha)-N\frac{\text{Vol}(\Omega)}{\text{Vol} (\Tor^d)}. $$

Uniform distribution of the sequence $x+k \a$ on $\T^d$ is equivalent to the fact that, 
for regular sets $\Omega,$ 
$D_N(\Omega,\a,x)/N \to 0$ as $N \to \infty$. A step further in the description of the uniform distribution is the study of the 
rate of convergence to $0$ of $D_N(\Omega,\a,x)/N$.

In $d=1$ it is known that if $\a \in \T-\Q$ is fixed, the discrepancy $D_N(\Omega,\a,x)/N$ displays an oscillatory 
behavior according to the position of $N$ with respect to the denominators of the best rational approximations of  $\a$. 
A great deal of work in Diophantine approximation has been done on estimating the discrepancy function in relation with 
the arithmetic properties of $\a \in \T$, and more generally for $\a \in \T^d$. 

%It is of  common knowledge that  in  studying  the discrepancies in dimension $1$    the continued fraction algorithm provides crucial help, and that the absence of an analogue in higher dimensions makes the study of discrepancies much harder. 

\subsection{The maximal discrepancy} 
\label{sec.beck}

Let
\begin{equation}
\label{DefMaxD}
\overline{D}_N(\a)=\sup_{\Omega \in \mathbb{B}} D_N(\Omega, \alpha, 0)
\end{equation}
where the supremum is taken over all sets $\Omega$ in some natural class of sets $\mathbb{B}$, for example balls or boxes (product of intervals).

The case of (straight) boxes was extensively studied, and  growth properties of the sequence $\overline{D}_N(\a)$ were obtained  with a special emphasis on their relations with the Diophantine approximation properties of $\a.$ 
In particular, following earlier advances of \cite{Kok, HL, O, Kh, schmidt} and others, \cite{beck1} proves

\begin{theorem} \label{th.beck}
Let 
$$\overline{D}_N(\a)=\sup_{\Omega -\text{box}} D_N(\Omega, \alpha, 0)$$
Then for any positive increasing function $\phi$ we have 
\begin{equation} \hspace{0.4cm} \sum_{n} \phi(n)^{-1}<\infty  \iff \frac{\overline{D}_N(\a)}{(\ln N)^d \phi(\ln\ln N)}   \begin{array}{l}
\text{ is  bounded for}\\
\text{ almost every  }\a \in \T^d.
\end{array} 
   \label{beckbound}  \end{equation}

\end{theorem}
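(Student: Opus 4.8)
\emph{Proof proposal.} The plan is to pass through the Fourier series of the box indicator and reduce the whole statement to a metric law of the iterated logarithm in which large values of the discrepancy are produced by a single dominant resonance whose tail is Cauchy-like. First I would expand, for a box $\Omega=\prod_{j=1}^d[a_j,b_j]$,
$$D_N(\Omega,\a,0)=\sum_{k\in\Z^d-0}\hat\chi_\Omega(k)\,S_N(k,\a),\qquad S_N(k,\a)=\sum_{n=0}^{N-1}e^{2\pi i n(k,\a)},$$
and use the elementary bounds $|S_N(k,\a)|\le\min\bigl(N,\tfrac1{2\|(k,\a)\|}\bigr)$ and $|\hat\chi_\Omega(k)|\le\prod_j\min\bigl(1,\tfrac1{\pi|k_j|}\bigr)$. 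Taking the supremum over boxes in \eqref{DefMaxD} then amounts, up to bounded factors, to replacing the oscillating coefficients $\hat\chi_\Omega(k)$ by their absolute values, so that
$$\overline{D}_N(\a)\asymp\sum_{k\ne0}\frac1{\prod_j\max(1,|k_j|)}\,\min\Bigl(N,\frac1{\|(k,\a)\|}\Bigr),$$
the lower bound being realized by choosing the corners of $\Omega$ so as to line up the phases of the largest terms. Uniformity of the supremum over the continuum of boxes is handled by a net argument: only boxes whose corners lie on a grid of spacing $N^{-C}$ matter, and the number of these is polynomial in $N$, hence absorbed into the Borel--Cantelli constants.

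The next step is to isolate the resonances. The terms with $\|(k,\a)\|\ge1/N$ sum to something of order $(\ln N)^d$ for typical $\a$ (each coordinate direction contributes one harmonic factor $\sim\ln N$), and this is the source of the normalization $(\ln N)^d$. The fluctuations above this baseline come from the near-resonant $k$, those with $\|(k,\a)\|\lesssim1/N$, each contributing a term of size $\asymp N/\prod_j\max(1,|k_j|)$, i.e. essentially the reciprocal of the quality of a simultaneous rational approximation of $\a$. The crucial arithmetic input is that, after normalization by $(\ln N)^d$, the largest such contribution behaves like the maximum of the relevant partial quotients, whose law has a Cauchy (index one) tail $\Prob(\,\cdot\,>\lambda)\sim c/\lambda$; this is the metric reason the test takes the shape $\sum_n\phi(n)^{-1}$ rather than a Gaussian iterated-logarithm integral. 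Organizing the scales dyadically in $\ln N$, so that one block is a range $N\in[N_0,N_0^2]$, the discrepancies inside a block are governed by the same few resonances and are strongly correlated, while distinct blocks involve arithmetically disjoint approximations and decorrelate; the number of blocks up to $N$ is $\asymp\ln\ln N$, which is exactly why the argument of $\phi$ is $\ln\ln N$.

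With this structure in place the equivalence follows from the two halves of the Borel--Cantelli lemma applied to the block events $E_n=\{\overline{D}_N>C(\ln N)^d\phi(\ln\ln N)\text{ for some }N\text{ in block }n\}$. For the direction $\sum\phi(n)^{-1}<\infty\Rightarrow$ boundedness, the Cauchy tail gives $\Prob(E_n)\le c/\phi(n)$ up to the polynomial net factor, so $\sum_n\Prob(E_n)<\infty$ and the first Borel--Cantelli lemma shows that $E_n$ occurs only finitely often, i.e. $\overline{D}_N/((\ln N)^d\phi(\ln\ln N))$ is bounded for almost every $\a$. For the converse, divergence gives $\sum_n\Prob(E_n)=\infty$; one then establishes quasi-independence of the $E_n$ by choosing, in each block, resonant frequencies lying in disjoint dyadic shells and estimating the pairwise correlations $\Prob(E_n\cap E_m)$, and applies the divergence form of Borel--Cantelli (Kochen--Stone) to conclude that $E_n$ occurs infinitely often with positive probability. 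Since the limsup event is invariant under the ergodic dynamics on the space of continued-fraction (Gauss map) data, a zero--one law upgrades positive probability to full measure.

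The main obstacle, and the part requiring the most work, is precisely the pair of decorrelation estimates underlying both Borel--Cantelli steps. In the upper bound one must show that the contributions of different resonant $k$ cannot conspire to produce a heavier-than-Cauchy tail, which requires controlling the joint distribution of small values of $\|(k,\a)\|$ over many $k$ simultaneously and, for $d>1$, understanding how simultaneous approximations in different coordinate directions interact. In the lower bound one must produce genuinely quasi-independent resonant events across blocks with matching constants, so that the typical size $(\ln N)^d$ and the extremal Cauchy tail carry the same normalization; this is where the factorization of the box discrepancy over coordinates and the metric theory of the multidimensional Gauss map enter in an essential way.
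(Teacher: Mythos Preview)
The paper does not prove this theorem: it is quoted from \cite{beck1}, with only the remark that the case $d=1$ follows from Khinchine's classical results on the metric theory of continued fractions while the higher-dimensional case is ``significantly more difficult''. So there is no proof in the paper to compare your proposal against.

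That said, your outline is in the right spirit and is consistent with the heuristics the paper develops elsewhere. The picture you describe---a baseline of order $(\ln N)^d$ coming from the bulk of the harmonic sum, fluctuations driven by near-resonant frequencies with a Cauchy (index one) tail, and a block structure on the scale $\ln\ln N$---is exactly what underlies Theorems \ref{ThBox}, \ref{dimd} and the Poisson process Theorem \ref{pois} later in the survey, and it is also the mechanism behind Beck's original argument. Your identification of the decorrelation estimates as the crux is accurate: this is where the genuine work in \cite{beck1} lies, and for $d>1$ it is far from routine because there is no continued-fraction algorithm and simultaneous resonances in different coordinate directions interact nontrivially.

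Two points where your sketch is loose. First, the step ``taking the supremum over boxes amounts, up to bounded factors, to replacing $\hat\chi_\Omega(k)$ by its absolute value'' is only obvious in one direction (the Erd\H{o}s--Tur\'an--Koksma inequality); the matching lower bound---choosing a box whose phases align with the dominant resonances---requires an argument and is not just ``lining up the phases of the largest terms'', since several resonances may be relevant simultaneously. Second, in the divergence direction your Borel--Cantelli argument shows only that the normalized discrepancy exceeds a \emph{fixed} constant $C$ infinitely often, which gives $\limsup\ge C$ but not unboundedness. The standard fix is to observe that if $\sum_n\phi(n)^{-1}=\infty$ then also $\sum_n(M\phi(n))^{-1}=\infty$ for every $M$, rerun the argument with $\phi$ replaced by $M\phi$, and let $M\to\infty$; you should say this explicitly.
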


In dimension $d=1$, this result 
is the content of Khinchine theorems obtained in the early 1920's \cite{Kh},
and it follows easily from well-known  results from 
the metrical theory of continued fractions (see for example the introduction of \cite{beck1}).
The higher dimensional case is significantly more difficult and the cited bound was only obtained in the 1990s.
% as opposed to the Khinchine findings dating back to the 1920s. Moreover,  it is still impossible to establish a dichotomy between the lower and upper bounds for the $\limsup$ in the higher dimensional case.    

 The bound in (\ref{beckbound}) 
 focuses on how bad can the discrepancy become along a subsequence of $N$, for a fixed $\a$ in a full measure set.  In a sense, it  deals with the worst case scenario and does not capture the oscillations of the discrepancy.  On the other hand, the restriction on $\a$ is necessary, since given any $\eps_n \to 0$ it is easy to see that for $\a \in \T$ {Ê\it sufficiently   Liouville}, the discrepancy (relative to intervals) can be as bad as $N_n \eps_n$ along a suitable sequence $N_n$ (large multiples of Liouville denominators). %\margem{made some modifications here}

For $d=1$, it is not hard to see, using  continued  fractions, that for any $\a$ : $\limsup\frac{\overline{D}_N(\alpha)}{\ln N}>0$, 
$\liminf\overline{D}_N(\alpha)\leq C$; and for $\a \in \mathcal D(0)$  $\limsup\frac{\overline{D}_N(\alpha)}{\ln N}<+\infty$.  The study of higher dimensional counterparts to these results raises several interesting questions.

\begin{question} \label{q7}
Is it true that $\limsup\frac{\overline{D}_N(\alpha)}{\ln^d N}>0$ {\bf for all} $\alpha\in \Tor^d$?
\end{question}

%{\color{red} Should we put $=\infty$ instead of $>0$?I replaced $\liminf$ by $\limsup$ otherwise it is false for $d=1$. It is not clear in Beck on page 11 of his paper when he speaks about a  lower bound for all $\a$ on the discrepancy whether he means $\liminf$ or $\limsup$ but I think it is $\limsup$. Have to check his proof on page 11-12 of a lower bound (it must be only a proof  for a.e. $\a$).}Currently the best known result is the the general lower bound $(\ln N)^{d/2}$ that holds for every sequence on $\T^d$ (\cite{Roth}). On the other hand, due to the use of continued fractions the latter conjecture can be easily verified in dimension $1$ (cf. discussion in \cite{beck1}). 

\begin{question}
Is it true that there exists $\a$ such that $\limsup\frac{\overline{D}_N(\alpha)}{\ln^d N}<+\infty$?
\end{question}

\begin{question} \label{q9}
What can one say about $\liminf\frac{\overline{D}_N(\alpha)}{a_N}$ for a.e. $\a$, where $a_N$ is an adequately chosen normalization? for every $\a$? %in distribution? 
\end{question}

\begin{question}
 Same questions as Questions \ref{q7}--\ref{q9}  when boxes are replaced by balls.
\end{question}

\begin{question} \label{q11}  Same questions as Questions \ref{q7}--\ref{q9}  for the {\it isotropic discrepancy,} when boxes are replaced by the class of 
 all convex sets \cite{Nie}. \end{question}

\subsection{Limit laws for the discrepancy as $\a$ is random}

 In this survey, we will mostly concentrate on the distribution of the discrepancy function as $\a$ is random. The above discussion naturally raises the following question. 

\begin{question}
Let $\a$ be uniformly distributed on $\T^d.$
Is it true that $\frac{\overline{D}_N(\alpha)}{\ln^d N}$ converges in distribution as $N\to\infty$?
\end{question}

Why do we need to take $\alpha$ random? The answer is that for fixed $\alpha$ the discrepancy does not have a limit distribution, no water which normalization is chosen. 

For example for $d=1$ the Denjoy-Koksma inequality says that
$$ |A_{q_n}-q_n\int A(x) dx|\leq 2 V  $$
where $q_n$ is the $n$-th partial convergent to $\alpha$ and $V$ denotes the total variation of $A.$ 
In particular $D_{q_n}(I, \a,x)$ can take at most 3 values.

In higher dimensions one can show that if $\Omega$ is either a box or any other strictly convex set
then for almost all $\alpha$ and almost all tori, when $x$ is random the variable
$$ \frac{D_N(\Omega, \reals^d/L, \alpha, \cdot)}{a_N} $$ 
does not converge to a non-trivial
limiting distribution for any choice of $a_N=a_N(\alpha, L)$
(see discussion in the introduction of \cite{DF2}).

\begin{question}
Is this true {\bf for all} $\alpha, L$? 
\end{question}

\begin{question}
Study the distributions which can appear as weak limits of $\dfrac{D_N(\Omega, \alpha, \cdot)}{a_N}$, in particular their relation with
number theoretic properties of $\alpha.$
\end{question} \vskip2mm

Let us consider the case $d=1$ (so the sets of interest are intervals and we will write $I$ instead of $\Omega$.) It is easy to see that all limit distributions are atomic for all $I$ iff $\alpha\in \rationals.$

\begin{question}
Is it true that all limit distributions are either atomic or Gaussian for almost all $I$ iff $\alpha$ is of bounded type?
\end{question}

Evidence for the affirmative answer is contained in the following results.

\begin{theorem}
(\cite{H}) If $\alpha\not\in \rationals$ and $I=[0, 1/2]$ then there is a sequence $N_j$ such that 
$\dfrac{D_{N_j}(I, \alpha, \cdot)}{j}$ converges to $\sN.$
\end{theorem}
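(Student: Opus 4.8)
The plan is to analyze $D_N(I,\alpha,\cdot)$ as a random trigonometric series in the uniformly distributed variable $x$, and then to choose $N_j$ so that this series becomes a balanced lacunary sum to which a classical central limit theorem applies. Write $f(y)=\chi_{[0,1/2]}(y)-\tfrac12$. Because $f(y+\tfrac12)=-f(y)$, its Fourier series carries only odd frequencies,
$$ f(y)=\sum_{k\ \mathrm{odd}}\frac{1}{\pi i k}\,e^{2\pi i k y}, $$
so that, summing the geometric progression in $n$,
$$ D_N(x)=\sum_{k\ \mathrm{odd}} c_k(N)\,e^{2\pi i k x},\qquad c_k(N)=\frac{1}{\pi i k}\,\frac{e^{2\pi i k N\alpha}-1}{e^{2\pi i k\alpha}-1}. $$
Since $x$ is uniform on $\T$, the monomials $e^{2\pi i k x}$ are orthonormal; in particular $\EXP_x[D_N]=0$ and $\mathrm{Var}_x(D_N)=\sum_{k\ \mathrm{odd}}|c_k(N)|^2$, with $|c_k(N)|=\frac{1}{\pi|k|}\,\frac{|\sin\pi kN\alpha|}{|\sin\pi k\alpha|}$.

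The coefficients $c_k(N)$ are largest at the resonant frequencies, namely the convergent denominators $q_n$ of $\alpha$, where $\|k\alpha\|$ is abnormally small. Since consecutive denominators are coprime, at most one of any two consecutive $q_n$ is even, so there are infinitely many odd $q_n$, and these grow at least geometrically ($q_{n+2}\ge 2q_n$); hence the odd resonant frequencies form a Hadamard-lacunary set. At such a frequency $q_n$ (odd), writing $\delta_n=q_n\alpha-p_n$ with $|\delta_n|\asymp q_{n+1}^{-1}$, one has
$$ |c_{q_n}(N)|=\frac{1}{\pi q_n}\,\frac{|\sin\pi N\delta_n|}{\pi|\delta_n|}, $$
which can be made of any prescribed order between $0$ and $\asymp q_{n+1}/q_n$ by a suitable choice of $N\bmod \delta_n^{-1}$.

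I would then construct $N_j$ so that $D_{N_j}(x)$ is, up to a negligible remainder, a lacunary trigonometric polynomial carrying balanced weight on a growing block $B_j$ of odd resonant frequencies. Note that $N_j$ cannot be taken among the $q_n$ themselves, since by the Denjoy--Koksma inequality quoted above $D_{q_n}$ stays bounded. Concretely, using the Ostrowski/continued-fraction data of $\alpha$, I would pick $N_j$ so that $|B_j|\to\infty$, the values $|c_{q_n}(N_j)|^2$ are comparable across $n\in B_j$ with $\max_{n\in B_j}|c_{q_n}(N_j)|^2\big/\sum_{n\in B_j}|c_{q_n}(N_j)|^2\to 0$, and the total variance is calibrated to $\mathrm{Var}_x(D_{N_j})=(1+o(1))\,j^2$. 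Grouping each conjugate pair $\pm q_n$ turns the dominant part of $D_{N_j}$ into $\sum_{n\in B_j}a_n\cos(2\pi q_n x+\varphi_n)$ with Hadamard-lacunary frequencies and $\tfrac12\sum a_n^2=(1+o(1))j^2$. The Salem--Zygmund central limit theorem for lacunary series then gives $D_{N_j}(x)/j\Rightarrow \sN(1)=\sN$.

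The main obstacle is the construction and calibration of $N_j$ for a \emph{general} irrational $\alpha$. For $\alpha$ of bounded type all resonances are automatically comparable and the balancing is nearly free, but for a general (possibly Liouville) $\alpha$ a single near-resonance can dominate $\mathrm{Var}_x(D_N)$ and force a non-Gaussian (atomic or Cauchy-like) limit along most $N$; the content of the theorem is precisely that one can always steer to a subsequence on which the mass is spread over $|B_j|\to\infty$ comparable frequencies. Realizing the required simultaneous smallness of $\{N_j\delta_n\}$ across the whole block $B_j$, while keeping the off-resonance tail and the harmonic overtones ($k\approx \ell q_n$) negligible and respecting the odd-parity restriction on $k$, is the technical heart of the argument; the trigonometric central limit theorem is then invoked as a black box.
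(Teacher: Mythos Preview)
The paper does not prove this theorem; it is quoted from \cite{H} (Huveneers) without argument, so there is no in-paper proof to compare against. Your outline is in fact close in spirit to Huveneers' actual proof: he too writes the discrepancy as a random trigonometric series in $x$, isolates the resonant frequencies tied to the continued-fraction denominators, and invokes a Salem--Zygmund-type CLT for lacunary series once the weights are balanced.

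That said, what you have written is a strategy, not a proof, and you correctly identify the gap yourself. The entire content of the theorem lies in the construction of $N_j$: you must exhibit, for \emph{every} irrational $\alpha$, integers $N_j$ for which (i) the variance contributions $|c_{q_n}(N_j)|^2$ across a growing block $B_j$ of odd resonant frequencies are genuinely comparable, (ii) the non-resonant tail and harmonic overtones $k\approx\ell q_n$ are $o(j^2)$ in $L^2$, and (iii) the total variance equals $(1+o(1))j^2$. You have stated these requirements but not met them. For bounded-type $\alpha$ this is indeed routine, but for Liouville $\alpha$ a single denominator can carry variance of order $(q_{n+1}/q_n)^2$, which may dwarf everything else; arranging simultaneous control of $\{N_j\delta_n\}$ over many $n$ while suppressing such spikes is a genuine Diophantine construction that requires work (in \cite{H} this is done via an explicit Ostrowski-type choice of $N_j$ and careful case analysis on the partial quotients). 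Until that construction is written down, the argument is incomplete.

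One minor point: your formula $|c_{q_n}(N)|=\frac{1}{\pi q_n}\frac{|\sin\pi N\delta_n|}{\pi|\delta_n|}$ has an extra factor of $\pi$ in the denominator; it should read $\frac{1}{\pi q_n}\frac{|\sin\pi N\delta_n|}{|\sin\pi\delta_n|}\approx\frac{1}{\pi q_n}\frac{|\sin\pi N\delta_n|}{\pi|\delta_n|}$, so the approximation is fine but the displayed equality is not.
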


Instead of considering subsequences, it is possible to  randomize $N.$

\begin{theorem} 
\label{ThNRandom}
Let $\alpha$ be a quadratic surd.

(a) (\cite{beck2}) If $(x, a, l)$ is uniformly distributed on $\Tor^3$ then
$\dfrac{D_{[aN]} ([0,l], \alpha, x)}{\sqrt{\ln N}}$ converges to $\sN(\sigma^2)$
for some $\sigma^2\neq 0.$

(b) (\cite{beck3}) If $M$ is uniformly distributed on $[1, N]$ and $l$ is rational then there are constants
$C(\alpha, l), \sigma(\alpha, l)$ such that
$\dfrac{D_{M} ([0,l], \alpha, 0)-C(\alpha,l)\ln N}{\sqrt{\ln N}}$ converges to $\sN (\sigma^2(\alpha, l)).$
\end{theorem}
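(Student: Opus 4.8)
The plan is to exploit the self-similar structure afforded by the eventual periodicity of the continued fraction expansion of a quadratic surd, reducing both statements to a central limit theorem for a weakly dependent stationary sequence of length $\asymp \ln N$. To begin, set $\psi=\chi_{[0,l]}-l$, so that $D_N([0,l],\a,x)=\sum_{n=0}^{N-1}\psi(x+n\a)$. Let $p_k/q_k$ denote the convergents of $\a$ and expand the number of terms in the Ostrowski basis, $N=\sum_k b_k q_k$ with $0\le b_k\le a_{k+1}$. Grouping the orbit into blocks of length $q_k$ yields a telescoping representation
$$ D_N(\a,x)=\sum_k g_k(\a,x)+(\text{error}), \qquad g_k=\sum_{j} D_{q_k}\bigl(\a,x+r_{k,j}\a\bigr), $$
in which each Denjoy--Koksma block $D_{q_k}$ is bounded by the total variation of $\psi$; the rationality of $l$ in part (b), respectively the averaging over $l$ in part (a), keeps these blocks regular. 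Since for a quadratic surd $q_{k+1}/q_k\to\lambda>1$, with $\lambda$ the relevant fundamental unit, there are $n\asymp \ln N/\ln\lambda$ nontrivial levels.

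The heart of the matter is that for a quadratic surd the partial quotients $a_k$ are eventually periodic, so the cocycle producing the $g_k$ is driven by a single hyperbolic renormalization map: the return map of the Gauss map to the periodic orbit, lifted so as to carry the phase of $x$ at the current scale. Consequently $(g_k)$ is, after a finite transient, a stationary functional of a process with exponential decay of correlations. The randomization supplies the needed equidistribution: in (a) the uniform choice of $(x,a,l)$ randomizes the initial phase together with the fractional part of $aN$, so that the digit string $b_k$ equidistributes, while in (b) the uniform choice of $M\in[1,N]$ randomizes $b_k$ directly.

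With $(g_k)$ thus identified as a weakly dependent stationary sequence, a martingale approximation in the spirit of Gordin, or equivalently the spectral gap of the transfer operator of the renormalization, yields
$$ \frac{1}{\sqrt n}\sum_{k=1}^n\bigl(g_k-\EXP g_k\bigr)\Rightarrow \sN(\sigma^2). $$
Substituting $n\asymp \ln N/\ln\lambda$ produces the $\sqrt{\ln N}$ normalization. In (a) the averaging over $x$ annihilates $\EXP g_k$, because $\int_{\Tor}\psi=0$, so the limit is the centered $\sN(\sigma^2)$; in (b) the fixed base point $x=0$ is special, the level means accumulate into the deterministic drift $C(\a,l)\ln N$, and this is precisely the quantity one subtracts before dividing by $\sqrt{\ln N}$.

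I expect the main obstacle to be twofold. The delicate analytic point is the decorrelation estimate: one must show that level contributions at distant scales are asymptotically independent with summable correlations, which requires controlling how the phase of $x$ modulo the scale $q_k$ equidistributes under renormalization and, crucially, handling the carries in the Ostrowski addition, since these couple adjacent levels and threaten stationarity. The second difficulty is the nondegeneracy $\sigma^2\ne 0$: one must rule out that the renormalization cocycle defining $g_k$ is a coboundary, and it is here that the fine arithmetic of $\a$, and in part (b) the rationality of $l$ entering $\sigma^2(\a,l)$, must genuinely be used.
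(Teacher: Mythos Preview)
Your outline follows Beck's original combinatorial route---Ostrowski expansion, periodicity of the partial quotients, and a CLT for the resulting stationary weakly dependent sequence---and this is a viable strategy, though as you correctly flag, the decorrelation across Ostrowski levels (carries) and the nondegeneracy $\sigma^2\neq 0$ are where the real work hides and your sketch does not yet resolve them.

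The paper, however, takes a genuinely different path, and only for part (b) with $l=\tfrac12$. Rather than working directly with continued fractions, it realizes the discrepancy as the vertical coordinate on the infinite staircase translation surface $St$: the Poincar\'e return map of the linear flow of slope $\theta$ (with $\alpha=(\tan\theta+1)/2$) to the union of top edges is exactly the skew product $(x,z)\mapsto(x+\alpha,\,z+\chi_{[1/2,1]}-\chi_{[0,1/2)})$. The key input is that $St$ is a Veech surface, so for a quadratic surd $\alpha$ there is a hyperbolic matrix $A\in SL_2(\integers)$, $A\equiv I\pmod 2$, with $(\cos\theta,\sin\theta)$ as eigenvector, inducing an affine automorphism $\phi_A$ of $St$ of the form $\phi_A(p,z)=(Ap,\,z+\tau(p))$ with $\int\tau=0$. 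Renormalizing an orbit segment of length $N$ by $\phi_A^m$ with $m\approx\ln N/\ln\lambda$ reduces the discrepancy to a Birkhoff sum $\sum_{j}\tau(\phi_A^{-j}q)$ of the zero-mean cocycle $\tau$ over the hyperbolic toral automorphism $A$, and the CLT for such automorphisms gives the Gaussian limit directly. The drift $C(\alpha,l)\ln N$ in (b) falls out transparently: the origin is a fixed point of $A$, so $\tau(\phi_A^j(0,0))=C$ for all $j$, producing a deterministic contribution $Cm\asymp C\ln N$.

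What each buys: your approach is portable to general rational $l$ and to part (a), and stays within classical one-dimensional number theory, but you must establish exponential mixing and nondegeneracy by hand. The paper's approach imports these for free from the CLT for hyperbolic toral automorphisms, and gives a clean geometric explanation of the drift, at the cost of being tied to the specific surface associated to $l=\tfrac12$; extending it to other rational $l$ requires other square-tiled surfaces.
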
 

Note that even though we have normalized the discrepancy by subtracting the expected value
an additional normalization is required in Theorem \ref{ThNRandom}(b). The reason for this is explained at
the end of Section~\ref{SSAppl}.

So if one wants to have a unique limit distribution for all $N$ one needs to allow random $\alpha.$

The case when $d=1$ was studied by Kesten.
Define 
$$ V(u, v, w)=\sum_{k=1}^\infty \frac{\sin(2\pi u)\sin(2\pi v)\sin(2\pi w)}{k^2}. $$
If $(r, q)$ are positive integers let
$$ \theta(r,q)=\frac{\Card(j: 0\leq j \leq q-1: gcd(j, r, q)=1)}{\Card(j, k: 0\leq j, k \leq q-1: gcd(j, k, q)=1)}. $$
Finally let
$$ c(r)=\begin{cases}  
\frac{\pi^3}{12} \left[\sum_{r=0}^{q-1} \theta(p, q) \int_0^1 \int_0^1 V(u, \frac{rp}{q}, v) dudv\right]^{-1}
& \text{if }r=\frac{p}{q}
\text{ and } gcd(p,q)=1 \\
\frac{\pi^3}{12} \left[\int_0^1 \int_0^1 \int_0^1 V(u, r, v) du dr dv\right]^{-1}
& \text{if }r \text{ is irrational.}
\end{cases} $$

\begin{theorem}
\label{ThKesten}
(\cite{K1, K2})
If $(\a,x)$ is uniformly distributed on $\Tor^2$ then
$\frac{D_N([0,l], \a,x)}{c(l) \ln N}$ converges to the Cauchy distribution $\Cauchy.$
%That is
%$$\lim_{N\to\infty} \text{Leb}((\a,x): \frac{D_N([0,l], \a,x)}{c(l) \ln N}<z)=\Cauchy(z) $$
%where $\Cauchy$ is the standard Cauchy cumulative distribution function
%$$\Cauchy(z)=\frac{1}{\pi} \int_{-\infty}^z \frac{1}{1+a^2}da=\frac{\tan^{-1} z}{\pi}+\frac{1}{2} .$$ 
\end{theorem}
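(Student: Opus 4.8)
The plan is to exploit the fact that for $A=\chi_{[0,l]}-l$ the Fourier coefficients decay only like $1/|k|$, so that the ergodic sum is dominated by the \emph{resonant} frequencies, namely the integers $k$ for which $\|k\a\|$ is abnormally small. These are exactly the integers close to the denominators $q_n$ of the continued fraction convergents of $\a$, which dictates that the whole computation be organized along the continued fraction (renormalization) dynamics. Concretely, I would start from
\begin{equation*}
A_N(\a,x)=\sum_{k\neq 0} a_k\,\frac{e^{2\pi i kN\a}-1}{e^{2\pi i k\a}-1}\,e^{2\pi i kx},\qquad a_k=\frac{1-e^{-2\pi i kl}}{2\pi i k},
\end{equation*}
and split the frequencies into windows around each $q_n$. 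The non-resonant windows contribute only $O(1)$ per scale, hence are negligible after division by $\ln N$; the resonant windows produce, for each renormalization level $n$ with $q_n\lesssim N$, a single contribution $\xi_n$ determined by the local arithmetic of $\a$ near $q_n$. The same decomposition can be obtained without Fourier analysis from the Ostrowski expansion of $N$ in the base $\{q_n\}$ together with the Denjoy--Koksma bound: complete blocks of length $q_n$ are uniformly controlled, so the fluctuation of $A_N$ is carried by one incomplete block per level, whose typical size is governed linearly by the partial quotient $a_{n+1}(\a)$ times an oscillating phase depending on $x$ and $l$.

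Next I would bring in the randomness of $(\a,x)$. As $\a$ is uniformly distributed, the Gauss map iterates $G^n\a$ equidistribute for the Gauss measure, and --- this is the crucial point --- the Gauss map (a cross-section of the geodesic flow on the modular surface) is exponentially mixing. This mixing is what turns the successive level contributions $\xi_n$ into an \emph{asymptotically independent} family. Each $\xi_n$ lies in the domain of attraction of the Cauchy law, because its heavy tail is inherited directly from the tail of the partial quotients, $\Prob(a_{n+1}=m)\sim (m^{2}\ln 2)^{-1}$, which is precisely the index-one stable tail. I would encode this limiting independence as the convergence of the marked point process of resonances to a Poisson process $\Pois$ on an appropriate space of rescaled configurations, in the spirit of the Poisson regime of Section~\ref{sec.poisson} and Section~\ref{SSPois}; in the limit $A_N/\ln N$ is then represented as a convergent integral against $\Pois$, which is exactly the Poisson (L\'evy--Khintchine) construction of a symmetric stable law. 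The dependence of the answer on $l$ enters here: whether $l$ is rational or irrational changes how the two endpoints of $[0,l]$ resonate with the denominators, and this is what produces the two cases in the definition of $c(l)$.

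Finally, the number of levels with $q_n\lesssim N$ is asymptotically $\frac{12\ln 2}{\pi^2}\ln N$ by the L\'evy constant, so $A_N$ is a sum of $\asymp\ln N$ asymptotically independent terms, each in the Cauchy domain of attraction. Since the stable index is $1$, the correct norming is \emph{linear} in the number of terms, i.e.\ by $\ln N$ (not $\sqrt{\ln N}$), and the stable limit theorem gives convergence of $A_N/(c(l)\ln N)$ to $\Cauchy$; the symmetry of the limit and the vanishing of the index-one centering correction both follow from the symmetry $x\mapsto -x$ of the phase factor once $x$ is randomized, which is also what allows the clean normalization by $\ln N$ without an extra $\ln\ln N$ drift. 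Tracking the scale factors through the renormalization identifies the constant $c(l)$. I expect the main obstacle to be the second step: upgrading equidistribution of $\{G^n\a\}$ to genuine, quantitatively controlled asymptotic independence of the heavy-tailed contributions $\xi_n$ (equivalently, the convergence of the resonance process to $\Pois$), while simultaneously showing that the complete-block and non-resonant remainders stay negligible after division by $\ln N$. The heavy tails make this delicate, since one must rule out that a single exceptionally large partial quotient is either lost in the truncation or contributes in a manner not captured by the stable limit.
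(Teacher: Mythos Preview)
Your outline is essentially correct and is in fact the classical route taken by Kesten in \cite{K1,K2}: organize $A_N$ along the continued fraction levels, use the Ostrowski expansion (or equivalently the resonant Fourier frequencies near the $q_n$) to isolate one heavy-tailed contribution $\xi_n$ per level, invoke mixing of the Gauss map to obtain asymptotic independence of the $\xi_n$, and then apply a stable limit theorem with index $1$ to the resulting sum of $\asymp\ln N$ terms. You have also correctly located the genuine difficulty---controlling the asymptotic independence of heavy-tailed summands well enough to push through a stable law---and correctly identified why the rational/irrational dichotomy for $l$ appears in $c(l)$.

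The paper does not give its own proof of Theorem~\ref{ThKesten}; it quotes it from Kesten. What the paper \emph{does} sketch is a proof of the higher-dimensional analogue, Theorem~\ref{dimd}, and the method there is genuinely different from yours. Instead of continued fractions (which have no canonical analogue for $d>1$), the paper works on the space of lattices $SL_{d+1}(\reals)/SL_{d+1}(\integers)$ and encodes the small divisors $\prod|\brk_i|\,\|(k,\a)\|$ as hits of the $g_t$-orbit of a horocycle translate. The asymptotic independence you extract from Gauss-map mixing is replaced by Theorem~\ref{pois}: a Poisson limit for the point process of small divisors, proved by a martingale argument exploiting quantitative equidistribution of unipotent translates. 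The Cauchy law then emerges directly from the Poisson representation \eqref{CauchyPois} of Section~\ref{SSPois}, rather than from a stable-limit theorem for a sum indexed by continued fraction levels.

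The trade-off is clear. Your approach is more elementary and transparent in $d=1$, and makes the role of the partial-quotient tail $\Prob(a_{n+1}=m)\sim 1/(m^2\ln 2)$ completely explicit; but it does not survive to $d>1$. The paper's approach is heavier (Ratner theory, effective equidistribution, martingale CLTs on homogeneous spaces) but is dimension-free and yields Theorems~\ref{ThBox} and~\ref{dimd} by the same machinery. In $d=1$ the two routes converge: your ``marked point process of resonances'' is exactly the one-dimensional instance of Theorem~\ref{pois}, and the Gauss map is the cross-section of $g_t$ on $SL_2(\reals)/SL_2(\integers)$.
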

Note that the normalizing factor is discontinuous as a function of the length of the interval at rational values.

A natural question is to extend Theorem \ref{ThKesten} to higher dimensions. The first issue is to decide which sets $\Omega$ to consider instead of intervals. It appears that a quite flexible assumption is that $\Omega$ is semialgebraic, that is, it is 
defined by a finite number of algebraic inequalities.

\begin{question}
\label{QSemiAlg}
Suppose that $\Omega$ is semialgebraic 
then there is a sequence $ a_N=a_N(\Omega)$ 
such that for a random translation of a random torus $\dfrac{D_N(\Omega, \reals^d/L, \a,x)}{a_N}$
converges in distribution as $N\to\infty.$
\end{question}
By random translation of a random torus, we  mean a translation of random angle $\a$ on a torus $\reals^d/L$ where  $L=A\integers^d$ and the triple $(\a, x, A)$ has a smooth density on $\Tor^d\times \Tor^d\times \text{GL}(\R,d)$. 
Notice that comparing to Kesten's result of Theorem \ref{ThKesten}, Question \ref{QSemiAlg} allows for additional randomness, namely, the torus is random.
In particular, for $d=1$, the study of the discrepancy of visits to $[0,l]$ on the torus $\reals/\integers$ is equivalent to 
the study of the discrepancy of visits to $[0,1]$ on the torus $\reals/(l^{-1} \integers).$ Thus the purpose of the extra randomness
is to avoid the irregular dependence on parameters observed in Theorem
\ref{ThKesten} (cf. also \cite{R1, R2}).

So far Question \ref{QSemiAlg} has been answered for two classes of sets which are natural counterparts to intervals in higher dimensions 
 : strictly convex sets and (tilted) boxes.

%\subsubsection{Limit laws for the discrepancy in the case of analytic convex bodies} To explain the result in the first case we need to introduce some notation.
%Let $\Omega$ be a strictly convex body with smooth boundary (for example a ball). This means that $\partial \Omega$ 
%is a smooth hypersurface of $\reals^d$ with strictly positive  gaussian curvature, or equivalently that 
%$\partial \Omega$ is  a smooth manifold isomorphic under the normal mapping to the unit sphere $\mathbb{S}_{d-1}$.  
%For each vector $\xi \in {\mathbb S}_{d-1}$ there exists a unique point $x(\xi)\in \partial \Omega$ 
%at which the unit outer normal vector is $\xi$. We denote by $K(\xi)$ the gaussian curvature of $\partial \Omega$ at this point. 

 Given a convex body $\Omega$, we consider the family $\Omega_r$ of bodies obtained from $\Omega$ by rescaling it with a ratio $r>0$ (we apply to $\Omega$ the homothety centered at the origin with scale $r$).  We suppose $r<r_0$ so that the rescaled bodies can fit inside the unit cube of $\R^d$. We define
\begin{equation}
D_N(\Omega,r,\a,x)  = \sum_{n=0}^{N-1} \chi_{\Omega_r}(x+n\a) - N {\rm Vol}({\Omega_r})
\end{equation}

%We introduce now some additional notations to describe the limit law of $D_N(\Omega,r,\a,x)$.

%Let
%\begin{equation}
%\cM_d=M\times T^\infty \text{ and }\cM_{2,d}=M\times T_2^\infty 
%\end{equation}

%By abusing sligtly the notation we let  $\bmu$ denote the Haar measures on both $\cM_d$ and $\cM_{2,d}.$  
%If $\mathcal C$ is not symmetric we have to extend the space where the function $\cL_\Omega$ 
%was defined in the case of balls  to 
%the fiber bundle with base $M$ and fiber $T_2^\infty$ 
%Let $\mu$ be the Haar measure on $\cM_2$.  
%Consider the following function on $\cM_{2,d}$
%\begin{equation}
%\label{LatTor}
%\cL'_{\Omega}(L, \theta, b,b')=\frac{1}{\pi^2} \sum_{m\in \cZ}\sum_{p=1}^\infty k(p,m,\theta)
%\frac{\sin (\pi p Z_m)}{R_m^{\frac{d+1}{2}} Z_m p^{\frac{d+3}{2}}}
%\end{equation}
%with 
%\begin{multline} 
%\label{DefK}
%k(p,m,\theta)=  K^{-\frac{1}{2}}(X_m/R_m)  
%\sin(2\pi(p b_m+p(m,\theta) -(d-1)/8)) \\ +K^{-\frac{1}{2}}(-X_m/R_m)  \sin(2\pi(p b'_m-p(m,\theta)-(d-1)/8))\end{multline}
%For the case of symmetric bodies, we define on the space $\cM_d$ the function  
%\begin{multline}
%\label{LimSumSym}
%\cL_{\Omega}(L, \theta, b)= \\  \frac{2}{\pi^2} \sum_{m\in \cZ}\sum_{p=1}^\infty
% K^{-\frac{1}{2}}(X_m/R_m) 
% \frac{\cos(2\pi p(m,\theta)) \sin(2\pi (p b_m-(d-1)/8)) \sin (\pi p Z_m)}
%{R_m^{\frac{d+1}{2}} Z_m p^{\frac{d+3}{2}}}. 
%\end{multline}

%We now give the description of the distribution $\cD_{\mathcal C}$ of Theorem~\ref{main.limit} 
%that generalizes the limit distribution obtained for balls

\begin{theorem}
\label{ThConvex}
(\cite{DF1})
If $(r,\a,x)$ is uniformly distributed on $X=  [a,b] \times \T^d\times \T^d$ then
$\frac{D_N(\Omega, r, \a,x )}{r^{(d-1)/2} N^{(d-1)/2d}}$ has a limit distribution as $N\to\infty.$
%If $\Omega$ is an analytic non
%symmetric strictly convex body in $\reals^d$, then for any $z \in \reals$ we have  
%\begin{equation}
%\label{distr.level}
%$$\lim_{N\to\infty} \Prob((r,\a,x) : D_N( \Omega,r,\a,x)   \leq z)=
%\bmu\left( (L, (\theta, b,b'))\in \cM_{2,d}  : \cL'_{\Omega}(L,\theta, b,b') \leq z \right). $$
%\end{equation}
%If $\Omega$ is symmetric then, for any $z \in \reals$ we have 
%\begin{equation}
%\label{distr.level2}
%$$\lim_{N\to\infty} \Prob((r,\a,x) : D_N( \Omega,r,\a,x)   \leq z)
%=  \bmu\left( (L, (\theta, b))\in \cM_d : \cL_{\Omega}(L,\theta, b) \leq z \right).%\end{equation}
%$$
\end{theorem}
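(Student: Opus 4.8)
The plan is to combine a Fourier-analytic reduction with renormalization on the space of unimodular lattices, and to read off the limiting distribution from equidistribution of expanding horospheres.

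First I would expand $\chi_{\Omega_r}$ in a Fourier series on $\Tor^d$, giving
\[
D_N(\Omega,r,\a,x)=\sum_{k\neq 0} c_k(r)\, e^{2\pi i(k,x)}\, S_N((k,\a)),\qquad S_N(\beta)=\sum_{n=0}^{N-1}e^{2\pi i n\beta},
\]
where $c_k(r)=r^d\,\widehat{\chi_\Omega}(rk)$. Since $\Omega$ is strictly convex with smooth boundary of nonvanishing Gaussian curvature, the stationary phase method yields $\widehat{\chi_\Omega}(\xi)=|\xi|^{-(d+1)/2}\sum_{\pm}A_\pm(\hat\xi)\,e^{\mp 2\pi i h_\pm(\hat\xi)|\xi|}+O(|\xi|^{-(d+3)/2})$, the two stationary points being the boundary points with outer normal $\pm\hat\xi$. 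Hence $c_k(r)\asymp r^{(d-1)/2}|k|^{-(d+1)/2}$. As $|S_N((k,\a))|\asymp\min(N,\|(k,\a)\|^{-1})$, a term is largest at a strong resonance; balancing $\|(k,\a)\|^{-1}\sim N$ against the Fourier decay shows that the dominant scale is $|k|\sim N^{1/d}$, where a single resonant term has size $r^{(d-1)/2}N^{(d-1)/2d}$ — precisely the normalization in the statement.

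Next I would renormalize. Encode a pair $(k,m)$ by the vector $(k,(k,\a)-m)=u_\a\bigl(\begin{smallmatrix}k\\-m\end{smallmatrix}\bigr)$ with $u_\a=\bigl(\begin{smallmatrix} I_d & 0\\ \a^{\mathsf T} & 1\end{smallmatrix}\bigr)$, and set $\Lambda_\a=u_\a\Z^{d+1}$. Applying $g_t=\mathrm{diag}(e^t,\dots,e^t,e^{-dt})$ with $e^t=N^{-1/d}$ rescales the resonance window $|k|\sim N^{1/d},\ \|(k,\a)\|\sim N^{-1}$ to an $O(1)$ region, sending $k\mapsto v'=N^{-1/d}k$ and $(k,\a)-m\mapsto v_{d+1}=N((k,\a)-m)$. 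Using $S_N/N\to\phi(v_{d+1})$ with $\phi(s)=(e^{2\pi i s}-1)/(2\pi i s)$ and inserting the stationary-phase amplitudes, after dividing by $r^{(d-1)/2}N^{(d-1)/2d}$ the discrepancy becomes a lattice sum
\[
\frac{D_N}{r^{(d-1)/2}N^{(d-1)/2d}}\approx \sum_{v\in g_t\Lambda_\a,\ v'\neq 0}\frac{\Psi_v}{|v'|^{(d+1)/2}}\,\phi(v_{d+1}),
\]
where $\hat v'=v'/|v'|$ and $\Psi_v$ collects the amplitudes $A_\pm(\hat v')$ and the phases $e^{2\pi i(k,x)}$, $e^{\mp 2\pi i h_\pm(\hat v')|k|}$ with $|k|=N^{1/d}|v'|$. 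The candidate limit is the same sum over a Haar-random unimodular lattice with the phases replaced by independent uniform ones.

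The dynamical input is that $\{u_\a:\a\in\R^d\}$ is the full expanding horospherical subgroup for $g_t$ as $t\to-\infty$, so the translates $g_t\Lambda_\a$ equidistribute in $\mathrm{SL}(d+1,\R)/\mathrm{SL}(d+1,\Z)$ toward Haar measure (standard equidistribution of expanding horospheres, derivable from mixing of $g_t$). Simultaneously, since $|k|=N^{1/d}|v'|\to\infty$, the phases $(k,x)$ and $h_\pm(\hat v')|k|$ oscillate rapidly in the independent variables $x\in\Tor^d$ and $r\in[a,b]$; averaging over $(x,r)$ decouples them from the lattice and makes them converge to independent uniform phases $\xi_v$, giving a candidate limit $Y=\sum_v B(\hat v')|v'|^{-(d+1)/2}\phi(v_{d+1})\,e^{2\pi i\xi_v}$ over the Haar-random lattice.

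The main obstacle is that $\mathrm{SL}(d+1,\R)/\mathrm{SL}(d+1,\Z)$ is non-compact and the integrand is unbounded: short vectors $v'$, corresponding to the best rational approximations of $\a$, produce the dominant resonances, and by Siegel's mean value formula the second moment already diverges, since $\int_{\R^d}|v'|^{-(d+1)}\,dv'$ diverges at $v'=0$. Thus the limit is expected to be heavy-tailed (Kesten's Cauchy law is exactly the case $d=1$), and naive boundedness arguments fail. The resolution I would pursue is a truncation scheme: fix $\e>0$, keep only $|v'|>\e$, prove convergence of the truncated prelimit by equidistribution of $g_t\Lambda_\a$ against the now bounded integrand, and then show the tail $Y-Y_\e$ is small in probability \emph{uniformly in $N$}. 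This last estimate is the crux: it demands quantitative control of the contribution of short lattice vectors, obtained from Rogers/Siegel-type higher-moment formulas together with the smallness of the probability that $g_t\Lambda_\a$ carries a vector with $|v'|<\e$; combined with the rapid phase oscillation this yields tightness and identifies $Y$ as an infinitely divisible limit. Ruling out escape of mass into the cusp is where essentially all the difficulty lies.
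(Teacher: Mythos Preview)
Your approach is essentially the same as the paper's: Fourier expansion of the discrepancy, Herz-type stationary-phase asymptotics for $\widehat{\chi_\Omega}$, identification of the resonant window $|k|\asymp N^{1/d}$, $\|(k,\a)\|\asymp N^{-1}$, renormalization via the Dani correspondence to the space of $(d{+}1)$-dimensional lattices, equidistribution of expanding horospheres under $g_t$, and a truncation $\eps<|v'|<\eps^{-1}$ with uniform-in-$N$ tail control to pass to the limit. The paper does exactly these three steps.

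There is one genuine inaccuracy in your description of the limit. You write that averaging over $(x,r)$ makes the phases ``converge to independent uniform phases $\xi_v$'' attached to the lattice points. They cannot be independent: for $k$'s lying on the same ray the phases are rigidly linked, since $(mk,x)=m(k,x)$ and $h_\pm(\widehat{mk})\,|mk|=m\,h_\pm(\hat k)\,|k|$. The paper deals with this by first restricting attention to \emph{primitive} $k$ in the resonant window; for those, the three phases $(k,x)$, $N(k,\a)$ and $rP(k)$ are shown to be asymptotically independent of one another and of the renormalized numerators. The limiting object is therefore not a sum with i.i.d.\ phases over lattice points, but a functional on a space of pairs (random lattice centered at $0$, random characters on it) satisfying the compatibility relations $\theta_{e_1+e_2}=\theta_{e_1}+\theta_{e_2}$ and $b_{me}=mb_e$; this is the space $\cM_d$ (or $\cM_{2,d}$ when $\Omega$ is not centrally symmetric, because the two stationary points give two distinct support-function phases). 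Your ``infinitely divisible'' description does not capture this structure. Once you build in the primitivity reduction and the correct phase constraints, your outline matches the paper's proof.
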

The form of the limiting distribution is given in Theorem \ref{ThConvex2} in Section \ref{ScProofs}.

%\subsubsection{Limit laws for the discrepancy in the case of boxes} 
In the case of boxes we recover the same limit distribution as in Kesten but with a higher power of the 
logarithm in the normalization. 
\begin{theorem}
\label{ThBox} (\cite{DF2}) In the context of Question \ref{QSemiAlg}, if $\Omega$ is a box, then 
 $\dfrac{D_N}{c \ln^d N}$ converges to $\Cauchy$ as $N\to\infty.$
\end{theorem}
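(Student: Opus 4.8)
The plan is to reduce the problem to the analysis of a small number of dominant resonant terms and then to identify the limit through homogeneous dynamics on the space of lattices, exactly as in the refinement of Kesten's Theorem~\ref{ThKesten}. First I would expand $D_N$ in a Fourier series. Writing the box as a product $\Omega=\prod_{i=1}^d[0,l_i]$ (after straightening the torus $\R^d/L$ by the linear map $A$), the $d$-dimensional Fourier coefficient of $\chi_\Omega$ factorizes as $\prod_i \hat g_i(k_i)$ with $\hat g_i(k_i)=O(1/|k_i|)$ for $k_i\neq0$, so that
$$
D_N(\Omega,\a,x)=\sum_{k\neq 0}\Big(\prod_i \hat g_i(k_i)\Big)\, e^{2\pi i(k,x)}\,\frac{e^{2\pi i N(k,\a)}-1}{e^{2\pi i(k,\a)}-1}.
$$
The denominator is small precisely when $\|(k,\a)\|$ is small; these resonant $k$ carry essentially the whole sum, each contributing a term of size $\big(\prod_{k_i\neq0}|k_i|^{-1}\big)\,\|(k,\a)\|^{-1}$. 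The first step is therefore to truncate the series to $|k|\le N^{C}$ and to show that only the resonant indices, those with $\|(k,\a)\|\lesssim 1/N$, survive in the limit after dividing by $\ln^d N$; the non-resonant part has smaller order and is discarded.

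Next I would reformulate the resonances dynamically. To the pair $(\a,A)$ one associates the unimodular lattice $\Lambda\subset\R^{d+1}$ spanned by the rows of a matrix built from $A$ and $\a$, and one introduces the diagonal flow $g_t=\mathrm{diag}(e^{t/d},\dots,e^{t/d},e^{-t})$ on $\mathrm{SL}(d+1,\R)/\mathrm{SL}(d+1,\Z)$. Under this correspondence the resonant $k$ with $|k|$ and $\|(k,\a)\|$ in prescribed dyadic windows correspond to short vectors of $g_{t}\Lambda$ for $t\sim \log N$, and the amplitude $\big(\prod|k_i|^{-1}\big)\|(k,\a)\|^{-1}$ becomes an explicit function of these short vectors. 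Because $(\a,x,A)$ is distributed with a smooth density on $\Tor^d\times\Tor^d\times\mathrm{GL}(\R,d)$, the lattice $\Lambda$ (together with the phase $x$) carries an absolutely continuous distribution on the homogeneous space, which is exactly the hypothesis needed to apply equidistribution of translates.

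The heart of the argument is then to pass to the limit. Splitting the range of scales $t\in[0,\log N]$ (respectively the compatible coordinate ranges for the box) into $\sim\ln N$ windows, mixing of the flow $g_t$ on $\mathrm{SL}(d+1,\R)/\mathrm{SL}(d+1,\Z)$ shows that the contributions from well-separated windows become asymptotically independent, while equidistribution identifies the contribution of a single window with a functional of a random lattice, i.e.\ with the limiting point process of short vectors of the type appearing in the Poisson regime $\Pois(X,\mu)$ of the introduction. Each such contribution is heavy-tailed with a $1/t$ tail (coming from $\|(k,\a)\|^{-1}$), so summing $\sim\ln N$ asymptotically independent such terms and normalizing by the count produces the symmetric $1$-stable law, namely $\Cauchy$; concretely one checks that the characteristic function of $D_N/(c\ln^d N)$ converges to $e^{-c'|\xi|}$. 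The extra power of the logarithm compared with Kesten's one-dimensional Theorem~\ref{ThKesten} is produced by the product structure of the box: summing the factor $\prod_{k_i\neq0}|k_i|^{-1}$ over the indices $k$ compatible with a fixed resonance depth contributes an additional $(\ln N)^{d-1}$, which multiplies the single logarithmic factor of the one-dimensional resonance hierarchy to give $\ln^d N$, while leaving the $1/t$ tails, and hence the $1$-stable limit, intact.

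The main obstacle I expect is the bookkeeping of the $d$ coordinate directions together with the uniform control of error terms. One must show not merely that the resonant part dominates in the mean but that the truncation and the discarded non-resonant and off-diagonal ($k$ with several comparable small linear forms) contributions are negligible uniformly, that the short-vector point processes converge with the correct intensity once the $\prod|k_i|^{-1}$ weight is folded in, and that the cross-scale independence furnished by mixing is quantitative enough to justify the $1$-stable limit theorem despite the heavy tails (which a priori obstruct naive second-moment estimates). Establishing this joint convergence, rather than the appearance of the Cauchy law itself, is where the genuinely higher-dimensional difficulty lies.
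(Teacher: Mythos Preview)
Your starting point---the Fourier expansion and the identification of resonant frequencies as the dominant contributors---is correct and matches the paper. However, the dynamical setup you propose is the one adapted to strictly convex bodies, not to boxes, and this is a genuine gap.

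The one-parameter diagonal flow $g_t=\mathrm{diag}(e^{t/d},\dots,e^{t/d},e^{-t})$ on $\mathrm{SL}(d+1,\R)/\mathrm{SL}(d+1,\Z)$ that you invoke is the right tool for Theorem~\ref{ThConvex}, where the Fourier coefficients decay like $|k|^{-(d+1)/2}$ and the relevant small-divisor quantity is $|k|^{(d+1)/2}\|(k,\a)\|$. For a box the Fourier coefficient factorizes as $\prod_i O(1/|\bar k_i|)$, so the quantity that governs the size of each term is the \emph{multiplicative} form $\prod_i |\bar k_i|\cdot\|(k,\a)\|$. This product is invariant under the full diagonal subgroup of $\mathrm{SL}(d+1,\R)$, not only under the single flow $g_t$; a resonant $k$ has no preferred scale $t$ at which it becomes a short vector of $g_t\Lambda$, and your decomposition into $\sim\ln N$ windows along the $g_t$-orbit does not capture the $d$-dimensional log-scale region $\{(\log|\bar k_1|,\dots,\log|\bar k_d|):\prod_i|\bar k_i|\le N\}$, whose volume $\sim(\ln N)^d/d!$ is the actual source of the normalization. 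Your heuristic ``$(\ln N)^{d-1}$ from the box times $\ln N$ from the resonance hierarchy'' reflects this mismatch.

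What the paper does instead is to prove directly a Poisson limit theorem (Theorem~\ref{pois}): the point process
\[
\Big\{\,(\ln N)^d \prod_i \bar k_i\,\|(k,\a)\|,\ N(k,\a)\bmod 2,\ \{\bar k_1 u_1\},\dots,\{\bar k_d u_d\}\,\Big\}_{k\in Z(\xi,N)}
\]
converges to a Poisson process of constant intensity, with the phases asymptotically independent of the small divisors. The Cauchy law then follows at once from the representation $\sum_j \xi_j/x_j$ with $\{x_j\}$ Poisson (the proposition in Section~\ref{SSPois}, equation~\eqref{CauchyPois}); there are only finitely many terms in the limit, not a growing sum of $\sim\ln N$ asymptotically independent blocks. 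The proof of Theorem~\ref{pois} does use mixing, but through a martingale argument that requires polynomial equidistribution rates for unipotent orbits---a different mechanism from the equidistribution of $g_t$-translates you sketch.
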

Alternatively, one can consider gilded boxes, namely: for $u=(u_1,\ldots,u_d)$ with $0<u_i<1/2$  for every $i$, we  define a cube on the $d$-torus by 
$C_{u}=[-u_1,u_1]\times \ldots [-u_d,u_d]$. Let $\eta>0$ and $M C_{u}$
be the image of $C_{u}$ by a matrix $M \in {\rm SL}(d,\R)$ such that 
$$M = (a_{ij}) \in G_\eta =\{ |a_{i,i}-1|<\eta, \text{ for every } i \text{ and }|a_{i,j}|<\eta \text{ for every }j\neq i\}. $$
For a point $x \in \T^d$ and a translation frequency vector $\a \in \T^d$ 
we denote $\xi=(u,M,\a,x)$
 and define the following discrepancy function 
$$D_N(\xi) = \# \{1\leq m \leq N : (x+
m\a)  {\rm  \  mod  \ } 1 \in M C_{u}  \} - 2^d \left(\Pi_i u_i\right) N.$$
Fix  $d$ segments $[v_i,w_i]$ such that $0<v_i<w_i<1/2 \forall i=1,\ldots,d$.
Let 
\begin{equation}
\label{DefX}
X=(u,\a,x, (a_{i,j})) \in [v_1,w_1] \times \dots [v_d,w_d] \times \T^{2d} \times G_\eta
\end{equation}
 We denote by $\Prob$ the normalized restriction of the Lebesgue $\times$ Haar measure on $X$. Then, the precise statement of Theorem \ref{ThBox} is  
\begin{theorem} (\cite{DF2}) \label{dimd} 
Let $\rho=\frac{1}{d!}\left(\frac{2}{\pi}\right)^{2d+2} . $ If $\xi$ is distributed according to $\lambda$ then
%For any $\eta>0$ and any $z \in \R$ we have
%\begin{equation} \label{cauchy} \lim_{N \to \infty} 
$ \frac{ D_N(\xi)}{\rho (\ln N)^d}$ converges to
$\Cauchy$ as $N\to\infty.$
 %\end{equation}
\end{theorem}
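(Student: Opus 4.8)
The plan is to pass to the Fourier side, isolate the resonant lattice points that dominate the discrepancy, and then renormalize the resulting resonance configuration using the diagonal flow on the space of unimodular lattices in $\R^{d+1}$; the Cauchy law will emerge as the distribution of a sum over the limiting Poisson process of resonances. First I would expand the indicator of the gilded box. Since $M C_u=\{y:M^{-1}y\in C_u\}$ with $C_u=\prod_{i=1}^d[-u_i,u_i]$ and $M\in{\rm SL}(d,\R)$, the Fourier coefficients factor as
$$ \hat{\chi}_{MC_u}(k)=\prod_{i=1}^d \frac{\sin\bigl(2\pi u_i (M^T k)_i\bigr)}{\pi (M^T k)_i},\qquad k\in\Z^d\setminus 0, $$
and summing the geometric series over $m$ gives
$$ D_N(\xi)=\mathrm{Re}\sum_{k\neq 0}\hat{\chi}_{MC_u}(k)\,e^{2\pi i(k,x)}\,\frac{e^{2\pi i N(k,\a)}-1}{e^{2\pi i(k,\a)}-1}, $$
whose individual terms are controlled by $\min\bigl(N,\|(k,\a)\|^{-1}\bigr)$ times the product $\prod_i|(M^T k)_i|^{-1}$.

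Next I would identify the dominant terms. A term is large only when $(k,\a)$ is close to an integer (a resonance) while the coordinates $(M^T k)_i$ are not too large, so that the competition between the Dirichlet gain $\|(k,\a)\|^{-1}\lesssim|k|^{d}$ and the anisotropic Fourier decay $\prod_i|(M^T k)_i|^{-1}$ is balanced. The natural bookkeeping device is the unimodular lattice
$$ \Lambda=\Lambda(\a,M)=\bigl\{\bigl((M^T k)_1,\dots,(M^T k)_d,\;(k,\a)-m\bigr):k\in\Z^d,\ m\in\Z\bigr\}\subset\R^{d+1}, $$
in which a dominant resonance is exactly a short vector, and the contribution of such a vector $v=(y_1,\dots,y_{d+1})$ is, up to bounded factors from the sines and from $u$, of size $\prod_{j=1}^{d+1}|y_j|^{-1}$. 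Using a Denjoy--Koksma type bound together with a dyadic decomposition of the scales $t_i=\ln|(M^T k)_i|$, I would show that all non-resonant $k$, and all $k$ lying outside the slab $\sum_i t_i\le\ln N$, contribute negligibly after normalization, reducing $D_N$ to a finite sum over short vectors of $\Lambda$.

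I would then renormalize and pass to the limit. Increasing the cutoff $N$ corresponds to letting $t\sim\ln N$ grow and applying the diagonal element $a_t=\mathrm{diag}(e^{-t/d},\dots,e^{-t/d},e^{t})$ to $\Lambda$; the short vectors of $a_t\Lambda$ encode the resonances visible at scale $N$. Because the triple $(\a,x,M)$ carries a smooth density, the lattice $\Lambda(\a,M)$ together with the phases $(k,x)$ equidistributes, and by mixing of the diagonal flow on ${\rm SL}(d+1,\R)/{\rm SL}(d+1,\Z)$ the rescaled configuration of short vectors converges, as $N\to\infty$, to a Poisson process $\Pois$ on the space of lattices (the resonance process alluded to in the introduction). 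The admissible scales $(t_1,\dots,t_d)$ fill a simplex $\{t_i\ge0,\ \sum_i t_i\le\ln N\}$ of volume $(\ln N)^d/d!$, which is the source both of the $(\ln N)^d$ normalization and of the factor $1/d!$ in $\rho$. Finally, the contribution $\prod_{j=1}^{d+1}|y_j|^{-1}$ of a short vector, evaluated against the limiting lattice measure, is heavy-tailed of tail index $1$, i.e.\ $\Prob(|\cdot|>s)\sim c/s$; summing such contributions over $\Pois$ and compensating the mean yields, by the L\'evy--Khintchine representation of the stable law of index $1$, precisely a Cauchy variable. Tracking the constants — the $\prod_i(2u_i)$ from the resonant Fourier coefficients averaged over $u$, the $\pi$'s from the kernel, and the simplex volume — gives the scale $\rho=\frac1{d!}\bigl(\tfrac2\pi\bigr)^{2d+2}$, so that $D_N(\xi)/\bigl(\rho(\ln N)^d\bigr)\Rightarrow\Cauchy$.

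The hard part will be the renormalization step and the error control that feeds it: one must prove the joint equidistribution of the lattice $\Lambda(\a,M)$ and the phases $(k,x)$ strongly enough to obtain Poisson convergence of the resonances \emph{uniformly across all $d$ logarithmic scales at once}, while showing that the interactions between near-resonances in different coordinate directions, and the tails of the truncated Fourier and geometric series, do not survive in the limit. Effective (quantitative) mixing on ${\rm SL}(d+1,\R)/{\rm SL}(d+1,\Z)$ and a careful second-moment estimate for the discarded terms are the technical crux.
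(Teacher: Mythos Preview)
Your high-level outline is correct and matches the paper: Fourier expansion, isolation of resonances, a Poisson limit for the resonance configuration, and then the representation $\sum_j \xi_j/x_j$ of the Cauchy law. The simplex-of-scales explanation for the factor $(\ln N)^d/d!$ is also right.

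However, the renormalization mechanism you propose has a genuine gap. You write that the resonances are the short vectors of $a_t\Lambda$ with $a_t=\mathrm{diag}(e^{-t/d},\dots,e^{-t/d},e^t)$, and that mixing of this diagonal flow produces a Poisson process. This is the mechanism for the \emph{convex body} case (Theorem~\ref{ThConvex}), where the resonance condition is $|k|^{(d+1)/2}\|(k,\alpha)\|$ small; there a single isotropic diagonal push works and the limit is a \emph{random lattice}, not a Poisson process, and the limiting law (Theorem~\ref{ThConvex2}) is not Cauchy. For boxes the resonance condition is the \emph{multiplicative} one $\prod_i|\bar k_i|\,\|(k,\alpha)\|$ small with $\prod_i|\bar k_i|<N$, which cannot be captured by short vectors under a single isotropic $a_t$: different resonances live at very different anisotropic scales $(t_1,\dots,t_d)$ in the simplex you yourself identify. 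The point you do not address is why the contributions coming from distinct cells of that simplex are \emph{asymptotically independent} --- this is exactly what distinguishes the Poisson limit here from the random-lattice limit in the convex case, and it does not follow from mixing of a one-parameter diagonal flow.

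In the paper this is handled by Theorem~\ref{pois}: the point process of resonances (with the multiplicative small-divisor condition, together with the phases $N(k,\alpha)\bmod 2$ and $\{\bar k_i u_i\}$) converges to a genuine Poisson process on $\R\times(\R/2\Z)\times(\R/\Z)^d$. Its proof is not a single equidistribution statement but a martingale argument that exploits \emph{polynomial equidistribution of unipotent orbits} (not mixing of the diagonal flow) to show that resonances at successive logarithmic scales decorrelate quantitatively. The randomness of the tilt $M\in G_\eta$ is precisely what gives the initial lattice a smooth density so that these quantitative unipotent estimates apply; with $M$ fixed this step is open (cf.\ Question~\ref{QSubMani}). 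So your plan needs to replace ``short vectors of $a_t\Lambda$ + diagonal mixing $\Rightarrow$ Poisson'' by ``multiplicative small divisors + martingale/decorrelation across logarithmic scales $\Rightarrow$ Poisson''; once Theorem~\ref{pois} is available, the passage to the Cauchy law via \eqref{CauchyPois} goes exactly as you describe.
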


\begin{question} 
\label{QFixVar}
Are Theorems \ref{ThConvex}--\ref{dimd} valid if 

(a) The lattice $L$ is fixed; (b) $x$ is fixed?
\end{question}

\begin{question}
Describe large deviations for $D_N.$ That is, given $b_N\gg a_N$ 
where $a_N$ is the same as in Question \ref{QSemiAlg},
study the asymptotics of 
$ \Prob(D_N\geq b_N).$ One can study this question in the annealed setting when all variables are random or in the quenched setting where some of them are fixed. 
\end{question}

\begin{question}
\label{QLLT}
Does a local limit theorem hold? That is, is it true that given a finite interval $J$ we have
$$ \lim_{N\to\infty} a_N \Prob(D_N\in J)=c |J| ? $$
%where $a_N$ is the same as in Question \ref{QSemiAlg}?
\end{question}

\section{Poisson regime} \label{sec.poisson}
The results presented in the last section deal with the so called CLT regime. This is the regime when, since the target set $\Omega$ is {\it macroscopic} (having volume of order $1$), if $T$ was sufficiently mixing, one would
get the Central Limit Theorem for the ergodic sums of $\chi_\Omega$. In this section we discuss Poisson ({\it microscopic}) regime, that is,  we  let $\Omega=\Omega_N$ shrink 
so that $\EXP(D_N(\Omega_N, \a,x))$ is constant. 
In this case, the sum in the discrepancy consists of a large number of terms each one of which  vanishes with probability close to 1 so that typically only finitely many terms
are non-zero. 

\begin{theorem}
\label{ThFinExp}
(\cite{M-ETDS})
Suppose that $\Omega$ is a bounded set whose boundary has zero measure.
%\margem{maybe we say that if $x$ is irrational vector then the distribution for fixed $x$ is the same as for random $x$?} 

If $(\a,x)$ is uniformly distributed on $\Tor^d\times \Tor^d$ then both
$D_N(N^{-1/d} \Omega, \alpha, x)$ and $D_N(N^{-1/d} \Omega, \alpha, 0)$ 
converge in distribution.
\end{theorem}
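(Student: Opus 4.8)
The plan is to recast the count of visits as a lattice-point count in a space of (affine) lattices and then to feed it into the equidistribution of expanding horospheres. First I would record that, since $\Vol(N^{-1/d}\Omega)=N^{-1}\Vol(\Omega)$, the recentering term $N\Vol(N^{-1/d}\Omega)$ equals the constant $\Vol(\Omega)$, so the integer-valued variable $D_N(N^{-1/d}\Omega,\a,x)$ converges in distribution if and only if the count
$$ \cN_N(\a,x):=\#\{0\le n\le N-1:\ x+n\a\in N^{-1/d}\Omega \ (\mathrm{mod}\ 1)\} $$
does. I embed everything into $\R^{d+1}$ with coordinates $(s,y)$, $s\in\R$, $y\in\R^d$, and set
$$ u(\a)=\begin{pmatrix}1&0\\ \a& I_d\end{pmatrix},\qquad a_t=\begin{pmatrix}e^{-dt}&0\\ 0& e^{t}I_d\end{pmatrix}\in \mathrm{SL}(d+1,\R),\qquad N=e^{dt}. $$
The lattice $u(\a)\Z^{d+1}$ consists of the points $(n,\mathbf k+n\a)$, so its fiber over $s=n$ is exactly the coset $n\a \bmod \Z^d$; applying $a_t$ rescales $s$ by $e^{-dt}=N^{-1}$ and $y$ by $e^{t}=N^{1/d}$. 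Hence for the affine lattice $\Lambda_t(\a,x)=a_t\,u(\a)\,(\Z^{d+1}+(0,x))$ and the fixed cylinder $\cC=[0,1)\times\Omega\subset\R\times\R^d$, one gets, for all $N$ large enough that $N^{-1/d}\Omega$ fits in a fundamental domain,
$$ \cN_N(\a,x)=\#\big(\Lambda_t(\a,x)\cap\cC\big)=:F_\Omega\big(\Lambda_t(\a,x)\big). $$

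Thus the law of $\cN_N$ under $(\a,x)$ uniform is the pushforward of that density under the orbit map $(\a,x)\mapsto\Lambda_t(\a,x)$ into the space $\cL=\mathrm{ASL}(d+1,\R)/\mathrm{ASL}(d+1,\Z)$ of affine unimodular lattices, read off against the fixed function $F_\Omega=\#(\,\cdot\,\cap\cC)$. For the centered statement $x=0$ the identical computation lives in the linear lattice space $X=\mathrm{SL}(d+1,\R)/\mathrm{SL}(d+1,\Z)$, the only change being the deterministic point contributed by $n=0$. The dynamical input is that $a_t u(\a)$ is an expanding horospherical translate: a one-line conjugation computation gives $a_t u(\a) a_{-t}=u(e^{(d+1)t}\a)$, so $\{u(\a)\}$ lies in the unstable horospherical subgroup of $a_t$. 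Consequently, as $t\to\infty$ the pushforward of the smooth density on the $(\a,x)$-slab equidistributes toward Haar measure $\mu$ on $\cL$ (respectively on $X$ when $x=0$). This is the standard equidistribution of expanding pieces of unstable horospheres, which follows from mixing of the $a_t$-action and, for horospherical data, does not require the full measure classification of Ratner's theorem since no intermediate invariant measures can arise.

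Finally I would upgrade weak equidistribution to convergence in distribution of $F_\Omega$ via the Portmanteau theorem. The hypothesis that $\partial\Omega$ has zero measure guarantees that for $\mu$-almost every lattice $\Lambda$ no point of $\Lambda$ lies on $\partial\cC$, so $F_\Omega$ is continuous at $\mu$-a.e.\ $\Lambda$; each event $\{F_\Omega\le k\}$ then has $\mu$-null boundary, and weak convergence yields $\lim_{N\to\infty}\Prob(\cN_N\le k)=\mu(F_\Omega\le k)$ for every integer $k$. The limit law is that of $\#(\Lambda\cap\cC)$ for a Haar-random (affine) lattice $\Lambda$, which in general is not Poisson.

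The one genuinely delicate point, and the step I expect to be the main obstacle, is that $X$ and $\cL$ are non-compact while $F_\Omega$ is unbounded: a lattice deep in the cusp carries many points in $\cC$. To know that the equidistributed measure is a genuine probability measure and that no probability mass leaks to infinity in the tails, one must control the horospherical average near the cusp by a quantitative non-divergence estimate of Dani--Margulis type. Once non-escape of mass is secured, the Portmanteau argument above closes the proof for both the annealed ($x$ random) and the $x=0$ statements.
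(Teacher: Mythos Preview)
Your approach is correct and coincides with the paper's own argument: Theorem \ref{ThFinExp} is derived from Theorem \ref{ThDO-Pois}, whose proof uses exactly the Dani correspondence you set up (identifying the visit count with a lattice-point count $f(g^{\ln N}(\Lambda_\alpha\Z^{d+1}+\bar x))$ in a fixed cylinder) followed by equidistribution of the expanding horospherical translates in the space of (affine) lattices. Your treatment is in fact a bit more explicit than the paper's sketch about the Portmanteau step and the need for non-divergence at the cusp; the paper simply invokes the general equidistribution of unstable manifolds and refers to \cite[Section~13]{M-Bern} for details.
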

Note that in this case the result is less sensitive to the shape of $\Omega$ than in the case of sets of unit size. 

We will see later (Theorem \ref{ThDO-Pois} in Section \ref{ScProofs}) that
one can also handle several sets at the same time.

\begin{corollary}
If $(\a,x)$ is uniformly distributed on $\Tor^d\times \Tor^d$ then the following random variables have limit distributions

(a) $\displaystyle N^{1/d} \min_{0\leq n<N} d(x+n\alpha, \brx)$ where $\brx$ is a given point in $\T^d;$

(b) $\displaystyle N^{2/d} \min_{0\leq n<N} [A(x+n\a)-A(\brx)]$ where $A$ is a Morse function with minimum at $\brx.$
\end{corollary}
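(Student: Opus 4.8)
The plan is to turn both statements into questions about the probability that a shrinking target receives \emph{no} orbit visits, and then to feed the relevant target into Theorem~\ref{ThFinExp}. For a statistic of the form $M_N=\min_{0\le n<N}\Phi_N(x+n\a)$ one has, for each level $t>0$,
$$\Prob(M_N>t)=\Prob\Big(\sum_{n=0}^{N-1}\chi_{\Omega_N(t)}(x+n\a)=0\Big),\qquad \Omega_N(t)=\{z:\Phi_N(z)\le t\}.$$
Since $\brx$ is fixed and $x$ is uniform, I would replace $x$ by $y=x-\brx$ (again uniform), so that $\Omega_N(t)$ becomes a shrinking neighbourhood of the origin. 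Whenever $\Omega_N(t)=N^{-1/d}\Omega$ for a fixed bounded $\Omega$ with null boundary, the count $S_N:=\sum_{n=0}^{N-1}\chi_{N^{-1/d}\Omega}(y+n\a)$ satisfies $D_N(N^{-1/d}\Omega,\a,y)=S_N-\Vol(\Omega)$, which converges in distribution by Theorem~\ref{ThFinExp}; hence $S_N$ converges as well, and because the $S_N$ are integer valued so is the limit $S_\infty$, giving $\Prob(S_N=0)\to\Prob(S_\infty=0)$. Letting $t$ range over $(0,\infty)$ then produces a full complementary distribution function for $M_N$.

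For part (a) I take $\Phi_N(z)=N^{1/d}d(z,0)$, so $\Omega_N(t)=\{z:d(z,0)\le tN^{-1/d}\}$ is, once $tN^{-1/d}<1/2$, exactly the ball $N^{-1/d}B(0,t)$. Thus $\Omega_N(t)=N^{-1/d}\Omega$ with $\Omega=B(0,t)$, whose boundary sphere is null, and the scheme above applies verbatim: $\Prob\big(N^{1/d}\min_n d(x+n\a,\brx)>t\big)\to\Prob(S_\infty=0)$ for each $t$. The resulting limit is monotone in $t$, tends to $1$ as $t\to0$ (the expected number of visits equals $\Vol(B(0,t))\to0$) and to $0$ as $t\to\infty$, hence is a genuine law, proving (a).

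For part (b), with $\Phi_N(z)=N^{2/d}(A(z)-A(\brx))$ the target is the sublevel set $U_\e=\{A-A(\brx)\le\e\}$ at $\e=sN^{-2/d}$, which is \emph{not} an exact rescaling of a fixed set; this is the real content. Writing $H$ for the (positive definite, by Morse nondegeneracy) Hessian of $A$ at $\brx$, Taylor expansion gives $A(\brx+w)-A(\brx)=\tfrac12 w^{\top}Hw+O(|w|^3)$, so $U_\e$ is a perturbation of the ellipsoid $\cE_\e=\{w:\tfrac12 w^{\top}Hw\le\e\}=N^{-1/d}(\sqrt{s}\,\cE_1)$, where $\cE_1=\{w:\tfrac12 w^{\top}Hw\le1\}$. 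On $U_\e$ one has $|w|\sim\sqrt{\e}$, so the boundary of $U_\e$ is displaced normally from $\partial\cE_\e$ by $O(\e^{3/2}/\sqrt{\e})=O(\e)$, while $\partial\cE_\e$ has surface measure $O(\e^{(d-1)/2})$; therefore
$$\Vol\big(U_\e\,\triangle\,(\brx+N^{-1/d}\sqrt{s}\,\cE_1)\big)=O(\e^{(d+1)/2})=O\big(N^{-(d+1)/d}\big)=o(N^{-1}).$$
Consequently the expected number of orbit points in this symmetric difference is $N\cdot o(N^{-1})=o(1)$, so by Markov's inequality an orbit point falls there with probability $\to0$; on the complementary event the visit counts to $U_\e$ and to the ellipsoid coincide, whence $\big|\Prob(\text{no visit to }U_\e)-\Prob(\text{no visit to }N^{-1/d}\sqrt{s}\,\cE_1)\big|\to0$. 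The latter converges by Theorem~\ref{ThFinExp} applied to $\Omega=\sqrt{s}\,\cE_1$, so $\Prob(M_N>s)$ converges for every $s>0$, proving (b). Here one uses that $\brx$ is the unique global minimum, so that for small $\e$ the set $U_\e$ localizes near $\brx$ and $A-A(\brx)\ge0$ makes the minimum over the orbit a genuine nearest-approach statistic.

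The main obstacle is precisely this geometric estimate in (b): one must control the sublevel sets of $A$ more finely than at the level of volume and show that their deviation from a true ellipsoid is negligible on the Poisson scale $N^{-1}$. This is exactly the step where nondegeneracy of the critical point is essential, entering through the quadratic leading term together with the $O(|w|^3)$ control of the remainder.
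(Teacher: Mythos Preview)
Your proposal is correct and follows the same route as the paper: reduce the minimum to the event that a shrinking target receives no visits, and invoke Theorem~\ref{ThFinExp}. Your treatment of (b) is considerably more detailed than the paper's one-line appeal to the quadratic Taylor expansion---your symmetric-difference volume estimate $O(\e^{(d+1)/2})=o(N^{-1})$ and the Markov argument make rigorous what the paper leaves implicit.
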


\begin{proof}
To prove (a) note that $\displaystyle N^{1/d} \min_{0\leq n<N} d(x+n\alpha, \brx)\leq s$ iff the number of points of the
orbit of $x$ of length $N$ inside $B(\brx, s N^{-1/d})$ is zero.

To prove (b) note that if $A$ is a Morse function and x is close to $\brx$ then $A(x)\approx A(\brx)+(D^2A)(\brx)(x-\brx, x-\brx).$
\end{proof}

There are two natural ways to extend this result.

\begin{question}
If $S\subset \T^d$ is an analytic submanifold of codimension $q$ find the limit distribution of
$\displaystyle N^{1/q} \min_{0\leq n<N} d(x+n\alpha, S).$
\end{question}

\begin{question}
Given a typical analytic function $A$ find a limit distribution of $\displaystyle N^{\frac 1 d} \min_{0\leq n<N} |A(x+n\a)|.$ 
\end{question}

As we shall see in Section \ref{SSProofSing} this question is closely related to Question \ref{q6}.

%By a {\bf random $d$-dimensional lattice} (centered at 0) we mean a lattice $L=Q\Z^d$ where $Q$ is distributed according to a Haar measure on
%$SL_d(\R)/SL_d(\Z).$

%By a {\bf random $d$-dimensional affine lattice} we mean an affine  lattice $L=Q\Z^d+b$ where $(Q,b)$ 
%is distributed according to a Haar measure on
%$(SL_d(\R)\ltimes \R^d)/(SL_d(\Z)\ltimes \Z^d).$

%{\colll We close this section with a probabilistic reminder on Poisson processes that will indeed be useful in the study of the micrscopic regime distribution of the Kronecker sequence $n \a [1]$.}

\section{Outlines of proofs}
\label{ScProofs}
\subsection{Poisson processes} 

\label{SSPois}

In this section  we recall some facts about the Poisson processes 
referring the reader to \cite[Section 11]{M-Bern} or \cite{King} for more details. The next section contains
preliminaries from homogenous dynamics.

Recall that a random variable $N$ has Poisson distribution with parameter $\lambda$ if
$\Prob(N=k)=e^{-\lambda} \frac{\lambda_k}{k!}.$ Now an easy combinatorics shows the following facts 

(I) If $N_1, N_2\dots N_m$ are independent random variables and $N_j$ have Poisson distribution with parameters
$\lambda_j$, then $N=\sum_{j=1}^m N_j$ has Poisson distribution with parameter $\sum_{j=1}^m \lambda_j.$

(II) Conversely, take $N$ points distributed according to a Poisson distribution with parameter $\lambda$ and color each
point independently with one of $m$ colors where color $j$ is chosen with probability $p_j.$ Let $N_j$ be the
number of points of color $j.$ Then $N_j$ are independent and $N_j$ has Poisson distribution with parameter
$\lambda_j=p_j\lambda.$

Now let $(\Omega, \mu)$ be a measure space. By a Poisson process on this space we mean a random point process on $X$
such that if $\Omega_1, \Omega_2\dots \Omega_m$ are disjoint sets and $N_j$ is the number of points in $\Omega_j$
then $N_j$ are independent Poisson random variables with parameters $\mu(\Omega_j)$ (note that this definition is
consistent due to (I)). We will write $\{x_j\}\sim \Pois((X, \mu))$ to indicate that $\{x_j\}$ is a Poisson process with parameters
$(X, \mu).$ If $(X, \mu)=(\R, c \Leb)$ we shall say that $X$ is a Poisson process with intensity $c.$
The following properties of the Poisson process are straightforward consequences of (I) and (II) above.

\begin{proposition}
\label{PrPropPois}
(a) If $\{x_j'\}\sim\Pois(X, \mu')$ and $\{x_j''\}\sim\Pois(X, \mu'')$ are independent then
$\{x_j'\}\cup \{x_j''\}\sim \Pois(X, \mu'+\mu'').$

(b) If $\{x_j\}\sim \Pois(X, \mu)$ and $f:X\to Y$ is a measurable map then
$\{f(x_j)\}\sim\Pois(Y, f^{-1}\mu).$

(c) Let $X=Y\times Z,$ $\mu=\nu\times \lambda$ where $\lambda$ is a probability measure on $Z.$
Then $\{(y_j, z_j)\}\sim \Pois(X, \mu)$ iff $\{y_j\}\sim \Pois(Y, \nu)$ and 
$z_j$ are random variables independent from $\{y_j\}$ and each other and distributed according to $\lambda.$
\end{proposition}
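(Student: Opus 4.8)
The plan is to verify, in each case, the defining property stated just above the proposition: for an arbitrary finite collection of pairwise disjoint measurable sets, the associated counting variables must be independent and Poisson-distributed with the prescribed parameters. Parts (a) and (b) reduce immediately to facts (I) and (II).

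For (a), fix disjoint $\Omega_1,\dots,\Omega_m$ and let $N_j'$, $N_j''$ count the points of $\{x_j'\}$, resp. $\{x_j''\}$, in $\Omega_j$. Since the two processes are Poisson and independent, the whole family $\{N_j'\}\cup\{N_j''\}$ is jointly independent with $N_j'\sim\mathrm{Pois}(\mu'(\Omega_j))$ and $N_j''\sim\mathrm{Pois}(\mu''(\Omega_j))$. The count of the superposition in $\Omega_j$ is $N_j=N_j'+N_j''$, which by (I) is $\mathrm{Pois}((\mu'+\mu'')(\Omega_j))$; the $N_j$ are independent because they are functions of disjoint blocks of an independent family, giving the claim. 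For (b), given disjoint $B_1,\dots,B_m\subset Y$, the preimages $f^{-1}(B_k)$ are disjoint in $X$ and the number of points of $\{f(x_j)\}$ in $B_k$ equals the number of points of $\{x_j\}$ in $f^{-1}(B_k)$; these are independent and $\mathrm{Pois}(\mu(f^{-1}(B_k)))=\mathrm{Pois}((f^{-1}\mu)(B_k))$ by the definition of $f^{-1}\mu$, which is the statement.

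For (c) the core is the implication $\Leftarrow$, the marking statement; the implication $\Rightarrow$ then follows because the law of a Poisson process is uniquely determined by its intensity through these finite-dimensional count distributions, so any process with intensity $\nu\times\lambda$ must agree in law with the marked process built in $\Leftarrow$, while its $Y$-projection is $\Pois(Y,\nu)$ by (b) applied to $\pi:Y\times Z\to Y$ (using $\lambda(Z)=1$ to identify the $\pi$-pushforward of $\nu\times\lambda$ with $\nu$). For $\Leftarrow$ I would invoke the coloring fact (II): partition $Z$ into finitely many pieces $C_1,\dots,C_m$ and note that labeling each point $y_j$ by $i$ whenever $z_j\in C_i$ is precisely an independent coloring with probabilities $p_i=\lambda(C_i)$. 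By (II) the labeled subprocesses $\{y_j:z_j\in C_i\}$ are independent and $\mathrm{Pois}(\lambda(C_i)\,\nu)$, so for a rectangle $A\times C_i$ the count is $\mathrm{Pois}(\nu(A)\lambda(C_i))=\mathrm{Pois}((\nu\times\lambda)(A\times C_i))$, and disjoint rectangles drawn from a common refinement of $Z$ yield independent counts.

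The main obstacle is this last point in (c): fact (II) directly delivers the joint Poisson and independence structure only for counts indexed by a finite partition of the mark space, so the work lies in upgrading from rectangles to arbitrary disjoint measurable subsets of $Y\times Z$. I would carry this out by a monotone class / generating argument, using that the product rectangles generate the product $\sigma$-algebra and that the algebra of their finite disjoint unions already carries the required independent-Poisson law; this then identifies the full law with $\Pois(Y\times Z,\nu\times\lambda)$. A minor accompanying point is that the marks $z_j$ must be attached to well-defined points, which holds since (for non-atomic $\nu$, or in the marked-point formalism in general) the $y_j$ are almost surely distinct.
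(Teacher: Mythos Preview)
Your proposal is correct and follows exactly the route the paper intends: the paper does not give a separate proof but simply declares the proposition to be ``straightforward consequences of (I) and (II) above,'' and your argument spells out precisely how (I) yields (a), how disjoint preimages yield (b), and how the coloring fact (II) together with a generating/monotone-class step yields (c). The only difference is level of detail---you articulate the rectangle-to-general-set passage and the labeling issue in (c), which the paper suppresses entirely.
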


Next recall \cite[Chapter XVII]{Fel} that the Cauchy distribution is unique (up to scaling) symmetric
distribution such that if $Z, Z'$ and $Z''$ are independent random variables with that distribution then $Z'+Z''$ has the same distribution as $2Z.$
We have the following representation of the Cauchy distribution.

\begin{proposition}
(a) If $\{x_j\}$ is a Poisson process with constant intensity then $\sum_j \frac{1}{x_j}$ has Cauchy distribution.
(the sum is understood in the sense of principle value). 

(b) If $\{x_j\}$ is a Poisson distribution with constant intensity and $\xi_j$ are random variables with finite expectation 
independent from $\{x_j\}$ and from each other
then 
\begin{equation}
\label{CauchyPois}
\sum_j \frac{\xi_j}{x_j}
\end{equation}
 has Cauchy distribution.
\end{proposition}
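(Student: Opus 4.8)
The plan is to exploit the two structural properties of the Poisson process recorded in Proposition \ref{PrPropPois} together with the uniqueness characterization of the Cauchy law recalled just above. Write $S=\sum_j \frac1{x_j}$ (principal value) for $\{x_j\}\sim\Pois(\R, c\,\Leb)$. First I would check that $S$ is symmetric: since $c\,\Leb$ is invariant under $x\mapsto -x$, Proposition \ref{PrPropPois}(b) gives $\{-x_j\}\sim\Pois(\R,c\,\Leb)$, so $S\stackrel{d}{=}-S$. Next I would establish the self-similarity relation $S'+S''\stackrel{d}{=}2S$ for two independent copies $S',S''$. Indeed, by Proposition \ref{PrPropPois}(a) the superposition of two independent intensity-$c$ processes is an intensity-$2c$ process, so $S'+S''\stackrel{d}{=}\sum_j \frac1{y_j}$ with $\{y_j\}\sim\Pois(\R,2c\,\Leb)$. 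On the other hand, applying Proposition \ref{PrPropPois}(b) to $f(x)=x/2$ pushes $c\,\Leb$ forward to $2c\,\Leb$, so $\{y_j\}$ has the law of $\{x_j/2\}$ and $\sum_j\frac1{y_j}=2\sum_j\frac1{x_j}=2S$. The uniqueness characterization then forces $S$ to be Cauchy, provided $S$ is a genuine (finite, non-degenerate) random variable.

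\textbf{Main obstacle.}
The delicate point is precisely the existence and non-degeneracy of the principal-value sum, which the stability argument presupposes. I would settle this, and simultaneously pin down the scale, by the exponential (Campbell) formula for Poisson processes: for the symmetric truncation $S_R=\sum_{0<|x_j|<R}\frac1{x_j}$,
\begin{equation*}
\EXP\big[e^{itS_R}\big]=\exp\!\left(c\int_{0<|x|<R}\big(e^{it/x}-1\big)\,dx\right).
\end{equation*}
The imaginary part of the integrand is odd in $x$ and integrates to zero, while for the real part the substitution $u=1/x$ together with the standard integral $\int_0^\infty \frac{\cos u-1}{u^2}\,du=-\frac{\pi}{2}$ gives $c\int_{0<|x|<R}(\cos(t/x)-1)\,dx\to -c\pi|t|$ as $R\to\infty$. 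Hence $\EXP[e^{itS_R}]\to e^{-c\pi|t|}$, so by Lévy continuity $S_R$ converges in law to the Cauchy distribution of scale $c\pi$; this is both the convergence statement and the identification of the limit, and a posteriori it is consistent with the stability relation above.

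\textbf{Part (b).}
I would reduce this to part (a) via the mapping theorem. Treating $\{(x_j,\xi_j)\}$ as the Poisson process $\Pois(\R\times\R,\, c\,\Leb\times\nu)$ on the product space (Proposition \ref{PrPropPois}(c), with $\nu$ the common law of the marks $\xi_j$), apply the map $g(x,\xi)=x/\xi$. By Proposition \ref{PrPropPois}(b) the image $\{y_j:=x_j/\xi_j\}$ is again a Poisson process on $\R$, with intensity equal to the pushforward of $c\,\Leb\times\nu$. A direct computation of that pushforward, using that $x\mapsto x/\xi$ sends $c\,\Leb$ to $c|\xi|\,\Leb$ for each fixed $\xi$ and then integrating in $\xi$, yields the constant intensity $c\,\EXP|\xi|\,\Leb$; here the finite-expectation hypothesis is exactly what guarantees $\EXP|\xi|<\infty$, so the resulting process is well defined. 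Since $\sum_j \frac{\xi_j}{x_j}=\sum_j \frac1{y_j}$, part (a) applied to $\{y_j\}$ shows this sum is Cauchy (of scale $c\pi\,\EXP|\xi|$), completing the proof.
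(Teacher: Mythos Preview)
Your argument is correct and follows the same route as the paper: the stability relation $S'+S''\stackrel{d}{=}2S$ via Proposition~\ref{PrPropPois}(a),(b) together with the Cauchy characterization for part (a), and the reduction of part (b) to part (a) by observing that $\{x_j/\xi_j\}$ is again a constant-intensity Poisson process via Proposition~\ref{PrPropPois}(b),(c). The paper's sketch stops there, whereas you go further and supply the Campbell-formula computation of the characteristic function, which both handles the convergence of the principal-value sum and identifies the scale parameter---this is a welcome addition of rigor that the paper omits.
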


To see part (a) let $\{x_j'\}, \{x_j''\}$ and  $\{x_j\}$ are independent Poisson processes with intensity $c.$ Then 
$$ \sum \frac{1}{x_j'}+\sum \frac{1}{x_j''}=\sum_{y\in \{x_j'\}\cup \{x_j''\}} \frac{1}{y} $$
and by Proposition \ref{PrPropPois} (a) and (b)  both $\{x_j'\}\cup \{x_j''\}$ and $\{\frac{x_j}{2}\}$ are 
Poisson processes with intensity $2c.$

To see part (b) note that by Proposition \ref{PrPropPois}(b) and (c), $\{\frac{x_j}{\xi_j}\}$ is a Poisson process
with constant intensity.

%\section{Remarks on the proofs: Preliminaries.}
\subsection{Uniform distribution on the space of lattices}

\label{notations.lattices} 

In order to describe ideas of the proofs from Sections \ref{ScErgSum}, \ref{sec.discrepancy}, 
and \ref{sec.poisson}
we will first go over some preliminaries.   By a {\bf random $d$-dimensional lattice} (centered at 0) we mean a lattice $L=Q\Z^d$ where $Q$ is distributed according to  Haar measure on
$G=SL_d(\R)/SL_d(\Z).$

By a {\bf random $d$-dimensional affine lattice} we mean an affine  lattice $L=Q\Z^d+b$ where $(Q,b)$ 
is distributed according to a Haar measure on
$\bar G=(SL_d(\R)\ltimes \R^d)/(SL_d(\Z)\ltimes \Z^d).$ Here $\bar G$ is equipped with the  multiplication rule
$(A, a)(B, b)=(AB, a+Ab)$.

%If there is no ambiguity we will use the same notation $\P$ to denote the Haar measure on the homogeneous spaces that appear all through the text. 

We denote by $g_t$ the diagonal action on $G$ given by
$$ g^t=%\left(
\left(\begin{array}{llll}
e^{t/d} &  \dots & 0 & 0 \\
&&&\\
0 & \dots & e^{t/d} & 0\\
0 & \dots & 0 & e^{-t} \end{array}\right) $$
%, \left(\begin{array}{l} 0 \\ {} \\ 0 \\ 0 \end{array}\right) \right), $$
and for $\a \in \R^{d-1}$ we denote by $\Lambda_\a$ the horocyclic action 
\begin{equation}
\label{InCond}
\Lambda_\a=\left(\begin{array}{llll}
1 &  \dots & 0 & \alpha_1 \\
&&&\\
0 & \dots & 1 & \alpha_{d-1} \\
0 & \dots & 0 & 1 \end{array}\right).
\end{equation}

The action of $g^t$ on the space of affine lattices $G$ is partially hyperbolic and 
unstable manifolds are orbits of $\Lambda_\alpha$ where $\alpha \in\R^{d-1}$.

Similarly the action by ${g}_t:=(g_t,0)$, defined on the space of affine lattices is partially hyperbolic 
and 
unstable manifolds are orbits of $(\Lambda_\alpha,\bar{x})$ where $(\alpha,x) \in(\R^{d-1})^2$ and $$\bar x=\left(\begin{array}{l} x_1 \\ {} \\ x_{d-1} \\ 0 \end{array}\right).$$
For convenience, here and below we will use the notation $\bar x= (x_1,\ldots,x_{d-1},0)$ 
for the column vector $ x\in\reals^{d-1}$.

We can also equip $ SL_d(\R)\ltimes (\R^d)^r $ with the multiplication rule
\begin{equation} (A, a_1,\dots, a_r)(B, b_1,\dots, b_r)=(AB, a_1+Ab_1,\dots, a_r+A b_r), \label{eq.mult} \end{equation}
and consider the space of periodic configurations in $d$-dimensional space
 $\hat{G}= SL_d(\R)\ltimes (\R^d)^r/SL_d(\Z)\ltimes (\Z^d)^r$.

The action of $g_t:=(g^t,0,\ldots,0)$ on $\hat{G}$ is partially hyperbolic and 
unstable manifolds are orbits of $(\Lambda_\alpha, \bar x_1,\ldots,\bar x_r)$ where $\alpha, \bar x_j\in\R^{d-1}$.

We will denote these unstable manifolds by $n_+(\a)$ or $n_+(\a,\bar x)$ or $n_+(\a, \bar x_1,\ldots,\bar x_r)$. Note also that   $n_+(\a,\bar x)$ or $n_+(\a, \bar x_1,\ldots,\bar x_r)$ for fixed $\bar x, \bar x_1,\ldots,\bar x_r$ form positive codimension manifolds inside the full unstable leaves of the action of $g_t$.

We will often use the uniform distribution of 
the images of unstable manifolds for partially hyperbolic flows (see e.g. \cite{EM}) to assert that $g_t (\Lambda_\alpha)$ or $g_t (\Lambda_\alpha, \bar x)$ or $g_t (\Lambda_\alpha, \bar x_1,\ldots,\bar x_r)$
becomes uniformly distributed in the corresponding lattice spaces according to their Haar measures as $\alpha, \bar x, \bar x_1,\ldots,\bar x_r$ are independent and distributed according to any absolutely continuous measure on $\R^{d-1}$. 
In fact, if the original measure has smooth density, then one has exponential estimate for the rate of equidistribution
(cf. \cite{KM1}). The explicit decay estimates play an important role in proving limit theorems by martingale methods.
For example, such estimates are helpful in proving Theorem \ref{ThKesten} in Section \ref{sec.discrepancy}
and Theorems \ref{dfv1}, \ref{dfv2} in Section \ref {sec.shrinking}. 

Below we shall also encounter a more delicate situation when all or some of $\brx, \brx_j$ are fixed so we have to
deal with positive codimension manifolds inside the full unstable horocycles. In this case one has to use Ratner 
classification theory for unipotent actions. Examples of unipotent actions are 
$n_+(\cdot)$ or $n_+(\cdot,\bar x)$ or $n_+(\a, \cdot)$. The computations of the limiting distribution of
the translates of unipotent orbits proceeds in two steps (cf. \cite{M-Bern}). 
For several results described in 
the previous sections we need the limit distribution of 
$g_t \Lambda_\alpha w$ inside $X=\bG/\bGamma$ where $X$ can be any of the sets $G, \bar{G}, \hat{G}$
described above and $w\in\bG.$ In fact, the identity
$g_t \Lambda_\alpha w=w(w^{-1} g_t \Lambda_\alpha w)$ allows us to assume that $w=id$ at the cost of replacing the
action of $SL_d(\R)$ by right multiplication by a twisted action 
$\phi_w(M) u=w^{-1} M w u .$ So we are interested in $\phi_w(g_t \Lambda_\alpha)\text{Id}$ for some fixed $w \in X$.  The first step in the analysis is to use Ratner Orbit Closure Theorem 
\cite{Rat2} to find a closed connected subgroup, that depends on $w$, $\bH\subset \bG$ such that 
$\overline{\phi_w(SL_d(\R))\Gamma}=\bH\Gamma$ and $\bH\cap \Gamma$ is a lattice in $\bH.$
The second step is to use Ratner Measure Classification  Theorem \cite{Rat1} to conclude that the sets in question are uniformly
distributed in $\bH\Gamma/\Gamma.$ Namely, we have the following result (see \cite[Theorem 1.4]{Sh} or
\cite[Theorem 18.1]{M-Bern}).

%For example, the approach based on Ratner's theory is most helpful in the proof of Theorem \ref{ThDO-Pois}(b) where $x$ is a fixed irrational vector.  We will deal with this case using a useful corollary of Ratner's theorem that allows to show in some cases that the invariant measures are actually equal to Haar is Shah's theorem that we state now.
%equidistribution with respect to Haar in some cases, i.e. $H=G$ in Ratner's theorem and therefore $\nu$ is the Haar measure is given by Shah's theorem.
\begin{theorem} 
\label{th.shah} 
%(Shah's Theorem, \cite[Theorem 1.4]{Sh})  
%Suppose $X$ contains an embedding of an $SL(d,\R)$ action $\varphi: SL(d,\R) \to X$, $\varphi(SL(d,\R))=H$, and that   $H \Gamma/\Gamma$ is dense in 
%$X/\Gamma$, then 
(a) For any bounded piecewise continuous functions $f:X\to \R$ and $h: \R^{d-1} \to \R$ the following holds 

\begin{equation}Ê\label{eq.shah} 
\lim_{t \to \infty} \int_{\R^{d-1}} f\left(\varphi_w(g_t \Lambda_\a)\right) h(\a) d\a = \int_X f d\mu_{\bH} \mathbb \int_{\R^{d-1}} h(\a) d\a \end{equation}
where $\mu_\bH$ denotes the Haar measure on $\bH\Gamma/\Gamma.$

(b) In particular, if $\phi(SL_d(\R))\Gamma$ is dense in $X$ then 
$$ \lim_{t \to \infty} \int_{\R^{d-1}} f\left(\varphi_w(g_t \Lambda_\a)\right) h(\a) d\a = \int_X f d\mu_{\bG} 
\mathbb \int_{\R^{d-1}} h(\a) d\a 
$$
where $\mu_\bG$ denotes the Haar measure on $X.$
\end{theorem}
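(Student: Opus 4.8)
The plan is to rewrite the left-hand side of \eqref{eq.shah} as an integral against a family of orbit measures and prove that this family equidistributes. For $h\ge 0$ with $0<\int h<\infty$ (the general case follows by linearity), define probability measures $\mu_t$ on $X$ by
\[
\int_X f\,d\mu_t=\left(\int_{\R^{d-1}} h\right)^{-1}\int_{\R^{d-1}} f\bigl(\varphi_w(g_t\Lambda_\a)\bigr)\,h(\a)\,d\a,
\]
so that \eqref{eq.shah} is exactly the weak-$*$ convergence $\mu_t\to\mu_\bH$. Since the Ratner orbit closure theorem \cite{Rat2} has already produced a closed connected $\bH\subset\bG$ with $\varphi_w(SL_d(\R))\subset\bH$, $\overline{\varphi_w(SL_d(\R))\Gamma}=\bH\Gamma$, and $\bH\cap\Gamma$ a lattice in $\bH$, the entire family $\{\mu_t\}$ is supported on the closed homogeneous set $Y:=\bH\Gamma/\Gamma\subset X$. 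It therefore suffices to show that every weak-$*$ limit of $\{\mu_t\}$ equals the homogeneous measure $\mu_\bH$.

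First I would establish tightness: no mass escapes into the cusp of $Y$. Here I would invoke the uniform non-divergence estimates for (translated) unipotent trajectories, i.e.\ the $(C,\alpha)$-good and quantitative non-divergence machinery (cf.\ \cite{KM1}), applied to the polynomial curves $\a\mapsto\varphi_w(g_t\Lambda_\a)$. This yields, for every $\eps>0$, a compact $K\subset Y$ with $\mu_t(K)>1-\eps$ for all large $t$, so that $\{\mu_t\}$ is tight and every weak-$*$ limit $\mu_\infty$ is a genuine probability measure on $Y$.

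Next comes the soft but crucial invariance step. Write $U^+=\{\Lambda_\beta:\beta\in\R^{d-1}\}$ for the expanding horospherical subgroup of $g_t$; conjugation gives $g_t^{-1}\Lambda_\beta g_t=\Lambda_{e^{-ct}\beta}$ for some $c>0$. Using the abelianness of $U^+$, for a fixed $u=\varphi_w(\Lambda_\beta)$ we get
\[
u\,\varphi_w(g_t\Lambda_\a)\Gamma=\varphi_w\bigl(g_t\Lambda_{\a+e^{-ct}\beta}\bigr)\Gamma .
\]
Thus $u_*\mu_t$ is obtained from $\mu_t$ by translating the density $h$ by the vanishing vector $e^{-ct}\beta$; since translation is continuous in $L^1$, the measures $u_*\mu_t$ and $\mu_t$ have the same limits, and hence $\mu_\infty$ is invariant under the unipotent group $\varphi_w(U^+)\subset\bH$. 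By Ratner's measure classification theorem \cite{Rat1}, $\mu_\infty$ is then a convex combination of algebraic measures, each supported on a closed orbit of a closed subgroup containing $\varphi_w(U^+)$.

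The main obstacle is the final step: ruling out that any component of $\mu_\infty$ concentrates on a proper homogeneous subvariety of $Y$, so that in fact $\mu_\infty=\mu_\bH$. This is precisely the linearization technique of Dani--Margulis and Mozes--Shah underlying \cite{Sh}: the set of base points whose limiting $\varphi_w(U^+)$-orbit is trapped in a proper closed subgroup $\bL\subsetneq\bH$ is contained in a countable union of proper real-algebraic subvarieties. Because the initial measure $h(\a)\,d\a$ is absolutely continuous along the horocycle $U^+$, and $\varphi_w(SL_d(\R))\Gamma$ is dense in $Y$ by the construction of $\bH$, these exceptional subvarieties are $\mu_\infty$-null; hence $\mu_\infty=\mu_\bH$. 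As all weak-$*$ limits coincide and $\{\mu_t\}$ is tight, $\mu_t\to\mu_\bH$, proving (a). For (b), density of $\varphi_w(SL_d(\R))\Gamma$ in $X$ forces $\bH\Gamma=X$, hence $\bH=\bG$ by minimality in the orbit closure theorem, so $\mu_\bH=\mu_\bG$ is the Haar measure on $X$ and the claim follows from (a).
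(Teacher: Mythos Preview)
The paper does not give its own proof of this theorem: it is quoted as a known equidistribution result, with the references \cite[Theorem~1.4]{Sh} and \cite[Theorem~18.1]{M-Bern} supplied in lieu of an argument. The only indication the paper gives is the two-line sketch preceding the statement (use Ratner's orbit closure theorem to identify $\bH$, then Ratner's measure classification theorem to conclude equidistribution in $\bH\Gamma/\Gamma$).

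Your outline is essentially the standard proof that appears in those references: reformulate as weak-$*$ convergence of the pushforward measures $\mu_t$, establish tightness via quantitative non-divergence for polynomial/unipotent trajectories, obtain $\varphi_w(U^+)$-invariance of any limit from the shearing identity $g_t^{-1}\Lambda_\beta g_t=\Lambda_{e^{-ct}\beta}$, apply Ratner's measure classification, and use the Dani--Margulis/Mozes--Shah linearization to rule out concentration on intermediate closed orbits. This matches the argument in \cite{Sh}; there is nothing to compare against in the paper itself beyond the citation. Part (b) is, as you say, the specialization $\bH=\bG$.
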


To apply this Theorem one needs to compute $\bH=\bH(w).$ Here we provide an example of such computation based on
\cite[Sections 17-19]{M-Bern}, \cite[Theorem 5.7]{M-Inh},
\cite[Sections 2 and 4]{MS-Cr} and \cite[Section 3]{E}.

\begin{proposition}
\label{PrRatnerGroup}
Suppose that $d=2$ and $w=\Lambda(I , \brx_1\dots \brx_r).$ 

(a) If  the vector
$(x_1,  \dots, x_r)$ is irrational then $\bH(w)=SL_2(\R)\ltimes (\R^2)^r.$

{ (b) If the vector $(x_1, \dots, x_k)$ is irrational and for $j>k$
$$x_j=q_j +\sum_{i=1}^k q_{ij} x_j$$ 
where $q_j$ and $q_{ij}$ are rational numbers then 
$\bH(w)$ is isomorphic to $SL_2(\R)\ltimes (\R^2)^k.$}
\end{proposition}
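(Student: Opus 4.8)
# Proof Proposal for Proposition \ref{PrRatnerGroup}

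The plan is to compute the Ratner group $\bH(w)$ by analyzing the closure of the orbit $\varphi_w(SL_2(\R))\Gamma$ inside $\hat G = SL_2(\R)\ltimes(\R^2)^r/SL_2(\Z)\ltimes(\Z^2)^r$, where $w=\Lambda(I,\brx_1,\dots,\brx_r)$ and the twisted action is $\varphi_w(M)u = w^{-1}Mwu$. By the Ratner Orbit Closure Theorem, this closure is $\bH\Gamma$ for a closed connected subgroup $\bH\supset SL_2(\R)$ (up to conjugacy) with $\bH\cap\Gamma$ a lattice. Since $SL_2(\R)$ acts on each $\R^2$ factor by the standard representation, any closed connected $\bH$ containing a conjugate of $SL_2(\R)$ must be of the form $SL_2(\R)\ltimes V$ for some $SL_2(\R)$-invariant subspace $V\subseteq(\R^2)^r$. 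The entire content of the computation is therefore to identify $V$, and $SL_2(\R)$-invariance forces $V$ to be a direct sum of full $\R^2$-blocks indexed by some subset, or more precisely a ``diagonal'' subspace cut out by the rational relations among the $\brx_j$.

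For part (a), when $(x_1,\dots,x_r)$ is irrational I would argue that $V$ must be all of $(\R^2)^r$. The obstruction to $V$ being smaller is exactly a nontrivial rational linear relation among the translation components: if $V$ were a proper invariant subspace, it would be annihilated by some nonzero integer functional $\sum_j n_j \brx_j \equiv$ (rational) on the lattice side, which would force $\sum_j n_j x_j \in\Q$, contradicting irrationality of $(x_1,\dots,x_r)$ in the sense defined in the Notations. Concretely I would test candidate invariant functionals by pairing against integer vectors $k\in\Z^2$ and using the standard fact that $g_t\Lambda_\a$ equidistributes in the $SL_2$-direction (Theorem \ref{th.shah}(b) applied in the base), so the only way the translates fail to equidistribute in a $\R^2$-fiber is through an exact rational resonance in the $x_j$'s. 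This yields $\bH(w)=SL_2(\R)\ltimes(\R^2)^r$.

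For part (b), the relations $x_j = q_j + \sum_{i=1}^k q_{ij}x_i$ for $j>k$ produce exactly $r-k$ independent rational functionals annihilating the fiber directions, so I expect $V$ to be the $2k$-dimensional invariant subspace on which the ``slave'' coordinates $\brx_j$ ($j>k$) are determined from $\brx_1,\dots,\brx_k$. After clearing denominators, these relations define a sublattice and a corresponding $SL_2(\R)$-equivariant splitting $(\R^2)^r = V\oplus V'$ with $V\cong(\R^2)^k$; the graph of the linear map $(\brx_1,\dots,\brx_k)\mapsto(\brx_{k+1},\dots,\brx_r)$ inside the fibers is the invariant subspace carrying the orbit closure. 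Since $(x_1,\dots,x_k)$ is irrational, part (a) applies to the first $k$ coordinates and fills out the $V$-directions completely, while the complementary directions are rigidly slaved, giving $\bH(w)\cong SL_2(\R)\ltimes(\R^2)^k$.

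The main obstacle I anticipate is verifying the two hypotheses of the Orbit Closure Theorem for the candidate $\bH$: that it is genuinely closed and connected (immediate for $SL_2(\R)\ltimes V$ with $V$ a rational invariant subspace, but requiring care about which $V$ are rational) and, more delicately, that $\bH\cap\Gamma$ is a lattice in $\bH$. The latter is precisely where the rationality of the $q_j,q_{ij}$ enters: after clearing denominators the relations cut out a subgroup defined over $\Q$, so $\bH\cap(SL_2(\Z)\ltimes(\Z^2)^r)$ is commensurable with $SL_2(\Z)\ltimes(\text{integer points of }V)$, hence a lattice. I would lean on the cited computations in \cite[Sections 17--19]{M-Bern}, \cite[Theorem 5.7]{M-Inh}, \cite[Sections 2 and 4]{MS-Cr}, and \cite[Section 3]{E} to handle the equidistribution and lattice verifications uniformly, rather than re-deriving them, and would present the argument as a representative instance of the general two-step Ratner scheme outlined above.
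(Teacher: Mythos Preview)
Your strategy is sound but proceeds quite differently from the paper's proof. For part (a) the paper does not invoke the classification of $SL_2(\R)$-invariant subspaces of $(\R^2)^r$ or rationality constraints coming from the lattice condition; instead it gives a completely elementary, self-contained density argument. Writing $U_x=(I,\brx_1,\dots,\brx_r)$, the paper reduces to showing that the set $\{(\gamma^{-1}(\brx_1+n_1),\dots,\gamma^{-1}(\brx_r+n_r)):\gamma\in SL_2(\Z),\,n_i\in\Z^2\}$ is dense in $(\R^2)^r$, and proves this by an explicit construction: using irrationality of $(x_1,\dots,x_r)$ and density of $\Z$-orbits under $T_z$ and $T_{az}$ on $\T^r$, one finds coprime integers $a,c$ (with $c\equiv 1\bmod a$) realizing any prescribed target in each coordinate to within $\eps$, then completes $a,c$ to $\gamma^{-1}=\begin{pmatrix}a&b\\c&d\end{pmatrix}\in SL_2(\Z)$. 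This is concrete and avoids any appeal to the cited references, which in fact are sources for the proposition rather than tools one may invoke inside its proof. Your approach trades this construction for structure theory: it is cleaner conceptually, but the step ``a proper invariant $V$ forces an integer functional $\sum n_j x_j\in\Q$'' still needs the input that $V$ is rational, which you defer to the lattice condition and the references --- so your argument is not self-contained in the way the paper's is.

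For part (b) the two approaches converge: the paper also identifies the candidate $H=SL_2(\R)\ltimes V$ with $V$ the affine subspace cut out by the relations, first in the integer-coefficient case (where containment $\bH(w)\subset H$ is by inspection and the reverse inclusion reuses the density argument of (a) to match dimensions), then handles rational coefficients by passing to the finite-index sublattice $SL_2(\Z)\ltimes(\Z/Q)^{2r}$ and a measure/dimension count. Your graph-subspace description is equivalent, though note the paper's $V$ is affine (because of the constant terms $q_j$) rather than linear.
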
 

\begin{proof}
(a) Denote 
$$(M,0)=\left(M,\left(\begin{array}{l} 0 \\ 0 \end{array}\right), \ldots, \left(\begin{array}{l} 0 \\ 0 \end{array}\right)\right), \quad
 U_x= (I,\bar{x}) = \left(\left(\begin{array}{ll} 1 & 0  \\ 0 & 1 \end{array}\right), \bar x_1, \ldots, \bar x_r \right) .$$
We need to show that 
 $U_x^{-1} (M,0) U_x (\gamma, n)$ is dense in 
 $SL_d(\R)\ltimes (\R^2)^r$ as $M$, $\gamma,$ $n=(n_1,\ldots,n_r)$ vary in ${\rm SL}(2,\R)$, ${\rm SL}(2,\Z)$, and 
 $(\Z^2)^{r}$ respectively. It is of course sufficient to prove the density of 
 $$(M,0) U_x (\gamma, n)= (M \gamma, M \bar x_1 + M n_1 , \ldots,M \bar x_r + M n_r  )$$ 
which in turn follows from the density in $(\R^2)^r$ of 
$$\left( \gamma^{-1} (\bar x_1+n_1),\ldots, \gamma^{-1} (\bar x_r+n_r)\right).$$
To prove this last claim fix $\epsilon>0$ and any $v \in (\R^2)^{r}.$ Let $z=(x_1,\ldots,x_r)$.
Since  $\{1, x_1\dots x_r\}$ are linearly independent over $\Q$, the $T_z$ orbit of $0$ is dense in $\T^r$ and hence 
there exists $a\in \N$ and a vector $m_1\in \Z^r$ such that
$$ |a x_i - v_{i,1}-m_{i,1} |<\epsilon. $$
Since the $T_{az}$ orbit of $z$ is dense in $\T^r$ there exists $c\in\mathbb{N}$ such that
$$c\equiv 1\text{ mod }a\text{ and } |c x_i - v_{i,2}-m_{i,2} |<\epsilon. $$  
Since $a \wedge c=1$
we can find $b,d \in \Z$ such that $ad-bc=1.$ Let 
$\gamma^{-1} = \left(\begin{array}{ll} a & b  \\ c & d \end{array}\right)$ and $n_i=-\gamma m_i$ so that 
$\left| \left(\gamma^{-1} (\bar x_i+n_i)\right)_j-v_{i,j}\right| <\eps$ for every $i= 1,\ldots,r$ and $j=1,2$. This finishes the proof of density %and implies that $(g_{\ln N},0)$ 
completing the proof of part (a). 

{
(b) Suppose first that $q_j$ and $q_{ij}$ are integers. In this case a direct inspection shows that
$\phi(SL_2(\R))$ is contained in the orbit of $H=SL_2(\R)\times V$ where 
$$V=\{(z_1, z_2, \dots, z_r) : z_j=q_j+\sum_i q_{ij} z_i\} $$
and that the orbit of $H$ is closed. Hence $\bH(w)\subset H.$ To prove the opposite inclusion it suffices to show that
$\dim(H)=\dim(\bH).$ To this end we note that since the action of $SL_2(\R)$ is a skew product, $\bH$ projects 
to $SL_2(\R).$ On the other hand the argument of part (a) shows that the closure of $\phi(SL_2(\R)) $ contains the
elements of the form $(Id, v)$ with $v\in V.$ This proves the result in case $q_i$ and $q_{ij}$ are integers.

In the general case where $q_j=\frac{p_j}{Q}$ and $q_{ij}=\frac{p_{ij}}{Q}$ where $Q, p_j$ and $p_{ij}$ are integers, 
the foregoing argument shows that 
$$\overline{\phi_w(SL_2(\R)) (SL_2(\Z) \ltimes (\Z/Q)^{2r})}=H(SL_2(\Z) \ltimes (\Z/Q)^{2r}).$$
Accordingly, the orbit of $Id$ is contained in a finite union of $H$-orbits and intersects one of these orbits by
the set of measure at least $(1/Q)^r.$ Again the dimensional considerations imply that $\bH(w)=H.$
}
\end{proof}

%We refer the reader to \cite[Sections 17-19]{M-Bern} or
%\cite[Sections 2 and 4]{MS-Cr} for more details regarding the convergence in Ratner's theorem as well as for computation of the
%Ratner subgroup $H$ in various cases, as for example in the result of  Theorem \ref{ThDO-Pois}(c).

We are now ready to explain some of the ideas behind the proofs of the Theorems of Sections
\ref{ScErgSum}, \ref{sec.discrepancy}  and \ref{sec.poisson} following \cite{DF1, DF2}.
Applications to similar techniques to the related problems could be found
for example in \cite{BM, E, EM, KM1, KM2, M-LH, M2}. %\margem{Why these references only?}.
We shall see later that the same approach can be used to prove several  other limit theorems.

\subsection{The Poisson regime} 
Theorem \ref{ThFinExp} is a consequence of the following more general result.

\begin{theorem}
\label{ThDO-Pois}
(\cite{M-ETDS, MS-LG})
(a) If $(\a,x)$ is uniformly distributed on $\Tor^d\times \Tor^d$ then
$N^{1/d} \{x+n\alpha\}$ converges in distribution to
$$ \{X\in\R^d \text{ such that for some }Y\in [0,1] \text{ the point }
(X,Y)\in L \} $$%\margem{isn't it  $\{X: (X,Y)\in L, Y\in [0,1] \}$}
where $L\in \reals^{d+1}$ is a random affine lattice.
%\margem{isn't there a restriction $N^{1/d} ||x+n\alpha||<1/\eps$? }

(b) The same result holds if $x$ is a fixed irrational vector and $\alpha$ is uniformly distributed on $\Tor^d.$

(c) If $\alpha$ is uniformly distributed on $\Tor^d$ then 
$N^{1/d} \{n\alpha\}$ converges in distribution to
$$ \{X\in \R^d \text{ such that for some }Y\in [0,1] \text{ the point }
(X,Y)\in L \} $$%\margem{isn't it  $\{X: (X,Y)\in L, Y\in [0,1] \}$}
where $L\in \reals^{d+1}$ is a random lattice centered at 0. 
\end{theorem}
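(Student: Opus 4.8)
The plan is to realize the rescaled orbit as a fixed geometric functional of an expanding horosphere in the space of (affine) lattices in $\R^{d+1}$, and then to read off the limit from the equidistribution results of Section \ref{notations.lattices}. For part (a), to a pair $(\a,x)\in\Tor^d\times\Tor^d$ I would attach the affine unimodular lattice
\[
\Delta_{\a,x}=\begin{pmatrix} I_d & \a \\ 0 & 1\end{pmatrix}\Z^{d+1}+\begin{pmatrix} x \\ 0\end{pmatrix}\subset\R^{d+1},
\]
whose points are the vectors $(m+n\a+x,\,n)$ with $m\in\Z^d$, $n\in\Z$. Applying the diagonal flow $g^t=\mathrm{diag}(e^{t/d}I_d,e^{-t})$ with $e^t=N$ carries such a point to $(N^{1/d}(m+n\a+x),\,n/N)$. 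Thus the points of $g^t\Delta_{\a,x}$ whose last coordinate lies in $[0,1]$ are precisely the points $(N^{1/d}\{x+n\a\}+N^{1/d}\Z^d,\,n/N)$ for $0\le n\le N$. Restricting to any bounded window of $\R^d$ and projecting to the first $d$ coordinates, I would identify the point process $\{N^{1/d}\{x+n\a\}:0\le n<N\}$ with the image under $\pi(X,Y)=X$ of $g^t\Delta_{\a,x}\cap(\R^d\times[0,1])$, up to boundary terms to be controlled later.

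The second step is equidistribution. The lattice $\Delta_{\a,x}$ sits on the unstable horosphere $n_+(\a,\brx)$ of $g^t$ introduced in Section \ref{notations.lattices} (with $d$ replaced by $d+1$), and taking $(\a,x)$ uniform amounts to integrating over a full piece of that horosphere against a smooth density. Invoking the equidistribution of expanding translates of unstable horospheres (Theorem \ref{th.shah}(b), cf. \cite{EM}), I would conclude that $g^t\Delta_{\a,x}$ converges in distribution, as $N=e^t\to\infty$, to a random affine lattice $L$ distributed according to the Haar measure on $\bar G$. Part (c) is the same computation with the translation vector dropped: one uses the linear lattice $\Delta_\a=\begin{pmatrix}I_d&\a\\0&1\end{pmatrix}\Z^{d+1}$ on the horosphere $n_+(\a)$ inside $SL_{d+1}(\R)/SL_{d+1}(\Z)$ and obtains a random lattice centered at $0$. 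For part (b), where $\brx$ is fixed and only $\a$ is random, the relevant orbit $n_+(\a,\brx)$ has positive codimension inside the full horosphere, so plain horosphere equidistribution no longer applies; instead I would appeal to the Ratner-type statement discussed after Theorem \ref{th.shah}, noting that the irrationality of $x$ forces the associated group $\bH(w)$ to be the full affine group — the analogue, for a single shift vector, of Proposition \ref{PrRatnerGroup}(a). Hence $g^t(\Lambda_\a,\brx)$ equidistributes to the same Haar measure on $\bar G$, and the limit $L$ coincides with the one in part (a).

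The final step, which I expect to be the genuine obstacle, is to transfer equidistribution of lattices into convergence of the point process. I would view an affine lattice as a locally finite configuration in $\R^{d+1}$ (a continuous identification) and apply the functional ``intersect with the slab $\R^d\times[0,1]$ and project by $\pi$''. The only discontinuity of this functional is carried by the slab boundary $\R^d\times\{0,1\}$, and for Haar-almost every lattice no point lands on these two hyperplanes, so the functional is almost everywhere continuous with respect to the limiting measure. Together with a tightness input — the expected number of lattice points in a fixed bounded box of $\R^{d+1}$ stays bounded along the flow (Siegel), so counts over bounded windows do not escape into the cusp — the continuous mapping theorem would give convergence of $\{N^{1/d}\{x+n\a\}\}$ in distribution to $\pi\bigl(L\cap(\R^d\times[0,1])\bigr)$, which is exactly $\{X\in\R^d:\exists\,Y\in[0,1],\ (X,Y)\in L\}$. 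The delicate bookkeeping suppressed in the first step belongs here as well: replacing the sharp range $n\in\{0,\dots,N-1\}$ by the slab condition $n/N\in[0,1]$, and discarding the coarse wrapping lattice $N^{1/d}\Z^d$, affect only the unbounded part of each window and vanish in the local limit.
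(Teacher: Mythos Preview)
Your proposal is correct and follows essentially the same route as the paper: the Dani correspondence realizing the rescaled orbit as the slab-intersection of $g^{\ln N}(\Lambda_\a\Z^{d+1}+\brx)$, equidistribution of expanding unstable horospheres for parts (a) and (c), and the Ratner-type input (Theorem~\ref{th.shah} together with the analogue of Proposition~\ref{PrRatnerGroup}(a)) for the fixed-$x$ case in part (b). Your final step, making the continuous-mapping-plus-boundary argument explicit, is a welcome elaboration of what the paper leaves as a sketch.
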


Here the convergence in, say part (a), means the following. Take a collection of sets 
$\Omega_1, \Omega_2\dots \Omega_r\subset \R^d$ whose boundary has zero measure and
let $\cN_j(\a,x , N)=\Card(0\leq n\leq N-1: N^{1/d} \{x+n\alpha\}\in \Omega_j. $ Then for each $l_1\dots l_m$
\begin{equation}
\label{EqDO-Pois}
 \lim_{N\to\infty} \Prob(\cN_1(\a,x , N)=l_1,\dots, \cN_r(\a,x , N)=l_r)=
\end{equation}
$$\mu_G(L : \Card(L\cap (\Omega_1\times [0,1]))=l_1,
\dots, \Card(L\cap (\Omega_r\times [0,1]))=l_r)$$
where $\mu_G$ is the Haar measure on $G=SL_d(\R)$, or $G=SL_d(\R)\ltimes \R^d$ respectively.
%in the case of random lattices. 

The sets appearing in Theorem \ref{ThDO-Pois} are called {\bf cut-and-project sets.} We refer the reader to
Section \ref{SSCutProject} of the present paper as well as to \cite[Section 16]{M-Bern} for more discussion of these objects.

\begin{remark} Random (quasi)-lattices provide important examples of  random point processes in the Euclidean space having a large symmetry group. This high symmetry explains why they appear as limit processes in several limit theorems (see discussion in \cite[Section 20]{M-Bern}). Another point process with large symmetry group is a Poisson process discussed in Section \ref{SSPois}.
Poisson processes will appear in
Theorem \ref{pois} below. \end{remark}

A variation on Theorem \ref{ThDO-Pois} is the following. 

Let 
 $G=SL_2(\R)\ltimes (\R^2)^r$ equipped with the  multiplication rule defined in Section \ref{notations.lattices} and consider $X=G/\Gamma$,  $\Gamma=SL_2(\Z)\ltimes (\Z^2)^r$.
\begin{theorem} \label{th.multipole}  Assume $(\a, z_1\dots z_r) \in [0,1]^{r+1}$.   For any collection of sets 
$\Omega_{i,j} \subset \R$, $i=1,\ldots,r$ and $j=1,\ldots,M$ whose boundary has zero measure 
let 
$$\cN_{i,j}(\a, N)=\Card\left(0\leq n\leq N-1: N \{z_i+n\alpha\}\in \Omega_{i,j}\right).$$

(a) If $(\a, z_1\dots z_r)$ are uniformly distributed on $[0,1]^{r+1}$ or if $\a$ is uniformly distributed on $[0,1]$ and $(z_1,\ldots,z_r)$ is  a fixed irrational vector then 
for each $l_{i,j}$
\begin{multline} \label{EqDO-Pois2}
 \lim_{N\to\infty} \Prob \left( \cN_{i,j} (\a, N) = l_{i,j}, \forall i,j\right)  \\ =  \mu_G\left((L,a_1,\ldots,a_r ) \in G  : \Card  (L+a_i \cap (\Omega_{i,j}\times [0,1]))=l_{i,j},  \forall i,j \right)
\end{multline}
where we use the notation $\Prob$ for Haar measure on $\T$ (in the case of fixed vector $(z_1,\ldots,z_r)$) as well as on $\T^{r+1}$ (in the case of random vector $(z_1,\ldots,z_r)$), and $\mu_G$ is the Haar measure on $G$.  

(b) For arbitrary $(z_1, \dots, z_r)$ there is a subgroup $\bH\subset G$ such that 
for each $l_{i,j}$
\begin{multline} \label{EqDO-Pois2-B}
 \lim_{N\to\infty} \Prob \left( \cN_{i,j} (\a, N) = l_{i,j}, \forall i,j\right)  \\ 
=  \mu_\bH \left((L,a_1,\ldots,a_r ) \in G  : \Card  (L+a_i \cap (\Omega_{i,j}\times [0,1]))=l_{i,j},  \forall i,j \right)
\end{multline}
where $\mu_\bH$ denotes the Haar measure on the orbit of $\bH.$ 
\end{theorem}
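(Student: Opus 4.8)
The plan is to run the cut-and-project argument behind Theorem~\ref{ThDO-Pois}, now on the group $\bG=SL_2(\R)\ltimes(\R^2)^r$ so that all $r$ orbits $\{z_i+n\a\}$ are tracked simultaneously. I first set up the dictionary between orbit statistics and counts of lattice points. Encode $\a$ by the horocycle $\Lambda_\a$ of \eqref{InCond} (here $d=2$, so $\a$ is a scalar and $\bar z_i=(z_i,0)$) and consider the point $g_t(\Lambda_\a,\bar z_1,\dots,\bar z_r)$ of $X=\bG/\Gamma$, with $g_t$ chosen to expand the fiber direction by a factor of order $N$. A direct computation shows that the rescaled configuration $\{(N\{z_i+n\a\},\,n/N):0\le n<N\}$ coincides, up to the choice of the nearest integer, with the points of the affine lattice $L+a_i$ lying in the strip $\R\times[0,1]$, where $(L,a_1,\dots,a_r)=g_t(\Lambda_\a,\bar z_1,\dots,\bar z_r)$; consequently $\cN_{i,j}(\a,N)=\Card((L+a_i)\cap(\Omega_{i,j}\times[0,1]))$. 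The only point needing care is that, for each fixed orbit index $n$, the lattice contains infinitely many points spaced at distance of order $N$ in the value coordinate; since the $\Omega_{i,j}$ are bounded and $N\to\infty$, at most one of them can fall in $\Omega_{i,j}$, so the count identity is exact for large $N$.

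Next I would express the joint event through a single test function on $X$. Put $F(L,a_1,\dots,a_r)=\prod_{i,j}\chi(\Card((L+a_i)\cap(\Omega_{i,j}\times[0,1]))=l_{i,j})$, a bounded function whose discontinuity set consists of the configurations having a lattice point on some $\partial(\Omega_{i,j}\times[0,1])$; because each $\partial\Omega_{i,j}$ has zero measure, this set is null for the Haar measure $\mu_G$ and for every $\mu_\bH$ occurring below. By the dictionary above, $\Prob(\cN_{i,j}=l_{i,j}\ \forall i,j)$ equals the integral of $F(g_t(\Lambda_\a,\bar z_1,\dots,\bar z_r))$ over the randomized variables: over $(\a,z)\in[0,1]^{r+1}$ when all are random, and over $\a\in[0,1]$ when the $z_i$ are fixed. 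To take the limit rigorously I would sandwich each $\chi_{\Omega_{i,j}}$ between continuous compactly supported functions and use that $F$ depends only on the finitely many lattice points in a fixed bounded window.

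The three cases then differ only in the equidistribution input. When $\a$ and $z$ are both random, $(\Lambda_\a,\bar z_1,\dots,\bar z_r)$ sweeps a full-dimensional piece of an unstable leaf, and equidistribution of translated unstable manifolds (\cite{EM,KM1}) gives the limit $\int_X F\,d\mu_G$, which is \eqref{EqDO-Pois2}. When $(z_1,\dots,z_r)$ is fixed and irrational only $\a$ varies, so the orbit lies on a positive-codimension subset of the unstable leaf; here I would apply Theorem~\ref{th.shah}(a) with $w=(\mathrm{Id},\bar z_1,\dots,\bar z_r)$, and Proposition~\ref{PrRatnerGroup}(a) identifies the orbit-closure group as $\bH(w)=\bG$, so the limit is again $\mu_G$ and \eqref{EqDO-Pois2} holds. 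For arbitrary $(z_1,\dots,z_r)$, part (b), Proposition~\ref{PrRatnerGroup}(b) gives a proper subgroup $\bH(w)$ and Theorem~\ref{th.shah}(a) yields $\int F\,d\mu_\bH$, which is \eqref{EqDO-Pois2-B}.

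The main obstacle is the identification of the limiting measure when the $z_i$ are not generic: the equidistribution is then governed by Ratner's theorems through the orbit-closure group $\bH(w)$, and one must prove this closure is exactly the expected semidirect factor, neither larger nor smaller, which is the arithmetic content of Proposition~\ref{PrRatnerGroup}. A secondary and more routine difficulty is the discontinuity of $F$: since the indicators $\chi_{\Omega_{i,j}}$ are not continuous, one must rule out orbit points accumulating near $\partial\Omega_{i,j}$, which is exactly where the zero-measure boundary hypothesis and the null-set property of the limiting measure enter.
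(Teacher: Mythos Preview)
Your proposal is correct and follows essentially the same approach as the paper: Dani's correspondence principle reduces the joint counts $\cN_{i,j}$ to a function on $X=(SL_2(\R)\ltimes(\R^2)^r)/(SL_2(\Z)\ltimes(\Z^2)^r)$ evaluated at $g_{\ln N}(\Lambda_\a,\bar z_1,\dots,\bar z_r)$, and then equidistribution (via Theorem~\ref{th.shah} together with Proposition~\ref{PrRatnerGroup}) yields the limits in all three cases. Your writeup is in fact more detailed than the paper's own proof, which just points to these ingredients; your explicit discussion of the discontinuity set of $F$ and the sandwiching argument is a useful addition.
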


\begin{proof}[Proof of Theorem \ref{ThDO-Pois}]
(a) We provide a sketch of proof referring the reader to \cite[Section 13]{M-Bern} for
more details.

Fix a collection of sets 
$\Omega_1, \Omega_2\dots \Omega_r\subset \R^d$ and $l_1,\ldots,l_r \in \N$. We want to prove \eqref{EqDO-Pois}.

Consider the following functions on the space of affine lattices 
$\bar G=(SL_{d+1}(\R)\ltimes \R^{d+1})/(SL_{d+1}(\Z)\ltimes \Z^{d+1})$
$$f_j(L)=\sum_{e\in L} \chi_{\Omega_j\times [0,1]}(e)$$
and let
$ \cA=\{L: f_j(L)=l_j\}.$ By definition, the right hand side of  \eqref{EqDO-Pois} is $\mu_{\bar G}( L: L \in \cA)$.

On the other hand, the Dani  
 correspondence principle states that 
\begin{equation}Ê\label{eq.dani}  \cN_1(\a,x , N)=l_1,\dots, \cN_r(\a,x , N)=l_r  \text{ iff }  Ê  g^{\ln N} (\Lambda_\a\Z^{d+1}+\bar x)\in \cA \end{equation}
where $g_t$ is the diagonal action $(e^{t/d},\ldots,e^{t/d},e^{-t})$ and $\bar x= (x_1,\ldots,x_d,0)$ are defined as in Section \ref{notations.lattices}. 
To see this, fix $j$ and suppose that  $\{x+n\alpha\} \in  N^{-1/d}\Omega_j$ for some $n \in [0,N]$. Then 
$$\{x+n\alpha\}=(x_1+n\alpha_1+m_1(n), \dots, x_d+n\alpha_d+m_d(n))$$ 
with $m_i(n)$ uniquely defined so that  $x_i+n\alpha_i+m_i\in (-1/2, 1/2]$,  
and the vector $v=(m_1(n),\ldots,m_d(n),n)$ is such that 
$$  \chi_{\Omega_j\times [0,1]} \left(g^{\ln N} (\Lambda_\a\Z^{d+1}+\bar x) v \right) =1.$$ 
The converse is similarly true, namely that any vector that counts in the right hand side of \eqref{eq.dani} corresponds uniquely to an $n$ that counts in the left hand side visits.

%; and if  $x+n\alpha+m\in N^{-1/d}\Omega_j$ for some $j$ then $x+n\alpha+m$should be close to 0 which implies that $m=m(n).$ Indeed, if we choose $m_i(n)$ so that $x_i+n\alpha_i+m_i\in (-1/2, 1/2]$ then  

%$$ g^t=\left(\left(\begin{array}{llll}e^{t/d} &  \dots & 0 & 0 \\&&&\\0 & \dots & e^{t/d} & 0\\0 & \dots & 0 & e^{-t} \end{array}\right),\left(\begin{array}{l} 0 \\ {} \\ 0 \\ 0 \end{array}\right) \right), $$\begin{equation}
%\label{InCond}Q=\left(\begin{array}{llll}1 &  \dots & 0 & \alpha_1 \\&&&\\0 & \dots & 1 & \alpha_d \\0 & \dots & 0 & 1 \end{array}\right), \quadb=\left(\begin{array}{l} x_1 \\ {} \\ x_d \\ 0 \end{array}\right).\end{equation}
%Namely let$ G=SL_2(\R)\ltimes (\R^2)^r $ with multimplication rule
%$$(A, a_1,\dots, a_r)(B, b_1,\dots, b_r)=(AB, a_1+Ab_1,\dots, a_r+A b_r), $$
%It turns out that the action of $g^t$ on the sapce of affine lattices is partially hyperbolic and 
%unstable manifolds are orbits of $(Q(\alpha), b(x))$ where $\alpha, x\in\R^d$ and 
%$Q$ and $b$ are given by \eqref{InCond}. 

Now \eqref{eq.dani}, and thus \eqref{EqDO-Pois}, follow if we prove that 
\begin{multline} \label{eq.equidist} \lim_{N \to \infty} { \mathbb P} \left( (\a,x) \in \T^d \times \T^d : g^{\ln N} (\Lambda_\a\Z^{d+1}+\bar x)\in \cA\right) =\\ \mu_G( L \in \bar G: L \in \cA) \end{multline} 

Finally, \eqref{eq.equidist} holds due to
 the uniform distribution of 
the images of unstable manifolds $n_+(\a,x)$ for partially hyperbolic flows. 

(b) Following the same arguments as above, we see that in order to prove  Theorem \ref{ThDO-Pois}(b) we need to show 
that $(g^{\ln N},0) . (\Lambda_\a,\bar x) $ becomes equidisitributed with respect to Haar measure on 
$$G=(SL_{d+1}(\R)\ltimes \R^{d+1})/(SL_{d+1}(\Z)\ltimes \Z^{d+1})$$ 
if $\a$ is random and $x$ is a fixed irrational vector.
This can be derived from Theorem \ref{th.shah}(b) using a generalization of Proposition \ref{PrRatnerGroup}(a).

The argument for part (c) is the same as in part (a) but we use the space of lattices rather 
than the space of affine lattices.
\end{proof}

\begin{proof}[Proof of Theorem \ref{th.multipole}]
As in  the proof of Theorem \ref{ThDO-Pois} we use Dani's correspondence principle to identify the left hand side in \eqref{EqDO-Pois2} with 
$$ \Prob\left( \Card  ((     g_{\ln N} \Lambda_\a + g_{\ln N} \bz_i  ) 
 \cap (\Omega_{i,j}\times [0,1]))=l_{i,j},  \forall i,j \right)$$
where $\bz_i = \left(\begin{array}{l} z_i \\ 0 \end{array}\right)$.

Now \eqref{EqDO-Pois2} follows 
if we have that $(g_{\ln N},0) . (\Lambda_\a, \bz_1,\ldots,\bz_r))  $ is distributed 
according to the Haar measure in $X=G/\Gamma$
with 
 $G=SL_2(\R)\ltimes (\R^2)^r$ and   $\Gamma=SL_2(\Z)\ltimes (\Z^2)^r$. But this last statement follows
 from Theorem \ref{th.shah}(b) and Proposition \ref{PrRatnerGroup}(a).

Likewise \eqref{EqDO-Pois2-B} follows from Theorem \ref{th.shah}(a). 
\end{proof}

%If $(\a, z_1\dots z_r)$ are uniformly distributed on $[0,1]^{r+1}$ or if $\a$ is uniformly distributed on $[0,1]$ and $(z_1,\ldots,z_r)$ is  a fixed irrational vector then for each $l_{i,j}$
%\begin{multline} 
 %\lim_{N\to\infty} \Prob \left( \cN_{i,j} (\a, N) = l_{i,j}, \forall i,j\right)  \\ =  \Prob\left((L,a_1,\ldots,a_r ) \in G  : \Card  (L+a_i \cap (\Omega_{i,j}\times [0,1]))=l_{i,j},  \forall i,j \right)
%\end{multline}

% the RHS in \eqref{MainTermMer}  with $ \Phi_R((g_{\ln N},0) . (\Lambda_\a, \bz_1, \dots \bz_r))$ where $\bz_i = \left(\begin{array}{l} z_i \\ 0 \end{array}\right)$

%$ \Phi(g_{\ln N} h(\alpha, z_1,\dots, z_r)) $
%where
%$$g_t=\left(\left(\begin{array}{ll} e^t & 0 \\ 0 & e^{-t}\end{array}\right), 0,\dots 0\right), $$
%$$h(\alpha, z_1, \dots ,z_r)=\left(\left(\begin{array}{ll} 1 & \a \\ 0 & 1 \end{array}\right), 
%\left(\begin{array}{l} z_1 \\ 0 \end{array}\right)
%M\dots \left(\begin{array}{l} z_r \\ 0 \end{array}\right)\right). $$

%For almost every $x \in \T$ we have that $(z_1, \dots ,z_r)$ is an irrational vector so Theorem \ref{th.multipole} applies and we get that 

\subsection{Application of the Poisson regime theorems to the ergodic sums of smooth functions with singularities} 
\label{SSProofSing}

The microscopic or Poisson regime theorems are useful to treat the ergodic sums of smooth functions with singularities since the main contribution to these sums come from the visits to small neighborhoods of the singularity. 

\begin{proof}[Proof of Theorem \ref{ThPole2}]
Due to Corollary \ref{CorErgSmooth} we may assume that $\tilde{A}=0.$

Let $R$ be a large number 
and denote by $S_N'$ the sum of terms with $|x+n \alpha-x_0|>R/N$ and by
$S_N''$ the sum of terms with $|x+n \alpha-x_0|<R/N.$
Then
$$\EXP(|S_N'/N^a|)\leq \frac{C}{N^a} \EXP(|\xi|^{-a} \chi_{|\xi|>R/N})=O(R^{1-a}).$$
On the other hand, by Theorem \ref{ThDO-Pois}(a)
if $(\a,x)$ is random $\dfrac{S_N''}{N^a}$ converges as $N\to\infty$ to 
$$\sum_{(X,Y)\in L: Y\in [0,1], |X|<R} \frac{c_-\chi_{X<0}+c_+ \chi_{X>0}}
{|X|^a},$$
where $L$ is  a random affine lattice in $\R^2$.
Letting $R\to\infty$ we get
$$ \frac{S_N}{N^a} \Rightarrow \sum_{(X,Y)\in L: Y\in [0,1]} \frac{c_-\chi_{X<0}+c_+ \chi_{X>0}}{|X|^a}.$$

The case of fixed irrational $x$ is dealt with similarly using Theorem \ref{ThDO-Pois}(b).
\end{proof}

\begin{proof}[Sketch of proof of Theorem \ref{ThMer}] Consider first the case when the highest pole has order $m>1.$
Then the argument given in the proof of Theorem  \ref{ThPole2} shows that
for large $R,$ $A_N/N^m$ can be well approximated by
\begin{equation}
\label{MainTermMer}
\frac{A_N}{N^m} \sim  \sum_{j=1}^r \sum_{|x+n\alpha-x_j|<R/N} \frac{c_j}{(N(x+n\alpha-x_j))^m}  
 \end{equation}
where $x_1\dots x_r$ are all poles of order $m$ and $c_1\dots c_r$ are the corresponding Laurent coefficients.
%This sum can be analyzed similarly to the proof of Theorem \ref{ThDO-Pois}. 

We use Theorem \ref{th.multipole} to analyze this sum.
Namely, let
 $G=SL_2(\R)\ltimes (\R^2)^r$ with the  multiplication rule defined in Section \ref{notations.lattices} and consider $X=G/\Gamma$,  $\Gamma=SL_2(\Z)\ltimes (\Z^2)^r$.
%$$(A, a_1,\dots, a_r)(B, b_1,\dots, b_r)=(AB, a_1+Ab_1,\dots, a_r+A b_r), $$ $\Gamma=SL_2(\Z)\ltimes (\Z^2)^r$ and 
%let $\tG=SL_2(\R)$ imbedded in $G$ as  the subgroup consisting of elements of the form $(A, 0,\dots, 0).$ 
Consider the functions $\Phi:G/\Gamma\to\R$ given by
\begin{equation}  \label{eq.MainTerMer2} \Phi_R(A, a_1, \dots a_r)=
\sum_{j=1}^r \sum_{e\in A\Z^2+a_j} \frac{c_j}{x(e)^m} 
\chi_{[-R, R]\times [0,1]} (e) \end{equation} 

which as $R \to \infty$ will be distributed as %and introduce $\lim_{R\to\infty} \Phi_R(A,  a_1, \dots, a_r)$ given by 
\begin{equation} \Phi(A, a_1\dots a_r)=\sum_{j=1}^r \sum_{e\in A\Z^2+a_j} \frac{c_j}{x(e)^m}  
\chi_{[0,1]} (y(e)). \label{eq.MainTerMer3}
\end{equation} 
Now Theorem \ref{ThMer} follows from Theorem \ref{th.multipole}. Namely part (a) of Theorem \ref{ThMer} follows from 
Theorem \ref{th.multipole}(a). To get part (c) we let 
$z_j=x-x_j$, we observe that for almost every $x$ the vector $(z_1,\ldots,z_r)$ is irrational. Hence  
Theorem \ref{th.multipole} 
applies and gives us that \eqref{MainTermMer} when $\a \in \T$ is random has the same distribution as \eqref{eq.MainTerMer2} as $(A, a_1\dots a_r)$ is random in $G$. Finally Theorem \ref{ThMer}(b) follows from Theorem \ref{th.multipole} (b).

The proof in case when all poles are simple is the same except that the proof that $A_N/N$ is well approximated by
\eqref{MainTermMer} is more involved since one cannot use $L^1$ bounds. We refer to \cite{SU} for
more details.
\end{proof}

\subsection{Limit laws for discrepancies.} 
\label{SSProofsD}

The proofs of Theorems \ref{ThConvex}--\ref{dimd} use a similar strategy as Theorems \ref{ThPole2} and  \ref{ThMer} of first localizing the important terms and then 
reducing their contribution to lattice counting problems. However the analysis in that case is more complicated,
in particular, because the argument is carried over in the set of frequencies of the Fourier series of the discrepancy 
rather than in the phase space.

Let us describe the main steps in the proof of Theorem \ref{ThConvex}. Consider first the case where $\Omega$ is
centrally symmetric. We start with Fourier series of the discrepancy
\begin{multline*} D_N(\Omega, \a,x )= \\ r^{(d-1)/2} \sum_{k\in \Z^d-0} c_k(r) 
\frac{\cos (2\pi(k,x)+\pi(N-1) (k, \a)) \sin(\pi N(k,\a))}{\sin(\pi(k, \a))}\end{multline*}
where $r^{(d-1)/2} c_k(r)$ are Fourier coefficients of $\chi_{r\Omega}$ which have the following asymptotics
for large $k$ (see \cite{herz})
$$ c_k(r)\approx \frac{1}{\pi |k|^{(d+1)/2}} K^{-1/2} (k/|k|) \sin\left(2\pi\left(r P(k)-\frac{d-1}{8}\right)\right).$$
Here $K(\xi)$ is the Gaussian curvature of $\partial\Omega$ at the point where the normal to $\partial\Omega$ is
equal to $\xi$ and $P(t)=\sup_{x\in \Omega} (x,t).$

The proof consists of three steps. First, using elementary manipulations with Fourier series one shows that the
main contribution to the discrepancy comes from $k$ satisfying
\begin{equation}
\label{ConvSD1}
\eps N^{1/d}<|k|< \eps^{-1} N^{1/d} 
\end{equation}
\begin{equation}
\label{ConvSD2}
k^{(d+1)/2}|\{(k, \a)\}|<\frac{1}{\eps N^{(d-1)/2d}}.
\end{equation}
To understand the above conditions note that \eqref{ConvSD1} and \eqref{ConvSD2} imply that $\{(k, \a)\}$ is of order
$1/N$ so the sum 
$\sum_{n=0}^{N-1} e^{i2\pi (k, (x+n\a))}$ is of order $N,$ that is, it is as large as possible. 
Next the number of terms with $|k|\ll N^{1/d}$ is too small (much smaller than $N$) so for typical $\a$, we have that 
$|\{k, \a)\}|\gg 1/N$ for such $k$ ensuring the cancelations in ergodic sums. For the higher modes $k\gg N^{1/d}$, using $L^2$ norms and the decay of $c_k$ one sees that their  contribution  is negligible 

The second step consists in showing using the same argument as in the proof of Theorem \ref{ThDO-Pois}
that if $(\a,x )$ is uniformly distributed on $\T^d\times \T^d$ then the distribution of
$$ \left(\frac{k}{N^{1/d}}, (k,\a) |k|^{(d+1)/2} N^{(d-1)/2d}\right) $$
converges as $N\to \infty$ to the distribution of
$$ (X(e), Z(e) |X(e)|^{(d+1)/2})_{e\in L} $$
where $L$ is a random lattice in $\R^{d+1}$ centered at $0.$ 

Finally the last step in the proof of Theorem \ref{ThConvex} is to show that if we take prime $k$ 
satisfying \eqref{ConvSD1} and \eqref{ConvSD2} 
then the phases $(k,x),$ $N(k, \a)$ and $r P(k)$ are asymptotically independent
of each other and of the numerators. For $(k,x)$ and $r P(k)$ the independence comes from the fact that 
\eqref{ConvSD1} and \eqref{ConvSD2} do not involve $x$ or $r$,  while $N(k, \a)$ 
has wide oscillation due to the large prefactor $N.$

The argument for non-symmetric bodies is similar except that the asymptotics of their Fourier coefficients is 
slightly more complicated. 

The foregoing discussion explains the form of the limit distribution which we now present.
Let $\cM_{2,d}$ be the space of quadruples $(L, \theta, b, b')$ where $L\in X$, the space of lattices in $\R^{d+1}$, and
$(\theta, b, b')$ are elements of $\T^X$ satisfying the conditions
$$ \theta_{e_1+e_2}=\theta_{e_1}+\theta_{e_2},
\quad b_{me}=mb_e \text{ and } b'_{me}=m b'_e. $$
Let $\cM_d$ be the subset of $M_{2,d}$ defined by the condition $b=b'.$
Consider the following function on $\cM_{2,d}$
\begin{equation}
\label{LatTor}
 \cL_{\Omega}(L, \theta, b,b')=\frac{1}{\pi^2} \sum_{e\in L} \kappa(e,\theta, b, b')
\frac{\sin (\pi Z(e))}{|X(e)|^{\frac{d+1}{2}} Z(e)}
\end{equation}
with 
\begin{multline} 
\label{DefK}
\kappa(e,\theta, b, b')=  K^{-\frac{1}{2}}(X(e)/|X(e)|)  \sin(2\pi(b_e+\theta_e -(d-1)/8)) \\ 
+K^{-\frac{1}{2}}(-X(e)/|X(e)|)  \sin(2\pi (b'_e-\theta_e-(d-1)/8)).
\end{multline}
It is shown in \cite{DF1} that this sum converges almost everywhere on $\cM_{2,d}$ and $\cM_d.$
Now the limit distribution in Theorem \ref{ThConvex} can be described as follows
\begin{theorem}
\label{ThConvex2}
(a) If $\Omega$ is symmetric then
$\dfrac{D_N(\Omega, r, \a,x )}{N^{(d-1)/2d} r^{(d-1)/2}} $ converges to $\cL_\Omega(L, \theta, b, b')$ 
where $(L, \theta, b, b')$ is uniformly distributed on $\cM_d.$

(b) If $\Omega$ is non-symmetric then
$\dfrac{D_N(\Omega, r, \a,x )}{N^{(d-1)/2d} r^{(d-1)/2}} $ converges to $\cL_\Omega(L, \theta, b, b')$ 
where $(L, \theta, b, b')$ is uniformly distributed on $\cM_{2,d}.$
\end{theorem}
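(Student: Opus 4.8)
The plan is to carry out the three-step scheme already outlined for Theorem \ref{ThConvex}, but retaining every constant so that the limit is identified with the explicit series \eqref{LatTor}. I begin from the Fourier expansion of $D_N(\Omega,r,\a,x)$ displayed above and pair the frequencies $k$ and $-k$ to keep the summands real. Substituting the Herz asymptotics \cite{herz} for $c_k(r)$ and restricting to the localized range \eqref{ConvSD1}--\eqref{ConvSD2}, on which $(k,\a)$ lies within a distance of order $1/N$ of an integer, I replace $\sin(\pi N(k,\a))/\sin(\pi(k,\a))$ by $N\,\sin(\pi Z)/(\pi Z)$ up to a negligible error, where $Z$ is the second rescaled coordinate appearing in the second step (so that $\sin(\pi N(k,\a))=\pm\sin(\pi Z)$), the sign together with the large integer part of $N(k,\a)$ contributing oscillating phases. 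After dividing by $N^{(d-1)/2d}r^{(d-1)/2}$ and setting $X=k/N^{1/d}$, a single $\pm k$ block becomes precisely one summand of $\cL_\Omega$: the coefficient $|k|^{-(d+1)/2}$ yields $|X|^{-(d+1)/2}$, the two factors of $\pi$ (one from the Herz amplitude, one from $\sin(\pi(k,\a))\approx\pi(k,\a)$) combine into the prefactor $1/\pi^2$, the Herz curvature terms yield $K^{-1/2}(\pm X/|X|)$, and the remaining sines assemble into the two-term expression $\kappa$ of \eqref{DefK}, whose phase slots are reserved for $(k,x)$, $rP(k)$ and $rP(-k)$.

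The second step is quoted verbatim from the proof of Theorem \ref{ThDO-Pois}: through the Dani correspondence and the uniform distribution of translated unstable horocycles, the rescaled frequency configuration converges in distribution to a random lattice $L$ in $\R^{d+1}$ centered at the origin. The crux is to upgrade this to joint convergence of the lattice together with the three families of phases. For this I restrict to prime frequencies $k$, removing the non-primitive ones by an $L^2$ bound, and show that along primes the phases $(k,x)$, the large part of $N(k,\a)$, and $rP(k)$ equidistribute on the torus independently of each other and of the rescaled lattice: $rP(k)$ decouples because \eqref{ConvSD1}--\eqref{ConvSD2} impose no constraint on $r$, the phase $(k,x)$ because they impose none on $x$, and the $N(k,\a)$ phase because of the wide oscillation induced by the prefactor $N$. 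Recording the limiting phases as torus-valued fields $\theta_e\leftrightarrow(k,x)$, $b_e\leftrightarrow rP(k)$, $b'_e\leftrightarrow rP(-k)$, they inherit the additivity $\theta_{e_1+e_2}=\theta_{e_1}+\theta_{e_2}$ from additivity of $(k,x)$ in $k$ and the homogeneity $b_{me}=mb_e$, $b'_{me}=mb'_e$ from the positive homogeneity of the support function $P$. This yields convergence of the normalized discrepancy to $\cL_\Omega(L,\theta,b,b')$ with the phases Haar-distributed subject to these constraints.

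The dichotomy between (a) and (b) is then governed by the symmetry of $\Omega$. If $\Omega=-\Omega$ then $P(-k)=P(k)$ and $K(-\xi)=K(\xi)$, so the data attached to $-k$ coincide with those attached to $k$; this forces $b=b'$ and confines the limit to the subspace $\cM_d$, proving (a). If $\Omega$ is not symmetric then $P(k)$ and $P(-k)$ are genuinely distinct, and as $r$ varies $rP(k)$ and $rP(-k)$ equidistribute independently modulo one, so $b$ and $b'$ are free and the limit occupies all of $\cM_{2,d}$, proving (b). I expect the main obstacle to be exactly the independence asserted in the second step: since $P(k)$ is a fixed function of the very same $k$ that builds the lattice $L$, decoupling the phases $rP(k)$ and $(k,x)$ from the geometric lattice data genuinely requires the passage to prime frequencies together with the absence of $r$ and $x$ from the localization window \eqref{ConvSD1}--\eqref{ConvSD2}. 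A secondary technical point is the almost-everywhere convergence of the infinite sum \eqref{LatTor} and the interchange of the truncation limit with $N\to\infty$, which I would control through the second-moment estimates of \cite{DF1}.
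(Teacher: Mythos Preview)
Your proposal is correct and follows essentially the same route as the paper: the three-step scheme (Fourier localization to the window \eqref{ConvSD1}--\eqref{ConvSD2}, lattice convergence via the Dani correspondence as in Theorem~\ref{ThDO-Pois}, and asymptotic independence of the phases $(k,x)$, $N(k,\alpha)$, $rP(\pm k)$ along prime frequencies) is exactly what the paper sketches, and your derivation of the symmetric/non-symmetric dichotomy from $P(-k)=P(k)$ versus $P(-k)\neq P(k)$ is the intended mechanism distinguishing $\cM_d$ from $\cM_{2,d}$. The two technical points you single out---the decoupling of the $rP(k)$-phase from the lattice data and the a.e.\ convergence of \eqref{LatTor} together with the truncation interchange---are precisely the places where the paper defers to the second-moment estimates of \cite{DF1}.
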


\begin{question}
Study the properties of the limiting distribution in Theorem \ref{ThConvex2}, in particular its tail behavior.
\end{question}

The next question is inspired by Theorem \ref{ThLargeQ} from Section \ref{ScHD}.

\begin{question}
Consider the case where $\Omega$ is the standard ball. Thus in \eqref{DefK} $K\equiv 1.$
Study the limit distribution of $\cL$ when the dimension of the torus $d\to\infty.$
\end{question}

%At this point the reader may wonder why the limit distribution in Theorems \ref{ThConvex} and \ref{dimd} look so different.
%So our next step is to recall that Cauchy distribution has a representation similar to \eqref{LatTor}. 

Next we describe the idea of the proof of Theorem \ref{dimd}.
Note that \eqref{CauchyPois} looks similar to \eqref{LatTor}.
The main ingredient in the proof of  Theorem \ref{dimd}
involves a result on the distribution of small divisors of multiplicative form $\Pi | k_i |   \| (k,\a)\|$. Namely, a 
 harmonic analysis of the discrepancy's Fourier series related to boxes allows to bound the frequencies that have essential contributions to the discrepancy and show that they must be resonant with $\a.$
The main step is then to establish a Poisson limit theorem for the distribution of small denominators and
the corresponding numerators. With the notation introduced before the statement of Theorem \ref{dimd} 
let $\brk_i=a_{i,1} k_1+\dots+a_{i,d}k_d .$ Then we have 

\begin{theorem}  \label{pois} 
(\cite{DF2}) 
Let $\xi \in X$ 
be distributed according to the normalized Lebesgue measure $\lambda.$
Then as $N\to\infty$ the point process 
$$\left\{\left( (\ln N)^d  \Pi_i \bar{k}_i  \|(k, \a)\|, N(k,\a) {\rm mod} (2), \{\brk_1 u_1\},
\dots \{\brk_d u_d\}, \right)\right\}_{k \in Z(\xi, N)} $$
where

\begin{multline*} Z(\xi,N) = \left\{ k \in \Z^d : |\bar{k}_i| \geq 1, |\Pi_i \bar{k}_i|<N,  \bar k_1>0, \right. \\ \left.  |\Pi_i \bar{k}_i|   \|(k,\a)\| \leq \frac{1}{\e (\ln N)^d},  
   \right. \\ \left. \exists m \in \Z  {\rm \ such \ that \ }  k_1 \wedge \ldots \wedge  k_d \wedge m =1  {\ and \ }  \|(k,\a)\|= (k,\a)+m \right\} \end{multline*}
converges to a Poisson process on $\R \times \R / (2 \Z) \times (\R/\Z)^d $ with intensity 
$2^{d-1} \bc_1/d!.$ 
\end{theorem}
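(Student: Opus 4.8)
The plan is to establish convergence to the Poisson process by the method of factorial moments: it suffices to show that for any finite family of pairwise disjoint boxes $B_1,\dots,B_r$ in the target space $\R\times\R/(2\Z)\times(\R/\Z)^d$, the counting variables $\cN_j=\#\{k\in Z(\xi,N):\Psi_N(k)\in B_j\}$, where $\Psi_N(k)$ denotes the quadruple displayed in the statement, converge jointly to independent Poisson variables whose means are the $\nu$-measures of the $B_j$, with $\nu$ the intensity measure of constant density $2^{d-1}\bc_1/d!$. Equivalently, I would verify that all joint factorial moments $\EXP\big[\prod_j\binom{\cN_j}{m_j}\big]$ converge to $\prod_j\nu(B_j)^{m_j}/m_j!$. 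The first moment fixes the intensity, and the factorisation of the higher moments is where the work lies.

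First I would compute the first moment $\EXP[\cN(B)]=\sum_{k}\Prob\big(k\in Z(\xi,N),\ \Psi_N(k)\in B\big)$. For a fixed primitive $k$, integrating over $\a$ (which carries a smooth density, so that $(k,\a)\bmod 1$ equidistributes as $|k|$ grows) shows that placing the first coordinate $(\ln N)^d\prod_i\bar k_i\,\|(k,\a)\|$ into a box of size $|B|$ forces $\|(k,\a)\|$ into an interval of length $\asymp |B|\big/\big((\ln N)^d\prod_i|\bar k_i|\big)$, contributing a factor $\asymp\big((\ln N)^d\prod_i|\bar k_i|\big)^{-1}$. Summing over the admissible range $|\bar k_i|\geq 1$, $\prod_i|\bar k_i|<N$ relies on the elementary asymptotics $\sum_{\prod_i|\bar k_i|<N}\big(\prod_i|\bar k_i|\big)^{-1}\sim(\ln N)^d/d!$ (the volume of the simplex $\sum_i\ln|\bar k_i|<\ln N$), so the normalising $(\ln N)^d$ cancels and a finite limit survives. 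The sign freedom $\bar k_1>0$, $\bar k_i\neq 0$ for $i\geq 2$ yields the factor $2^{d-1}$, while the primitivity condition $k_1\wedge\dots\wedge k_d\wedge m=1$, together with the normalisations of the densities on $G_\eta$ and $\T^d$, produces the arithmetic constant $\bc_1$; assembling these gives exactly $2^{d-1}\bc_1/d!$.

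Next I would check that the remaining coordinates are asymptotically uniform and independent of the first. The phase $N(k,\a)\bmod 2$ oscillates on the finest scale thanks to the large prefactor $N$ — the same mechanism as in the third step of the proof of Theorem \ref{ThConvex} — so conditionally on the value of $\|(k,\a)\|$ it equidistributes on $\R/(2\Z)$; and the coordinates $\{\bar k_i u_i\}$ equidistribute and decouple because $u=(u_1,\dots,u_d)$ is independent of $\a$ and ranges over a genuine product of intervals while $|\bar k_i|\to\infty$. This reduces the intensity measure $\nu$ to the product of the appropriate Lebesgue measures claimed in the statement.

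The main obstacle is the factorisation of the higher factorial moments, i.e.\ showing that distinct lattice points $k^{(1)},\dots,k^{(n)}\in Z(\xi,N)$ generate asymptotically independent small-divisor events. The dangerous configurations are the arithmetically resonant tuples, those for which the linear forms $(k^{(1)},\a),\dots,(k^{(n)},\a)$ fail to be $\Q$-independent, so that the small-divisor constraints become coupled. The primitivity requirement already excludes the crudest coincidences (proportional $k^{(j)}$), and I would bound the contribution of the surviving resonant tuples by a geometry-of-numbers count showing that the number of such tuples with all products below $N$ is of strictly smaller order than $(\ln N)^{nd}$, hence negligible after normalisation; for the non-resonant tuples, quantitative equidistribution of $\big((k^{(1)},\a),\dots,(k^{(n)},\a)\big)\bmod 1$, with effective rates coming from the smoothness of the density (cf.\ \cite{KM1}), delivers the clean product structure $\prod_j\nu(B_j)^{m_j}/m_j!$. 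Controlling these off-diagonal terms, rather than the diagonal first-moment calculation, is the real difficulty, and it is the step most sensitive to the multiplicative form $\prod_i|\bar k_i|\,\|(k,\a)\|$ of the divisor.
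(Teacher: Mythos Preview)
Your approach via factorial moments is different from the paper's. The paper does not prove Theorem~\ref{pois} by computing moments; instead it uses a \emph{martingale approach} (in the sense of \cite{dSL, D-Lim}). One reinterprets the point process through the Dani correspondence as values of a function on the space of lattices sampled along a diagonal orbit, and then verifies the Poisson limit by showing that increments over successive logarithmic time scales behave like an approximate martingale. The crucial input is that most orbits of the relevant unipotent subgroups equidistribute at a polynomial rate, which, after reducing to an initial condition with smooth density with respect to Haar measure, follows from polynomial mixing of the unipotent flow. This dynamical recasting is what handles the correlations between different $k$'s in a uniform way.

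Your outline is not wrong in spirit --- the moment method is a legitimate route to Poisson convergence --- but the step you flag as ``the real difficulty'' is in fact unresolved in your sketch, and it is exactly the step the paper's method is designed to avoid. Two specific concerns. First, for resonant tuples you propose a geometry-of-numbers count showing they are of smaller order than $(\ln N)^{nd}$, but this is not sufficient: for resonant configurations the joint probability of the small-divisor constraints is \emph{much larger} than the product of the marginals, so a bound on the cardinality alone does not control their contribution to the factorial moment. Second, for non-resonant tuples you appeal to quantitative equidistribution of $\big((k^{(1)},\a),\dots,(k^{(n)},\a)\big)$ modulo~$1$, but the $k^{(j)}$ range up to size $N$ and the effective error depends on the size of the smallest nontrivial integer combination $\sum m_j k^{(j)}$; making this uniform over all admissible tuples, while simultaneously accounting for the multiplicative constraint $\prod_i|\bar k_i|<N$ and the randomness in the matrix $(a_{ij})$, is precisely the delicate point. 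The martingale approach trades these combinatorial difficulties for a mixing estimate on a homogeneous space, which is why the paper takes that route.
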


Comparing this result with the proof of Theorem \ref{ThConvex} discussed above we see that
Theorem \ref{pois} comprises analogies of both step 2 and 3 in the former proof. Namely, it shows 
both that the small denominators contributing most to the discrepancy have asymptotically
Poisson distribution and that the numerators are asymptotically independent of the denominators
(cf. Proposition \ref{PrPropPois}(c)). 

We note that Theorem \ref{pois} is interesting in its own right since it describes the number of solutions
to Diophantine inequalities 
$$\Pi_i |\bar{k}_i| \|(k, \a)\|<\frac{c}{\ln^d N}, \quad |\brk_i|>1, \quad
\prod_i |\brk_i|<N. $$

\begin{question} \label{dista} 
What happens if  in Theorem \ref{pois} $\ln^d N$ is replaced by $\ln^a N$ with $a\in (0, d)$?
\end{question}

\begin{question}
\label{QSubMani}
Is Theorem \ref{pois} still valid if the distribution of $\xi$ is concentrated on a submanifold of $X?$ For example, one can take
$\alpha=(s, s^2).$ 
\end{question}

A special case of Question \ref{QSubMani} is when the matrix $(a_{i,j})$ is fixed equal to Identity. 
This case is directly related to Question \ref{QFixVar}(a).

The proof of Theorem \ref{pois} proceeds by martingale approach (see \cite{dSL, D-Lim}) which requires
good mixing properties in the future conditioned to the past. In the present setting, to apply this method 
it suffices to prove that most orbits of certain unipotent subgroups are equidisitributed at a polynomial rate.
Under the conditions of Theorem \ref{pois} one can assume (after an easy reduction) that the initial point has smooth
density with respect to Haar measure. Then the required equidistribution follows easily form polynomial mixing 
of the unipotent flows. In the setting of Question \ref{QSubMani} (as well as Question \ref{QKestenLF} 
in Section \ref{ScHD})
the initial point is chosen from a positive codimension submanifold so one cannot use the mixing argument. 
The problem of estimating the rate of equidistribution for unipotent orbits starting from submanifolds interpolates between
the problem of taking a random initial condition with smooth density which is solved and the problem of taking fixed initial
condition which seems very hard.

\section{Shrinking targets} \label{sec.shrinking} 
Another classical result in probability theory is the Borel-Cantelli Lemma which says that if $A_j$ are independent sets and
$\sum_j \Prob(A_j)=\infty$ then $\Prob$-almost every point belongs to infinitely many sets. 
A yet stronger  conclusion is given by the {\it strong Borel-Cantell Lemma} claiming that the number of $A_j$ which happen
up to time $N$ is asymptotic to $\sum_{j=1}^N \Prob(A_j).$
In the context of ergodic dynamical systems $(T,X,\mu)$, the law of large numbers is reflected in the Birkhoff theorem of almost sure converge in average of the ergodic means associated to a measurable observable, for example the characteristic function of a measurable set $A \subset X$. In a similar fashion one can study the so called dynamical Borel-Cantelli properties of the system $(X,T,\mu)$ by considering instead of a fixed stet $A$  a sequence of "target" sets $A_j \in X$ such that $\sum \mu(A_j)=\infty$. We then say that the dynamical Borel-Cantelli property is satisfied by $\{A_j\}$ if for almost every $x$, $T^j(x)$ belongs to $A_j$ for infinitely many $j$. 

In the context of a dynamical system $(T,X,\mu)$  on  a metric space $X$ it is natural to assume that the sets in question have nice geometric structure, since it is always possible for any dynamical system (with a non atomic invariant measure) to construct sets with divergent sum of measures that are missed after a certain iterate by the orbits of almost every point 
\cite[Proposition 1.6]{ChKl}. The simplest assumption is that the sets be balls. The dynamical Borel-Cantelli property for balls is a common feature  for deterministic systems displaying hyperbolicity features (see \cite{HV, Ph, D-Lim} and references therein). 

Due to strong correlations among iterates of a toral translation the dynamical Borel-Cantelli properties are more delicate in the quasi-periodic context. 

\subsection{Dynamical Borel-Cantelli lemmas for translations.} For toral translations one needs also to assume that the sets are nested since otherwise one can take
$A_j \subset (A_0+j\alpha)$ for some fixed set $A_0$ ensuring that the points from the compliment of $A_0$ do not visit any $A_j$ at time $j$.  This motivates 
the following definition (see \cite{HV,ChKl,F-STP}).

Given $T:(X, \mu)\to (X, \mu)$ let
$V_N(x,y)=\sum_{n=1}^N \chi_{B(y, r_n)} (T^n x).$ 
We say that $T$ has the {\bf shrinking target property} (STP)  if for any $y,$ $\{ r_n\}$
such that  $\sum_n \mu(B(y,r_n))=\infty$, it holds that $V_N(x,y)\to \infty$ for almost all $x,$ i.e.  the targets sequence $(B(y,r_n))$ satisfies the Borel-Cantelli property for $T$. 
We say that
$T$ has the {\bf monotone shrinking target property} (MSTP)  if for any $y, \{r_n\}$ 
such that $\sum_n \mu(B(r_n))=\infty$ and $r_n$ 
is non-increasing $V_N(x,y)\to \infty$ for almost all $x.$

In the case of translations, we can always assume without loss of generality that $y=0$ (replace $x$ by $x-y$). We then use the notation $V_N(x)$ for $V_N(x,y)$. We also use the notation $B(r)$ for the ball $B(0,r)$. Another interesting choice is to take $y=x$
in which case we study the rate of return rather than the rate of approach to $0.$ Note that if $V_N(x,x)$ does not depend on $x$
and so the number of close returns depends only on $\alpha.$ 
We shall write $U_N(\a)=\sum_{n=0}^{N-1} \chi_{B(r_n)} (T^n 0).$

%Shrinking target properties are common for deterministic systems displaying hyperbolicity features (see \cite{ChKl,Ath,ChCo} and references therein). As in the rest of these notes, we are interested in studying these properties for toral translations, individually or in average when the frequency is random. 

The following is a straightforward consequence of the fact that toral translations are isometries. 
\begin{theorem}
(\cite{F-STP}) Toral translations do not have STP.
\end{theorem}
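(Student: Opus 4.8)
The plan is to refute STP by exhibiting a single bad pair $(y,\{r_n\})$: a center, which we may take to be $y=0$ exactly as noted in the text (replace $x$ by $x-y$), together with radii satisfying $\sum_n \mu(B(r_n))=\infty$ for which the Borel--Cantelli conclusion $V_N(x)\to\infty$ fails on a set of positive measure. The one structural fact I would exploit is that, $T_\alpha$ being a translation, the set of initial points whose $n$-th iterate lands in the target is itself a ball whose radius we prescribe and whose center travels along the orbit:
\[
\{x:\ T_\alpha^n x\in B(r_n)\}=\{x:\ \|x+n\alpha\|<r_n\}=B(-n\alpha,r_n).
\]
Thus the choice of $r_n$ both selects the times at which the target is active and, through the orbit, pins the active pulled-back targets to balls centered at $-n\alpha$. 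The whole idea is to switch the target on only at times when $n\alpha$ lies in one small region, so that every active target sits inside one fixed proper subset of the torus.

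First I would fix a small ball $E=B(c_0,\rho)\subset\T^d$ and use minimality of $T_\alpha$, valid since $\alpha$ is irrational, to produce infinitely many times $n_1<n_2<\cdots$ with $n_k\alpha\in E$. I set $r_n=0$ for every $n$ outside $\{n_k\}$, so the target is empty at those times. At the active times I let the radius shrink slowly, say $r_{n_k}=\e_0 k^{-1/(2d)}$ with $\e_0$ small, so that $r_n\to 0$ (honoring the shrinking requirement) while
\[
\sum_n \mu\bigl(B(r_n)\bigr)=\sum_k \mu\bigl(B(\e_0 k^{-1/(2d)})\bigr)=c_d\,\e_0^{\,d}\sum_k k^{-1/2}=\infty .
\]
By construction $r_{n_k}\le \e_0$, so each active pulled-back target satisfies $B(-n_k\alpha,r_{n_k})\subset B(-c_0,\rho+\e_0)$.

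It then remains to pick $\rho$ and $\e_0$ small enough that $\rho+\e_0<1/2$, making $B(-c_0,\rho+\e_0)$ a proper ball whose complement has measure bounded below. For every $x$ in this complement one has $T_\alpha^n x\notin B(r_n)$ for all $n$ (the target is empty at inactive times and misses $x$ at active ones), whence $V_N(x)=0$ for every $N$ and in particular $V_N(x)\not\to\infty$. Thus $\{r_n\}$ has divergent measure sum yet fails Borel--Cantelli on a positive-measure set, so $T_\alpha$ does not have STP.

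The argument is deliberately soft and I do not expect a genuine analytic obstacle; the only points needing care are that minimality really supplies infinitely many returns of the orbit of $0$ to $E$, and that the radii can be made to tend to $0$ while the measures still sum to infinity, both routine. The one conceptual subtlety worth flagging is the role of non-monotonicity: the counterexample is driven entirely by the freedom to turn the target off at the ``wrong'' times, which STP permits but MSTP forbids. This is precisely why the monotone version is a far more delicate (and often true) statement, so a reader should understand that the ease here is not accidental but reflects the gap between STP and MSTP.
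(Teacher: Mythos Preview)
Your argument is correct and is exactly the idea the paper sketches: the text remarks that without nesting one can arrange $A_j\subset A_0+j\alpha$ so that points outside $A_0$ miss every target at time $j$, and calls the theorem a straightforward consequence of $T_\alpha$ being an isometry---your construction is a careful realization of this with $A_0=B(-c_0,\rho+\e_0)$. One small remark: invoking minimality (hence irrationality) is more than you need, since Poincar\'e recurrence or plain pigeonhole already yields infinitely many $n$ with $n\alpha$ in any fixed neighborhood of $0$, which covers every $\alpha$ at once.
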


%Recall the definition of the Diophantine set $\cD(\sigma)$ (see \eqref{DefDio}). The following result shows that the MSTP for translations characterizes the vectors of constant type, i.e. the set  $ \cD({d})$.
It turns out that the following Diophantine condition is relevant to this problem. Let 
\begin{equation}
\label{DInd}
\cD^*(\sigma)=\{\alpha: \forall k\in \integers-0,  \max_{i\in [1,d]}  \|k\alpha_i\| \geq C |k|^{-(1+\sigma)/d}\}.
\end{equation}

\begin{theorem}
(\cite{Ku}) \label{Ku}
A toral translation $T_\a$ has the MSTP iff $\alpha\in \cD^*(0).$
\end{theorem}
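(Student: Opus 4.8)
The plan is to translate recurrence of the orbit into the arithmetic of $\a$ and then to prove the two implications by, respectively, a Borel--Cantelli argument for quasi-independent events and an explicit near-periodic construction. Throughout I put $y=0$ and write $V_N(x)=\sum_{n=1}^N\chi_{B(r_n)}(x+n\a)$, where $r_n$ is non-increasing and, since $\mu(B(r))=c_dr^d$ for small $r$, the hypothesis $\sum_n\mu(B(r_n))=\infty$ reads $\sum_n r_n^d=\infty$. Using monotonicity I cut the time axis into the dyadic blocks $B_l=\{n:2^{-l-1}<r_n\le2^{-l}\}$, which are intervals of consecutive integers; on $B_l$ the radius is comparable to $\rho_l:=2^{-l}$, I set $\beta_l=\#B_l$, and $\sum_l\beta_l\rho_l^d=\infty$. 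The condition $\cD^*(0)$ enters only through $d(0,k\a)\ge\max_i\|k\a_i\|\ge C|k|^{-1/d}$, which gives two scale-free facts: (i) any $\beta$ consecutive iterates $\{n\a\}$ are pairwise at distance $\ge C\beta^{-1/d}$; and (ii) for any $\delta>0$ the times $\{k\ge1:d(0,k\a)\le\delta\}$ are spaced $\ge c\delta^{-d}$ apart, so an interval of length $T$ contains $O(T\delta^d+1)$ of them.

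For the implication $\a\in\cD^*(0)\Rightarrow\mathrm{MSTP}$ I reduce matters to a Borel--Cantelli statement over blocks. Let $H_l=\bigcup_{n\in B_l}(B(\rho_l)-n\a)$ be the event that the orbit visits $B(\rho_l)$ during the $l$-th block; since the blocks are disjoint in time and each $x\in H_l$ forces at least one visit, $\limsup_l H_l\subseteq\{V_N\to\infty\}$, so it suffices to show $\mu(\limsup_l H_l)>0$ and then invoke a zero--one law. Fact (i) makes the $\beta_l$ balls composing $H_l$ pack with bounded multiplicity, so $\Prob(H_l)\asymp\min(1,\beta_l\rho_l^d)$ and hence $\sum_l\Prob(H_l)=\infty$; fact (ii), applied to $H_l\cap H_{l'}$ after expanding it as a union over pairs $(n,n')\in B_l\times B_{l'}$ and counting the admissible differences $k=n'-n$, yields the quasi-independence bound $\Prob(H_l\cap H_{l'})\le C\,\Prob(H_l)\,\Prob(H_{l'})$ for $l\ne l'$. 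The Kochen--Stone lemma then gives $\mu(\limsup_l H_l)>0$. Finally, monotonicity of $r_n$ yields $V_N\circ T_\a\ge V_N-1$, so $\{V_N\to\infty\}$ is invariant modulo null sets and, being of positive measure, has full measure by ergodicity of $T_\a$. I expect the quasi-independence estimate to be the main obstacle, since this is exactly where $\cD^*(0)$ is used quantitatively and where the argument must break down for Liouville $\a$.

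For the converse I build an explicit monotone counterexample out of a near-period. If $\a\notin\cD^*(0)$ there are $q_j\to\infty$ with $\eta_j:=\max_i\|q_j\a_i\|$ and $\xi_j:=\eta_j q_j^{1/d}\to0$, and after thinning the subsequence I may take $\xi_j$ to tend to $0$ as fast as I wish. Since $T_\a^{q_j}$ displaces every point by at most $\sqrt d\,\eta_j$, over a window of $m_j$ periods the orbit splits into $q_j$ clusters of diameter $\le\sqrt d\,m_j\eta_j$. I then set $r_n\equiv\rho_j:=(a_j/q_j)^{1/d}$ on the $j$-th block, of length $L_j=m_jq_j$, ordering the blocks so that $\rho_j$ is non-increasing, and choose $a_j,m_j$ with $m_j\eta_j\ll\rho_j$, $\sum_j a_j<\infty$ but $\sum_j m_ja_j=\infty$ (feasible, e.g.\ $a_j=2^{-j}$ and $m_j\approx 2^j/j$, once $\xi_j$ has been made small enough). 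Then $\sum_n\mu(B(r_n))=c_d\sum_j m_jq_j\rho_j^d=c_d\sum_j m_ja_j=\infty$, whereas the set of $x$ visiting $B(\rho_j)$ during block $j$ is contained in $\bigcup_{0\le r<q_j}(B(\rho_j+\sqrt d\,m_j\eta_j)-r\a)$, of measure $\lesssim q_j\rho_j^d=a_j$. As $\sum_j a_j<\infty$, Borel--Cantelli shows that almost every $x$ meets only finitely many blocks, so $V_N(x)$ stays bounded and the MSTP fails. The delicate point is the simultaneous balancing of these three constraints, which is precisely where the failure $\xi_j\to0$ of $\cD^*(0)$ is exploited.
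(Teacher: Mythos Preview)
The paper does not give its own proof of this theorem; it attributes the result to Kurzweil and refers the reader to \cite{F-STP} for a simple proof. So there is no argument in the paper to compare against, and I will comment only on the soundness of your outline.

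Your overall strategy for both directions is correct, and the converse direction (building a monotone counterexample from a near-period when $\a\notin\cD^*(0)$) is clean. Two points in the forward direction need tightening.

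First, a small slip: in the definition of $H_l$ you take balls of radius $\rho_l=2^{-l}$, but on the block $B_l$ one only has $r_n>\rho_l/2$, so $x\in H_l$ gives $x+n\a\in B(\rho_l)$, which does \emph{not} force $x+n\a\in B(r_n)$. Use radius $\rho_l/2$ instead; this only changes constants.

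Second, the quasi-independence bound $\Prob(H_l\cap H_{l'})\le C\,\Prob(H_l)\Prob(H_{l'})$ is stated more strongly than what your counting actually delivers. When you expand over pairs $(n,n')$ and, for fixed $n$, count $n'\in B_{l'}$ with $d((n-n')\a,0)\le 2\rho_l$, fact~(ii) gives at most $C\beta_{l'}\rho_l^d+1$ such $n'$; the ``$+1$'' cannot be dropped. Writing $a_m=\beta_m\rho_m^d$, one gets for $l<l'$
\[
\Prob(H_l\cap H_{l'})\;\le\; C_1\,a_l a_{l'}\;+\;C_2\,2^{-(l'-l)d}\,a_l,
\]
and the second term need not be bounded by $C\,\Prob(H_l)\Prob(H_{l'})$ when $a_{l'}$ is very small. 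This does not break the argument: summing the extra term over $l'>l$ gives $O(a_l)$, hence
\[
\sum_{l,l'\le L}\Prob(H_l\cap H_{l'})\;\le\; C_1\Bigl(\sum_{l\le L} a_l\Bigr)^2+C_3\sum_{l\le L} a_l,
\]
which is $\le C_4\bigl(\sum_{l\le L}\Prob(H_l)\bigr)^2$ for large $L$ once you split into the cases where infinitely many $a_l$ exceed~$1$ (then restrict to those $l$, where $\Prob(H_l)\asymp1$ and the bound is trivial) or only finitely many do (then $\Prob(H_l)\asymp a_l$ eventually). So Kochen--Stone still applies; just do not assert the pairwise bound in its clean form.
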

A simple proof of Theorem \ref{Ku} can be found in \cite{F-STP}.  Recall that $\cD^*(0)$ 
has zero Lebesgue measure. Hence, the latter result shows that one has to further restrict the targets if one wants that typical translations display the dynamical Borel-Cantelli property relative to these targets. 

One possible restriction on the targets  is  to impose a certain growth rate on the sum of their volumes.  This actually allows to further distinguish among distinct Diophantine classes as it is shown in the following result. We say that $T$ has  {\bf s-(M)STP}  if for any $\{ r_n\}$ such that $\sum_n r_n^{ds}=\infty$ (and $r_n$ is non-increasing)
$V_N(x)\to \infty$ for almost all $x.$ We then have the following. 
\begin{theorem} 
(\cite{Ts})

a) If $\alpha\not\in \cD^*(sd-d)$, then the  toral translation $T_\a$ does not have the s-MSTP.

b) A circle rotation $T_\a$  has the s-MSTP iff $\alpha\in \cD^*(s-1).$
\end{theorem}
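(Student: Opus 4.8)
The plan is to treat the two parts by complementary methods: a second--moment (Borel--Cantelli) argument for the sufficiency in part (b), and an explicit overlapping--targets construction for the necessity in part (a). In both cases I set $A_n=\{x:x+n\a\in B(r_n)\}=B(-n\a,r_n)$, so that $V_N=\sum_{n=1}^N\chi_{B(r_n)}(x+n\a)=\sum_{n=1}^N\chi_{A_n}(x)$ is non-decreasing in $N$ and $\mu(A_n)$ equals the volume of $B(r_n)$. Monotonicity of $V_N$ means that s-MSTP is equivalent to $\mu(\limsup_n A_n)=1$ for every admissible radius sequence, which is the form I would work with throughout.

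For the sufficiency in part (b) we have $d=1$ and $\a\in\cD^*(s-1)$, i.e. $\|k\a\|\ge C|k|^{-s}$ for all $k\neq0$, and a fixed non-increasing $r_n$ with $\sum_n r_n^{s}=\infty$; here $\mu(A_n)=2r_n$. Since $\cD^*(s-1)$ is empty for $s<1$ there is nothing to prove unless $s\ge1$, in which case $r_n^{s}\le r_n$ forces $\sum_n\mu(A_n)=\infty$. First I would prove $\mu(\limsup_n A_n)\ge c>0$ via the Kochen--Stone inequality, whose essential input is the quasi-independence bound
\[
  \sum_{m,n\le N}\mu(A_m\cap A_n)=O\!\Big(\big(\textstyle\sum_{n\le N}\mu(A_n)\big)^{2}\Big).
\]
The off-diagonal terms with $m<n$ are nonzero only when $\|(n-m)\a\|\le r_m+r_n$, where they are bounded by $2r_n$; writing $h=n-m$ and using $\|h\a\|\ge Ch^{-s}$ to count how many $h$ satisfy $\|h\a\|\le 2r_{n-h}$, one sees that the Diophantine exponent $s$ matches the summation exponent $s$ exactly, and a three-gap/continued-fraction count of close returns of $n\a$ yields the bound. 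To upgrade the positive lower bound to full measure I would invoke a Gallagher-type zero--one law, available because the targets are nested: the same correlation estimate applied to the tails $\{A_n\}_{n\ge M}$ shows the lower bound is independent of $M$, and the zero--one law then forces $\mu(\limsup_n A_n)\in\{0,1\}$, hence $=1$, giving $V_N\to\infty$ almost everywhere.

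For the necessity in part (a), suppose $\a\notin\cD^*(sd-d)$ and set $\tau=(1+(sd-d))/d=s-1+1/d$; this produces integers $k_j\to\infty$ with $\eta_j:=\max_i\|k_j\a_i\|=o(k_j^{-\tau})$. The mechanism I would exploit is the near-periodicity of the orbit: since $\|k_j\a_i\|$ is tiny, the points $\{n\a:0\le n<k_j\}$ recur up to a slow drift of size $\eta_j$ per period, so over a block of length $B_j\approx \eps_0 k_j^{1-1/d}/\eta_j$ the translated targets $B(r_n)-n\a$ with radius $r_n\approx \eps_0 k_j^{-1/d}$ pile up on a union of $\approx k_j$ balls of total measure $\approx \eps_0^{d}$. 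A direct computation shows that each block contributes $\approx \eps_0^{ds+1}/(\eta_j k_j^{\tau})\to\infty$ to $\sum_n r_n^{ds}$, so after interlacing the blocks along a sparse subsequence of the $k_j$ (and choosing $\eps_0$ small) one obtains a non-increasing admissible sequence, while every $x$ lying in the complementary region, of measure at least $1-\eps_0^{d}$, misses all targets in that block. Specializing this to $d=1$ (where $sd-d=s-1$) gives the necessity direction of part (b).

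The main obstacle is precisely this last point in part (a): a single scale only shows that the covering is \emph{inefficient} (the missed regions differ from block to block, and at the critical radius $r_n\asymp k_j^{-1/d}$ the block measures are not summable, so a naive first Borel--Cantelli at the block level fails). The delicate step is to arrange the blocks so that the missed regions \emph{persist} along a subsequence, thereby capturing a set of positive measure on which $V_N$ stays bounded. For $d=1$ this is exactly where the rigid structure of the continued-fraction denominators and the three-gap theorem enter, letting one nest the uncovered gaps; in higher dimension the absence of such rigidity is presumably why part (a) only asserts the (necessary) failure and not a sharp characterization.
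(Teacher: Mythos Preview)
The paper does not prove this theorem; it is quoted from \cite{Ts} without argument, so there is no ``paper's own proof'' to compare against. What follows is a brief assessment of your plan on its own merits.

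Your sufficiency argument for part (b) is the standard one and is sound: the Diophantine bound $\|k\alpha\|\ge C|k|^{-s}$ is exactly what is needed for the quasi-independence estimate, and the zero--one upgrade is actually automatic here. Indeed, since $r_n$ is non-increasing, $T_\alpha^{-1}A_n\supset A_{n+1}$, so $T_\alpha^{-1}(\limsup_n A_n)\subset\limsup_n A_n$; ergodicity of $T_\alpha$ then forces $\mu(\limsup_n A_n)\in\{0,1\}$ without any appeal to a Gallagher-type law.

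For part (a) your diagnosis of the difficulty is correct but the resolution is simpler than you suggest, and your sketch stops just short of it. The point is that you should not keep $\eps_0$ fixed. Replace it by a sequence $\eps_j\to 0$ with $\sum_j \eps_j^{d}<\infty$. With your parameters the $j$-th block contributes at most $C\eps_j^{d}$ to the measure of the union $\bigcup_{n\in B_j}A_n$, so the first Borel--Cantelli lemma applied at the \emph{block} level gives $\mu(\limsup_n A_n)=0$ directly; there is no need to make the missed regions ``persist''. On the other hand the $j$-th block contributes $\approx \eps_j^{ds+1}\,k_j^{\,1-1/d-s}/\eta_j$ to $\sum_n r_n^{ds}$, and since $\alpha\notin\cD^*(sd-d)$ gives $\eta_j=o(k_j^{\,1-1/d-s})$, the factor $k_j^{\,1-1/d-s}/\eta_j$ tends to infinity. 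Passing to a sparse subsequence of the $k_j$ you can therefore keep each block contribution at least $1$ while $\eps_j\to 0$, so $\sum_n r_n^{ds}=\infty$. Filling the gaps between blocks with the value $\rho_{j+1}$ preserves monotonicity. Thus the ``delicate step'' you flag is handled by a routine diagonal choice rather than by any nesting of uncovered gaps; in particular no three-gap or continued-fraction structure is needed for the necessity direction, even when $d=1$.
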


\begin{question}
Is this true that  the  toral translation $T_\a$ has the s-MSTP iff $\alpha\in \cD^*(sd-d)?$
\end{question}

Another possible direction is to study specific sequences, asking for example that $r_n=c n^{-\gamma},$ 
or that $n r_n^d$ be decreasing, in which case the sequence $r_n$ is coined a {\bf Khinchin sequence}. 
The case $r_n=c n^{-1/d}$ in dimension $d$ is very particular, but important.  Indeed a vector $\a \in \T^d$ is said to be {\bf badly approximable} if for some $c>0$, the sequence $\lim_{N\to\infty} U_N(\a, \{c n^{-1/d}\})<\infty.$ It is known that the set of badly approximable vectors has zero measure. By contrast, vectors $\a$ such that 
$\lim_{N\to\infty} U_N(\a, \{c n^{-(1/d+\eps)}\})=\infty$
for some $\eps>0$ are called {\bf very well approximated, or VWA}. 
The obvious direction of the Borel-Cantelli lemma 
implies that almost every $\a \in \T^d$ is not very well approximated (cf. \cite[Chap. VII]{Cassel}).
The latter  facts are particular cases of a more general result, the Khintchine--Groshev theorem on Diophantine approximation which gives a very detailed description of the sequences such that $U_N(x, \{r_n\})$ diverges 
for almost all $\a.$
We refer the reader to \cite{BBKM} for a nice discussion of that theorem and its extensions, and to Section \ref{sec.linearforms} below. 

Khinchin sequences also display BC property much more likely than general sequences. 
For example, compare Theorem \ref{ChCo}(b) below with Theorem \ref{Ku}  which shows that the set of vectors having mSTP has
zero measure.

If a shrinking target property holds it is natural to investigate the asymptotics of the number of target hits.  This makes  the following definition natural. 
We say that   a given sequence of targets $\{A_n\}$ is  {\bf sBC or strong Borel-Cantelli}  for $(T,X,\mu)$  
if for almost every $x$  $$\lim_{N \to \infty} \frac{\sum_{n=1}^N \chi_{A_n} (T^n x)}{\sum_{n=1}^N \mu(A_n)}= 1.$$

%A particular, but important, case in dimension $d$ is given by the sequence of targets  $B_{r_n}$ $r_n=c/n^{1/d}$.
% In \cite{ChCo} the following was proven, improving on some earlier results, for example \cite{schmidt0,schmidt} \margem{maybe we have to add things here?}
\begin{theorem} \cite{ChCo} \label{ChCo}
(a) For every $\a \in \T$ such that its  convergents  satisfy $a_n \leq Cn^{\frac76}$ the sequence $\{B({\frac{c}{n}})\}$ is sBC for $T_\a.$

(b) For almost every $\a \in \T,$ any Khinchin sequence  is sBC for $T_\a.$

(c) For any  $\a \in   \cD(1)$, and any decreasing sequence $\{ r_n\}$ such that $\sum r_n = \infty,$ 
$\{B(r_n)\}$ is sBC for $T_\a$. 
\end{theorem}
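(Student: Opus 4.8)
The plan is to prove all three parts through the classical variance (second moment) method for strong Borel--Cantelli lemmas, the only ingredient that changes between the parts being the Diophantine input used to control correlations. Since we may center at $y=0$, write $f_n(x)=\chi_{B(r_n)}(x+n\a)$, $c_n=\mu(B(r_n))=2r_n$, $E_N=\sum_{n=1}^N c_n$ and $S_N(x)=\sum_{n=1}^N f_n(x)$, so that the sBC property is exactly $S_N/E_N\to 1$ almost everywhere. By the G\'al--Koksma / W.~Schmidt quasi-independence lemma it suffices to produce a block variance estimate of the form
\begin{equation}
\int_{\T}\Big(\sum_{n=M+1}^{N}\big(f_n-c_n\big)\Big)^2\,dx\ \le\ C\,\Big(\sum_{n=M+1}^{N}c_n\Big)\,\psi\Big(\sum_{n=M+1}^{N}c_n\Big)
\end{equation}
for all $M<N$, where $\psi$ grows at most like a fixed power of the logarithm; this yields $S_N=E_N+O\big(E_N^{1/2}\log^{3/2+\eps}E_N\big)$ almost surely, hence the conclusion whenever $E_N\to\infty$.

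First I would expand the square. The diagonal contributes $\sum_n c_n(1-c_n)\le E_N$, which is harmless. The off-diagonal terms are the covariances
\begin{equation}
\mathrm{Cov}(f_j,f_k)=\mu\big(B(r_j)\cap(B(r_k)-(k-j)\a)\big)-c_jc_k,
\end{equation}
where the first term is the overlap of two arcs of half-lengths $r_j,r_k$ whose centres are at distance $\|(k-j)\a\|$ apart. Thus $\mathrm{Cov}(f_j,f_k)$ is appreciable only for the \emph{resonant} lags $l=k-j$ for which $\|l\a\|\lesssim r_j+r_k$, and locating these resonances is a matter of the three--distance theorem and the continued fraction denominators $q_\nu$ of $\a$.

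The crucial --- and delicate --- point is that one \emph{cannot} simply discard the negative products $c_jc_k$: summed over the resonant lags the overlaps alone are of size $E_N^2$, matching $\sum_{j<k}c_jc_k$, so the whole variance lives in the difference. I would make this cancellation explicit by recognising the covariance sum as an integral of a discrepancy function. For a fixed radius $r$,
\begin{equation}
\sum_{l=1}^{L}\mu\big(B(r)\cap(B(r)-l\a)\big)=\int_{B(r)}\sum_{l=1}^{L}\chi_{B(r)}(y+l\a)\,dy=\mu(B(r))^2L+\int_{B(r)}D_L(B(r),\a,y)\,dy,
\end{equation}
so that after subtracting the product terms the covariance sum is, up to boundary corrections, the integral of the rotation's discrepancy $D_L(B(r),\a,\cdot)$ against balls. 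The variable radius $r_n$ is handled by Abel summation in $n$ together with a dyadic decomposition in the lag $l$, reducing matters to controlling $\int_{B(\rho)}D_L(B(\rho),\a,\cdot)$ uniformly over a continuum of scales $\rho$ and lengths $L$; this is exactly the quantity governed by the continued fraction expansion of $\a$, as in Theorem~\ref{th.beck}.

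The last step is to feed in the arithmetic of $\a$. For part (c), $\a\in\cD(1)$ is equivalent to $q_{\nu+1}\le Cq_\nu^{2}$; combining this with the Denjoy--Koksma bound $|D_{q_\nu}(B(\rho),\a,\cdot)|=O(1)$ on convergent denominators, the discrepancy integrals above are controlled for every decreasing $r_n$ with $\sum r_n=\infty$, giving (1) with $\psi$ polylogarithmic. For part (a) the same scheme applies, the growth hypothesis $a_n\le Cn^{7/6}$ on the partial quotients controlling the contribution of the large quotients, the exponent $7/6$ being calibrated precisely to the critical radius $r_n=c/n$. For part (b) I would replace the deterministic Diophantine input by the metric theory of continued fractions: for a.e.\ $\a$ the denominators grow at the L\'evy rate and sums of the type $\sum_\nu a_{\nu+1}^{-1}$ are controlled, which suffices to bound the discrepancy integrals for any Khinchin sequence. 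The main obstacle throughout is precisely the cancellation of the third paragraph: because a rotation has no mixing, the correlations $\mathrm{Cov}(f_j,f_k)$ do not decay, the equidistribution main term must be cancelled exactly, and pushing the residual discrepancy error down to $O(E_N\,\mathrm{polylog}\,E_N)$ \emph{uniformly} over the range of radii generated by the Abel summation is where all the sharp, $\a$-dependent continued fraction estimates reside.
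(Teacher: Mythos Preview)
The survey does not prove this theorem; it is quoted from \cite{ChCo} and only supplemented by the remark that the hypothesis in (a) defines a full-measure set together with a sharpness example showing that $a_n\sim n^{2+\eps}$ destroys the sBC property for $r_n=c/n$. So there is no ``paper's own proof'' to compare against.

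That said, your outline is the right one and is in the spirit of the method \cite{ChCo} actually uses (and of Schmidt's original arguments \cite{schmidt0,schmidt}). The reduction to a block variance estimate via the G\'al--Koksma/Schmidt lemma, the identification of the off-diagonal sum with an integrated discrepancy so that the main terms cancel exactly, and the Abel summation to pass from constant to variable radii are all standard and correct. Two places deserve more care if you push this through. First, for part (c) you write that $\a\in\cD(1)$ together with the Denjoy--Koksma bound $D_{q_\nu}=O(1)$ suffices; note that $\cD(1)$ only gives $q_{\nu+1}\le Cq_\nu^2$, so individual partial quotients $a_{\nu+1}$ can be as large as $Cq_\nu$, and one must check that the Ostrowski decomposition of a general $L$ still produces a discrepancy bound compatible with the target $O(E_N\,\mathrm{polylog}\,E_N)$ after integration over the ball --- the averaging over $y\in B(\rho)$ is what saves you here, not a pointwise discrepancy bound. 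Second, for part (a) the calibration of the exponent $7/6$ to the critical sequence $r_n=c/n$ is genuinely sharp (as the paper's counterexample with $a_n\sim n^{2+\eps}$ shows), so the estimate there leaves no room and the bookkeeping must be done precisely; your sketch correctly flags this but does not carry it out.
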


Observe that the condition in (a) has full measure.  On the other hand, 
it is not hard to see that if $a_n(\a) \sim n^{2+\eps}$ for every $n$ then the sequence $(B({\frac{1}{n}}))$ does not have the sBC for $T_\a$. Indeed, if $$ x \in \left[\frac{k}{q_n}- \frac{1}{2nq_n},\frac{k}{q_n}+ \frac{1}{2nq_n}\right]$$ then since $\|q_n \a\| \leq \frac{1}{q_{n+1}} \leq   \frac{2}{n^{2+\eps} q_{n}}$ and $\ln q_n \leq Cn \ln n$
$$\sum_{l=q_n}^{n^{1+\eps/2}q_n} \chi_{B(\frac 1 n)} (x+l\a) \geq n^{1+\eps/2} \gg  \sum_{l=1}^{n^{1+\eps/2}q_n} \frac 1 n.$$ 
But it is easy to see that a.e. $x$ belongs to infinitely many intervals of the form $ [\frac{k}{q_n}- \frac{1}{2nq_n},\frac{k}{q_n}+ \frac{1}{2nq_n}]$.

In higher dimensions, it was proved in \cite{schmidt}  that 

\begin{theorem} \label{DFVsBC} If $\sum_n r_n^d=\infty$ then
for almost every  vector $\a \in \T^d$, the sequence $(B({r_n}))$ 
is sBC for the translation $T_\a.$
\end{theorem}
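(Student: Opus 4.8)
The plan is to prove the statement in its fully annealed form, averaging over both the base point $x$ and the frequency $\a$ simultaneously, and then to recover the stated per-$\a$ conclusion by Fubini. Write $E_N=\sum_{n=1}^N \mu(B(r_n))$, which diverges since $\mu(B(r_n))=v_d r_n^d$ (with $v_d$ the volume of the unit ball in $\R^d$) and $\sum_n r_n^d=\infty$ by hypothesis. Set
$$f_n(x,\a)=\chi_{B(r_n)}(x+n\a)-\mu(B(r_n)), \qquad S_N(x,\a)=\sum_{n=1}^N f_n(x,\a),$$
so that the sBC property for a given $\a$ is exactly the assertion that $S_N(x,\a)/E_N\to 0$ for a.e. $x$. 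I would show that $S_N/E_N\to 0$ for a.e. $(x,\a)\in\T^d\times\T^d$; by Fubini this yields, for a.e. $\a$, convergence for a.e. $x$, which is precisely the claim.

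The heart of the argument is an exact orthogonality computation. Expanding $\chi_{B(r_n)}$ in a Fourier series $\chi_{B(r_n)}(z)=\sum_{k}c_k(r_n)e^{2\pi i(k,z)}$ with $c_0(r_n)=\mu(B(r_n))$, we have $f_n(x,\a)=\sum_{k\neq 0}c_k(r_n)e^{2\pi i(k,x)}e^{2\pi i n(k,\a)}$. Integrating a product $f_m f_n$ over $\T^d\times\T^d$, the $x$-integral forces the two frequencies to satisfy $k'=-k$, and the subsequent $\a$-integral forces $(m-n)k=0$, so that
$$\int_{\T^d\times\T^d} f_m f_n\, dx\, d\a=\sum_{k\neq 0}c_k(r_m)c_{-k}(r_n)\,\mathbf 1\{(m-n)k=0\}.$$
For $m\neq n$ the indicator forces $k=0$, hence the integral vanishes: the $f_n$ are pairwise orthogonal. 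For $m=n$, Parseval gives $\int f_n^2=\mu(B(r_n))-\mu(B(r_n))^2\leq \mu(B(r_n))$. Consequently, for any block $M<n\leq M+L$,
$$\int_{\T^d\times\T^d}\Big(\sum_{n=M+1}^{M+L} f_n\Big)^2 dx\, d\a=\sum_{n=M+1}^{M+L}\int f_n^2\ \leq\ \sum_{n=M+1}^{M+L}\mu(B(r_n))=E_{M+L}-E_M.$$

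With this variance bound in hand, I would invoke a standard quantitative Borel--Cantelli (Gál--Koksma) lemma: if a sequence of zero-mean functions satisfies $\int(\sum_{n=M+1}^{M+L}f_n)^2\leq C\,(\Phi(M+L)-\Phi(M))$ for a nondecreasing $\Phi$ with $\Phi(N)\to\infty$, then $\sum_{n\leq N}f_n=o\big(\Phi(N)^{1/2}\log^{3/2+\eps}\Phi(N)\big)$ almost everywhere, for every $\eps>0$. Applying this with $\Phi(N)=E_N$ and $C=1$ gives $S_N=o\big(E_N^{1/2}\log^{3/2+\eps}E_N\big)=o(E_N)$ for a.e. $(x,\a)$, i.e. $S_N/E_N\to 0$ almost everywhere on $\T^d\times\T^d$, and Fubini completes the proof.

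The pleasant feature of this route is that randomizing $x$ and $\a$ at the same time makes the correlations vanish identically, so that no Diophantine information about $\a$ and no decay estimate on the $c_k(r_n)$ are needed — only the trivial bound $\int f_n^2\leq\mu(B(r_n))$, valid for any measurable target. The sole genuine ingredient is the off-the-shelf Gál--Koksma lemma, whose role is to upgrade the $L^2$ variance bound into almost sure convergence along the full sequence rather than a subsequence. The truly difficult problem, which is \emph{not} what is asked here, is the quenched version with $\a$ fixed: there the correlations $\int f_m f_n\, dx$ no longer vanish, and one is forced to exploit the Diophantine properties of $\a$, as in Theorem \ref{ChCo}.
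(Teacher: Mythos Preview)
Your argument is correct. The orthogonality computation over $\T^d\times\T^d$ is clean and exact, the variance bound $\int(\sum_{M<n\le M+L} f_n)^2\le E_{M+L}-E_M$ follows immediately, and the G\'al--Koksma lemma converts this into $S_N=O\big(E_N^{1/2}\log^{3/2+\eps}E_N\big)=o(E_N)$ almost everywhere on the product space; Fubini then gives the stated per-$\a$ conclusion. This is essentially Schmidt's original variance argument from \cite{schmidt}, streamlined by your observation that joint randomization in $(x,\a)$ kills the off-diagonal correlations \emph{identically}, so no arithmetic input on $\a$ and no decay of $c_k(r_n)$ is ever invoked.

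The paper, by contrast, sketches a proof only in the special case $r_n=cn^{-1/d}$ and by a completely different route: it uses the Dani correspondence to rewrite the number of hits in the dyadic (or rather exponential) block $e^m<n<e^{m+1}$ as $f(g_m(\Lambda_\a\Z^{d+1}+\bar x))$ for an explicit function $f$ on the space of affine lattices, so that $V_N$ becomes an ergodic sum of length $\sim\ln N$ for the diagonal flow $g_m$; the sBC property then drops out of the Ergodic Theorem. What your approach buys is generality (arbitrary $r_n$, arbitrary measurable targets) and economy --- it needs nothing beyond Parseval and an off-the-shelf lemma. What the paper's approach buys is that the same representation as an ergodic sum over a partially hyperbolic flow immediately feeds into the martingale machinery needed for the finer distributional results, Theorems~\ref{dfv1} and~\ref{dfv2}; your second-moment argument, by itself, does not see the CLT scale.
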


\subsection{On the distribution of hits.} 
Theorems \ref{ChCo} and \ref{DFVsBC} motivate the study of the error terms 
$$\Delta_N(c,\a,x)= V_N(\a,x ) - \sum_{n=1}^N \Vol(B_{r_n})  
\text{ and }
\brDelta_N(c,\a)= U_N(\a)- \sum_{n=1}^N \Vol(B_{r_n}).$$ 

One can for example try to give lower and upper asymptotic bounds on the growth of $\Delta_N$ as a function of the arithmetic properties of $\a$ in the spirit of Kintchine-Beck Theorem \ref{th.beck} and Questions \ref{q7}--\ref{q9}.
Here we will be interested in the distribution of $\Delta_N(c,\a,x)$ after adequate normalization when $\a$ or $x$ or $(\a,x)$ are random.

%One can also study the distribution of $V_n.$

\begin{theorem}
(\cite{beck4, M-ST})
\label{CLTShrinkBad} Let $r_n=c n^{-1/d}.$
Suppose that  $x$ is uniformly distributed on $\T^d.$ For any $c >0$, if $\alpha\in \cD^*(0)$,  there is a constant $K$  such that all limit points of
$\dfrac{\Delta_N(c,\a,x)}{\sqrt{\ln N}}$ are $\sN(\sigma^2)$ with 
$\sigma^2\leq K.$
\end{theorem}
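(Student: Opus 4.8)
The plan is to treat $\Delta_N$ as a sum of mean-zero functions of the random point $x$ and to run a central limit theorem for this sum, the number of ``effectively independent'' summands being of order $\ln N$. Write $\Delta_N(c,\a,x)=\sum_{n=1}^N g_n(x)$ with $g_n(x)=\chi_{B(r_n)}(x+n\a)-v_n$ and $v_n=\Vol(B(r_n))=\omega_d c^d/n$, where $\omega_d$ is the volume of the unit ball. Each $g_n$ has zero mean with respect to the uniform measure on $\T^d$, so $\EXP_x[\Delta_N]=0$, and the whole problem is to analyze the law of $\Delta_N$ under $x\sim\mathrm{Unif}(\T^d)$, the vector $\a\in\cD^*(0)$ being fixed throughout (this is a quenched statement in $\a$).

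First I would compute the variance. Since $\EXP_x[g_n g_m]=\Vol\big(B(r_n)\cap(B(r_m)+(n-m)\a)\big)-v_nv_m$, the diagonal terms contribute $\sum_{n\le N}(v_n-v_n^2)=\omega_d c^d\ln N+O(1)$, which already produces the $\ln N$ scale. For the off-diagonal terms, note that $\EXP_x[g_ng_m]\neq 0$ forces the two balls to overlap, i.e. $\|(n-m)\a\|<r_n+r_m$, and in that case the covariance is bounded by $\min(v_n,v_m)$. Here the hypothesis $\a\in\cD^*(0)$ enters decisively: the lower bound $\max_i\|l\alpha_i\|\ge C|l|^{-1/d}$ keeps the return times $l=n-m$ with $\|l\a\|$ comparable to the critical scale $r_{n}$ sparse and quantitatively separated, so that $\sum_{n\ne m}|\EXP_x[g_ng_m]|=O(\ln N)$. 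Combining the two contributions gives the uniform upper bound $\mathrm{Var}_x(\Delta_N)\le K\ln N$, which is exactly the source of the constant $K$ and of the restriction $\sigma^2\le K$ in the statement.

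Next I would organize the summands into $J\asymp\ln N$ geometric blocks $I_j=[2^{j},2^{j+1})$ and set $W_j=\sum_{n\in I_j}g_n$. On $I_j$ one has $r_n\asymp c\,2^{-j/d}$ and $v_n\asymp c^d2^{-j}$, so the expected number of hits per block is $\asymp c^d$; hence each $W_j$ is $O(1)$ in $L^2(dx)$ by the variance estimate restricted to $I_j$, and $\Delta_N/\sqrt{\ln N}=(\ln N)^{-1/2}\sum_j W_j$ is a normalized sum of $\asymp\ln N$ uniformly $L^2$-bounded pieces. I would then establish asymptotic independence of the blocks by the method of moments: expanding $\EXP_x[\Delta_N^p]$ in Fourier series, orthogonality in $x$ restricts the frequencies $k^{(1)},\dots,k^{(p)}\in\Z^d$ to satisfy $\sum_i k^{(i)}=0$; the pairing terms reproduce, for $p$ even, the number of pairings times $(\mathrm{Var}_x\Delta_N)^{p/2}$, while the Diophantine separation between distinct resonant scales forces every non-pairing contribution, equivalently every joint cumulant of the $W_j$ of order $\ge 3$, to be $o((\ln N)^{p/2})$. (Alternatively one checks a Lindeberg condition for the nearly martingale-difference array $\{W_j\}$.) Since $\mathrm{Var}_x(\Delta_N)\le K\ln N$ furnishes tightness of $\Delta_N/\sqrt{\ln N}$, every weak limit along a subsequence exists, and, all cumulants above the second vanishing, must be a centered Gaussian $\sN(\sigma^2)$ with $\sigma^2=\lim (\ln N)^{-1}\mathrm{Var}_x(\Delta_N)\le K$ along that subsequence.

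I expect the correlation control to be the main obstacle, in two places: the off-diagonal bound $\sum_{n\ne m}|\EXP_x[g_ng_m]|=O(\ln N)$ in the variance, and the vanishing of the higher cumulants, both of which hinge on turning $\cD^*(0)$ into a quantitative sparsity statement for the close returns $\{l:\|l\a\| \text{ comparable to } r_l\}$ of the orbit, which are governed by the best simultaneous approximation denominators of $\a$. A secondary but important point is that one should \emph{not} expect $(\ln N)^{-1}\mathrm{Var}_x(\Delta_N)$ to converge: the within-block geometry of the orbit relative to the origin oscillates with $j$ without settling down, so the variance may possess several limit points. This is precisely why the conclusion is phrased in terms of all limit points of $\Delta_N/\sqrt{\ln N}$ being Gaussian with variance at most $K$, rather than as a single limiting law.
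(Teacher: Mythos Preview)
The paper is a survey and does not give its own proof of this theorem; it cites \cite{beck4} and \cite{M-ST}. However, in Section~\ref{tsp.proofs} the paper sketches the mechanism it favors for the neighboring Theorems~\ref{DFVsBC}, \ref{dfv1}, \ref{dfv2}: via the Dani correspondence one rewrites the hit count on dyadic shells $e^m<n<e^{m+1}$ as $N_m(\alpha,x)=f(g_m(\Lambda_\alpha\Z^{d+1}+\bar x))$ for a fixed function $f$ on the space of affine lattices, so that $\Delta_N$ becomes an ergodic sum of length $\asymp\ln N$ along the diagonal flow $g_m$. For fixed $\alpha\in\cD^*(0)$ the base point lies on a bounded $g_m$-orbit, and the Gaussian limits come from a CLT for such orbits (this is exactly what the title of \cite{M-ST}, ``An application of Bakhtin's CLT for hyperbolic sequences\dots'', refers to).

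Your route is genuinely different: you stay on $\T^d$, compute moments of $\Delta_N$ directly, group into $\asymp\ln N$ dyadic blocks, and run a moment/cumulant argument. This is in spirit Beck's original approach in \cite{beck4}. The homogeneous-dynamics route buys a cleaner structure---the blocks become literal iterates of a fixed map, so off-the-shelf limit theorems apply and the role of $\cD^*(0)$ is simply that the orbit stays in a compact part of the lattice space---and it generalizes more easily (cf.\ Theorem~\ref{DFVgeneral}). Your direct route is more elementary and keeps the arithmetic input explicit; your explanation of why only subsequential Gaussian limits with $\sigma^2\le K$ are claimed (the per-block variance oscillates with the local geometry of the orbit and need not converge) is exactly right.

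One caution on the hardest step. Your assertion that non-pairing contributions to $\EXP_x[\Delta_N^p]$ are $o((\ln N)^{p/2})$ is correct for $\alpha\in\cD^*(0)$, but the Fourier argument you sketch is more delicate than indicated: each $g_n$ carries all Fourier modes of $\chi_{B(r_n)}$, whose coefficients decay only like $|k|^{-(d+1)/2}$, so after imposing $\sum_i k^{(i)}=0$ there is still a large sum over $p$-tuples of frequencies and times to control. Turning $\cD^*(0)$ into the required multi-scale separation statement and carrying out the combinatorics is the substance of Beck's proof (done in \cite{beck4} for $d=1$) and is not a short consequence of the variance bound; you have correctly identified this as the main obstacle.
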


In the case of random $(\a,x)$ we have 
\begin{theorem} \label{dfv1} Let $r_n=c n^{-1/d}.$
(\cite{DFV}) There is $\Sigma(c,d)>0$ such that
if $(\alpha,x)$ is uniformly distributed on $\Tor^d \times \Tor^d$ then
$\dfrac{\Delta_N(c,\a,x)}{\sqrt{\ln N}}$ converges to $\sN(\Sigma(c,d)).$
\end{theorem}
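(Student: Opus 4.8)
The plan is to reduce Theorem \ref{dfv1} to a central limit theorem for ergodic integrals of the diagonal flow on the space of affine lattices and then to run the martingale method. I will keep the annealed randomness in both $\a$ and $x$: this is what makes the variance transparent and, more importantly, keeps the initial measure absolutely continuous, so that we avoid the hard ``unipotent orbit from a positive codimension submanifold'' issues flagged after Theorem \ref{pois}.

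First I would set $y=0$ and write $\Delta_N(c,\a,x)=\sum_{n=1}^N f_n$ with $f_n=\chi_{B(r_n)}(x+n\a)-\Vol(B(r_n))$, so that $\EXP[f_n]=0$. A direct computation exploiting the annealed averaging shows that the $f_n$ are in fact pairwise uncorrelated: for $m\neq n$, setting $j=n-m$ and integrating first in $x$ (translation invariance of Haar measure) and then in $\a$ (the endomorphism $\a\mapsto j\a$ of $\T^d$ is measure preserving), the joint probability factors, and $\EXP[f_m f_n]=0$. Hence, writing $\omega_d=\Vol(B(1))$ and using $\Vol(B(r_n))=\omega_d c^d/n$,
\[
\EXP[\Delta_N^2]=\sum_{n=1}^N \Vol(B(r_n))\big(1-\Vol(B(r_n))\big)=\omega_d c^d\ln N+O(1).
\]
This already identifies the limiting variance $\Sigma(c,d)=\omega_d c^d$ and gives its positivity for free. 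Pairwise uncorrelatedness is of course far from enough for asymptotic normality, so the dynamical input is needed to control the higher order dependence.

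Next I would renormalize scale by scale. For $n$ in the dyadic block $I_k=[e^k,e^{k+1})$ one has $r_n\approx c\,e^{-k/d}$, and by the Dani correspondence used in the proof of Theorem \ref{ThDO-Pois} the event $x+n\a\in B(r_n)$ translates into the presence of a point of $g^{k}(\Lambda_\a\Z^{d+1}+\bar x)$ in a \emph{fixed} window $B(c)\times[1,e]$. Grouping the sum by blocks, $\Delta_N$ is approximated, up to an $o(\sqrt{\ln N})$ error, by a Birkhoff-type integral $\int_0^{\ln N}F\big(g^{t}(\Lambda_\a\Z^{d+1}+\bar x)\big)\,dt$, where $F(L)=\#\big(L\cap(B(c)\times[0,1])\big)-\omega_d c^d$ is the centered counting function on the space of affine lattices $\bar G=(SL_{d+1}(\R)\ltimes\R^{d+1})/(SL_{d+1}(\Z)\ltimes\Z^{d+1})$. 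Thus the statement becomes a CLT for ergodic integrals of the partially hyperbolic flow $g^t$ with the initial point distributed along the unstable leaf $\{\Lambda_\a\Z^{d+1}+\bar x\}$, which equidistributes towards Haar measure by Theorem \ref{ThDO-Pois} and its effective version (cf.\ \cite{KM1}). I would then decompose the integral into unit blocks $\xi_k=\int_k^{k+1}F(g^t\cdot)\,dt$, introduce the filtration $\mathcal F_k$ generated by the past of the flow, and, following the scheme of \cite{D-Lim,dSL}, use effective exponential mixing of $g^t$ to approximate $\sum_{k\le\ln N}\xi_k$ by a martingale with asymptotically stationary increments. The conditional variances $\sum_k\EXP[\xi_k^2\mid\mathcal F_{k-1}]$ obey a law of large numbers by the Birkhoff theorem for $g^t$, converging to $\Sigma(c,d)\ln N$, while the Lindeberg condition follows from tail bounds for $F$; the martingale CLT then yields $\Delta_N/\sqrt{\ln N}\Rightarrow\sN(\Sigma(c,d))$ with $\Sigma(c,d)=\omega_d c^d$.

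The hard part will be the unboundedness of $F$ in the cusp of $\bar G$: large individual block contributions occur precisely when $\a$ admits an unusually good simultaneous rational approximation at the scale $e^{-k/d}$, producing a short vector of $g^k(\Lambda_\a\Z^{d+1}+\bar x)$ and hence many lattice points in the fixed window. One must show that these excursions contribute negligibly beyond the main variance term and do not spoil the Lindeberg condition. Quantitatively the block count has a tail of order $M^{-(d+1)}$, so for $d\ge 2$ the blocks have strictly more than two finite moments and the Gaussian CLT goes through, whereas $d=1$ is borderline (tail exponent $2$) and requires extra care; this is exactly the mechanism by which a non-shrinking window produces instead the Cauchy law $\Cauchy$ of Kesten's Theorem \ref{ThKesten}. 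Making the equidistribution effective enough to run the martingale approximation uniformly over the $\sim\ln N$ scales, together with the truncation of cusp excursions, is the technical heart of the argument; the annealed averaging over both $\a$ and $x$ is what keeps the initial measure smooth and thereby avoids the much harder regime discussed after Theorem \ref{pois}.
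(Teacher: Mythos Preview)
Your proposal follows essentially the same route as the paper's sketch in Section~\ref{tsp.proofs}: the Dani correspondence \eqref{eq.danistp} rewrites $\Delta_N$ as a Birkhoff sum of a Siegel-type counting function along the diagonal flow $g_m$ on the space of affine lattices, and the CLT is then obtained by the martingale argument of \cite{LB, dSL}. Your direct pairwise-uncorrelation computation of the annealed variance is a nice addition not spelled out in the paper, and your discussion of the cusp excursions and the borderline $d=1$ tail correctly anticipates the mechanism behind the different normalization in Theorem~\ref{dfv2}.
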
 

There is an analogous statement for the return times.

\begin{theorem} \label{dfv2}
(\cite{philipp.book, samur,DFV}) Let $r_n=c n^{-1/d}.$
There is $\bar{\Sigma}(c,d)>0$ such that
if $\alpha$ is uniformly distributed on $\Tor^d$ then 
$\dfrac{\brDelta_N(c,\a)}{\sqrt{b_N}}$ converges in distribution to $\sN(\bar{\Sigma}(c,d))$ where
$$b_N=\begin{cases} 
\ln N \ln \ln N & \text{ if }d=1 \\
\ln N & \text{ if }d\geq 2 .
\end{cases} $$
\end{theorem}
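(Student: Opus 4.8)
The plan is to realize the return counter $U_N(\a)$ as an ergodic integral along the diagonal flow $g^{t}$ on the space of unimodular lattices in $\R^{d+1}$, and then to prove a central limit theorem for that integral; the two different normalizations will come from whether or not the relevant observable is square integrable. \textbf{Reduction to a Birkhoff integral.} First I would write $U_N(\a)=\#\{1\le n\le N:\ \|n\a\|<c\,n^{-1/d}\}$, where $\|\cdot\|$ is distance to $\Z^d$, so that averaging over $\a$ gives $\EXP$ of the $n$-th term equal to $\Vol(B(c\,n^{-1/d}))=\Vol(B(1))\,c^{d}/n$, whence $\sum_{n\le N}\Vol(B_{r_n})=\Vol(B(1))\,c^{d}\ln N+O(1)$; this fixes the centering. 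By the Dani correspondence principle used in the proof of Theorem \ref{ThDO-Pois} — applied here in the centered (non-affine) setting of part (c) — the condition $\|n\a\|<c\,n^{-1/d}$ for $n\approx e^{t}$ is equivalent to the renormalized lattice $g^{t}(\Lambda_\a\Z^{d+1})$ having a nonzero vector in the fixed window $W=[-c,c]^{d}\times[0,1]$. Grouping the range $1\le n\le N$ into blocks on which $t=\ln n$ varies by $O(1)$, I would obtain
\[
\brDelta_N(c,\a)=\int_0^{\ln N}\big(F-\bar F\big)\big(g^{s}\Lambda_\a\Z^{d+1}\big)\,ds+O(1),
\]
where $F(L)=\#\{e\in L:\ e\in W\}$ counts short vectors and $\bar F=\int F\,d\mu_{\bG}$. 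The whole question then reduces to a CLT for ergodic integrals of the mean-zero function $F-\bar F$ over $g^{s}$, with the initial lattice drawn from the equidistributing unstable horocycle $\{\Lambda_\a\Z^{d+1}\}$ of Section \ref{notations.lattices}.

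\textbf{The case $d\ge 2$.} Here I would run the martingale central limit theorem. The quantitative equidistribution of the translates $g^{t}\Lambda_\a$ (the exponential mixing mentioned after Theorem \ref{th.shah}, cf.\ \cite{KM1}) controls the conditional expectations of future increments given the past, so that the ergodic integral can be approximated by a martingale with stationary increments, yielding asymptotic normality with variance $\bar\Sigma^{2}\ln N$, where $\bar\Sigma^{2}=\int_{-\infty}^{\infty}\mathrm{Cov}\big(F-\bar F,\,(F-\bar F)\circ g^{s}\big)\,ds$ is the Green--Kubo variance. The decisive input is that $F\in L^{2}$: the measure of lattices with a vector of length $<\e$ in $W$ is $\asymp\e^{\,d+1}$, and a shortest vector of length $\ell$ contributes $F\asymp 1/\ell$ points to $W$, so $\Prob(F>k)\asymp k^{-(d+1)}$ and $\int F^{2}<\infty$ precisely when $d\ge 2$. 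Positivity $\bar\Sigma(c,d)>0$ would be checked by exhibiting a single nonvanishing correlation.

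\textbf{The case $d=1$: the main obstacle.} Now $\Prob(F>k)\asymp k^{-2}$, so $\int F^{2}=\infty$ with only a logarithmic divergence, the Green--Kubo integral diverges, and the naive $\sqrt{\ln N}$ scaling fails. The remedy is a truncation at a cutoff $M=M(N)\to\infty$: one applies the martingale CLT to $F_M=\min(F,M)$ and shows that the deep cusp excursions discarded by the truncation are negligible in the limit. For $d=1$ the renormalization is exactly the geodesic flow on the modular surface (the Gauss map), and the bookkeeping is transparent in continued-fraction terms: the solutions of $n\|n\a\|<c$ cluster near the convergent denominators $q_k$, and level $k$ contributes $\asymp\sqrt{a_{k+1}}$ solutions (from the multiples $jq_k$ with $j^{2}/a_{k+1}\lesssim c$), a random variable with tail $\asymp j^{-2}$ under the Gauss measure. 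Summing the $K\asymp\ln N$ levels reached by time $N$ and truncating each level's contribution at its typical maximum $\asymp\sqrt{K}$, the truncated second moment per level grows like $\log\sqrt{K}\asymp\ln\ln N$, producing total variance $\asymp\ln N\cdot\ln\ln N$; this is exactly the mechanism behind the normalization $b_N=\ln N\,\ln\ln N$, and the argument runs parallel to the classical central limit theorems for sums $\sum_k\log a_k$ over continued-fraction digits of Philipp and Samur \cite{philipp.book, samur}. The technically demanding point — the heart of the proof — is to reconcile the truncation-error bounds with the precise variance asymptotics so that the truncated and untruncated integrals share the same Gaussian limit, while simultaneously verifying $\bar\Sigma(c,d)>0$.
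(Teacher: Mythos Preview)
Your proposal is correct and follows essentially the same route as the paper: express $U_N$ via the Dani correspondence as an ergodic sum of a lattice-point counting function along the diagonal flow on the space of \emph{centered} (not affine) lattices, then invoke the martingale CLT for $d\ge 2$ where the observable is in $L^2$, and handle $d=1$ separately using that the observable has a barely non-integrable square (the paper cites the continued-fraction approach of Philipp and Samur and notes the analysis parallels \cite[Section~4]{G-Bern}). The only cosmetic discrepancies are your use of a continuous-time integral versus the paper's discrete blocks $e^m<n<e^{m+1}$, and your window $[-c,c]^d\times[0,1]$ versus the paper's $B(0,c)\times[1,e]$ (the latter is what actually matches the balls $B(r_n)$).
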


The case $d=1$ was obtained in \cite[Theorem 3.1.1 on page 44]{philipp.book} 
(see also \cite{samur}), based on the metric theory of the continued fractions. In fact, one can handle more general sequences. Namely, let $\phi(k)$ satisfy the following conditions
\begin{itemize}
\item[(i)] $\phi(k)\searrow 0,$ but $\sum_k \phi(k)=+\infty, $
\item[(ii)] There exists $0<\delta<1/2$ such that  
$\sum_{k=1}^n \frac{\phi(k)}{k^\delta} \leq 
C \sqrt{\sum_{k=1}^n \phi(k)} $
\item[(iii)] $\sum_{k=1}^n \phi^2(k)\leq 
C \sqrt{\sum_{k=1}^n \phi(k)}. $
\end{itemize}

\begin{theorem}(\cite{fuchs})
If $r_n=\frac{\phi(\ln n)}{n}$ and $\a$ is uniformly distributed on $\T$ then
$\dfrac{\brDelta_N(c,\a)}{\sqrt{F(n) \ln F(n)}}$ converges in distribution to $\sN(\bSigma(c))$ where
$F(n)=\sum_{k=1}^n \frac{\phi(\ln k)}{k}.$
\end{theorem}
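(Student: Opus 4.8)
The plan is to renormalize the return count $U_N(\a)$ by the continued fraction algorithm and to deduce the Gaussian limit from a central limit theorem for a weakly dependent array; this recovers and extends the constant-weight case $\phi\equiv c$ of Theorem \ref{dfv2} for $d=1$. Write $U_N(\a)=\#\{1\le n\le N:\ \|n\a\|<r_n\}$ with $r_n=\phi(\ln n)/n$, so that the hitting condition reads $n\|n\a\|<\phi(\ln n)$. Since $\a$ is uniformly distributed on $\T$, I would work with the continued fraction expansion $\a=[a_1,a_2,\dots]$, whose partial quotients $(a_k)$---equivalently the orbit of the Gauss map $G(x)=\{1/x\}$---form an exponentially $\psi$-mixing sequence. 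This mixing is the analytic engine that replaces the homogeneous dynamics used in the higher dimensional Theorems \ref{dfv1} and \ref{dfv2}, and it is the reason the one dimensional case is handled by the metric theory of continued fractions (as in \cite{philipp.book, samur}).

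First I would cut the range $1\le n\le N$ along the convergent denominators $q_k$ and write $U_N=\sum_k H_k+(\text{negligible error})$, where $H_k$ counts the hits with $n\in[q_{k-1},q_k)$ and depends, up to small errors, only on a bounded window of the expansion around level $k$. By L\'evy's theorem $\tfrac1k\ln q_k\to \pi^2/(12\ln 2)$ almost surely, so there are $\asymp\ln N$ relevant levels; since $\phi$ varies slowly and $\ln n\approx \mathrm{const}\cdot k$ on the $k$-th block, the effective weight is $\phi_k:=\phi(\mathrm{const}\cdot k)$ and $F(N)=\sum_{n\le N}\phi(\ln n)/n\approx\sum_{k}\phi_k$. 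Equidistribution of $\{n\a\}$ inside each block gives $\EXP[H_k]\approx 2\phi_k$, and hence $\EXP[U_N]\approx 2F(N)$, which is exactly the centering subtracted in $\brDelta_N$.

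The decisive step is the variance. The independence heuristic predicts $\mathrm{Var}(U_N)\asymp F(N)$, but the true order is $F(N)\ln F(N)$: a hit at time $n$ forces near hits at the arithmetically related times $2n,3n,\dots$ (since $\|jn\a\|\le j\|n\a\|$), producing positive correlations whose sum carries an extra logarithm; equivalently, the per-block fluctuations are heavy tailed because the partial quotients obey the Gauss--Kuzmin law $\Prob(a_k=a)\asymp a^{-2}$. This is the very mechanism that turns $b_N$ into $\ln N\ln\ln N$ in the case $\phi\equiv c$ of Theorem \ref{dfv2}, and under that specialization $F(N)\ln F(N)\asymp \ln N\ln\ln N$. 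To pass from the variance to the law I would run a big-block/small-block argument: separate long runs of levels by sparse buffer levels, use the exponential $\psi$-mixing of $G$ to render the buffered big blocks asymptotically independent, and apply a Lindeberg central limit theorem to the resulting triangular array after truncating each $H_k$ at the standard-deviation scale. Hypotheses (i)--(iii) enter precisely here: (i) guarantees $F(N)\to\infty$; (iii) $\sum\phi^2\le C\sqrt{\sum\phi}$ makes the untruncated large jumps negligible relative to $\sqrt{F(N)\ln F(N)}$; and (ii) $\sum\phi(k)k^{-\delta}\le C\sqrt{\sum\phi}$ furnishes the quantitative decorrelation that controls the cross terms and verifies the Lindeberg condition, yielding a nondegenerate limit $\sN(\bSigma(c))$ with $\bSigma(c)>0$.

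The part I expect to be hardest is obtaining the variance asymptotics with the exact $\ln F(N)$ factor and, simultaneously, showing that the heavy-tailed, positively correlated block contributions assemble into a Gaussian rather than a stable law. The delicacy is that the partial quotients have infinite mean, so the Gaussian regime is reached only because the truncation scale and the slow variation of $\phi$ are balanced just right; conditions (ii) and (iii) are tailored to maintain this balance, and making the estimates uniform over the $\asymp\ln N$ levels while carrying the weak dependence through the mixing of the Gauss map is the technical heart of the argument.
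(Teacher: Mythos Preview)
The paper does not prove this statement; it is quoted from \cite{fuchs} as an external result, and the only related discussion in Section~\ref{tsp.proofs} concerns Theorems~\ref{DFVsBC}, \ref{dfv1} and \ref{dfv2}, with the remark that the $d=1$ case of Theorem~\ref{dfv2} comes from \cite{philipp.book,samur} ``based on the metric theory of the continued fractions.''  So there is no in-paper proof to compare against.

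That said, your outline is the approach of the cited literature.  Philipp, Samur and then Fuchs all work exactly as you describe: renormalize the hitting count along the continued-fraction denominators $q_k$, use exponential $\psi$-mixing of the Gauss map to make the block contributions nearly independent, and run a big-block/small-block Lindeberg argument on the resulting triangular array.  Your identification of where hypotheses (i)--(iii) enter (divergence of $F$, control of the truncation tails, control of cross terms) and your explanation of the extra $\ln F(N)$ in the variance---Gauss--Kuzmin heavy tails plus the arithmetic correlations forced by $\|jn\a\|\le j\|n\a\|$---match the mechanism in \cite{fuchs}.  The one place where your sketch is a bit loose is the centering: getting $\EXP[U_N]=2F(N)+o(\sqrt{F(N)\ln F(N)})$ with enough precision is itself a nontrivial lemma in the source (the per-block expectation is not simply $2\phi_k$ but involves the three-distance structure of $\{n\a\}$ on $[q_{k-1},q_k)$), and one has to check that the error it produces is swallowed by the normalization; otherwise the plan is sound.
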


The higher dimensional case is obtained {\it via}  ergodic theory of homogeneous flows and martingale methods in \cite{DFV}.

\begin{question} \label{q.intermediate2}
Study the limiting distribution of $U_N$ and $V_N$ in case
$r_n=\frac{c}{n^\gamma}$ with $\gamma<\frac1d.$
\end{question}

\begin{question}
Do Theorems \ref{DFVsBC},
\ref{dfv1} and \ref{dfv2} hold when the random vector $\a$ is taken from a proper submanifold of $\T^d,$
for example $\a=(s, s^2,\dots,s^d).$
\end{question}

One motivation for this question comes from {\it Diophantine approximation on manifolds} (see \cite{BBKM}
and references wherein), another is multidimensional extension of Kesten Theorem (cf. Question \ref{QSubMani}).

\subsection{Proofs outlines.} \label{tsp.proofs}

First we sketch a proof of Theorem \ref{DFVsBC} in case $r_n=c n^{-1/d}.$ %and \ref{dfv1} 
Consider the number $N_m(\a,x )$ of solutions to
$$ x+n\alpha\in B(0, cn^{-1/d}), \quad e^m<n<e^{m+1} . $$
The argument used to prove Theorem \ref{ThDO-Pois} shows that
$$ N_m(\a,x )=f(g_m (\Lambda_\a \Z^{d+1}+x)) $$
where $f$ is the function on the space of affine lattices given by
$$ f(L)=\sum_{v\in L} \chi_{B(0,c)\times [1, e]}(v). $$
Thus 
\begin{equation} \label{eq.danistp} 
\sum_{m=1}^{\ln N} N_m(\a,x ) \sim\sum_{m=1}^{\ln N} f(g_m (\Lambda_\a \Z^{d+1}+x)) \end{equation}
and Theorem \ref{DFVsBC} for $r_n=c n^{-1/d}$ 
reduces to the study of ergodic sums \eqref{eq.danistp} under the assumption that the initial condition has a density
on $n_+(\a,x)$. In fact, a standard argument allows to reduce the problem to the case when the initial condition has density on the space
of lattices. Namely, it is not difficult to check that the ergodic sums of $f$ do not change much  if we move in the stable or neutral direction
in the space of lattices. After this reduction, the sBC property follows from the Ergodic Theorem. 
%This can be viewed  {\it via} the Birkhoff ergodic theorem applied to the mixing action of $g_m$, and the  observation that the asymptotics of the RHS normalized by $\ln N$ remain almost unchanged if we move slightly the point given by $\a$ and $x$ in the other directions inside the space of lattices since these additional directions are contained in the  center stable direction of $g_m$. 

The relation \eqref{eq.danistp} also allows to reduce  Theorem \ref{dfv1} to a Central Limit Theorem for ergodic sums of $g_m$
which can be proven, for example, by a martingale argument (see \cite{LB}. We refer the reader to \cite{dSL} for a nice introduction
to the martingale approach to limit theorems for dynamical systems.)

The proof of Theorem \ref{dfv2} is similar but one needs to work with lattices centered at 0 rather than affine 
lattices.

In particular, the non-standard normalization in case $d=1$ is explained 
by the fact that $f$ in this case is not in $L^2$ and the main contribution comes from the region where 
$f$ is large (in fact, the analysis is similar to 
\cite[Section~4]{G-Bern}).

%\subsection{Intermediate regime} 
%Not much is known for lager yet shrinking targets. 

%\begin{question} \label{q.intermediate1}
%Show that $r_n=\frac{c}{n^\gamma}$ with $\gamma<\frac1d$ is sBC for almost every $T_\a$?
%\end{question}

%\begin{theorem}
%{\bf Kim 2007} For each $y\in \Tor^1, \alpha\in \reals-\rationals$ if $r_n=\frac{1}{n}$
%$S_N\to\infty$ almost surely.
%\end{theorem}

\section{Skew products. Random walks. } \label{sec.skew}
\subsection{Basic properties.}
The properties of ergodic sums along toral translations are crucial to the study of some classes of dynamical systems, such as skew products or special flows. In this section we consider the skew products. Special flows are the subject  of Section \ref{ScFlows}.

{\bf Skew products} above $T_\a$ will be 
denoted $S_{\a,A} : \T^d \times \T^r  \to \T^d \times \T^r $ They are given by $S_{\a,A}(x,y) = (x+\a,y+A(x) \text{ mod }1)$.
{\bf Cylindrical cascades} above $T_\a$   will be denoted  $W_{\a,A} : \T^d \times \R^r  \to \T^d \times \R^r .$ 
They are given by $W_{\a,A}(x,y) = (x+\a,y+A(x))$. 
Note that  $$W_{\a,A}^N(x,y)=(x+N\a, y+A_N(x))$$
(the same formula holds for $S_{\a, A}$ but the second coordinate has to be taken mod 1).
If $A$ takes integer values then $W_{\a, A}$ preserves $\T^d\times \Z^r$ 
and it is natural to restrict the dynamics to this subset.
Thus cylindrical cascades define random walks on $\R^r$ or $\Z^r$ driven by the translation $T_\a$.

If $\a $ is Diophantine and $A$ is smooth then the so called linear cohomological equation similar to \eqref{CoB} 
\begin{equation}
\label{CoB2}
A(x)-\int_{\T^d} A(u) du =-B(x+\alpha)+B(x)
\end{equation}
has a smooth solution $B$, thus  $S_{\a,A}$ and $W_{\a,A}$ are respectively smoothly conjugated to the translations  $S_{\a,\int_{\T^d} A}$ and  $W_{\a,\int_{\T^d} A}$  via the conjugacy $(x,y) \mapsto (x,y-B(x))$.

Hence the ergodic properties of the skew products and the cascades with smooth $A$
are interesting to study only in the Liouville case. The following is a convenient ergodicity criterion 
for skew products.

\begin{proposition} \label{erg}
\cite{KN}
$S_{\a,A}$ is ergodic 
iff for any $\lambda \in \Z^r-\{0\}$, $\langle \lambda , A \rangle$ is not a measurable 
multiplicative coboundary above $T_\a$, that is, iff there does not exist $\lambda \in \Z^r-\{0\}$ and a measurable solution $\psi : \T^d \to \mathbb C$ to 
%$$ \psi(x+\a)-\psi(x)=A(x)-\int_{\T^d} A \hspace{3cm} (L)$$
 \begin{equation}
 \label{MultCoBZ}
 e^{i2 \pi \langle \lambda , A(x) \rangle }= \psi(x+\a)/\psi(x).
 \end{equation}
\end{proposition}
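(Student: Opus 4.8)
The plan is to characterize ergodicity of $S_{\a,A}$ through its invariant functions and to reduce invariance, via Fourier analysis in the fiber variable, to the family of twisted equations that the proposition is about. Recall that $S_{\a,A}$ is ergodic if and only if every $S_{\a,A}$-invariant $F\in L^2(\T^d\times\T^r)$ is constant. Given such an $F$, I would expand it in a Fourier series in $y$,
$$ F(x,y)=\sum_{\lambda\in\Z^r} f_\lambda(x)\, e^{i2\pi\langle\lambda,y\rangle}, \qquad f_\lambda\in L^2(\T^d). $$
Substituting into $F\circ S_{\a,A}=F$, using $F(S_{\a,A}(x,y))=\sum_\lambda f_\lambda(x+\a)\,e^{i2\pi\langle\lambda,A(x)\rangle}e^{i2\pi\langle\lambda,y\rangle}$, and matching coefficients in $y$ gives, for every $\lambda$,
$$ f_\lambda(x)=f_\lambda(x+\a)\, e^{i2\pi\langle\lambda,A(x)\rangle}. $$

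For the ``only if'' direction (in contrapositive form), suppose $F$ is a nonconstant invariant function. Taking moduli in the displayed relation yields $|f_\lambda(x)|=|f_\lambda(x+\a)|$, so $|f_\lambda|$ is $T_\a$-invariant and hence a.e.\ constant, since $T_\a$ is ergodic ($\a$ being irrational). In particular $f_0$ itself is $T_\a$-invariant and therefore constant, so the nonconstancy of $F$ forces some $\lambda\neq 0$ with $f_\lambda\not\equiv 0$; for this $\lambda$ the constant $|f_\lambda|$ is nonzero. Setting $\psi:=1/f_\lambda$ then gives a bounded measurable function, and rearranging the relation shows $e^{i2\pi\langle\lambda,A(x)\rangle}=f_\lambda(x)/f_\lambda(x+\a)=\psi(x+\a)/\psi(x)$, i.e.\ a measurable multiplicative coboundary for a nonzero $\lambda$.

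For the converse, assume there exist $\lambda\neq 0$ and a measurable $\psi$ with $e^{i2\pi\langle\lambda,A(x)\rangle}=\psi(x+\a)/\psi(x)$. Taking moduli shows $|\psi|$ is $T_\a$-invariant, hence constant, and I may normalize $|\psi|\equiv 1$. Then $F(x,y)=\overline{\psi(x)}\,e^{i2\pi\langle\lambda,y\rangle}$ is a nonconstant function in $L^2(\T^d\times\T^r)$, and using $\overline{\psi(x+\a)}=\overline{\psi(x)}\,e^{-i2\pi\langle\lambda,A(x)\rangle}$ a direct computation gives $F\circ S_{\a,A}=F$. Thus $S_{\a,A}$ admits a nonconstant invariant function and is not ergodic, completing the equivalence.

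The only genuine obstacle beyond bookkeeping is legitimizing the pointwise manipulations — in particular the division $1/f_\lambda$ — at the level of $L^2$ Fourier coefficients. This is exactly where ergodicity of the base rotation $T_\a$ enters: it forces each $|f_\lambda|$ to be an honest constant, so $1/f_\lambda$ is a bona fide bounded measurable function on the set where $f_\lambda\neq 0$ (which is then all of $\T^d$). Hence the entire argument reduces to the ergodicity of the irrational rotation plus the elementary Fourier computation, and there are no further analytic difficulties.
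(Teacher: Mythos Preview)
Your proof is correct and follows essentially the same approach as the paper's: the paper observes that the spaces $V_\lambda=\{\phi(x)e^{i2\pi\langle\lambda,y\rangle}\}$ are $S_{\a,A}$-invariant and that invariance of a function in $V_\lambda$ is exactly the coboundary equation, which is precisely your Fourier decomposition in the fiber variable and coefficient matching. You have simply filled in the details (the role of base ergodicity in forcing $|f_\lambda|$ constant, and the explicit construction of the invariant function from a given $\psi$) that the paper leaves to the reader.
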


This ergodicity criterion can be simply derived from the observation that the spaces $V_\lambda$ of functions of the form 
\begin{equation} \phi(x) e^{i2\pi \langle \lambda ,y\rangle}  \label{FiberFourier} \end{equation}
 are invariant under $S_{\a,A}$. It then follows that the existence or nonexistence  
of an invariant function $\varphi$ by $S_{\a,A}$ is determined by the existence or nonexistence  
 of a solution to \eqref{MultCoBZ}. We refer the reader to Section \ref{ScFlows} for further discussion concerning~\eqref{MultCoBZ}.

{ When $A$ is not a linear coboundary, i.e. \eqref{CoB2} does not have a solution, 
it is very likely and often easy to prove that \eqref{MultCoBZ} 
does not have a solution either. 
 For example, it suffices to show that the sums $A_{N_n}$ do not concentrate on a 
subgroup of lower dimension for a sequence $N_n$ such that $T_\a^{N_n} \to {\rm Id}.$ %because in this case  there cannot exist a measurable solution $\psi$ to \eqref{MultCoBZ}. 
Indeed, if a solution  to \eqref{MultCoBZ} exists then $|\psi|$ is constant by ergodicity of the base translation. Therefore by Lebesgue Dominated Convergence Theorem
 $$\lim_{n\to\infty} \int _{\T^d}  e^{i2 \pi \langle \lambda , A_{N_n}(x) \rangle } dx=\lim_{n\to\infty} \int _{\T^d} \psi(x+N_n \a)/\psi(x) dx=1$$
which means that $A_{N_n}(x)$ is concentrated near the set 
$$\{u\in\R^r: \langle  \lambda, u \rangle\in \Z\}.$$
}

In particular it was shown, in \cite{etds1}, that for every Liouville translation vector $\a \in \R^d,$  
the generic smooth function $A$ does not admit a solution to \eqref{MultCoBZ} for any $\lambda \in \R^d-\{0\}$. Hence 
the generic smooth skew product above a Liouville translation is ergodic (cf. Section \ref{SSErgCasc} and Theorem \ref{3} 
in Section \ref{ScFlows}). 

It is known that ergodic skew products $S_{\a,A}$ are actually uniquely ergodic (see \cite{parry}). On the other hand, skew products above translations are never weak mixing since they have the translation as a factor.
However, the same ideas as the ones used to prove ergodicity of the skew products often prove that all eigenfunctions come from that factor 
(see \cite{GLL, Fr, Iw, ILR, W}). 

If one considers skew products on $\T\times \T$ with smooth increasing functions on $(0,1)$ having a jump discontinuity at $0$ then the corresponding skew product will even be mixing in the fibers, that is,  the correlations between functions that depend only on the fiber coordinate tends to 0. A classical example is given by the skew shift $(x,y) \mapsto (x+\a,y+x)$. The mixing in the fibers can be easily derived from the invariance of 
$V_\lambda$ defined by \eqref{FiberFourier} and the fact that, by the Ergodic Theorem, 
$\frac{\partial A_n}{\partial x}=\left(\frac{\partial A}{\partial x}\right)_n\to +\infty.$ 
 A similar  phenomenon can occurs for analytic skew products that are homotopic to identity but over higher dimensional tori   $\T^d \times \T \ni (x,y) \mapsto (x+\a,y+\phi(x))$,
with $\a$ and $\phi$ as in Theorem \ref{nnnspecial} below (see \cite{fskew}). 
This mechanism can also be used to establish ergodicity of cylindrical cascades (see \cite{Pas}).
A fast decay of correlations in the fibers can be responsible for the existence of non trivial invariant distributions  for these skew products similarly to what occurs for the skew shift $(x,y) \mapsto (x+\a,x+y)$ (see \cite{katok}).

The deviations of ergodic sums for skew products, that is the behavior of the sums
$$ \sum_{n=0}^{N-1} B(S_{\a, A}^n (x, z))-N \int_{\T^d}Ê\int_{\T^r}  B(x,z) dx dz $$
is poorly understood. The only cases where some results are available have significant extra symmetry  
\cite{katok,M-LH, FF}.

\subsection{Recurrence.} 
\label{SSRec}
Our next topic are cylindrical cascades. As it was mentioned above they are sometimes called deterministic 
random walks.
So the first question one can ask is if the walk is {\bf recurrent}
(that is, $A_N$ returns to some bounded region infinitely many times) or {\bf transient}.
We will assume in this section that $A$ has zero mean since otherwise $W_{\a,A}$ is transient by the ergodic theorem.
If $r=1$ this condition is also sufficient. In fact, the next result is valid for skew products over arbitrary ergodic
transformations (in fact, there is a multidimensional version of this result, see Theorem \ref{SmallWorld}). 

\begin{theorem}
\label{ThRecDim1}
(\cite{At}) If $r=1,$ $A$ is integrable and has zero mean then $W$ is recurrent.
\end{theorem}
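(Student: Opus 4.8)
The plan is to recognize this as Atkinson's recurrence theorem and to establish it through the Hopf conservativity framework. Write $T=T_\a$ and $W=W_{\a,A}$, and recall that $W$ preserves the (infinite) measure $m=\mu\times\Leb$ on $\T^d\times\R$, with $W^N(x,y)=(T^Nx,\,y+A_N(x))$. Recurrence of $W$ — that $A_N(x)$ returns to a bounded region infinitely often for a.e. $x$ — follows from conservativity of $(W,m)$, which is therefore what I would establish. As a first reduction I would record that, because $W$ commutes with the vertical translations $\tau_s(x,y)=(x,y+s)$ and $T$ is ergodic, conservativity yields the cocycle statement
$$\liminf_{N\to\infty}|A_N(x)|=0\quad\text{for a.e. }x.$$
Indeed, applying the recurrence theorem for conservative measure preserving systems to the finite-measure set $\T^d\times(-\eps,\eps)$ shows that for a.e. $(x,y)$ in this strip one has $y+A_N(x)\in(-\eps,\eps)$ for infinitely many $N$; Fubini and letting $\eps\to0$ then give $\liminf_N|A_N(x)|=0$ a.e., which is exactly the asserted recurrence of the walk.

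So the task reduces to showing that the dissipative part $\mathcal{D}$ in the Hopf decomposition $\T^d\times\R=\mathcal{C}\sqcup\mathcal{D}$ is $m$-null. The key structural observation is that the Hopf decomposition is canonical, hence invariant under every measure preserving transformation commuting with $W$; in particular $\tau_s\mathcal{D}=\mathcal{D}$ for all $s$, so $\mathcal{D}=D_0\times\R$ for some $D_0\subset\T^d$. On $\mathcal{D}$ the orbit is dissipative, so $W^N(x,y)$ leaves every compact set; since $T$ is probability preserving (hence conservative) the horizontal coordinate returns, which forces $|A_N(x)|\to\infty$ on $D_0$. The sets $\{x:A_N(x)\to+\infty\}$ and $\{x:A_N(x)\to-\infty\}$ are $T$-invariant (under $x\mapsto Tx$ the sum changes only by the single term $A(x)$), so by ergodicity each has measure $0$ or $1$, and $D_0$ agrees a.e. with their union. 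Thus conservativity will follow once I rule out $A_N\to+\infty$ a.e. and, symmetrically, $A_N\to-\infty$ a.e.

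This last step is where the zero-mean hypothesis is essential and is the heart of the matter. I would invoke Hopf's maximal ergodic theorem: since $A\in L^1$ with $\int A\,d\mu=0$ and $T$ is ergodic, one obtains the oscillation
$$\limsup_{N\to\infty}A_N(x)=+\infty\quad\text{and}\quad\liminf_{N\to\infty}A_N(x)=-\infty\quad\text{a.e.}$$
(the running supremum $g=\sup_{N\ge0}A_N$ satisfies the relation $g=\max(0,A+g\circ T)$, and the maximal inequality together with the $0$–$1$ law prevents $g$ from being finite on a set of full measure unless $A$ is a trivial coboundary). In particular $\liminf_N A_N=-\infty$ a.e. contradicts $A_N\to+\infty$ a.e., and likewise on the other side, so $\mu(D_0)=0$. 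Hence $\mathcal{D}$ is null, $W$ is conservative, and recurrence follows. The main obstacle is precisely this oscillation statement for a general integrable cocycle: because $A$ is only $L^1$ its increments are unbounded, so one cannot pass from ``$A_N$ oscillates'' to ``$A_N$ returns near $0$'' by any naive intermediate-value reasoning. It is the conservativity and Hopf-decomposition mechanism above, rather than the oscillation alone, that delivers genuine recurrence, and the delicate point is controlling the possibly non-integrable running maximum through the maximal ergodic theorem.
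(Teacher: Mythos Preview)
Your reduction to conservativity via the Hopf decomposition is sound up to the point where you identify $\cD=D_0\times\R$ and show that $|A_N(x)|\to\infty$ for a.e.\ $x\in D_0$. The gap is the next step: you assert that ``$D_0$ agrees a.e.\ with their union,'' meaning $\{A_N\to+\infty\}\cup\{A_N\to-\infty\}$, but what you have actually established is only $D_0\subset\{|A_N|\to\infty\}$ a.e. Since $A$ is merely integrable, hence typically unbounded, the divergence $|A_N|\to\infty$ is perfectly compatible with $\limsup A_N=+\infty$ and $\liminf A_N=-\infty$ holding simultaneously---the increments $A(T^{N}x)$ can be arbitrarily large, so there is no intermediate-value obstruction to $A_N$ jumping between large positive and large negative values. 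Consequently your oscillation statement, even if granted, rules out $\mu\{A_N\to+\infty\}=1$ and $\mu\{A_N\to-\infty\}=1$ but does \emph{not} rule out $\mu\{|A_N|\to\infty\}=1$, which is exactly what you need. You correctly flag this difficulty in your final paragraph, yet the mechanism you describe does not close it: the Hopf decomposition handed you the set $\{|A_N|\to\infty\}$, and the maximal-ergodic oscillation says nothing further about that set.

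The paper proceeds by a different and more direct route, deducing the result from Theorem~\ref{SmallWorld}(a). By Birkhoff's ergodic theorem $A_n/n\to0$ a.e., so one may choose $\delta_n=o(n)$ with $\mu(|A_n|\le\delta_n)\to1$. If $B\subset\T^d\times(-C,C)$ were a wandering set of positive measure, put $B_n=\{(x,z)\in B:|A_n(x)|\le\delta_n\}$; the sets $W^nB_n$ are pairwise disjoint, each of measure at least $m(B)/2$ for large $n$, yet all lie in $\{|z|\le C+\delta_n\}$, whose measure is $O(\delta_n)=o(n)$. Comparing volumes gives the contradiction. This uses only the ergodic theorem and a pigeonhole estimate, avoids the maximal ergodic theorem entirely, and never needs to separate the cases $A_N\to\pm\infty$ from the oscillating case.
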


\subsubsection{Recurrence and the Denjoy Koksma Property.} \label{DKP}
Next we note that if the base dimension $d=1$ and $A$ has bounded variation then $W$   
is recurrent for all $r$ and for all $\alpha\in \reals-\rationals$ due to the Denjoy-Koksma inequality stating that 
%Of course, the Denjoy Koksma inequality for functions of bounded variations $\varphi:\T \to \R$ and irrational circle rotations $R_\a$ implies a strong DKP property, namely 
\begin{equation}
\label{EqDKstrong} \max_{x\in \T} |A_{q_n}-q_n \int_{\T} A(y) dy|\leq 2V 
\end{equation}
for every denominator of the convergence of $\a$, where $V$ is the total variation of $A$.

More generally we say that $A$ (not necessarily of zero mean) has {\bf the Denjoy-Koksma property} (DKP) if there exist
constants $C, \delta>0$ and
a sequence $n_k\to\infty$ such that 

\begin{equation}
\label{EqDK}
\Prob(|A_{n_k}-n_k \int_{\T^d} A(y) dy|\leq C)\geq \delta. 
\end{equation}
We say that $A$ has {\bf the strong Denjoy-Koksma property} (sDKP) if \eqref{EqDK} holds with $\delta=1.$

Note that if DKP holds and $A$ has zero mean then the set of points where 
$\lim\inf_{n\to\infty} |A_{n}|\leq C$ has positive measure and so by ergodicity of the base map
$W_{\a, A}$ is recurrent.

Later, we will also see  how the DKP can be very helpful  in proving ergodicity of the cylindrical cascades as well as weak mixing of special flows.

The situation with DKP for translations on higher dimensional tori is delicate. 
Of course it holds for almost all $\a$ and for every smooth function by the existence of smooth solutions to the linear cohomological equation \eqref{CoB}. But the DKP also holds above most translations even from a topological point of view. 

\begin{theorem}
\label{ThDKPRes} 
(\cite{etds2}) There is a residual set of vectors in $\a \in \R^d$ 
such that DKP holds above $T_\a$ for every function that is of class $C^4.$ 
\end{theorem}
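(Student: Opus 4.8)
The plan is to reduce the Denjoy--Koksma property \eqref{EqDK} to a single Diophantine-flavored condition on $\a$, phrased through the Fourier series of the ergodic sums, and then to produce this condition on a residual set by a Baire category argument. Writing $A(x)=\sum_{k}a_k e^{2\pi i(k,x)}$ with $a_0=\int_{\T^d}A$, the finite sum identity (the finite-$N$ analogue of \eqref{CoB}, valid for every $\a$ since no convergence is involved) reads
\[
A_q(x)-q\int_{\T^d}A=\sum_{k\neq 0}a_k\,D_q(k,\a)\,e^{2\pi i(k,x)},\qquad D_q(k,\a)=\frac{e^{2\pi i q(k,\a)}-1}{e^{2\pi i(k,\a)}-1},
\]
with the elementary bound $|D_q(k,\a)|\le\min\bigl(q,(2\|(k,\a)\|)^{-1}\bigr)$. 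The key observation is that if, for some sequence $q_j\to\infty$ depending only on $\a$, one has $\sup_{k\neq 0}|D_{q_j}(k,\a)|/|k|^4\le B$, then by Parseval $\|A_{q_j}-q_j\int A\|_{L^2}^2=\sum_{k\neq0}|a_k|^2|D_{q_j}(k,\a)|^2\le B^2\sum_{k}|k|^8|a_k|^2\le B^2\|A\|_{C^4}^2$ up to a dimensional constant, using $C^4\hookrightarrow H^4$. Chebyshev's inequality then yields $\Prob(|A_{q_j}-q_j\int A|\le 2B\|A\|_{C^4})\ge 3/4$ for every $j$, which is exactly \eqref{EqDK} with $\delta=3/4$; crucially the same sequence $q_j$ works for all $A\in C^4$ at once.

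It therefore suffices to exhibit a residual set of $\a$ for which the sup-condition holds along some $q_j\to\infty$. Fix $B=B(d)$ large and set
\[
\mathcal{U}_N=\Bigl\{\a:\ \exists\,q>N,\ \sup_{k\neq 0}\tfrac{|D_q(k,\a)|}{|k|^4}<B\Bigr\},\qquad \mathcal{R}=\bigcap_{N\ge 1}\mathcal{U}_N,
\]
so $\mathcal{R}$ is a $G_\delta$ and any $\a\in\mathcal{R}$ furnishes the required sequence. Each $\mathcal{U}_N$ is open: for fixed $q$ the bound $|D_q(k,\a)|\le q$ makes all modes with $|k|^4>q/B$ satisfy the inequality automatically, so only finitely many $k$ matter and $\a\mapsto\sup_{|k|^4\le q/B}|D_q(k,\a)|/|k|^4$ is continuous; a union over $q>N$ is then open.

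The heart of the matter is density of $\mathcal{U}_N$, which I would establish by approximating an arbitrary $\a_0$ by rationals $p/q$ with $q>N$ chosen so that the resonance module $\{k\in\Z^d:(k,p)\equiv 0\ (\mathrm{mod}\ q)\}$ contains no nonzero vector shorter than $\sim q^{1/d}$, i.e.\ $p/q$ is \emph{balanced}, being Diophantine up to scale $q^{1/d}$. Such rationals are dense (one may truncate the best-approximation expansion of a badly approximable vector lying near $\a_0$, or count residues $p$ mod $q$ with balanced duals). At such a point $D_q(k,p/q)\in\{0,q\}$, equal to $q$ precisely on the resonance module, whence $\sup_{k\neq0}|D_q(k,p/q)|/|k|^4=q/|k_{\min}|^4\lesssim q^{\,1-4/d}<B$ once $q$ is large. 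Intersecting, $\mathcal{R}$ is residual and, by the first paragraph, every $\a\in\mathcal{R}$ has the Denjoy--Koksma property for all $A\in C^4$.

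I expect the density step to be the main obstacle, for two reasons. First, one must genuinely produce the balanced rationals and control their shortest resonance, reconciling the trivial high-mode bound $|D_q|\le q$ (which forces the resonances out to scale $q^{1/d}$) with a fixed threshold $B$. Second, this is precisely where the regularity is consumed: the comparison $q^{\,1-4/d}<B$ needs the $C^4$ smoothness to dominate the resonance scale, which it does cleanly for $d\le 4$; in higher dimension one would either raise the smoothness order or replace the rational-approximation scheme by the actual best-approximation denominators of $\a$, tracking the nested resonances so that $q_j(k,\a)$ is near an integer for all small $k$ while $\|(k,\a)\|$ stays above $\sim q_j^{-4/d}$.
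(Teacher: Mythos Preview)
The survey does not prove this statement---it merely quotes it from \cite{etds2}---so there is no in-paper argument to compare against. Your Fourier/Chebyshev reduction and the $G_\delta$ set-up are correct, and for $d\le 4$ the scheme goes through once you justify the density of ``balanced'' rationals (for a large prime $q$, a direct count shows that for all but a $\delta^d$-fraction of residues $p\in(\Z/q\Z)^d$ the shortest nonzero $k$ with $q\mid(k,p)$ satisfies $|k|\ge\delta q^{1/d}$; such $p/q$ are $1/q$-dense in $\T^d$).

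The genuine gap is $d\ge 5$, and it is not a matter of care but of method. At any rational $p/q$ the resonance lattice $\Lambda=\{k:(k,p)\equiv 0\pmod q\}$ has covolume $q$, and Minkowski's first theorem forces its shortest nonzero vector to satisfy $|k_{\min}|\le C_d\,q^{1/d}$. Hence
\[
\sup_{k\neq0}\frac{|D_q(k,p/q)|}{|k|^4}=\frac{q}{|k_{\min}|^4}\ge c_d\,q^{1-4/d}\to\infty,
\]
so for large $N$ no rational lies in your $\mathcal U_N$ and the density step collapses. The same obstruction shows at the $L^2$ level: a balanced index-$q$ sublattice contributes $q^2\sum_{k\in\Lambda\setminus 0}|k|^{-8}\asymp q^{2-8/d}$ to $\|A_q-q\bar A\|_{L^2}^2$, which stays bounded only when $d\le 4$. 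Your closing remark does point toward the right repair---building irrational $\a$ with a sequence $q_j$ along which $\|q_j\a\|$ is extremely small (so that $|D_{q_j}(k,\a)|\lesssim |k|\,\|q_j\a\|/\|(k,\a)\|$ handles the low modes) while $\|(k,\a)\|$ is controlled from below on the remaining range---but this is a genuinely different construction, not a perturbation of the rational one, and you have not carried it out. Alternatively, the regularity hypothesis must grow with $d$ (roughly $C^d$ in place of $C^4$) for the rational-approximation scheme to survive; the cited reference is in fact concerned with $d=2$, where your argument is already complete.
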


 In fact, it is non-trivial to construct rotation 
vectors and smooth functions that do not have the DKP. The first construction is due to Yoccoz and it actually provides examples of non recurrent analytic cascades.

\begin{theorem}
\label{ThSmNonRec}
(\cite[Appendix]{Y}) For $d=2$ there exists an uncountable dense set $Y$ of translation vectors and a real analytic function 
$\cA:\T^2\to \C$ with zero mean
such that $W$ is not recurrent.
\end{theorem}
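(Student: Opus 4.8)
The plan is to realize $\cA$ and the vectors in $Y$ through a rapid (Liouville) approximation scheme that creates a lacunary sequence of strong resonances in the Fourier series of the ergodic sums \eqref{ErgSum}. Writing $A(x)=\sum_{k\neq 0}a_k e^{2\pi i(k,x)}$ with $a_0=0$, one has the exact formula
$$A_N(x)=\sum_{k\neq 0} a_k\, e^{2\pi i(k,x)}\,\frac{e^{2\pi i N(k,\a)}-1}{e^{2\pi i(k,\a)}-1},$$
so the kernel attached to a frequency $k$ reaches modulus of order $1/\|(k,\a)\|$ when $N$ is of order $1/\|(k,\a)\|$. Real analyticity of $A$ is equivalent to an exponential bound $|a_k|\le C e^{-\rho|k|}$, and since $\cA$ is allowed to be $\C$-valued no conjugate-symmetry constraint is needed, so single-frequency blocks are legitimate. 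Because here the fiber lives in $\C=\R^2$ (so $r=2$), the recurrence criterion behind Theorem \ref{ThRecDim1} does not apply and transience is possible; I would reduce non-recurrence of $W$ to the statement that $A_N(x)$ leaves every compact set as $N\to\infty$ for almost every $x$.

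The construction is inductive. At stage $j$ I would select an integer vector $k_j\in\Z^2$ with $|k_j|$ increasing very fast, put $a_{k_j}=e^{-\rho|k_j|}$ and leave all other coefficients zero, which guarantees $A$ is real analytic. Simultaneously I would refine $\a$ by imposing $\|(k_j,\a)\|<\eps_j$ with $\eps_j$ so small (for instance $\eps_j\le e^{-2\rho|k_j|}$, decreasing super-exponentially) that the resonant amplitude $B_j:=|a_{k_j}|/\|(k_j,\a)\|$ tends to infinity and, crucially, $B_j$ dominates $\sum_{i<j}B_i$. Each constraint $\|(k_j,\a)\|<\eps_j$ carves out a union of thin strips in the $\a$-plane; since the new constraint involves only high frequencies it can be met without destroying earlier ones, so the nested intersection is a Cantor-type set, making $Y$ uncountable, while seeding the first strips near any prescribed vector makes $Y$ dense. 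Setting $N_j\approx 1/(2\|(k_j,\a)\|)$ one checks that the $j$-th term of $A_{N_j}(x)$ has modulus $\asymp B_j$, the future terms ($i>j$) contribute $O(N_j|a_{k_{j+1}}|)=o(B_j)$, and the completed terms ($i<j$) are bounded by $\sum_{i<j}B_i\ll B_j$.

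Granting these estimates, at the times $N_j$ the dominant resonance forces $|A_{N_j}(x)|\ge B_j/2\to\infty$ uniformly in $x$. The main obstacle is to upgrade this divergence along the sparse sequence $N_j$ into an obstruction to recurrence: between two consecutive resonances the kernel of a completed frequency $k_i$ oscillates with period $\approx 1/\|(k_i,\a)\|$ and passes repeatedly through zero, so at certain intermediate times the earlier blocks can nearly cancel and $|A_N(x)|$ may dip back toward the origin. Controlling these dips is the technical heart of the argument and the reason the statement is delicate. I would handle it by a Borel--Cantelli argument: the set of $x$ for which the phases $(k_i,x)$ conspire to return $A_N(x)$ near the origin at a base-return time has small measure at each scale, and choosing the gaps $N_{j+1}/N_j$ and the decay of $\eps_j$ fast enough makes these measures summable, so that for almost every $x$ the sums $A_N(x)$ escape to infinity. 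This yields that $W=W_{\a,\cA}$ is not recurrent for every $\a\in Y$, completing the proof.
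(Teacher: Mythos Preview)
Your overall framework is sound, but the route you propose for the key step (controlling the dips) is different from Yoccoz's and is not carried through. In \cite{Y} the function is explicit and \emph{separable},
\[
\cA(x,y)=\sum_{k\ge 2}e^{-k}e^{2\pi ikx}+\sum_{k\ge 2}e^{-k}e^{2\pi iky},
\]
and the set $Y$ is specified arithmetically: $(\alpha',\alpha'')\in Y$ iff the continued-fraction denominators of the two coordinates are \emph{interlaced} with super-exponential gaps, $q''_n\ge e^{3q'_n}$ and $q'_{n+1}\ge e^{3q''_n}$ for all large $n$. The ergodic sum then splits as $A_N(x,y)=f_N(x)+g_N(y)$, and the interlacing is exactly the device that resolves the dip problem you flag: for every large $N$, at least one of the two one-variable sums is in the growth phase of a strong resonance, because whenever the $x$-sum is in a dip the $y$-sum is still climbing toward its next peak, and vice versa. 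This yields $|A_N|\to\infty$ for almost every $(x,y)$ along \emph{all} $N$, not just a subsequence, and non-recurrence follows directly.

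In your scheme the frequencies $k_j$ are generic vectors in $\Z^2$ and the only structure is lacunarity; the Borel--Cantelli argument you sketch for the intermediate times is not worked out, and it is unclear it can be made to converge: between $N_j$ and $N_{j+1}$ the number of base-return times grows with the gap while you have no independent distributional control on $A_N(x)$ at those times beyond crude amplitude bounds. The separable-plus-interlaced construction converts this fragile probabilistic step into a structural lower bound. In effect, Yoccoz's choice corresponds to taking your $k_j$ alternately along the two coordinate axes with the growth rates tied to the coordinate continued fractions; that is the specific instance of your scheme in which the dip problem becomes tractable.
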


Denote the translation vector by $(\alpha', \alpha'').$ 
The main ingredient in the construction of \cite{Y} is that the denominators,
$q_n'$ and $ q''_n$
of the convergents  of $\a'$ and $\a''$ are alternated, and
 more precisely, they are such that the sequence $... q_n', q''_n, q_{n+1}',
 q''_{n+1}...$ increases exponentially. We will see later that the same construction can be used to 
create examples of mixing special flows with an analytic ceiling function.

Let $Y$ be the set of couples $(\a',\a'') \in {{\R}^2 - \Q}^2$, whose
sequences of best approximations $q_n'$ and $q''_n$ satisfy,
for any $n \geq n_0(\a',\a'')$
$$ q''_n \geq  e^{3q'_n}, \quad
q'_{n+1} \geq e^{3q''_n}. $$

Then  \cite{Y} constructs a real analytic function   
$\cA:\T^2 \to \C$ 
with zero integral such that for almost every $(x,y) \in \T^2$ $|A_n(x,y)| \to \infty$, hence $W_{\a,\cA}$ is not recurrent. 
Note that the set $Y$ as defined above is uncountable and dense in $\R^2$.

\subsubsection{Indicator functions.} 

Now we specify the study of $W_{\a,A}$ to the case where 
$$A=(\chi_{\Omega_j} -  {\rm Vol} (\Omega_j))_{j=1,\ldots,r} \text{ where } \Omega_j \subset X=\T^d \text{ are regular sets.} $$ 
If $d>1$ then the DKP does not seem to be well adapted for proving recurrence in this case (see Questions \ref{q7}--\ref{q11}).

\begin{question} Show that DKP does not hold when $d>1$ and $A=(\chi_{\Omega_j} -  {\rm Vol} (\Omega_j))_{j=1,\ldots,r}$  and the 
$\Omega_j \subset X$ are balls or boxes. 
\end{question}

 There is however another criterion for recurrence which is valid for arbitrary skew products. 

\begin{theorem}
\label{SmallWorld}
Given a sequence $\delta_n=o(n^{1/r})$ the following holds.  

(a) (\cite{CC})
Consider the map $T:X\to X$ preserving a measure $\mu.$
Let $ W(x,y)=(Tx, y+A(x)). $
If there exists 
a sequence $k_n$ such that
$\displaystyle\lim_{n\to\infty} \mu(x : A_{k_n}(x) \leq \delta_{n})=1$
then $W$ is recurrent.

(b) Consider a parametric family of maps $T_\alpha: X\to X,$ $\alpha\in\mathfrak{A}.$ Assume that 
$T_\a$ preserves a measure $\mu_\alpha.$ Let $(\a, x)$ be distributed according to a measure 
$\blambda$ on $\mathfrak{A}\times X$ such that
$d\blambda=d\nu(\a) d\mu_\a(x)$ for some measure $\nu$ on $\mathfrak{A}.$
If $\dfrac{\sum_{n=0}^{N-1} A(T_\a^n x)}{\delta_N}$ has a limiting distribution 
as $N\to\infty$ then
$W_{\a, A}$ is recurrent for $\nu$-almost all $\a.$
\end{theorem}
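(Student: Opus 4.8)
The plan is to establish (a) by a volume/pigeonhole argument and then to derive (b) from it by checking its hypothesis for $\nu$-almost every $\a$. Throughout I use the cocycle relation $A_{k_j}(x)-A_{k_i}(x)=A_{k_j-k_i}(T^{k_i}x)$, which turns a coincidence between two ergodic sums into a near-return of the cocycle. Recall that recurrence of the $\R^r$-extension $W$ is equivalent to $0$ being an essential value of the cocycle $A$, and that (working on ergodic components of $T$) the conservative--dissipative dichotomy makes recurrence of $W$ an all-or-nothing property; thus it suffices to exhibit, for every $\eps>0$, near-returns $|A_m(\cdot)|\le\eps$ at arbitrarily large times $m$ on a set of positive $\mu$-measure.

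First I fix $\eps>0$ and, replacing $\delta_n$ by its running maximum (which is still $o(n^{1/r})$), assume $\delta_n$ nondecreasing. For $x$ in the base let $G_n(x)=\{j\le n:|A_{k_j}(x)|\le\delta_n\}$ collect the ``good'' indices; the hypothesis gives $\int_X |G_n|\,d\mu\ge (1-o(1))\,n$. All vectors $\{A_{k_j}(x):j\in G_n(x)\}$ lie in the ball of radius $\delta_n$ in $\R^r$, which is covered by at most $C_r(\delta_n/\eps)^r$ cubes of side $\eps$; since $\delta_n=o(n^{1/r})$ forces $(\delta_n/\eps)^r=o(n)$, the number of cubes is eventually far below $|G_n(x)|\approx n$. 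Counting the pairs $i<j$ in $G_n(x)$ that fall in a common cube, convexity gives
\[ P_n(x)\ \ge\ \frac{|G_n(x)|^2}{2C_r(\delta_n/\eps)^r}-\frac{|G_n(x)|}{2}, \]
and integrating (Jensen) yields $\int_X P_n\,d\mu\gtrsim n^2(\delta_n/\eps)^{-r}\to\infty$. Each counted pair satisfies $|A_{k_j}(x)-A_{k_i}(x)|\le \eps\sqrt r$, i.e. $|A_{k_j-k_i}(T^{k_i}x)|\le \eps\sqrt r$; since $T$ preserves $\mu$,
\[ \sum_{1\le i<j\le n}\mu\big(|A_{k_j-k_i}|\le \eps\sqrt r\big)\ \ge\ \int_X P_n\,d\mu\ \to\ \infty. \]
Hence there are arbitrarily large times $m=k_j-k_i$ carrying a positive-measure set of $\eps$-near-returns, which is exactly what the recurrence criterion requires; letting $\eps\to0$ gives recurrence of $W$.

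For (b) write the fibered sums as $A_N(\a,x)$ and set $b_N=\delta_N=o(N^{1/r})$, so that $A_N/b_N$ converges in law under $\blambda=\int \mu_\a\,d\nu(\a)$. Convergence in distribution yields tightness, hence radii $R_\ell$ with $\blambda(|A_N|>R_\ell b_N)\le 2^{-\ell}$ for $N\ge N_\ell$. Picking a rapidly increasing sequence $M_\ell\ge N_\ell$, Borel--Cantelli applied to $\sum_\ell\blambda(|A_{M_\ell}|>R_\ell b_{M_\ell})<\infty$ together with Fubini in $(\a,x)$ shows that for $\nu$-almost every $\a$ one has $\mu_\a(|A_{M_\ell}(\a,\cdot)|\le R_\ell b_{M_\ell})\to1$. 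This is precisely the hypothesis of (a) with counting index $n=\ell$, times $k_n=M_n$ and thresholds $\delta_n=R_n b_{M_n}$; exploiting the freedom to take $M_n$ sparse one arranges $\delta_n=o(n^{1/r})$, and (a) gives recurrence of $W_{\a,A}$ for $\nu$-a.e. $\a$.

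The hard part is the bridge from the first-moment bound $\int_X P_n\,d\mu\to\infty$ to genuine almost-sure recurrence: by itself the count is an averaged statement, and a priori the coincidences could be supported on a shrinking set of $x$. Making this rigorous requires running the pigeonhole relative to an arbitrary positive-measure base set $B$ (equivalently, for the induced cocycle) and invoking a bona fide recurrence criterion --- Schmidt's essential-values theory or Halmos' recurrence theorem --- rather than reading recurrence off the first moment; the dichotomy then removes the dependence on $B$. In (b) the analogous subtlety is the normalization bookkeeping: one must check that the tightness radii $R_\ell$ do not grow too fast relative to $n^{1/r}$, which is exactly where the slack in choosing the sparse sampling times $M_\ell$ is consumed.
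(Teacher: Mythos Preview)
Your pigeonhole idea for (a) is in the right spirit, but the argument has a real gap that your final paragraph does not close. What you actually prove is that for every $\eps$ there are arbitrarily large $m$ with $\mu(|A_m|\le\eps\sqrt r)>0$. This does \emph{not} rule out transience: if $W$ is dissipative then $|A_n|\to\infty$ a.e., so $\mu(|A_m|\le\eps)\to 0$, which is perfectly compatible with each term being strictly positive. Your quantitative lower bound $\int P_n\gtrsim n^2(\delta_n/\eps)^{-r}$ only forces the \emph{average} of $\mu(|A_{k_j-k_i}|\le\eps)$ over pairs to be $\gtrsim(\delta_n/\eps)^{-r}$, and this goes to zero when $\delta_n\to\infty$. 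Localizing to a base set $B$ does not help either: the cocycle identity sends $x$ to $T^{k_i}x$, which need not lie in $B$, so you never produce the intersection $B\cap T^{-m}B\cap\{|A_m|<\eps\}$ required by the essential-value criterion.

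The paper's proof of (a) runs the same volume count, but in the skew-product rather than on the values of the cocycle, and this is what makes it go through in five lines. Assume $W$ has a wandering set $B\subset\{|y|<C\}$ of positive product measure and set $B_n=\{(x,z)\in B:|A_{k_n}(x)|\le\delta_n\}$; the hypothesis gives $(\mu\times\mathrm{Leb})(B_n)\to(\mu\times\mathrm{Leb})(B)$. The iterates $W^{k_i}B_i$ are pairwise disjoint and all sit inside $X\times\{|y|\le C+\delta_n\}$, whose measure is $O(\delta_n^r)=o(n)$; disjointness forces their union to have measure $\ge n(\mu\times\mathrm{Leb})(B)/2$, a contradiction. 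So the wandering set itself plays the role of your set $B$, and disjointness of its iterates is exactly what converts the pigeonhole into a genuine obstruction.

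For (b) the paper does something much simpler than your Borel--Cantelli scheme: it applies (a) directly to the single map $\cT(\a,x,y)=(\a,W_{\a,A}(x,y))$ on $(\mathfrak A\times X)\times\R^r$ with the measure $\blambda\times\mathrm{Leb}$. Tightness of $A_N/\delta_N$ under $\blambda$ lets one take $k_n=n$ and thresholds $\tilde\delta_n=R_n\delta_n$ with $R_n\to\infty$ slowly enough that $\tilde\delta_n=o(n^{1/r})$; recurrence of $\cT$ then gives recurrence of $W_{\a,A}$ for $\nu$-a.e.\ $\a$ by Fubini. Your route has a genuine bookkeeping problem: after passing to the sparse times $M_\ell$, the hypothesis of (a) must be checked with index $\ell$, so you need $R_\ell\,\delta_{M_\ell}=o(\ell^{1/r})$. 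But $R_\ell$ is dictated by the tails of the limit law (for Cauchy, $R_\ell\sim 2^\ell$) and $\delta_{M_\ell}$ may grow with $M_\ell$; taking $M_\ell$ sparser makes $\delta_{M_\ell}$ larger, not smaller, so there is no slack to absorb both.
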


Note that $T$ is not required to be ergodic. On the other hand if $T$ is ergodic, $r=1$ and $A$ has zero mean, 
then by the Ergodic Theorem $\mu(|A_n/n|>\eps)\to 0$ for any $\eps$ so one can take $k_n=n$ and $\delta_n=\eps_n n$
where $\eps_n\to 0$ sufficiently slowly. Therefore Theorem \ref{SmallWorld} implies Theorem \ref{ThRecDim1}. 

\begin{proof} 
(a) Suppose $B$ is a wondering set (that is, $W^k B$ are disjoint) of positive measure which
is contained in $\{|z|<C\}.$ Let 
$$B_n=\{(x,z)\in B: A_{k_n}(x) \leq \delta_n\}.$$ 
Then $\mu(B_n)\to\mu(B)$ as $n\to\infty$ 
so for large $n$ 
$$\mu(\cup_{1\leq i\leq n} W^{k_i} (B_{k_i})) \geq n \frac{\mu(B)}{2}.$$ 
On the other hand, by assumption $W^{k_i}(B) \subset E_i :=\{ y \leq 2 C + \delta_i \} \subset E_n$ 
if $i \in [1,n].$  
Hence $\mu(\cup_{1\leq i\leq n} W^{k_i} (B_{k_i})) \leq \delta_n^r =o(n)$, 
a contradiction. 

(b) follows from (a) applied to the map $\cT:(\mathfrak{A}\times X)\times \R^r$ given by
$\cT(\a, x, y)=(\a, W_{\a, A}(x,y)).$
\end{proof}

Combining Theorems \ref{ThConvex} and \ref{SmallWorld}(b) we obtain

\begin{corollary}
\label{ThWConv}
If  $\{\Omega_j\}_{j=1,\ldots,r}$ are real analytic and strictly convex and $\frac{(d-1)}{2d}<\frac{1}{r}$
then $W$ is recurrent for almost all $\alpha.$
\end{corollary}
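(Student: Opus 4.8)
The plan is to recognize the cylindrical cascade observable as a vector of discrepancy functions and then feed the limit law of Theorem \ref{ThConvex} directly into the recurrence criterion of Theorem \ref{SmallWorld}(b). Concretely, set $A=(\chi_{\Omega_j}-\Vol(\Omega_j))_{j=1,\dots,r}$, so that the ergodic sum along $T_\a$ is the vector of discrepancies
\[
A_N(\a,x)=\bigl(D_N(\Omega_j,\a,x)\bigr)_{j=1,\dots,r}.
\]
I would take $\mathfrak{A}=\T^d$ (the frequencies), $X=\T^d$ (the base points), $\nu=\Leb$ and $\mu_\a=\Leb$, so that the product $d\blambda=d\nu(\a)\,d\mu_\a(x)$ is precisely Lebesgue measure on $\T^d\times\T^d$, which is the distribution under which Theorem \ref{ThConvex} is formulated.

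First I would establish that $A_N/\delta_N$ admits a limiting distribution for the normalization $\delta_N=N^{(d-1)/2d}$. Each $\Omega_j$ is real analytic and strictly convex, so Theorem \ref{ThConvex} applies and yields that $D_N(\Omega_j,\a,x)/N^{(d-1)/2d}$ converges in distribution, the scale factor $r^{(d-1)/2}$ being bounded and harmless. Since all $r$ coordinates carry the \emph{same} normalizing exponent and, by the description in Theorem \ref{ThConvex2}, are all governed by one common random lattice $L$ together with the associated phases, the same equidistribution argument delivers joint convergence of the vector $A_N/N^{(d-1)/2d}$; in particular this vector is tight, which is all that Theorem \ref{SmallWorld}(b) requires.

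Next I would verify the normalization hypothesis of Theorem \ref{SmallWorld}, namely $\delta_N=o(N^{1/r})$. This is exactly the content of the standing assumption $\frac{d-1}{2d}<\frac{1}{r}$: it forces $N^{(d-1)/2d}=o(N^{1/r})$, so $\delta_N=N^{(d-1)/2d}$ is an admissible normalizing sequence. Having a limiting distribution for $A_N/\delta_N$ with an admissible $\delta_N$, Theorem \ref{SmallWorld}(b) concludes that $W=W_{\a,A}$ is recurrent for $\nu$-almost every $\a$, which is the assertion.

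The only genuine point requiring care, and the one I expect to be the main (albeit minor) obstacle, is the passage from the single-body limit law of Theorem \ref{ThConvex} to a joint statement for the $r$-dimensional observable with the geometry held \emph{fixed}. It is resolved by the lattice-point representation of the limit: the phases $(k,x)$, $N(k,\a)$ and the curvature/support data attached to the different bodies $\Omega_j$ become asymptotically independent for the same reasons as in the proof of Theorem \ref{ThConvex2}, so tightness of each coordinate upgrades to tightness of the whole vector and the criterion of Theorem \ref{SmallWorld}(b) applies verbatim.
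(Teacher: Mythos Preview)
Your proposal is correct and follows exactly the paper's approach: the paper's entire proof is the single line ``Combining Theorems \ref{ThConvex} and \ref{SmallWorld}(b) we obtain'' the corollary. You have supplied more detail than the paper does, in particular flagging the passage from single-body convergence to joint tightness of the $r$-dimensional vector, which the paper leaves implicit.
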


Note that the proof of Theorem \ref{SmallWorld} is not constructive.

\begin{question}
(a) Construct $\a$ and $\{\Omega_j\}_{j=1,\ldots,r}$ for which the corresponding $W$ is non recurrent.

(b) Find explicit arithmetic conditions which imply recurrence.
\end{question}

\begin{theorem}
(\cite{CC})
\label{ThWPoly}
(a) If $\{\Omega_j\}_{j=1,\ldots,r}$ are polyhedra then
$W$ is recurrent for almost all $\alpha.$

(b) There are polyhedra  $\{\Omega_j\}_{j=1,\ldots,r}$ and $\a$ in $\Tor^2$ such that $W$ is transient.
\end{theorem}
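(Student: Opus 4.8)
The plan is to establish the two parts by opposite mechanisms. Part (a) will follow from the recurrence criterion Theorem~\ref{SmallWorld}(b) once we know that the discrepancy of a polyhedron grows only polylogarithmically; part (b) will be a Liouville construction producing a genuinely $r$-dimensional drift, in the spirit of Yoccoz's non-recurrence example (Theorem~\ref{ThSmNonRec}). Note that part (a) forces every transient $\a$ of part (b) to lie in a set of measure zero, so the two parts are consistent.

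For part (a) the decisive feature is that the boundary of a polyhedron $\Omega$ is flat, so that, in contrast with the strictly convex bodies of Theorem~\ref{ThConvex} whose discrepancy grows like $N^{(d-1)/2d}$, the Fourier coefficient $\hat\chi_\Omega(k)$ has size $\asymp|k|^{-1}$ only when $k$ is nearly normal to a facet and decays faster in every other direction. Expanding $D_N(\Omega,\a,x)$ in Fourier series and isolating the resonant frequencies for which $\|(k,\a)\|$ is of order $1/N$, one is led, exactly as in the proof of Theorem~\ref{dimd}, to a lattice counting problem in $G/\Gamma$ that I would analyze through the Dani correspondence and the equidistribution input of Theorem~\ref{th.shah}, with the Poisson behaviour of the small denominators supplied by an analogue of Theorem~\ref{pois}. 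The target is that, for $(\a,x)$ uniform on $\Tor^d\times\Tor^d$, the vector $(D_N(\Omega_1,\a,x),\dots,D_N(\Omega_r,\a,x))/(\ln N)^{d}$ converges jointly in distribution (treating the $r$ bodies simultaneously, as in Theorem~\ref{th.multipole}). Since $(\ln N)^{d}=o(N^{1/r})$ for every fixed $r$, Theorem~\ref{SmallWorld}(b) applied with $A=(\chi_{\Omega_j}-\Vol(\Omega_j))_{j}$ and $\delta_N=(\ln N)^{d}$ then yields recurrence of $W$ for almost every $\a$; the precise power of $\ln N$ is immaterial, only the polylogarithmic order matters.

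For part (b) one first observes that $r\geq2$ is necessary, since for $r=1$ a zero-mean integrable cocycle is recurrent for every irrational $\a$ by Atkinson's Theorem~\ref{ThRecDim1}. Taking $d=2$, the plan is to choose a Liouville vector $\a=(\a',\a'')$ whose coordinate denominators $q'_n,q''_n$ grow very fast and are interlaced as in Yoccoz's construction (Theorem~\ref{ThSmNonRec}), together with polygons $\Omega_1,\dots,\Omega_r$ whose facets are aligned with the directions selected by the successive best approximations of $\a$. At a good time $N_n$ matched to $q'_n$ (respectively $q''_n$) the orbit $\{m\a\}_{m<N_n}$ concentrates along a family of nearly parallel segments, and a facet placed along that family is then crossed with a one-sided bias, producing an increment of definite sign in the corresponding component of $A_{N_n}$. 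Assigning the two interlaced scales of $\a$ to two different components makes the corresponding increments asymptotically independent, so that $A_{N_n}(x)$ leaves every ball of $\R^r$; a Borel--Cantelli argument over $n$ upgrades this to $|A_N(x)|\to\infty$ for almost every $x$, i.e. transience of $W$.

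The main obstacle in part (a) is the limit law itself. Unlike for boxes, the facet normals of a general polyhedron need not be rational, so the resonant frequencies do not lie on a fixed sublattice; the lattice-point count in $G/\Gamma$ must be organized facet by facet and fed into the martingale/Poisson argument with the correct normalization, and it is precisely the non-rational facet directions that are delicate — they are also what opens the door to the exceptional Liouville directions of part (b). In part (b) the hard point is quantitative: one must estimate the one-sided discrepancy across a tilted facet sharply enough, and interlace the two denominator scales tightly enough, to ensure that the drift never cancels and the walk genuinely escapes every compact set rather than merely oscillating between the good times.
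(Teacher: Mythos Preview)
Your strategy for part (b) matches the paper: it is indeed a Liouville construction on $\Tor^2$ with interlaced denominators, adapting Yoccoz's analytic example (Theorem~\ref{ThSmNonRec}) from trigonometric polynomials to indicator functions of polygons. The paper says nothing more precise than that, so your sketch is at the right level.

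For part (a), however, you have taken a much harder road than necessary, and one that is not actually open. You propose to prove a joint limit law for $\bigl(D_N(\Omega_1,\a,x),\dots,D_N(\Omega_r,\a,x)\bigr)/(\ln N)^d$ when the $\Omega_j$ are arbitrary polyhedra, and then invoke Theorem~\ref{SmallWorld}(b). But such a limit theorem is precisely the content of Question~\ref{QSemiAlg} for this class of sets; Theorem~\ref{dimd} and the Poisson input of Theorem~\ref{pois} are proved only for \emph{boxes}, and crucially require the extra randomness over the tilt matrix $M\in G_\eta$. For a fixed polyhedron with irrational facet normals you have no such averaging available, and the resonant-frequency analysis you outline does not go through as stated. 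So the ``target'' you aim at is an open problem, not an intermediate lemma.

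The paper's argument for (a) is far more modest. What \cite{CC} actually prove is an $L^2$ growth bound: for any polyhedron $\Omega\subset\Tor^d$ and any $\gamma>0$, for almost every $\a$ one has $\|A_n\|_{L^2(\Tor^d)}=O(n^\gamma)$, where $A=\chi_\Omega-\Vol(\Omega)$. (For axis-parallel boxes this is immediate from Beck's polylogarithmic estimate, Theorem~\ref{th.beck}.) Fixing such an $\a$ and choosing $\gamma<1/r$, Chebyshev gives $\mu\bigl(|A_n|>n^{\gamma'}\bigr)\to 0$ for any $\gamma<\gamma'<1/r$, so the hypothesis of Theorem~\ref{SmallWorld}(a) holds with $k_n=n$ and $\delta_n=n^{\gamma'}=o(n^{1/r})$. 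No limit distribution is needed---only that the ergodic sums are sub-polynomially small in probability, which is a much weaker statement than convergence of $D_N/(\ln N)^d$.
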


Here part (a) follows from Theorem \ref{SmallWorld} and a control on the growth of the ergodic sums. Namely it is proven in \cite{CC} that given any polyhedron $\Omega \subset \T^d$ then for any $\gamma>0$,  for almost every $\a \in \R^d$, we have that $\|A_n\|_2 = O(n^\gamma)$ where $A=\chi_{\Omega} -  {\rm Vol} (\Omega),$ the sums are considered above the translation $T_\a$ and the $L_2$ norm is considered with respect to the Haar measure on $\T^d$.  In the case of boxes, the latter naturally follows from the power log control given 
by Beck's Theorem (see Section \ref{sec.beck}).

The proof of part (b) proceeds by extending the method of
\cite{Y} discussed in Section \ref{DKP}    to the case of indicator functions.

% The next result follows from Theorem~\ref{ThConvex}.
%\margem{I changed from corollary to question because I think it needs some work} 

%\begin{proof}
%Theorem \ref{ThConvex} allows us to apply Theorem \ref{SmallWorld} with the base map
%$\cT: \T^{2d}\to\T^{2d}$ given by $\cT(\alpha, x)=(\alpha, x+\alpha).$ 
%\end{proof}

\begin{question}  
\label{QConvTrans}
Is it true that for a generic choice of $\Omega_j$ as in Question \ref{ThWConv},  
$W$ is transient for almost all $\alpha$ when $\frac{(d-1)}{2d}>\frac{1}{r}$?
\end{question}

An affirmative answer to Question \ref{QLLT} (Local Limit Theorem) 
would give evidence that Question \ref{QLLT} may be true due to Borel-Cantelli Lemma. 
(More precisely, to answer Question \ref{QConvTrans} we need a joint Local Limit Theorem
for ergodic sums of indicators of several sets.)

\begin{question}
Let $\alpha$ be as in Theorem \ref{ThWPoly} (a) or Question  \ref{ThWConv}. 
Does there exist $x$ such that $\lim_{N \to \infty} ||A_N(x)|| =\infty$?
\end{question}

Note that this is only possible if $d>1$ due to the Denjoy-Koksma inequality. On the other hand in any dimension one can 
have orbits which stay in a half space. Such orbits have  been studied extensively (see \cite{PR} and the references wherein).

Another case where recurrence is not easy to establish is that of skew products 
over circle rotations with functions having a singularity such as the examples discussed in Section \ref{ScErgSum}. 
We will come back to this question in the next section.

\subsection{Ergodicity.}
\label{SSErgCasc}
Next we discuss the ergodicity of cylindrical cascades. Here one has to overcome both problems of recurrence
discussed in Section \ref{SSRec} and issues of non-arithmeticity appearing in the study of ergodicity of $S_{\a, A}.$

The ergodicity of $W_{\a, A}$ is usually established using the fact that the sums $A_{N_n}$ are increasingly 
well distributed on $\R^r$ when considered above  any small scale balls in the base and for some {\it rigidity} sequence $N_n$, i.e.  
such that $\| N_n \a \| \to 0$. More precisely,  usual
methods of proving their ergodicity  take into
consideration a sequence of distributions 
\begin{equation}\label{e3}
\left(A_{n_k}\right)_\ast(\mu),\;k\geq1 \end{equation} along some rigidity
sequence $\{n_k\}$ as probability measures on ${\bar \R}^r$ where $\bar \R$ is the one-point
compactification of $\R$. As shown in \cite{Le-Pa-Vo} each point
in the topological support of a  limit measure of (\ref{e3})
is a so called {\it essential value} for $W_{\a,A}.$ Following \cite{Sch} $a\in\R^r$ is called an {\bf essential
value} of $A$ if for each $B\in\T^d$ of positive measure, for
each $\epsilon>0$ there exists $N\in\Z$ such that
$$
\mu(B\cap T^{-N}B\cap[|A_N(\cdot)-a|<\epsilon])>0.$$ 
Denote by
$E(A)$ the set of essential values of $A$. Then the essential value criterion  states as follows

\begin{theorem}\label{p1}
(\cite{Sch},\cite{Aa})

(a) $E(A)$ is a closed subgroup of $\R^r$.

(b) $E(A)=\R^r$ iff $W_{\a,A}$ is ergodic.

(c) If $A$ is integer valued and $E(A)=\Z^r$ then $W_{\a, A}$ is ergodic on $\Tor^d\times \Z^r.$
\end{theorem}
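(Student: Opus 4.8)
The plan is to connect the set of essential values with the periods of the $\sigma$-algebra of $W_{\a,A}$-invariant functions, and then read off ergodicity from the triviality of that $\sigma$-algebra. Throughout write $W=W_{\a,A}$, $T=T_\a$, and $R_a(x,y)=(x,y+a)$ for $a\in\R^r$; note $R_a$ commutes with $W$ and preserves $\mu\times(\text{Haar})$. For part (a), closedness of $E(A)$ is immediate: if $a_n\to a$ with $a_n\in E(A)$, then given $B$ and $\eps$ I pick $n$ with $|a_n-a|<\eps/2$ and apply the defining property of $a_n$ with tolerance $\eps/2$. The subgroup property I would prove using the cocycle identity $A_{M+N}(x)=A_M(x)+A_N(T^Mx)$: given $a,b\in E(A)$ and a target set $B$, first produce a positive-measure piece $B_1\subset B\cap T^{-N_1}B$ on which $|A_{N_1}-a|$ is small, then apply the property of $b$ to the positive-measure set $T^{N_1}B_1$ to get $N_2$ and a further piece on which $|A_{N_2}-b|$ is small; pulling this back along $T^{N_1}$ and using the cocycle identity yields a positive-measure set on which $|A_{N_1+N_2}-(a+b)|$ is small, so $a+b\in E(A)$, while the reflection $a\mapsto-a$ comes from $A_{-N}(T^Nx)=-A_N(x)$. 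That $0\in E(A)$ is exactly conservativity (recurrence) of $W$; in the present setting this is supplied for $r=1$ by Theorem \ref{ThRecDim1} and is assumed (or separately verified) in general.

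For part (b), the key is the correspondence $E(A)=\{a:\,R_a h=h\text{ for every }W\text{-invariant }h\in L^\infty\}$, whose right side I abbreviate $P$. Expanding a $W$-invariant $h$ in fiber Fourier modes as in \eqref{FiberFourier}, its mode $\phi_t(x)=\widehat h(x,t)$ must satisfy $\phi_t(x)=e^{2\pi i\langle t,A(x)\rangle}\phi_t(Tx)$, so whenever $\phi_t\not\equiv0$ ergodicity of $T$ makes $|\phi_t|$ constant and $\psi_t:=\phi_t/|\phi_t|$ a unimodular solution of the multiplicative cohomological equation \eqref{MultCoBZ} for the character $e^{2\pi i\langle t,\cdot\rangle}$. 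The inclusion $E(A)\subset P$ then follows from a Lusin argument: on a small set $B$ where $\psi_t$ is nearly constant, an essential-value return with $A_N\approx a$ and $\psi_t(x)\approx\psi_t(T^Nx)$ forces $e^{2\pi i\langle t,a\rangle}=1$ for every coboundary direction $t$, which is precisely the statement that $a$ kills every nonzero Fourier mode of every invariant $h$, i.e.\ $R_ah=h$.

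The reverse inclusion $P\subset E(A)$ is the substantive step and the one I expect to be the main obstacle. Its contrapositive asks: if $a\notin E(A)$, produce a $W$-invariant $h$ with $R_ah\neq h$; by the mode analysis this amounts to finding a character direction $t$ with $\langle t,a\rangle\notin\Z$ for which $e^{2\pi i\langle t,A\rangle}$ is a multiplicative coboundary, since then $h(x,y)=\psi_t(x)e^{2\pi i\langle t,y\rangle}$ works. Equivalently, one must show that $E(A)$ is exactly the annihilator of the group of coboundary characters, so that a value not realized as an essential value is detected by such a character. I would obtain this by reducing the cocycle modulo the closed subgroup $E(A)$: the induced cocycle $\pi\circ A$ into $\R^r/E(A)$ has no nontrivial essential values, and for the target groups $\R^r$ and $\Z^r$ the regularity (conservativity) of the cocycle lets one conclude it is a measurable coboundary, which furnishes the transfer function and hence the missing character. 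This is where Schmidt--Aaronson regularity theory and recurrence are genuinely used.

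Granting the correspondence, part (b) is immediate: $W$ is ergodic iff every invariant $h$ is constant iff $P=\R^r$ iff $E(A)=\R^r$. Finally, part (c) is the same argument carried out for the fiber group $\Z^r$ in place of $\R^r$, with Haar measure replaced by counting measure: since $W$ preserves $\T^d\times\Z^r$ and $\Z^r$ is the full fiber group, the condition $E(A)=\Z^r$ is ``$E(A)=$ whole group,'' and the correspondence yields ergodicity on $\T^d\times\Z^r$ exactly as before.
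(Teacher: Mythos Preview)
The paper does not prove this theorem; it is quoted from \cite{Sch} and \cite{Aa} as a standard tool. Your sketch follows the architecture of those references, centering on the period group $P=\{a:R_ah=h\text{ for every }W\text{-invariant }h\}$. Two points deserve correction.

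First, your Fourier-mode argument for $E(A)\subset P$ tacitly treats the fiber as $\T^r$: the expansion \eqref{FiberFourier} is a discrete sum over $\lambda\in\Z^r$, whereas here the fiber is $\R^r$ and there is no such series for an $L^\infty$ function. One can repair this via the spectral measure of the unitary $\R^r$-action $\{R_a\}$ on $L^2$, or avoid Fourier entirely by arguing directly with $W$-invariant sets and Lebesgue density points, which is the route taken in \cite{Sch}.

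Second, and more substantively, you work harder than necessary for (b) and in doing so invoke a false statement. Only the inclusion $E(A)\subset P$ is needed for ``$E(A)=\R^r\Rightarrow W$ ergodic'' (every invariant $h$ is then $R_a$-invariant for all $a$, hence depends only on $x$, hence constant by ergodicity of $T_\a$). The converse ``$W$ ergodic $\Rightarrow E(A)=\R^r$'' is direct: given $a,B,\eps$, conservative ergodicity of $W$ forces some $W^N$ to carry positive mass from $B\times(-\eps/2,\eps/2)$ into $B\times(a-\eps/2,a+\eps/2)$, and unwinding $W^N$ yields $\mu(B\cap T^{-N}B\cap[|A_N-a|<\eps])>0$. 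Your attempt at the full equality $P\subset E(A)$ reduces $A$ modulo $E(A)$ and then asserts that a cocycle with trivial essential-value group is a coboundary ``by regularity (conservativity)''. This conflates two hypotheses: Schmidt's structural theorem requires that $\infty$ not be an essential value (regularity in his sense), which is strictly stronger than conservativity of $W$. So the step as written has a gap, though for (b) and (c) it is simply unnecessary.
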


Hence if the supports of the 
probability measures in (\ref{e3}) are increasingly dense on $\R^r$ then $W_{\a,A}$ is ergodic.

The case where $d=r=1$ is the most studied  although there are still some open questions  in this context.
For $d=r=1$ ergodicity is often proved using the Denjoy Koksma Property. 
Indeed, if $A$ is not cohomologous to a constant then $A_N-N \int A$ are not bounded. 
%Then one uses slow divergence of the sums from $A_{q_n}$ that are bounded with 
Let $q_n$ be a best denominator for the base rotation. Pick  $K_n$ which is large but not too large. 
Then $Kq_n$ is still a rigidity time for the translation but 
$A_{Kq_n}$ have sufficiently large albeit controlled oscillations to yield that a given value $a$ in the fibers is indeed an essential value. 

This method is  well
adapted to $A$ whose Fourier transform satisfies
$\hat{A}(n)=\mbox{O}(1/|n|)$, since they display a DKP (see \cite{Lem1}).  Example of such functions are functions of
bounded variation and functions smooth everywhere except for a log symmetric singularity. 

Ergodicity also holds in general for characteristic functions of intervals.

\begin{theorem}
(a) \cite{etds1}  
If $\a$ is Liouville, 
there is a residual set of smooth functions $A$ with zero integral such that the skew product $W_{\a,A}$ is ergodic.

(b) (\cite{Fr-Le})
If $A$ has a symmetric logarithmic singularity then $W_{\a, A}$ is ergodic for all 
irrational $\a.$

(c) (\cite{CK}) If $A=\chi_{[0, 1/2]}-\chi_{[1/2, 1]}$ and $\a$ is irrational then $W_{\a, A}$
is ergodic on $\T^1\times \Z.$ 

(d) (\cite{Oren}) 
If $A=\chi_{[0,\beta]}-\beta$ then $W_{\a, A}$ is ergodic iff $1, \a$ and $\beta$ are rationally
independent.

(e) (\cite{Pas}) If $A$ is piecewise absolutely continuous,  $\int_{\T^1} A(x) dx=0,$ 
$A'$ is Riemann integrable and $\int_{\T^1} A'(x) dx \neq 0$ then $W_{\a, A}$
is ergodic for all $\a\in \R-\Q.$

(f) (\cite{conze})  If $A: \T \to \R^r = (A_1,\ldots,A_r) $ with $A_j= \sum c_{j,i} \chi_{I_{j,i}} - \beta_j$ with $I_{j,i}$ a finite family of intervals, $c_{j,i} \in \Z$ and $\beta_j$ is such that $\int_\T A_j(x)dx=0$ and if the sequence $(\{q_n \beta_1 \},\ldots,\{q_n \beta_r \})$ is equidisitributed on $\T^r$ { as $n\to\infty$}, where $q_n$ is the sequence of denominators of $\a$,  then $W_{\a,A}$ is ergodic. In the case $r=1$, it is sufficient to ask that $(\{q_n \beta\})$ has infinitely many accumulation points, then $W_{\a,A}$ is ergodic.

\end{theorem}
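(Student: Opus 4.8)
The plan is to verify the essential value criterion of Theorem \ref{p1}, that is, to show $E(A)=\R^r$. Write $A=(A_1,\dots,A_r)$ with $A_j=\sum_i c_{j,i}\chi_{I_{j,i}}-\beta_j$. Since the $c_{j,i}$ are integers, each step function $g_j:=\sum_i c_{j,i}\chi_{I_{j,i}}$ is integer valued, so for every $N$ and every $x$ one has $A_N(x)\in\Z^r-N\beta$; consequently the class of $A_N(x)$ in $\T^r=\R^r/\Z^r$ equals the deterministic point $-\{N\beta\}=-(\{N\beta_1\},\dots,\{N\beta_r\})$, \emph{independent of $x$}. First I would record that each $g_j$ has finite total variation $V_j$, so the Denjoy--Koksma inequality \eqref{EqDKstrong} applied coordinatewise along the denominators $q_n$ of $\a$ gives that the $j$-th coordinate of $A_{q_n}(x)$ has absolute value at most $2V_j$ for all $x$. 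Thus all the sums $A_{q_n}(x)$ lie in the fixed box $\prod_j[-2V_j,2V_j]$, while their common projection to $\T^r$ is exactly $-\{q_n\beta\}$.

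For the equidistributed hypothesis, fix $t\in\T^r$. Since $\{q_n\beta\}$ equidistributes it is dense, so $t$ is an accumulation point and one may choose a subsequence with $\{q_{n_k}\beta\}\to t$. As $\|q_{n_k}\a\|\to 0$, this is a rigidity sequence, and by tightness (all $A_{q_{n_k}}$ lie in a fixed box) we pass to a further subsequence so that $(A_{q_{n_k}})_*\mu$ converges weakly to a \emph{probability} measure $\nu_t$ on $\R^r$. Its projection to $\T^r$ is $\delta_{-t}$, so $\mathrm{supp}(\nu_t)\subseteq \Z^r-t$ and is nonempty. By the result of \cite{Le-Pa-Vo} quoted above, every point of $\mathrm{supp}(\nu_t)$ is an essential value, so $-t\in\pi\bigl(E(A)\bigr)$, where $\pi:\R^r\to\T^r$ is the projection. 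Since $t$ was arbitrary this yields $\pi\bigl(E(A)\bigr)=\T^r$, i.e. $E(A)+\Z^r=\R^r$. As $E(A)$ is a closed subgroup, $\R^r/E(A)$ is a connected abelian Lie group onto which the countable group $\Z^r$ surjects; a countable group cannot surject onto a positive dimensional Lie group, so $\R^r/E(A)$ is trivial and $E(A)=\R^r$. By Theorem \ref{p1}(b), $W_{\a,A}$ is ergodic.

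The refinement for $r=1$, where one only assumes that $\{q_n\beta\}$ has \emph{infinitely many} accumulation points, is where the real difficulty lies, because then the argument above merely shows $\pi\bigl(E(A)\bigr)$ is infinite. A closed subgroup of $\R$ is $\{0\}$, $c\Z$, or $\R$; an infinite projection excludes $\{0\}$ and every rational $c$, so only $E(A)=c\Z$ with $c$ irrational remains to be ruled out. The plan is to produce a nonzero \emph{integer} essential value: for infinitely many $n$ the integer valued function $x\mapsto m(x):=A_{q_n}(x)+q_n\beta$ is nonconstant (a genuine step function with integer jumps produces of order $q_n$ distinct discontinuities that cannot cancel to a constant), so some limit measure $\nu_t$ carries at least two atoms, at $j_1-t$ and $j_2-t$ with $j_1,j_2\in\Z$; their difference $j_1-j_2$ is a nonzero integer in $E(A)$. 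If $E(A)=c\Z$ this forces $c\in\Q$, whence $\pi\bigl(E(A)\bigr)$ would be finite, contradicting the infinitely many accumulation points. Hence $E(A)=\R$ and $W_{\a,A}$ is ergodic. I expect the most delicate points to be the bookkeeping guaranteeing nonconstancy of $m$ (equivalently, that $A$ is genuinely not cohomologous to a constant, which is anyway necessary for ergodicity) and the uniform tightness used to extract $\nu_t$; everything else is a direct assembly of the Denjoy--Koksma bound, the equidistribution hypothesis, and the criterion of Theorem \ref{p1}.
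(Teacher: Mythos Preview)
Your proposal treats only part (f); since the paper's own discussion after the theorem is likewise a brief sketch centered on (f) (the reference to ``(e) in the case $r=1$'' there is a typo for (f)), I restrict my comments to that part.

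For the equidistributed hypothesis your argument is correct and is exactly what the paper has in mind: the coordinatewise Denjoy--Koksma bound traps $A_{q_n}$ in a fixed box, the projection of $A_{q_n}(x)$ to $\T^r$ is the deterministic point $-\{q_n\beta\}$, tightness plus the criterion of \cite{Le-Pa-Vo} gives $-t\in\pi\bigl(E(A)\bigr)$ for every $t\in\T^r$, and the closed-subgroup argument forces $E(A)=\R^r$.

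For the $r=1$ refinement your route is more complicated than necessary and has a genuine gap. You try to produce a nonzero \emph{integer} essential value by arguing that $m_n(x)=A_{q_n}(x)+q_n\beta$ is nonconstant for infinitely many $n$. You do not prove this, and it does not follow directly from $A$ not being a coboundary; the step ``a genuine step function with integer jumps produces of order $q_n$ distinct discontinuities that cannot cancel to a constant'' is precisely where the difficulty lives, and you have not supplied it. The paper's sketch (``the set of essential values is not discrete, hence it is all of $\R$'') points to a much shorter argument that bypasses this issue entirely. Write $A_{q_n}(x)=m_n(x)-\{q_n\beta\}$ with $m_n(x)\in\Z$, $|m_n(x)|\le 2V+1$. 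The set $K\subset[0,1]$ of accumulation points of $\{q_n\beta\}$ is closed; if $K$ is infinite it has an accumulation point $t_0$, so one can choose $t_k\in K$, $t_k\to t_0$, $t_k\neq t_0$. Your own first paragraph already yields, for each $t_k$, an essential value $m_k-t_k$ with $m_k\in\Z$, $|m_k|\le 2V+1$. Passing to a subsequence with $m_k\equiv m$ constant, the essential values $m-t_k$ are pairwise distinct and converge to $m-t_0$; since $E(A)$ is closed it contains the limit as well. Thus $E(A)$ contains a nontrivial convergent sequence, hence is not discrete, hence equals $\R$. No control on the nonconstancy of $m_n(\cdot)$ is needed.
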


For further results on the ergodicity of cascades defined over circle rotations with step functions as in (f), we refer to the recent work \cite{ConzeP}.

The proofs of (a) and (b) are based on DKP 
and progressive divergence of the sums as explained above. (c)--(e) are treated differently since the ergodic sums take discrete values. For example, the proof of (e)  in the case $r=1$ is based on the fact that $A_{q_n}$ is bounded by DKP and then the hypothesis on $\{q_n \beta\}$ implies that the set of essential values is not discrete, hence it is all of $\R$, 
and the ergodicity follows.

The cases of slower decay of the Fourier coefficients of ${A}$ are more difficult to handle. We have nevertheless a positive result in the particular situation of log singularities. 

\begin{theorem}
\cite{FL} \label{thFL}
If $A$ has (asymmetric) logarithmic singularity then $W_{\a, A}$ is ergodic for almost every $\a.$
\end{theorem}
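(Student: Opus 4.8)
The plan is to verify the essential value criterion of Theorem \ref{p1}: since $E(A)$ is a closed subgroup of $\R$, it is enough to produce a family of nonzero essential values whose closure is all of $\R$, which rules out both $E(A)=\{0\}$ and $E(A)=c\Z$. Following the strategy recalled after Theorem \ref{p1}, I would realize essential values as points in the topological support of the weak limits of the measures $(A_{n_k})_*\mu$ taken along a rigidity sequence $n_k$ with $\|n_k\a\|\to 0$, using \cite{Le-Pa-Vo}. In this way the whole problem reduces to showing that, for almost every $\a$, one can choose $n_k$ so that the supports of these limit measures become dense in $\R$.

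First I would split $A$ around its singular point $x_0$ into its symmetric and antisymmetric parts, $A=A_s+A_a$, where $A_s$ carries the symmetric logarithm with coefficient $c=(a^++a^-)/2$ and $A_a$ carries the genuinely asymmetric part with coefficient $b=(a^+-a^-)/2\neq 0$. The symmetric piece $A_s$ has Fourier coefficients of size $O(1/|n|)$ and hence enjoys the Denjoy--Koksma property, so along the denominators $q_n$ of $\a$ the sums $A_{s,q_n}$ stay bounded with controlled oscillation; this is exactly the situation handled for all irrational $\a$ in \cite{Fr-Le}, and it contributes only a bounded, oscillating term.

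The heart of the argument is the antisymmetric part $A_a$, for which the Denjoy--Koksma property fails. Here I would analyze the ergodic sums along $q_n$ using the three-distance theorem: among the points $x+j\a$, $0\le j<q_n$, the closest approach to $x_0$ is of order $1/q_n$, and as $x$ varies this nearest point sweeps across the singularity. Because the logarithm is asymmetric, the contributions from the two sides of this passage no longer cancel, so $A_{a,q_n}(x)$ acquires an excursion of size comparable to $b\ln q_n$ whose sign and magnitude depend on where $x$ sits relative to $x_0$. Summing $K=K_n$ consecutive blocks over the rigidity time $Kq_n$, with $K$ large but not too large so that $Kq_n$ remains a rigidity time (exactly as in the symmetric construction recalled above), these logarithmic contributions accumulate without cancelling, and as $x$ ranges over the circle they sweep out an interval of values whose length grows without bound. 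This is what forces the supports of the limit measures to become dense and hence $E(A)=\R$.

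The step I expect to be the main obstacle is the quantitative control of this accumulation for almost every $\a$ rather than for all $\a$. One needs the partial quotients $a_{n+1}$ not to grow too fast, so that a single block does not dominate and destroy the spreading, while simultaneously needing enough growth of $\ln q_n$ along a well-chosen subsequence to guarantee that the asymmetric contributions do not conspire to cancel; both are typical properties of the continued fraction expansion and follow from the metric theory of continued fractions (Gauss--Kuzmin statistics), which is precisely why the conclusion holds only almost surely in $\a$. Turning the heuristic ``asymmetry $\Rightarrow$ non-cancellation $\Rightarrow$ spreading'' into a rigorous lower bound on the oscillation of $A_{a,Kq_n}$, uniform enough to identify the limiting distributions and read off their supports, is the technical core and is carried out in \cite{FL}.
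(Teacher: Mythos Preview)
Your high-level plan (essential value criterion, rigidity times, the Lema\'nczyk--Parreau--Voln\'y device of reading essential values off the support of weak limits of $(A_{n_k})_*\mu$) is the right framework, and the decomposition $A=A_s+A_a$ into symmetric and antisymmetric logarithmic parts is a reasonable first move. But the mechanism you propose for the antisymmetric piece points in the wrong direction and would not produce essential values.

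You argue that the asymmetry makes $A_{a,Kq_n}$ ``sweep out an interval whose length grows without bound'' and that this forces the supports of the limit measures to be dense. The problem is that the limit in \cite{Le-Pa-Vo} is taken on the one-point compactification $\bar\R$: if the bulk of the distribution of $A_{n_k}$ drifts off to infinity (which is exactly what happens here---the special flow under an asymmetric logarithm is mixing by \cite{SK}, and mixing is incompatible with any positive-measure concentration of the sums), then the weak limit is simply $\delta_\infty$ and you extract no finite essential value at all. Knowing that the \emph{range} of $A_{Kq_n}$ is a huge interval tells you nothing about where the \emph{mass} sits; a sequence of uniform distributions on $[n,2n]$ also sweeps out arbitrarily long intervals and converges weakly to $\delta_\infty$.

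The actual argument in \cite{FL}, as the paper summarizes, goes the opposite way: one proves a \emph{weakened} Denjoy--Koksma property, namely that \eqref{EqDK} still holds provided the fixed lower bound $\delta$ is replaced by a sequence $\delta_n\to 0$ decaying sufficiently slowly. In other words, even though most of the mass of $(A_{q_n})_*\mu$ escapes to infinity, a set of measure at least $\delta_n$ survives in a bounded window. It is this controlled residual concentration---not the spreading---that lets the standard essential-value machinery go through. The arithmetic conditions on $\alpha$ (hence the ``almost every'') are precisely what guarantee that $\delta_n$ decays slowly enough for the argument to close; your invocation of Gauss--Kuzmin is in the right spirit, but it should be feeding a lower bound on $\mu(|A_{q_n}|\le C)$, not an upper bound on cancellation.
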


The delicate point  in Theorem \ref{thFL} is that the DKP does not hold. Indeed, it was shown in \cite{SK} that the special flow above $T_\a$ and under a function that has asymmetric log singularity is mixing for a.e. $\a$. But, as we will see in the next section, mixing of the special flow is not compatible with the DKP. {\it A contrario}  special flows under functions with  symmetric logarithmic singularities are not mixing \cite{Kc2,Lem1} because of the DKP.

In the proof of Theorem \ref{thFL}, one first shows that the DKP 
\eqref{EqDK} holds if the constant $\delta$ is replaced by a sequence $\delta_n$ which decays sufficiently slowly
and then uses this to push through the standard techniques under appropriate arithmetic conditions.

The case of general angles for the base rotation or the case of stronger singularities are harder and all questions are still open.

%The second case is indeed sensitively different from the first one
%for the following reason that we will further comment in Section \ref{sec.skew} : while the distribution of the ergodic sums related to a zero average function with symmetric log singularity   along a special subsequence of integers are bounded on a positive measure  proportion of points of the circle, the sums for power like singularities or asymmetric log are concentrated  at infinity. This is why for example  the special flow over $R_\a$ and under a smooth
%function with at least one  power like singularity or with asymmetric log singularities is
%mixing \cite{Kc1,SK,bsmf,Kc3} while the one under a smooth function
%with symmetric logarithmic singularities is not \cite{Kc2,Lem1}. For the same reason, it is relatively easy to show that cylindrical cascades above irrational rotations with symmetric logarithm singularities are ergodic  \cite{Fr-Le}, while ergodicity of the cylindrical cascades for asymmetric logarithms is harder \cite{FL} and still unknown for power like singularities. 
%{\color{red} Add reference and result for compact extensions}

\begin{question} Are there examples of ergodic cylindrical cascades with smooth functions having power like singularities?
\end{question}

Conversely, we may ask the following 
\begin{question} Are there examples of non ergodic cylindrical cascades with smooth functions having  non symmetric logarithmic  or (integrable) power singularities?
\end{question}

The study of ergodicity when $d>1$ and $r>1$ is more tricky essentially because of the absence of DKP. 

%Again, one has to separate the case of smooth observables and non smooth ones. Indeed, for $\a$ Diophantine and $A$ smooth, $W_{\a,A}$ is smoothly conjugated to $W_{\a,0}$ since all the coordinates of $A$ are smooth coboundaries above $\a$. Then obviously ergodicity fails. So for smooth observables only the Liuoville case is interesting. 

For smooth observable, only the Liouville frequencies are interesting. The ergodic sums above such frequencies tend to stretch at least along a subsequence of integers. And this stretch usually occurs gradually and independently in all the coordinates of $A$ hence a positive answer to the following question is expected. 

\begin{question}
Show that for any Liouville vector $\a$, 
there is a residual set of smooth functions $A$ with zero integral such that the skew product $W_{\a,A}$ is ergodic.
\end{question}

As we discussed in the proof of Theorem \ref{thFL}, the cylindrical cascade on $\T \times \R$ with a function $A$ having an asymmetric logarithmic singularity is ergodic for almost every $\alpha$ although the ergodic sums $A_N$ 
above $T_\a$ concentrate at infinity as $N \to \infty$. The slow divergence of these sums that compare to $\ln N$ (see Question \ref{q.log}) plays a role in the proof of ergodicity. 
The logarithmic control of the discrepancy relative to a polyhedron
%\margem{isn't it a box rather than polyhedron otherwise why would one need Conze in the recurrence theorem? Of course one expects Beck to hold for polyhedron and we can keep polyhedron in the question}  
(see Theorems \ref{th.beck}, \ref{ThBox} and \ref{ThWPoly}) motivates the following question.

 \begin{question} \label{q.ergodicity}
  Is it true that for (almost) every polyhedra  $\Omega_j \subset \T^d$, $j=1,\ldots,r$, and  $A=(\chi_{\Omega_j} -  {\rm Vol} (\Omega_j))_{j=1,\ldots,r}$, the cascades $W_{\a,A}$ are ergodic? 
  \end{question} 

We note that the answer is unknown even for boxes with $d=2$ and $r=1$.

%Special flows with power like singularities of the form $|x|^{-a}$, $a \in (0,1)$ are mixing \cite{Kc1}. The ergodic sums are too large (see Question \ref{q.a}) to obtain ergodicity of the cylindrical cascades {\it via} the essential value criterion and the slow convergence to $0$ of the sets where the sums are bounded. 

\subsection{Rate of recurrence.}
Section \ref{SSErgCasc} described several situations where $W_{\a, A}$ is ergodic. However for infinite measure preserving transformations
the (ratio) ergodic theorem does not specify the growth of ergodic sums. Rather it shows that for any $L^1$ functions
$B_1(x,y), B_2(x,y)$ with $B_2>0$ we have
\begin{equation}
\label{RatETh}
\frac{\sum_{n=0}^{N-1} B_1(W_{\a, A}^n (x,y))}{\sum_{n=0}^{N-1} B_2(W_{\a, A}^n (x,y))}
\to
\frac{\iint B_1(x,y) dx dy}{\iint B_2(x,y) dx dy}.  
\end{equation}
In fact (\cite{Aa}) there is no sequence $a_N$ such that
\begin{equation}
\label{RescaledSumCasc}
\frac{\sum_{n=0}^{N-1} B_1(W_{\a, A}^n (x,y))}{a_N}
\end{equation}
converges to 1 almost surely. On the other hand, one can try to find $a_N$ such that 
\eqref{RescaledSumCasc} converges in distribution. By \eqref{RatETh} it suffices to do it 
for one fixed function $B.$ For example one can take $B=\chi_{B(0,1)}.$ This motivates the following question.

\begin{question}
Let $\alpha$ be as in Theorem \ref{ThWPoly} (a) or Question \ref{ThWConv}. 
How often is $||W^N||\leq R$?
\end{question}

So far this question has been answered only in a special case. Namely,
let $d=r=1,$ $Z_N(x)=\sum_{n=0}^{N-1}\left[\chi_{[0, 1/2]}(x+n\alpha)-1/2\right].$
Denote $\LL_N=\text{Card}(n\leq N: Z_n=0).$

\begin{theorem}
\label{ThChi2}
\cite{ADDS}
If $\alpha$ is a quadratic surd then there exists a constant $c=c(\alpha)$ such that
$\frac{\sqrt{\ln N} }{c N} \LL_N $ converges to $e^{-\sN^2/2} .$
\end{theorem}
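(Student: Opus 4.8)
The plan is to exploit the exact self-similarity that a quadratic surd enjoys. Its continued fraction expansion is eventually periodic, so after passing to the period the Gauss-map renormalization has a genuine periodic orbit and the denominators grow geometrically, $q_{k+1}\sim\lambda q_k$ for the fundamental unit $\lambda$. In the language of Section \ref{notations.lattices}, this means that the $g_t$-orbit of the lattice attached to $\alpha$ is periodic of period $\ln\lambda$, and the Dani correspondence used in the proof of Theorem \ref{ThDO-Pois} realizes the discrepancy $Z_n(x)$ as a bounded cocycle sampled along this periodic orbit, perturbed by the base point $x$. Concretely, I would first use the Ostrowski expansion of $n$ together with the Denjoy--Koksma inequality \eqref{EqDKstrong} (for $A=\chi_{[0,1/2]}-\tfrac12$ this makes $|Z_{q_k}|$ uniformly bounded) to write $Z_n$ as a sum of one bounded, mean-zero increment per renormalization level, so that $Z_n$ is effectively a sum of $\sim \ln n/\ln\lambda$ weakly dependent bounded terms.

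Second, I would establish an invariance principle in logarithmic time. Since the increments attached to distinct levels decorrelate exponentially (hyperbolicity of the periodic renormalization orbit), the rescaled process $s\mapsto Z_{\lfloor N^s\rfloor}(x)/(\sigma\sqrt{\ln N})$, $s\in[0,1]$, converges, as $x$ is uniform on $\T$, to a standard Brownian motion $B(s)$ for a suitable $\sigma=\sigma(\alpha)>0$; in particular the terminal position satisfies $Z_N/(\sigma\sqrt{\ln N})\Rightarrow \sN$. This is the central limit phenomenon already underlying Theorem \ref{ThNRandom}, and it can be proven by the martingale method alluded to in Section \ref{notations.lattices}, using the exponential equidistribution of the translates of the unstable orbit along the periodic geodesic.

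Third, and this is the heart of the matter, I would analyze $\LL_N=\sum_{n\le N}\mathbf 1[Z_n=0]$ as a local time at the origin of the lattice-valued walk $Z_n$. A lattice local limit theorem for the cocycle gives $\Prob(Z_n=0)\sim c_\ast/\sqrt{\ln n}$, whence $\EXP[\LL_N]\sim c N/\sqrt{\ln N}$ with the sum dominated by the last $O(1)$ multiplicative windows $n\in[N/C,N]$. The fluctuations of $\LL_N$ as $x$ varies must then be tied to the terminal height: I would prove a quenched (conditional) local limit theorem showing that on the dominant range the density of zeros is modulated by the Gaussian density of the limiting process at its current normalized position, so that jointly $\big(\tfrac{\sqrt{\ln N}}{N}\LL_N,\, Z_N/(\sigma\sqrt{\ln N})\big)$ converges to $\big(\mathrm{const}\cdot e^{-\sN^2/2},\,\sN\big)$. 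Because $e^{-y^2/2}$ is exactly the ratio of the standard Gaussian density at height $y$ to its value at $0$, this identifies the limit, and after absorbing $\sigma$, $c_\ast$ and the lattice spacing into a single constant $c=c(\alpha)$ one obtains $\tfrac{\sqrt{\ln N}}{cN}\LL_N\Rightarrow e^{-\sN^2/2}$; the limit is genuinely random precisely because $\LL_N$ is large exactly when the walk ends near $0$.

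The main obstacle is this third step: coupling the global central limit theorem, which fixes the terminal height $\sN$, with a local limit theorem, which converts the event $\{Z_n=0\}$ into a density, and showing that the resulting modulation is exactly the Gaussian factor $e^{-\sN^2/2}$ rather than some decomposition-dependent expression. Indeed, a naive coarse/fine split at an intermediate scale $N^c$ already produces a spurious dependence on $c$, so the argument must use the exact self-similarity of the quadratic surd, i.e. the spectral data of the renormalization operator along its periodic orbit, to pin down both the exponent and the constant $c(\alpha)$. Controlling the lattice local limit theorem uniformly across the $\sim\ln N$ scales, and establishing the joint convergence of $\LL_N$ with $Z_N$, is where essentially all the work lies.
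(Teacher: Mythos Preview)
Your proposal has the right skeleton---renormalization along a periodic orbit attached to the quadratic surd, a CLT for the terminal position $Z_N$, and a local limit theorem converting visits to zero into a density---but the execution route differs substantially from the paper's, and the obstacle you flag in your third step is precisely what the paper's more geometric argument sidesteps.

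The paper does not work with the Ostrowski expansion or the space of lattices from Section~\ref{notations.lattices}. Instead it realizes the cylindrical cascade $(x,z)\mapsto(x+\alpha, z+\chi_{[1/2,1]}-\chi_{[0,1/2)})$ as the Poincar\'e return map of the linear flow of slope $\theta$ (with $\alpha=(\tan\theta+1)/2$) on the infinite staircase translation surface $St$. The key input is that $St$ is a Veech surface: for any $A\in SL_2(\Z)$ with $A\equiv I\pmod 2$ there is an affine automorphism $\phi_A$ of $St$ with derivative $A$, commuting with the deck transformation $G$, and acting on coordinates as $\phi_A(p,z)=(Ap,\,z+\tau(p))$ with $\int\tau=0$. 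Since $\alpha$ is a quadratic surd one chooses $A$ hyperbolic, fixing the direction $\theta$, with eigenvalue $\lambda<1$. Then $\LL_N/N$ is (up to boundary terms) the proportion of time the orbit segment $\Gamma_N(x)$ of length $\sim N$ spends in rectangle $0$, and applying $\phi_A^m$ with $m\approx\ln N/|\ln\lambda|$ contracts $\Gamma_N(x)$ to a segment $\tilde\Gamma$ of bounded length sitting in some rectangle of index $a=a(x)$. This yields the exact identity
\[
\frac{\LL_N}{N}\approx\Prob_x\!\left(\sum_{j=1}^m \tau(\phi_A^{-j}q)=a\right),
\]
where $q$ is uniform on the short segment $\tilde\Gamma$. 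Now $\phi_A$ acts on the quotient torus as the hyperbolic toral automorphism $A$, so the Local Limit Theorem for toral automorphisms gives this probability as $\sim(2\pi m\sigma^2)^{-1/2}e^{-a^2/2\sigma^2 m}$, while $a(x)=\sum_{j=0}^{m-1}\tau(\phi_A^j\eta(x,0))$ satisfies the CLT for the same automorphism, so $a/\sqrt m\Rightarrow\sN(\sigma^2)$. Composing these two facts gives the result directly.

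The point is that the affine automorphism furnishes an \emph{exact} renormalization identity that cleanly separates the problem into (i) a backward-in-time LLT over $m$ steps conditioned on the level $a$, and (ii) a forward-in-time CLT for $a$ itself. There is no coupling to engineer and no spurious scale parameter; the decomposition is dictated by the geometry and the two pieces are governed by the same toral automorphism $A$. Your proposed route through Ostrowski increments and a joint invariance principle for $(\LL_N,Z_N)$ is plausible in principle, but as you yourself note, making the local-time/terminal-height coupling rigorous in that framework is where the real difficulty lies; the staircase/Veech approach buys you exactly that coupling for free.
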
 

Similar results have been previously obtained by Ledrappier-Sarig for abelian covers of compact hyperbolic
surfaces (\cite{LS}). The fact that  the correct normalization is $N/\sqrt{\ln N}$ was established in \cite{AaK}.

\begin{question}
Extend Theorem \ref{ThChi2} to the case when $1/2$ is replaced by

(a) any rational number; 

(b) any irrational number,  (in which case one needs to replace $\{A_N=0\}$ by $\{|A_N| \leq 1\}$). 
\end{question}

\begin{question}
\label{RWTypAlpha}
What happens for typical $\alpha?$ 
\end{question}
Note that in contrast to Theorem \ref{ThChi2}, 
$\LL_N\sim \dfrac{N}{\ln N}$ 
(rather than $\LL_N\sim \dfrac{N}{\sqrt{\ln N}}$) is expected in view of
Kesten's Theorem \ref{ThKesten}. Ideas of the proof of Theorem \ref{ThChi2}
 will be described in Section \ref{SSAppl}.

\section{Special flows.}
\label{ScFlows}
\subsection{Ergodic integrals.}
In this section we consider {\bf special flows} above $T_\a$   which will be denoted  $T^t_{\a,A}.$ 
Here $A(\cdot) >0$ is called the {\bf ceiling function} and   the flow is given by  
\begin{eqnarray*}
\T^d \times \R / \sim  &  \rightarrow &  \T^d \times \R / \sim  \\
 (x,s) & \rightarrow & (x,s+t), \end{eqnarray*}
where $\sim$ is the
identification 
\begin{equation}
\label{FlowSpace}
(x, s + A(x)) \sim (T_\a(x),s). 
\end{equation}
Equivalently the flow is defined for  $t+s \geq 0$ by 
$$T^t(x,s) = (x+n\a, t+s-A_n(x))$$ 
   where $n$ is the unique integer such that 
\begin{equation}
\label{D-C}
   A_{n}(x) \leq t+s < A_{n+1}(x).
\end{equation}    
Since $T_\a$ preserves a unique probability measure $\mu$ then the special flow will preserve a unique probability measure that is the normalized product measure of $\mu$ on the base and the Lebesgue measure on the fibers. 

Special flows above ergodic maps are always ergodic for the product measure constructed as above. The interesting feature of special flows is that they can be more "chaotic" then the base map, displaying  properties such as weak mixing or mixing even if the base map does not have them. Actually any map of a  very wide class of zero entropy measure theoretic transformations, so called Loosely Bernoulli maps, 
are isomorphic to sections of special flows above any irrational rotation of the circle with a continuous ceiling function
(see \cite{ORW}).

If $A=\beta$ is constant then $T_{\a, A}$ is the linear flow on $\T^{d+1}$ with frequency vector $(\a, \beta).$
Thus special flows $T^t_{\a,A}$ can be viewed as time changes of translation flows on $\T^{d+1}.$ In particular, 
if we consider the linear flow
on $\T^{d+1}$ and multiply the velocity vector by  a smooth {\it non-zero} function $\phi$ we get a special flow with 
a smooth ceiling function~$A.$

\subsection{Smooth time change.} We recall that a translation flow frequency $v \in \R^d$ is said to be Diophantine if there exists $\sigma,\tau>0$ such that $||(k,v)||\geq C |k|^{-\sigma}$ for every $k\in \Z^d$. Hence a translation vector $(1,v) \in \R^{d+1}$ is Diophantine (homogeneous Diophantine or Diophantine in the sense of flows) if and only if $v$ is a Diophantine vector in the sense 
of \eqref{DefDio}.

\begin{theorem}  \label{th.kol} \cite{Kol1} Smooth non  vanishing time changes of translation flows with a Diophantine frequency vector are smoothly conjugated to translation flows. 
\end{theorem}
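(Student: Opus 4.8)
The plan is to realize the smooth time change as a vector field $X=\phi\, v$ on $\T^n$, where $\dot x = v$ is the original translation flow with Diophantine frequency $v$ and $\phi$ is a smooth non-vanishing function, and to exhibit an explicit smooth conjugacy to a constant-speed linear flow $\dot y=c\,v$. First I would observe that, since $\T^n$ is connected and $\phi$ never vanishes, $\phi$ has constant sign; after possibly reversing time we may assume $\phi>0$. The invariant measure of $X$ has density proportional to $\phi^{-1}$, because $(v\cdot\nabla)(\rho\phi)=0$ forces $\rho\phi$ to be constant (the only flow-invariant functions of an irrational linear flow being constants). This dictates the correct target speed
\[
c=\Big(\int_{\T^n}\phi^{-1}\,dx\Big)^{-1}.
\]

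The heart of the argument is the cohomological equation
\[
(v\cdot\nabla)u=\frac1\phi-\frac1c .
\]
Its right-hand side is smooth and has zero mean with respect to Lebesgue measure by the choice of $c$, hence lies in the range of $v\cdot\nabla$. Expanding in Fourier series $\frac1\phi-\frac1c=\sum_{k\neq 0}g_k e^{2\pi i(k,x)}$ one solves mode by mode, $u_k=g_k/\bigl(2\pi i(k,v)\bigr)$. This is precisely the step where the Diophantine hypothesis $\|(k,v)\|\geq C|k|^{-\sigma}$ is indispensable: it guarantees that the small divisors $(k,v)$ decay at most polynomially, so that the rapidly decreasing $g_k$ yield coefficients $u_k$ that still decay faster than any power of $|k|$, whence $u\in C^\infty(\T^n)$. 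This is exactly the flow analogue of the solvability of the linear cohomological equation \eqref{CoB} used earlier, and I expect it to be the main (indeed the only genuine) obstacle in the proof.

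With $u$ in hand I would set the candidate conjugacy
\[
H(x)=x+c\,u(x)\,v \pmod{\Z^n}.
\]
Writing the time-changed flow as $\Phi^t(x)=x+\beta(x,t)v$, where $t=\int_0^{\beta}\phi(x+sv)^{-1}\,ds$, and integrating the cohomological equation along the orbit gives the time relation $t=\beta/c+u(x+\beta v)-u(x)$. A direct substitution then shows $H\circ\Phi^t=\Psi^t\circ H$, where $\Psi^t(y)=y+ctv$ is the linear flow of speed $c$: the $v$-components of both sides agree precisely because of this time relation, so $H$ intertwines the two flows.

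Finally I would verify that $H$ is a diffeomorphism. A rank-one computation yields $\det DH=1+c\,(v\cdot\nabla)u=c/\phi>0$ everywhere, so $H$ is a local diffeomorphism; being homotopic to the identity (via $x\mapsto x+sc\,u(x)v$, $s\in[0,1]$) it has degree one, and a degree-one local diffeomorphism of a compact manifold is a global diffeomorphism. This produces the desired smooth conjugacy of the time-changed flow to the translation flow $\dot y=c\,v$, completing the argument. The delicate point throughout remains the smoothness of $u$, that is, the control of small divisors, which is exactly what the Diophantine condition on the frequency vector provides.
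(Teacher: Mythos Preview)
Your proof is correct and follows essentially the same approach as the paper: conjugate via a map of the form $x\mapsto x+f(x)\,v$ and reduce to the linear cohomological equation $\partial_v f=g$ with $\int g=0$, which is solved in Fourier series thanks to the Diophantine condition. Your version is in fact more complete than the paper's sketch, since you also verify that the conjugacy $H$ is a global diffeomorphism (via $\det DH=c/\phi>0$ and the degree argument), a step the paper leaves implicit.
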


\begin{proof} Let $v$ be a constant vector field on $\T^{d+1}$. We suppose WLOG that $v=(1,\a)$. Let $u(x)$ be a smooth function on the torus 
and $\dot{x}=u(x)v$. Then, making a change of
variables $y=T^{\phi(x)}_v(x)$ we obtain the equation
$\dot{y}=(\phi+\partial_v \phi)(y) u(y) v.$ The equation for $y$ is linear if $\phi+\partial_v \phi=\dfrac{c}{u}.$
Passing to Fourier series, this equation can be solved if $c=\int \phi(x) dx \left(\int \dfrac{dx}{u(x)}\right)^{-1}$ and $v$ is such that $|1+(k,v)| |\geq C |k|^{-\sigma}$ for every $k \in \Z^{d+1}$ which is equivalent to $\a$ Diophantine as in  \eqref{DefDio}. 

One can also see this fact at the level of the special flow $T^t_{\a,A}$ associated to $\dot{x}=u(x)v$. Namely, making a change of variables 
$(y,s)=T_{\a, A}^{B(x,t)} (x,t)$ transforms $T_{\a, A}$ to $T_{\a, D}$ with
$$D(x)=A(x)+B(x+\a,0)-B(x,0)$$ so one can make the LHS constant provided $\a$ is Diophantine.
Finally, the similarity between linear and nonlinear flows in the Diophantine case is also reflected in
\eqref{D-C} since for Diophantine vectors $\a$ $A_n=n\int A(x) dx+O(1).$  \end{proof}

An interesting question is that of deviations of ergodic sums above time changed linear flows.  In fact, the case of linear flows is already non trivial and can be studied by the methods described in Section \ref{SSProofsD}. More precisely, as for translations the interesting case occurs when the function under consideration has singularities,
for example, for indicator functions.

Namely, given a set $\Omega$ let

\begin{equation}
\bD_\Omega(r,v,x,T)  = \int_{0}^{T} \chi_{\Omega_r}(T_v^t x) dt - T \text{Vol}({\Omega_r})
\end{equation}
where $T_v^t$ denotes the linear flow with velocity $v.$ 

We assume that $(x,v, r)$ are distributed according to a smooth density.

\begin{theorem}
\label{ThFlows}
(\cite{DF1, DF2}).
Suppose that $\Omega$ is analytic and strictly convex.

(a) If $d=2$ then 
$\bD_\Omega(r,v,x,T)$ 
converges in distribution.

(b) If $d=3$ then $\frac{\bD_\Omega(r,v,x,T)}{\ln T}$ converges to a Cauchy distribution.

(c) If $d\geq 4$ then 
$\frac{\bD_\Omega(r,v,x,T)}
{ r^{\frac{d-1}{2}} T^{\frac{d-3}{2(d-1)}} }$
has limiting distribution as $T\to \infty.$

(d) For any $d \in \N$, if $\Omega$ is a box then $\bD_\Omega(r,v,x,T)$ 
converges in distribution.
\end{theorem}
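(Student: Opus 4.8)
The plan is to desuspend the flow to a toral translation and thereby reduce the ergodic integral to an ergodic sum of the type already analyzed for Theorem \ref{ThConvex}. After normalizing $v$ so that its first coordinate equals $1$, the linear flow $T_v^t$ on $\T^d$ is the suspension with constant ceiling of the rotation $T_\a$ of the cross-section $\T^{d-1}$, where $\a\in\T^{d-1}$ is the slope of $v$. First I would write, for integer $T=N$ (the non-integer part of $T$ only contributes an error of order $\sup\psi_r=O(r)$),
\[ \int_0^T \chi_{\Omega_r}(T_v^t x)\,dt = \sum_{n=0}^{N-1}\psi_r(x_b+n\a) + O(r), \]
where $x_b\in\T^{d-1}$ is the base projection of $x$ and $\psi_r(z)$ is the length of the intersection of $\Omega_r$ with the flow line over $z$. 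Since $\Omega_r$ is small, each lattice translate is met on a short, well separated time interval, so this chord-length decomposition is clean. By Cavalieri, $\int_{\T^{d-1}}\psi_r=\Vol(\Omega_r)$, hence
\[ \bD_\Omega(r,v,x,T)=\sum_{n=0}^{N-1}\bigl[\psi_r(x_b+n\a)-\textstyle\int\psi_r\bigr]+O(r), \]
that is, $\bD_\Omega$ is exactly the discrepancy of $T_\a$ for the observable $\psi_r$, and the randomness of $(x,v,r)$ passes to randomness of $(x_b,\a,r)$, matching the hypotheses of the translation theorems.

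The second step is the harmonic analysis of $\psi_r$. By the projection--slice identity its base Fourier coefficients are $\hat\psi_r(k)=\widehat{\chi_{\Omega_r}}(\iota k)$, where $\iota\colon\R^{d-1}\to\R^d$ is the embedding dual to the flow direction. Because $\Omega$ is strictly convex, its shadow is strictly convex and $\psi_r$ has square-root singularities on the shadow boundary; the stationary phase asymptotics of \cite{herz} then give
\[ \hat\psi_r(k)\approx \frac{r^{(d-1)/2}}{|k|^{(d+1)/2}}\,K^{-1/2}\!\Bigl(\tfrac{k}{|k|}\Bigr)\sin\!\Bigl(2\pi\bigl(rP(k)-\tfrac{d-1}{8}\bigr)\Bigr). \]
This is half a power faster than the indicator of a convex body in the same base dimension $d-1$, which is precisely what shifts the exponents. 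I would then run the three-step scheme of Section \ref{SSProofsD} essentially verbatim: elementary manipulation of the Fourier series of the ergodic sum shows the main contribution comes from the resonant window
\[ \eps N^{1/(d-1)}<|k|<\eps^{-1}N^{1/(d-1)},\qquad |k|^{(d+1)/2}\,\|(k,\a)\|<\frac{1}{\eps\,N^{(d-3)/(2(d-1))}}, \]
the analogues of \eqref{ConvSD1}--\eqref{ConvSD2}; lower modes fail to resonate for typical $\a$ while higher modes are killed by the $L^2$ bound coming from the decay of $\hat\psi_r$.

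The regimes are separated by the sign of the critical exponent $(d-3)/(2(d-1))$. For $d\ge 4$ it is positive: the resonant contribution concentrates at the top scale $|k|\sim N^{1/(d-1)}$, and the equidistribution of translated unstable orbits (Theorems \ref{th.shah} and \ref{ThDO-Pois}), together with the asymptotic independence of the phases $(k,x_b)$, $N(k,\a)$ and $rP(k)$, shows that the renormalized resonance data converge to a random lattice in $\R^d$, yielding a limit of the lattice-sum form $\cL_\Omega$ analogous to Theorem \ref{ThConvex2}; this is part (c), with the $r^{(d-1)/2}$ factor inherited from $\hat\psi_r$. For $d=3$ the exponent vanishes: the condition $|k|^2\|(k,\a)\|<C$ is scale invariant, so $\sim\ln T$ dyadic scales each contribute an $O(r)$ term of the $\xi_j/x_j$ shape, the small denominators form a Poisson process (Theorem \ref{pois}), and the Poisson representation \eqref{CauchyPois} of the Cauchy law produces, after division by $\ln T$, the distribution $\Cauchy$ of part (b), exactly as for boxes in Theorem \ref{ThBox}. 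For $d=2$ the exponent is negative, the resonant sum is tight, and $\bD_\Omega$ converges in distribution with no normalization, which is part (a).

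Part (d) uses the same reduction but a different harmonic analysis, following Theorem \ref{dimd}: for a box the chord-length $\psi_r$ is continuous and piecewise linear (a tent/trapezoid supported on the shadow zonotope), so its Fourier coefficients decay like a product $\prod_i|(\iota k)_i|^{-1}$ tempered by the extra regularity gained from continuity. This regularity is exactly what converts the logarithmically divergent multiplicative small-divisor sum responsible for the $\ln^d N$ growth in Theorem \ref{dimd} into a convergent one, so that $\bD_\Omega$ remains tight and converges in distribution in every dimension. The main obstacle throughout is the critical case $d=3$ and the box case: establishing the Poisson limit law for the relevant small denominators together with the asymptotic independence of the numerators, which requires the precise rate of equidistribution of the translated unipotent orbits and the identification of the Ratner group $\bH$ through Theorem \ref{th.shah}; by comparison the desuspension and Cavalieri steps are routine.
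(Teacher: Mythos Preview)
Your reduction by desuspension is correct and reproduces the right normalizations, and it is essentially a reorganization of what the paper indicates (the paper's own proof is the one line ``similar to Theorems \ref{ThConvex}, \ref{ThBox} and Corollary \ref{CorErgSmooth}''). In \cite{DF1,DF2} the argument is carried out directly on the flow: one expands
\[
\bD_\Omega(r,v,x,T)=\sum_{k\in\Z^d\setminus\{0\}} c_k(r)\,e^{2\pi i(k,x)}\,\frac{e^{2\pi iT(k,v)}-1}{2\pi i(k,v)},
\]
so that the small divisor is $(k,v)=k_1+(k',\alpha)$ for $k=(k_1,k')$, $v=(1,\alpha)$, and then runs the resonant-window/equidistribution scheme of Section \ref{SSProofsD} with the continuous Dirichlet kernel in place of the discrete one. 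Your cross-section formulation is the same computation: once $\psi_r$ is Fourier-expanded and the sum over the vertical mode $m=k_1$ is reinstated, the two displays coincide. The direct route has the practical advantage that the observable does not depend on $\alpha$, whereas your $\psi_r$ (the chord along $v$) does.

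Two points to tighten. First, the projection--slice identity $\hat\psi_r(k)=\widehat{\chi_{\Omega_r}}(\iota k)$ is not literally valid on the torus: what one actually has is $\hat\psi_r(k)=\sum_{m\in\Z}c_{(m,k)}(r)\int_0^1 e^{2\pi i(m+(k,\alpha))t}\,dt$, and only after multiplying by the geometric factor for $T_\alpha$ does the expression collapse to the direct flow expansion. This does not affect your exponent count, but you should not invoke projection--slice as a black box. Second, for part (b) you appeal to Theorem \ref{pois}, but that Poisson law is specific to the \emph{multiplicative} small divisors $\prod|\brk_i|\,\|(k,\alpha)\|$ of the box problem; for a strictly convex $\Omega$ in $d=3$ the relevant small divisors are $|k|^2\|(k,\alpha)\|$, and in \cite{DF1} the Cauchy limit is obtained from the lattice-sum functional of the type \eqref{LatTor} in the critical case, not from Theorem \ref{pois}. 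The heuristic you give (scale invariance, $\sim\ln T$ independent contributions of $\xi/x$ type, hence \eqref{CauchyPois}) is the right intuition, but the implementation goes through the space-of-lattices machinery rather than the box Poisson theorem.
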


The proof of  Theorem \ref{ThFlows} is similar to the proofs of Theorems \ref{ThConvex} and \ref{ThBox} and Corollary \ref{CorErgSmooth}.

\begin{corollary}
Theorem \ref{ThFlows} remains valid for time changes $T_{u(x)v}$ where $u(x)$ is a fixed smooth positive function
and $v$ is random as in Theorem \ref{ThFlows}.
\end{corollary}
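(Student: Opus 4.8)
The plan is to reduce the ergodic integral of the time-changed flow to a smoothly weighted ergodic integral of the linear flow and then rerun the Fourier-analytic machinery behind Theorem~\ref{ThFlows}. Since $T^t_{u(x)v}$ runs along the same orbits as $T^t_v$ at a different speed, there is a reparametrization $T^t_{u(x)v}x=T^{\tau(t)}_v x$ with $\dot\tau(t)=u(T^{\tau(t)}_v x)$, so that $dt=d\tau/u(T^\tau_v x)$ along each orbit. Substituting in the ergodic integral gives
\begin{equation*}
\int_0^T \chi_{\Omega_r}\bigl(T^t_{u(x)v}x\bigr)\,dt=\int_0^{S}\frac{\chi_{\Omega_r}}{u}\bigl(T^\tau_v x\bigr)\,d\tau,
\end{equation*}
where the new horizon $S=S(x,v,T)$ is defined by $\int_0^S u^{-1}(T^\tau_v x)\,d\tau=T$. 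The time change preserves the normalized measure with density $u^{-1}/c_u$, where $c_u=\int_{\T^d} u^{-1}\,dz$, so the natural discrepancy to study is the above integral minus $T\,c_u^{-1}\int_{\Omega_r}u^{-1}\,dz$.

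First I would dispose of the randomness of the endpoint $S$. Writing $T=c_u S+\int_0^S(u^{-1}-c_u)(T^\tau_v x)\,d\tau$ and using that, for Diophantine $v$ (hence for a.e.\ $v$), the smooth zero-mean function $u^{-1}-c_u$ admits a smooth primitive along the flow by solving $\partial_v B=u^{-1}-c_u$ exactly as in the proof of Theorem~\ref{th.kol}, the last integral equals $B(T^S_v x)-B(x)=O(1)$ uniformly in $T$. Hence $S=T/c_u+O(1)$, and since $\chi_{\Omega_r}/u$ is bounded, both the replacement of $S$ by $T/c_u$ and the discrepancy between the two volume normalizations cost only a bounded error. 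Consequently the time-changed discrepancy equals
\begin{equation*}
\int_0^{T/c_u}\Bigl(\frac{\chi_{\Omega_r}}{u}-\int_{\T^d}\frac{\chi_{\Omega_r}}{u}\Bigr)\bigl(T^\tau_v x\bigr)\,d\tau+O(1),
\end{equation*}
which is a discrepancy for the \emph{linear} flow with the smoothly weighted observable $\chi_{\Omega_r}/u$ and rescaled time $T/c_u$. As the normalizations in Theorem~\ref{ThFlows} are fixed powers or logarithms of $T$, the constant factor $c_u$ is harmlessly absorbed into the scale of the limit.

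It then remains to prove the limit law for this weighted linear discrepancy, for which I would follow verbatim the three-step argument of Section~\ref{SSProofsD} (in its flow version behind Theorem~\ref{ThFlows}). The only new input is the asymptotics of the Fourier coefficients of $\chi_{\Omega_r}/u$. Since $u$ is smooth and bounded away from $0$, the coefficients $\widehat{u^{-1}}(m)$ decay faster than any polynomial, so $\widehat{\chi_{\Omega_r}/u}(k)=\sum_m \widehat{\chi_{\Omega_r}}(k-m)\,\widehat{u^{-1}}(m)$ inherits, by stationary phase, the leading oscillatory behaviour of $\widehat{\chi_{\Omega_r}}(k)$ from~\cite{herz}, with the curvature factor $K^{-1/2}$ merely multiplied by the smooth value of $u^{-1}$ at the boundary point whose outer normal is $\pm k/|k|$. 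In particular the resonance conditions~\eqref{ConvSD1}--\eqref{ConvSD2} selecting the frequencies that contribute depend only on the direction $v$ and are untouched by the weight; the localization, the equidistribution on the space of lattices, and the asymptotic independence of the phases go through unchanged, producing a limit of the same form as in Theorem~\ref{ThConvex2} (respectively the Cauchy law of Theorem~\ref{ThBox} for boxes), with $u$-dependent modifications only in the coefficients $\kappa$ of~\eqref{DefK}.

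The main obstacle is precisely this last verification: one must confirm that multiplying the indicator by the smooth weight $u^{-1}$ perturbs the Fourier coefficients by smooth modulation alone, creating no new near-resonant frequencies and destroying none of the delicate cancellations responsible for the limit. It is worth stressing why one cannot simply invoke the smooth conjugacy of Theorem~\ref{th.kol}: conjugating the time change to a genuine translation flow transports $\Omega_r$ by a nonlinear diffeomorphism, which in general destroys strict convexity and the boundary curvature structure on which Theorem~\ref{ThFlows} relies, so the weighted-observable reduction above is the natural route.
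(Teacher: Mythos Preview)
Your argument is correct and follows essentially the same route as the paper's proof: reparametrize to reduce to a weighted linear discrepancy with observable $\chi_{\Omega_r}/u$, use the Diophantine coboundary argument from Theorem~\ref{th.kol} to show the endpoint shift $S=T/c_u+O(1)$, and then observe that the Fourier transform of $\chi_{\Omega_r}/u$ has the same leading asymptotics as that of $\chi_{\Omega_r}$ so the proof of Theorem~\ref{ThFlows} carries over verbatim. The paper's version is terser---it simply cites \cite{SW} for the Fourier asymptotics rather than writing out the convolution argument you give---and does not include your closing remark on why direct conjugacy fails, but the substance is the same.
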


\begin{proof}
To fix our ideas let us consider the case where $\Omega$ is analytic and strictly convex. 
Note that $T_{uv}^t x=T_v^{\tau(x,t)}$ where the by the above discussion the function 
$\tau$ satisfies 
$$\tau(t,x)=at+\eps(t,x,v) \text{ where }a=\left(\int\frac{dx}{u(x)}\right)^{-1} $$ 
and  $\eps(t,x,v)$ is bounded for
almost all $v$ uniformly in $x$ and $t.$ Accordingly it suffices to see how much time is spend
inside $\Omega_r$ for the linear segment of length $at.$ 

 Next if the linear flow stays inside 
$\Omega_r$ during the time $[t_1, t_2]$ then the time spend in $\Omega_r$ by the orbit of
$T_{uv}$ equals to
$ \int_{t_1}^{t_2} \frac{d t}{u(x(t))}.$ Thus we need to control the following integral for linear flow
$$  \tilde{\bD}_\Omega(r,v,x,T)  = \int_{0}^{T} \frac{\chi_{\Omega_r}(T_v^t x)}{u(T_v^t x)}  dt - T 
\int_{\Omega_r} \dfrac{dx}{u(x)}. $$
However the Fourier transform of $\frac{\chi_{\Omega_r}(x)}{u(x)}$ has a similar asymptotics at
infinity as the Fourier transform of $\chi_{\Omega_r}(x)$ (see \cite{SW}) so the proof of 
the Corollary proceeds in the same way as the proof of Theorem \ref{ThFlows} in \cite{DF1}.
\end{proof}

Up to now, we were interested in smooth time change of linear flows with typical frequencies. We will further discuss smooth time changes for special frequencies in Section \ref{sec.mixing} devoted to mixing properties.

{
\subsection{Time change with singularities.} 

If the time changing function of an irrational flow has zeroes then the ceiling function of the corresponding special flow has 
poles. In this case the smooth invariant measure is infinite. In the case of a unique singularity, we have that the time changed flow is uniquely ergodic with the Dirac mass at the singularity the unique invariant probability measure:

\begin{proposition}
\label{PrDirac}
Consider a flow $T^t$ given by a smooth time change of an irrational linear flow obtained by multiplying the constant vector field by a function 
which is smooth and non zero
everywhere except for one point $x_0$, then for any continuous function $b$ and any $x$
$$\lim_{t\to\infty} \frac{1}{t} \int_0^t b(T^ux) du=b(x_0). $$
\end{proposition}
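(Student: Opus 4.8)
The plan is to reduce the time average along $T^t$ to a ratio of Birkhoff integrals for the underlying linear flow and then exploit the non-integrable singularity of $1/u$ at $x_0$, where $u$ denotes the (smooth, nonnegative) time-change function vanishing only at $x_0$. Write $T_v^s$ for the linear flow, which is irrational and hence uniquely ergodic on $\T^{d+1}$ with respect to Lebesgue measure. As in the proof of Theorem \ref{th.kol} and of the preceding corollary, the time-changed flow satisfies $T^w x=T_v^{s}x$, where the orbit-time $w$ of $T^t$ and the linear time $s$ are related by
$$ w=t(s):=\int_0^s \frac{d\sigma}{u(T_v^\sigma x)}. $$

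First I would dispose of the exceptional orbits. If the linear orbit of $x$ passes through $x_0$, then $x_0$ is a fixed point of $T^t$ toward which the forward orbit of $x$ converges, and the claim is immediate from continuity of $b$. Otherwise $T_v^\sigma x\neq x_0$ for all $\sigma$, so $1/u(T_v^\sigma x)$ is finite on every compact interval and $t(s)$ is strictly increasing in $s$. Setting $D(S):=\int_0^S ds/u(T_v^s x)$ and letting $S=S(t)$ be the inverse of $t(s)$ (so $t=D(S)$), the change of variables $w=t(s)$ in the ergodic integral gives
$$ \frac{1}{t}\int_0^t b(T^w x)\,dw=\frac{\int_0^S \dfrac{b(T_v^s x)}{u(T_v^s x)}\,ds}{\int_0^S \dfrac{ds}{u(T_v^s x)}}=\frac{\int_0^S (b/u)(T_v^s x)\,ds}{D(S)}. $$
Since $t(s)\to\infty$ as $s\to\infty$, we have $S\to\infty$ as $t\to\infty$, and it suffices to show that this last ratio tends to $b(x_0)$.

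The key point is that $D(S)$ grows superlinearly. For each truncation level $K$ the function $\min(1/u,K)$ is continuous on $\T^{d+1}$, so unique ergodicity of $T_v^s$ yields $\frac1S\int_0^S \min(1/u,K)(T_v^s x)\,ds\to\int \min(1/u,K)\,dy$; the right-hand side increases to $\int (1/u)\,dy=+\infty$ as $K\to\infty$ because $u$ vanishes at $x_0$. Hence $D(S)/S\to\infty$, that is, $S=o(D(S))$. Now fix $\eps>0$ and a neighborhood $U\ni x_0$ with $|b-b(x_0)|<\eps$ on $U$; on the compact complement $\T^{d+1}\setminus U$ one has $u\ge\delta>0$, while $|b|\le M$ everywhere. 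Splitting $\int_0^S (b-b(x_0))/u\,(T_v^s x)\,ds$ according to whether $T_v^s x\in U$ bounds the part over $U$ by $\eps D(S)$ and the part over the complement by $(2M/\delta)\,S$. Dividing by $D(S)$ and using $S=o(D(S))$ gives $\limsup_{S\to\infty}\left|\frac{\int_0^S (b/u)(T_v^s x)\,ds}{D(S)}-b(x_0)\right|\le\eps$, and since $\eps$ is arbitrary the ratio converges to $b(x_0)$.

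The main obstacle is precisely the superlinear growth $D(S)/S\to\infty$: the integrand $1/u$ is unbounded, so unique ergodicity cannot be applied to it directly, and the truncation argument together with the monotone divergence of $\int\min(1/u,K)\,dy$ is what forces the near-$x_0$ contribution to dominate the away-from-$x_0$ one. Once this is in place, the continuity of $b$ at the single singular point $x_0$ closes the argument.
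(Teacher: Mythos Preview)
Your proof is correct and rests on the same key idea as the paper's: the truncation argument showing that, because $\int 1/u=\infty$, the reparametrized time grows superlinearly in the linear time (your $D(S)/S\to\infty$, the paper's $A_n/n\to\infty$). The paper packages this via the special-flow representation~\eqref{Int-Sum}, reducing to the case where $b$ vanishes near $x_0$ and then bounding the discrete Birkhoff sum $B_{n(t)}$ by $Cn(t)=o(t)$; you instead stay on the torus, write the time average as a ratio of Birkhoff integrals for the linear flow, and split according to a neighborhood of $x_0$. These are equivalent formulations of the same argument; your version avoids introducing the section and ceiling function, at the cost of redoing the change of variables that the special-flow machinery encodes. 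One small remark: your assertion that $\int 1/u\,dy=+\infty$ ``because $u$ vanishes at $x_0$'' is exactly what the paper also assumes (it writes ``since $\int A(x)\,dx=\infty$''); note that for a smooth nonnegative $u$ with a single zero this uses that $\nabla u(x_0)=0$, hence $u(y)=O(|y-x_0|^2)$, which forces non-integrability of $1/u$ on $\T^2$ but not automatically in higher dimension---so both proofs are implicitly in the surface setting that the surrounding discussion (Kocergin flows) has in mind.
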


\begin{proof}
To simplify the notation we assume that the time change preserves the orientation of the flow. We use the representation as a special flow $T^t_{\a,A}$ with $A$ having a pole. 

It suffices to prove this statement in case $b$ equals to $0$ in a small neighborhood of $x_0.$
In that case we have 
\begin{equation}
\label{Int-Sum}
\int_0^t b(T^u_{\a, A} (x, s)) du=B_{n(t)}(x)+O(1)
\end{equation}
where $B(x)=\int_0^{A(x)} b(x, s) ds$ and $n(t)$ is defined by \eqref{D-C}.
If $b$ vanishes in a small neighborhood of $x_0$ then $B$ is bounded and so 
$|B_{n(t)}|\leq C n(t).$ Therefore it suffices to show that $\frac{n(t)}{t}\to 0$
which is equivalent to $\frac{A_n}{n}\to \infty.$ Let $\tilde{A}$ be a continuous function which is 
less or equal to $A$ everywhere. Then
$$\lim\inf \frac{A_n}{n}\geq \lim_{n\to\infty} \frac{\tilde{A}_n}{n}=\int \tilde{A}(x) dx. $$
Since $\int A(x) dx=\infty$ we can make $\int\tilde{A}(x) dx$ as large as possible proving our claim.  
\end{proof}

\begin{question}
In the setting of Proposition \ref{PrDirac} describe the deviations of ergodic integrals from $b(x_0).$
\end{question}

\begin{question}
Consider the case where the time change has finite number of zeroes $x_1, x_2,\dots, x_m.$
In that case all limit measures are of the form $\sum_{j=1}^m p_j \delta_{x_j}.$ Which 
$p_j$ describe the behavior of Lebesgue-typical points?
\end{question}

In view of the relation \eqref{Int-Sum} these questions are intimately related to 
Theorems \ref{ThPole2} and  \ref{ThMer} and Questions \ref{Q1xasym}, \ref{QM2-NI}  and \ref{q6} from Section \ref{ScErgSum}.
}

\begin{figure}[htb]
    \centering
    \resizebox{!}{4cm}{\includegraphics{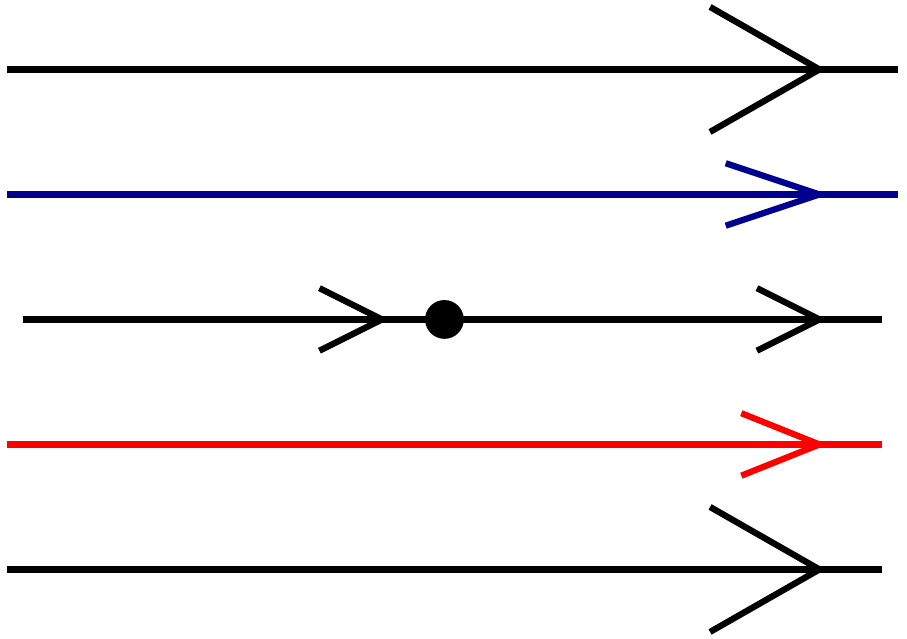}}
    \caption{Kocergin Flow is topologically equivalent to the area preserving flow shown on Figure \ref{FigSaddle}
with separatrix loop removed. The rest point is responsible for the shear along the orbits.}
    \label{FigKoc}
\end{figure}

If one is interested in flows with singularities preserving a finite non-atomic measure then 
the simplest example can be obtained by plugging (by smooth surgery) in the phase space of the minimal 
two dimensional linear flow
an isolated singularity coming from a Hamiltonian flow in
$\R^2$ (see Figure \ref{FigKoc}).
The so called Kochergin flows thus obtained preserve
besides the Dirac measure  at the singularity a measure that is
equivalent to Lebesgue measure \cite{Kc1}. 
As it was explained in Section \ref{ScErgSum} Kochergin flows model smooth area preserving 
flows on $\T^2.$
These flows still have $\T$ as a
global section with a minimal rotation for the return map, but the
slowing down near the fixed point produces a singularity for the
return time function above the last point where the section
intersects the incoming separatrix of the fixed point. The strength of the singularity depends on
how abruptly the linear flow is slowed down in the neighborhood of
the fixed point. A mild slowing down, or mild {\it shear}, is typically represented by
the logarithm while stronger singularities such as $x^{-a}, a\in (0,1)$ are also possible. 
Powerlike singularities appear naturally in the study of area preserving flows with degenerate fixed points.
We shall see below that dynamical properties of the special flows are quite different for 
logarithmic and power like singularities.

\begin{question}
What can be said about the deviations of the ergodic sums above  Kocergin flows?
\end{question}

\subsection{Mixing properties.} \label{sec.mixing} 
 %is also obtained {\it via} a diffusive distribution of $A_{N_n}$ (mod $(1)$ in this case) for a sequence  $N_n$ such that $\| N_n \a \| \to 0$. 
 
 We give first a classical criterion for weak mixing of special flows. 
%that can be found for example in \cite{}. 
% {\color{red} I COULD NOT FIND THIS RESUL IN \cite{cfs}. COULD YOU GIVE A MORE PRECISE REFERENCE}
 Its proof  is similar to the proof of the ergodicity criterion for skew products given by Proposition \ref{erg}.
 \begin{proposition} 
(\cite{VN}) %[Proposition 6.2]{PP})
$T_{\a, A}$ is weak mixing iff
 for any $\lambda \in \R^*$, there are no measurable solutions to the multiplicative cohomological equation  
 \begin{equation}
 \label{MultCoB}
 e^{i2 \pi \lambda A(x) }= \psi(x+\a)/\psi(x). \end{equation}
 \end{proposition}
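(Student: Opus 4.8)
The plan is to exploit the standard spectral characterization of weak mixing for a finite measure-preserving flow: $T_{\a,A}^t$ is weakly mixing if and only if its only measurable eigenfunctions are the constants, i.e. the point spectrum is reduced to the eigenvalue $0$. Thus it suffices to establish a bijection between nonzero eigenvalues $\lambda$ of the flow and measurable solutions $\psi$ of \eqref{MultCoB}, in exact parallel with the way Proposition \ref{erg} reduces the existence of invariant functions for a skew product to the solvability of a multiplicative coboundary equation. The role played there by the invariant subspaces $V_\lambda$ of functions $\phi(x)e^{2\pi i\langle\lambda,y\rangle}$ is played here by the ansatz $F(x,s)=e^{2\pi i\lambda s}\psi(x)$ adapted to the flow direction.

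First I would carry out the forward direction. Represent the flow on the fundamental domain $\{(x,s): 0\le s<A(x)\}$ with the identification \eqref{FlowSpace}, and let $F$ be a measurable eigenfunction with eigenvalue $\lambda\neq 0$, so that $F\circ T^t_{\a,A}=e^{2\pi i\lambda t}F$. Since the flow simply advances the second coordinate until it reaches the ceiling, for $0\le s<A(x)$ one gets $F(x,s)=e^{2\pi i\lambda s}\psi(x)$ with $\psi(x):=F(x,0)$. Applying the eigenfunction relation across one return and using that $(x,A(x))$ and $(T_\a x,0)=(x+\a,0)$ are identified yields $e^{2\pi i\lambda A(x)}\psi(x)=\psi(x+\a)$, which is precisely \eqref{MultCoB}. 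Taking absolute values gives $|\psi(x+\a)|=|\psi(x)|$, so by ergodicity of the irrational translation $T_\a$ the modulus $|\psi|$ is a.e. constant; normalizing, $\psi$ may be taken unimodular, confirming $F\in L^2$ of the (finite) invariant measure.

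Conversely, given a measurable solution $\psi$ of \eqref{MultCoB} for some $\lambda\in\R^*$, I would set $F(x,s)=e^{2\pi i\lambda s}\psi(x)$ and verify that \eqref{MultCoB} makes $F$ well defined on the quotient by \eqref{FlowSpace}, that it is measurable and (after normalizing $|\psi|\equiv 1$) square-integrable, and that by construction $F\circ T^t_{\a,A}=e^{2\pi i\lambda t}F$, so $\lambda$ is an eigenvalue. Combining the two directions, a nonconstant measurable eigenfunction exists precisely when \eqref{MultCoB} has a measurable solution for some $\lambda\neq 0$; negating both sides gives the stated criterion. The only genuinely delicate points are bookkeeping ones: justifying the ansatz $F(x,s)=e^{2\pi i\lambda s}\psi(x)$ from the eigenvalue equation on a set of full measure (rather than merely formally, via a Fubini argument in $(x,s,t)$), and checking well-definedness of $F$ across the roof identification. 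The invocation of the spectral characterization of weak mixing is classical and I would simply cite it.
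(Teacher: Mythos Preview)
Your proof is correct and follows essentially the same approach as the paper: the paper's brief sketch also reduces to the spectral characterization of weak mixing, observing that if $h(x,t)$ is an eigenfunction then $h(x,t)e^{-2\pi i\lambda t}$ is almost surely independent of $t$, thereby defining $\psi(x)$, and the cohomological equation then follows from the roof identification \eqref{FlowSpace}. Your version is simply more detailed, spelling out both directions and the Fubini-type justification that the paper leaves implicit.
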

Indeed if $h(x,t)$ is the eigenfunction when for almost all $x$
the function
$h(x,t) e^{-\lambda t}$ takes the same value $\psi(x)$ for almost almost all $t.$ Then
\eqref{MultCoB} follows from the identification \eqref{FlowSpace}.

\begin{theorem} (\cite{etds1}) 
\label{3} 
If the vector $\a \in {\R}^d $ is not $\beta$-Diophantine then there exists a dense
$G_{\delta}$ for the $C^{\beta+d}$ topology, of functions $\varphi \in
C^{\beta+d} ( {\T}^d,{\R}^*_+), $ such that the special flow constructed over $T_{\a}$ with
the ceiling function $\varphi $ is weak mixing.
\end{theorem}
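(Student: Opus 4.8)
The plan is to run the classical category-plus-perturbation scheme, using the weak mixing criterion \eqref{MultCoB} of \cite{VN} together with the rigidity obstruction already exploited in the skew-product discussion surrounding \eqref{MultCoBZ} and Proposition \ref{erg}. First I would record the obstruction in usable form. If for some $\lambda\neq 0$ the equation $e^{2\pi i\lambda\varphi(x)}=\psi(x+\a)/\psi(x)$ has a measurable solution $\psi$, then iterating gives $e^{2\pi i\lambda\varphi_N(x)}=\psi(x+N\a)/\psi(x)$, and since $|\psi|$ is constant by ergodicity of $T_\a$, along any \emph{rigidity sequence} $N_j$ (that is, $\|N_j\a\|\to 0$) dominated convergence forces
$$ \int_{\T^d} e^{2\pi i\lambda\varphi_{N_j}(x)}\,dx\ \longrightarrow\ 1 . $$
Consequently, to prove $T_{\a,\varphi}$ weak mixing it suffices to exhibit, for every $\lambda\neq 0$, a rigidity sequence along which this integral stays bounded away from $1$.

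Next I would set up the $G_\delta$ structure in $C^{\beta+d}(\T^d,\R^*_+)$. For positive integers $j,m$ put
$$ E_{j,m}=\Big\{\varphi:\ \exists\,N\ge m,\ \|N\a\|\le \tfrac1m,\ \sup_{1/j\le|\lambda|\le j}\Big|\int_{\T^d} e^{2\pi i\lambda\varphi_N(x)}\,dx\Big|<1-\tfrac1m\Big\}. $$
For fixed $N$ the supremum over the compact $\lambda$-range is continuous in $\varphi$ for the $C^0$ (hence $C^{\beta+d}$) topology, so each $E_{j,m}$ is a union over admissible $N$ of open sets and is therefore open. If $\varphi\in\bigcap_{j,m}E_{j,m}$ then for every $\lambda\neq 0$ (choosing $j$ with $1/j\le|\lambda|\le j$) one obtains a rigidity sequence along which the integral is bounded away from $1$, so by the previous paragraph no $\lambda\neq0$ is an eigenvalue and the flow is weak mixing. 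Thus $\bigcap_{j,m}E_{j,m}$ is a $G_\delta$ contained in the weak mixing set, and it remains only to prove density.

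The heart of the argument is density, and this is where $\a\notin\cD(\beta)$ (not $\beta$-Diophantine) enters. By \eqref{DefDio} this failure supplies frequencies $k_n\in\Z^d$ with $\|(k_n,\a)\|=:\e_n=o(|k_n|^{-d-\beta})$. Given $\varphi_0$ and $\delta>0$ I would set $\varphi=\varphi_0+\sum_n a_n\cos(2\pi(k_n,x))$ with $k_n$ lacunary, $a_n$ decreasing and chosen in the nonempty range $\e_n\ll a_n\ll|k_n|^{-d-\beta}$ (nonempty exactly because $\e_n=o(|k_n|^{-d-\beta})$). The upper bound makes $\sum_n a_n|k_n|^{\beta+d}$ as small as desired, so $\varphi$ is within $\delta$ of $\varphi_0$ in $C^{\beta+d}$ and stays positive. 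Choosing a rigidity time $N_n$ of size $\sim 1/\e_n$ (for $d=1$ one takes $N_n$ the appropriate multiple $M_nk_n$ of the denominator $k_n$, so that $\|N_n\a\|=O(1/|k_n|)$ and $N_n$ is genuinely a rigidity time), the Birkhoff sum of the resonant mode over $N_n$ steps is a geometric sum
$$ \sum_{j=0}^{N_n-1} a_n\cos\big(2\pi[(k_n,x)+j(k_n,\a)]\big)=A_n\cos\big(2\pi(k_n,x)+\phi_n\big)+o(A_n),\qquad A_n\sim\frac{a_n}{\|(k_n,\a)\|}\to\infty . $$
Hence $\varphi_{N_n}=A_n\cos(2\pi(k_n,\cdot)+\phi_n)+R_n$ with amplitude $A_n\to\infty$, and a stationary-phase (Bessel, for $d=1$) estimate gives $\sup_{1/j\le|\lambda|\le j}|\int e^{2\pi i\lambda\varphi_{N_n}}dx|\to0$ for each fixed $j$, placing $\varphi$ in every $E_{j,m}$.

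The main obstacle is making the resonant and rigidity scales compatible and controlling $R_n$. One needs a single integer $N_n$ that is simultaneously a rigidity time for the \emph{vector} translation ($\|N_n\a\|\to0$, required for the obstruction) and of size $\sim 1/\|(k_n,\a)\|$ (required for the geometric resonance to reach amplitude $a_n/\|(k_n,\a)\|$). For $d=1$ this is arranged by taking $N_n$ a multiple of $k_n$; for $d>1$ it requires coordinating the dual linear-form approximation governing $(k_n,\a)$ with the simultaneous approximation governing $N_n\a$, which is precisely where the dimensional shift between the exponent $d+\beta$ in \eqref{DefDio} and the smoothness $\beta+d$ is consumed. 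Finally one must verify that the remainder $R_n$ does not spoil the cancellation: the constant part $N_n\int\varphi_0$ only rotates the integral by a unimodular factor, while the $x$-dependent part of $(\varphi_0)_{N_n}$ and the contributions of the modes $k_{n'}$, $n'\neq n$, are kept negligible against $A_n$ using the smoothness of $\varphi_0$, the fast decay and lacunarity of the $a_n$ and $k_n$, and the fact that at the relevant scale only the single mode $k_n$ produces a large nondegenerate oscillation in $x$. This is essentially the argument of \cite{etds1}.
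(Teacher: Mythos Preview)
The survey does not prove Theorem~\ref{3}; it merely quotes the result from \cite{etds1}, so there is no in-paper argument to compare against beyond the criterion \eqref{MultCoB} and the rigidity obstruction you use. Your overall architecture --- the obstruction via rigidity sequences, a Baire category scheme, and density by planting resonant trigonometric modes furnished by the failure of the Diophantine condition --- is indeed the strategy of \cite{etds1}.

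There is, however, a genuine slip in your $G_\delta$ bookkeeping. With the threshold $1-\tfrac1m$ inside $E_{j,m}$, membership in $\bigcap_m E_{j,m}$ only produces a rigidity sequence $N_m$ along which $\big|\int e^{2\pi i\lambda\varphi_{N_m}}\big|<1-\tfrac1m$; since $1-\tfrac1m\to 1$, this is entirely compatible with the integral tending to $1$, and therefore does \emph{not} contradict $\lambda$ being an eigenvalue. You must use a threshold that does not degenerate with $m$: e.g.\ replace $1-\tfrac1m$ by $1-\tfrac1j$, or decouple the threshold as a third index. Your density argument actually drives the integral to $0$, so the corrected sets are still dense; only the definition is wrong.

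Two further points on density. First, for $d>1$ you correctly flag the coordination of the dual small divisor $\|(k_n,\a)\|$ with the simultaneous rigidity $\|N_n\a\|\to 0$, but the sentence ``this is precisely where the dimensional shift \dots\ is consumed'' is not an argument. One way through is Dirichlet: take $Q_n$ with $Q_n^d\sim 1/\eps_n$ and a rigidity time $N_n\le Q_n^d$ with $\|N_n\a\|\le 1/Q_n$; then $N_n\eps_n$ is bounded, but since Dirichlet only gives an \emph{upper} bound on $N_n$, you must choose $a_n$ a posteriori (within $\eps_n\ll a_n\ll |k_n|^{-d-\beta}$) so that $N_n a_n\to\infty$. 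Second, the claim that the $x$-dependent part of $(\varphi_0)_{N_n}$ is ``kept negligible against $A_n$ using the smoothness of $\varphi_0$'' is not justified: smoothness $C^{\beta+d}$ alone does not bound Birkhoff sums when $\a\notin\cD(\beta)$. The standard fix is to first replace $\varphi_0$ by a nearby trigonometric polynomial (a further $C^{\beta+d}$-small perturbation), for which the zero-mean Birkhoff sums are bounded, and only then add the resonant tail.
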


This result is optimal 
since smooth time changes of linear flows with Diophantine vectors $\a,$
are smoothly conjugated to the linear flow 
%\cite{Kol1}, 
and, hence, are not weak mixing.

Mixing of  special flows is  more delicate to establish since one needs to have uniform distribution on increasingly large scales in $\R^+$ of the sums $A_N$ for {\it all} integers $N \to \infty$, and this above arbitrarily small sets of the base space. Indeed mixing of special flows above non mixing base dynamics is in general proved as follows: 
%{\color{red} PICTURE???}
if the ergodic sums $A_N$ become as $N \to \infty$ uniformly stretched (well distributed inside large intervals of $\R_+$) above small sets, the image by the special flow at a large time $T$ of these small sets decomposes into long strips  that are well distributed in the fibers due to uniform stretch and well distributed in projection on the base because of ergodicity of the base dynamics (see Figure \ref{FigFlowMix}).

\begin{figure}[htb]
    \centering
    \resizebox{!}{6cm}{\includegraphics{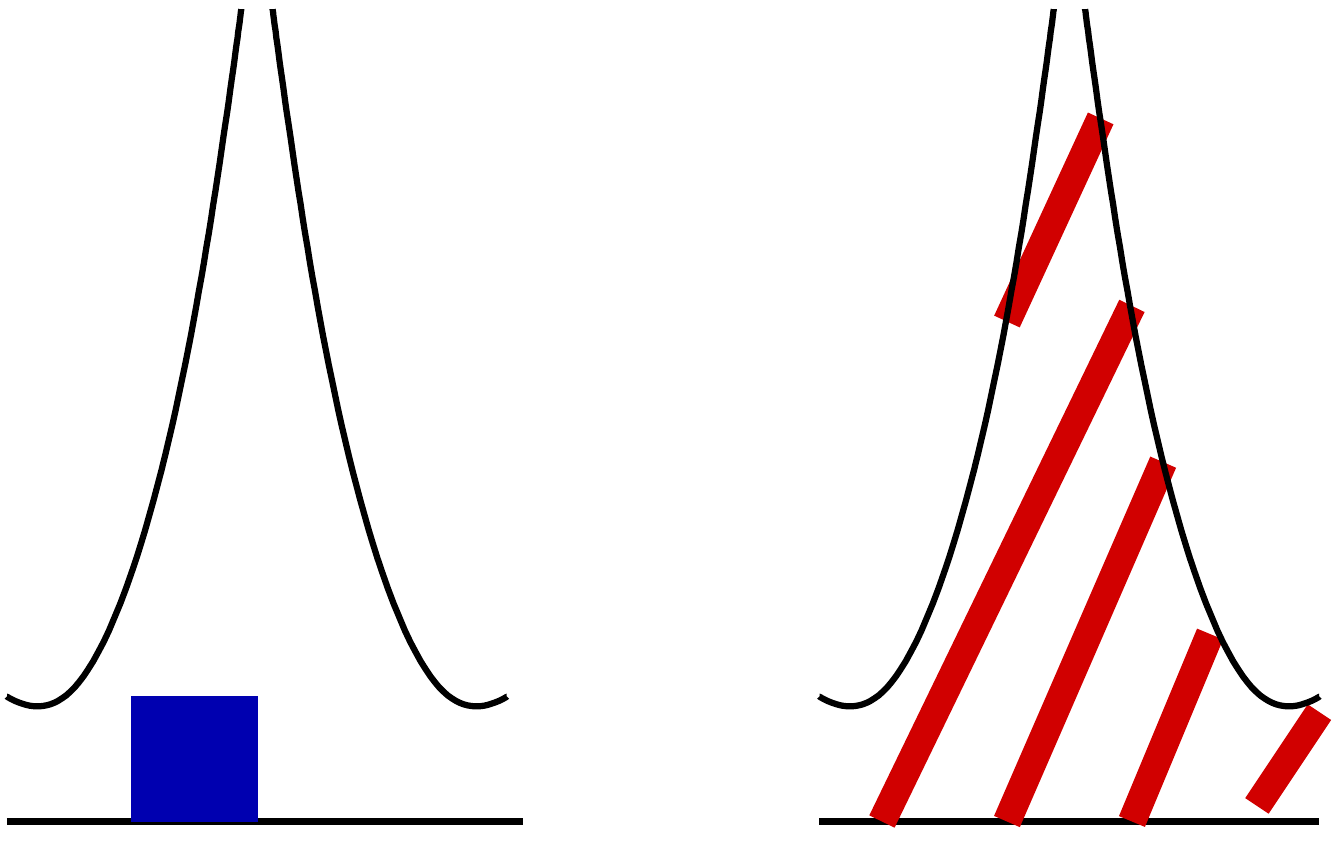}}
\caption{Mixing mechanism for special flows: the image of a rectangle is
a union of long narrow strips which fill densely the phase space.} 
%    \resizebox{!}{8cm}{\includegraphics{FlowMix.pdf}}
%    \caption{}{\it Mixing due to uniform stretch of the ergodic sums.}  the image of a small interval $J$ in the $x$ direction at a time $t$ that is large and far from the denominators of $\a'$, consists of
%a lot of almost linear curves whose projection on the basis lies
%along a piece of a trajectory under the translation $R_{\a',\a''}$. For intervals in the $y$ direction, the same phenomenon happens as time $t$ is far from the denominators of $\a''$.}
    \label{FigFlowMix}
\end{figure}

The delicate point however is to have uniform stretch for all integers $N \to \infty$. 
In particular the following result has been essentially  proven in \cite{Kc0}. 
\begin{theorem}
\label{ThMixVsDKP}
If $A$ has DKP then $T_{\a, A}^t$ is not mixing. 
\end{theorem}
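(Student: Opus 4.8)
The plan is to disprove mixing by producing a single bounded continuous test function on the product space whose correlation along a well-chosen time sequence fails to decay to the product value. Write $M=\T^d\times\R/\!\sim$ for the phase space, $\mu$ for the invariant probability measure, and $\pi:M\to\T^d$ for the base projection. Recall that $T^t_{\a,A}$ is mixing if and only if the self-joinings $\mu_k:=(\mathrm{Id}\times T^{t_k}_{\a,A})_*\mu$ converge weakly to $\mu\times\mu$ for \emph{every} sequence $t_k\to\infty$. Hence it suffices to exhibit one sequence $t_k\to\infty$ and one $\Psi\in C_b(M\times M)$ for which $\liminf_k\int\Psi\,d\mu_k>\int\Psi\,d(\mu\times\mu)$, since $\int\Psi\,d\mu_k=\int_M\Psi(p,T^{t_k}p)\,d\mu(p)$ and mixing would force this to the product value.

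The DKP furnishes the data: constants $C,\delta>0$, a sequence $n_k\to\infty$, and sets $G_k\subset\T^d$ with $\mathrm{Leb}(G_k)\ge\delta$ on which $|A_{n_k}-n_k\bar A|\le C$, where $\bar A=\int_{\T^d}A(y)\,dy\in(0,\infty)$. Since $A>0$ is a genuine ceiling we have $A_{\min}>0$, and since $A\in L^1$ I would first truncate, replacing $G_k$ by $G_k':=G_k\cap\{A\le C'\}$, which still has $\mathrm{Leb}(G_k')\ge\delta/2$ once $C'$ is large. Set $t_k:=n_k\bar A$. The heart of the argument is a \emph{uniform control of the return index}. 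Using the formula $T^t(x,s)=(x+n\a,\,t+s-A_n(x))$ with $n=n(x,s,t)$ determined by $A_n(x)\le t+s<A_{n+1}(x)$, the bound $A_{n_k}(x)=n_k\bar A+e_k(x)$ with $|e_k|\le C$ gives, for $x\in G_k'$, the identity $t_k+s=A_{n_k}(x)+w$ with $w=s-e_k(x)\in[-C,\,C'+C)$. Because consecutive partial sums of $A$ differ by at least $A_{\min}$, the index $n(x,s,t_k)$ must lie in the bounded set $\{n_k+j:\,j\in J\}$ with $|J|\le (C'+2C)/A_{\min}+2$, uniformly in $k$ and in $(x,s)$. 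Thus the base displacement $\pi(T^{t_k}(x,s))-x=n(x,s,t_k)\,\a$ takes values in the finite set $\{(n_k+j)\a:\,j\in J\}$.

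I would then pass to a subsequence along which $(n_k+j)\a\to s_j^*$ for each $j\in J$, put $S^*=\{s_j^*\}$ (a finite set), and choose $\Psi(p,q)=\psi(\pi q-\pi p)$ with $\psi\ge0$ on $\T^d$, equal to $1$ on a small neighborhood of $S^*$ and supported in its $\eps_0$-neighborhood. The two integrals then separate. On one hand, for all large $k$ the base displacement over $G_k'$ is $\eps_0$-close to $S^*$, so $\Psi(p,T^{t_k}p)=1$ there and, using $\Psi\ge0$, $\int\Psi\,d\mu_k\ge\mu(\{p:\pi p\in G_k'\})=\bar A^{-1}\!\int_{G_k'}A\ge A_{\min}\delta/(2\bar A)=:\kappa_0>0$. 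On the other hand, since $\mu$ projects to Lebesgue measure on the base, $\int\Psi\,d(\mu\times\mu)=\int_{\T^d}\psi(y)\,dy=\|\psi\|_{L^1}\le |J|\,c_d\,\eps_0^{\,d}$. As $|J|$ is fixed before $\eps_0$, shrinking $\eps_0$ makes $\|\psi\|_{L^1}<\kappa_0$, which contradicts mixing.

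The step I expect to be the main obstacle — and the only place where DKP is genuinely used — is the uniform boundedness of the return index $n(x,s,t_k)$ on a set of definite measure: this is precisely the statement that bounded deviation of the Birkhoff sums pins $t_k+s$ within $O(1)$ of $A_{n_k}(x)$, and accommodating a possibly unbounded ceiling (poles) is exactly what forces the truncation to $G_k'$. Beyond this, the compactness extraction of $S^*$ and the construction of $\psi$ are soft. I would also emphasize that no rigidity of the base rotation ($\|n_k\a\|\to0$) is needed: the obstruction to mixing is that the base shift is confined to a finite set, which already renders the limit joining singular with respect to $\mu\times\mu$; conversely, for mixing ceilings the index $n(x,s,t_k)$ spreads without bound and this confinement fails, so the argument does not prove too much.
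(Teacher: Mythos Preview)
Your argument is correct and follows the same core mechanism as the paper's proof: DKP pins $A_{n_k}(x)$ within $O(1)$ of $t_k=n_k\bar A$ on a set of definite measure, so after passing to a subsequence the base displacement $\pi(T^{t_k}p)-\pi(p)$ is trapped in a fixed finite subset of $\T^d$, which is incompatible with mixing. The paper packages this more compactly via two explicit flow boxes $\Omega_i=\bigcup_{0\le t\le\eta}T^t[I\times\{0\}]$ and $\Omega_f=\bigcup_{|t|\le C+\eta}T^t[(I+\beta)\times\{0\}]$ (with $n_k\alpha\to\beta$), showing $\mu(T^{t_k}\Omega_i\cap\Omega_f)\to\mu(\Omega_i)\neq\mu(\Omega_i)\mu(\Omega_f)$. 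The paper sidesteps your explicit return-index count by using the flow identity $T^{t_k+s}(x,0)=T^{\,t_k+s-A_{n_k}(x)}(x+n_k\alpha,0)$ with $|t_k+s-A_{n_k}(x)|\le C+\eta$; this is slightly cleaner bookkeeping and does not require $A_{\min}>0$, whereas your bound on $|J|$ does.

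One slip to fix: $\pi_*\mu$ is not Lebesgue measure but has density $A/\bar A$, so the identity $\int\Psi\,d(\mu\times\mu)=\|\psi\|_{L^1}$ is false. Writing $\rho=A/\bar A\in L^1(\T^d)$ one gets
\[
\int\Psi\,d(\mu\times\mu)=\int_{\T^d}\psi(z)\,(\rho*\check\rho)(z)\,dz,
\qquad \check\rho(x)=\rho(-x),
\]
and since $\rho*\check\rho\in L^1$ and $0\le\psi\le 1$ is supported in an $\eps_0$-neighborhood of the finite set $S^*$, absolute continuity of the integral gives $\int\Psi\,d(\mu\times\mu)\to 0$ as $\eps_0\to 0$. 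This is all you need, and it also handles unbounded ceilings without further truncation.
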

\begin{proof}
If $A$ has the DKP then there is a set $\Omega$ of positive measure on which  \eqref{EqDK} holds for positive density of $n_k.$
By passing to a subsequence we can find a set $I$ of positive measure, a sequence $\{t_k\}$  and a vector $\beta$ such that
on $\Omega$ $|A_{n_k}-t_k|<C$ and $\alpha n_k\to \beta.$ Pick a small $\eta.$ 
$$\Omega_i=\cup_{0\leq t\leq \eta} T_{\a, A}^t [I\times \{0\}], \quad
\Omega_f=\cup_{0\leq |t|\leq C+\eta} T_{\a, A}^t [(I+\beta) \times \{0\}]. $$
By decreasing $I$ if necessary we obtain that those sets have measures strictly between $0$ and $1.$
On the other hand it is not difficult to see from the definition of the special flow that 
$\mu(T_{\a, A}^{t_k}\Omega_i\cap \Omega_f)\to \mu(\Omega_i)$ contradicting the mixing property.
\end{proof}

In particular the flows with ceiling functions $A$ of bounded variation or functions with symmetric log 
singularities are not  mixing.

In fact, since the sDKP holds for any minimal circle diffeomorphism, it follows from \eqref{D-C} 
and \eqref{Int-Sum}
that any smooth flow on $\T^2$ without cycles or fixed points is not topologically mixing. 
We leave this as an exercise for the reader.

The first positive result about mixing of special flows is obtained in \cite{Kc1}.

\begin{theorem}
\label{ThDegSF}
If $\a\in\R-\Q$ and $A$ has (integrable) power singularities then $T_{\a, A}$ is mixing.
\end{theorem}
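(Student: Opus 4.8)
The plan is to establish mixing through the geometric ``stretching of strips'' mechanism described above (see Figure \ref{FigFlowMix}), reducing the problem to a quantitative control of the stretch of the ergodic sums. It suffices to show that for any two sets of the form $B_i = J_i \times K_i$, with $J_i \subset \T$ a base interval and $K_i \subset \R$ a fiber interval, one has $\mu(T^t_{\a,A} B_1 \cap B_2) \to \mu(B_1)\,\mu(B_2)$ as $t \to \infty$. The image $T^t_{\a,A}(J_1 \times \{s\})$ of a horizontal segment is a curve whose fiber coordinate, as the base point $x$ ranges over $J_1$, equals $t + s - A_{N(x,t)}(x)$, where $N(x,t)$ is the number of returns to the base before time $t$, determined by \eqref{D-C}. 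Thus the fiber displacement across $J_1$ is measured by the oscillation of $A_N$, and the whole argument hinges on showing that this oscillation grows to infinity in a controlled way, uniformly for all large $t$.

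The key estimate is on the derivative of the ergodic sum, $A_N'(x) = \sum_{n=0}^{N-1} A'(x+n\a)$. Near the singularity $x_0$ one has $A(x) \approx c\,|x-x_0|^{-a}$, hence $A'(x) \approx \mp c\,a\,|x-x_0|^{-a-1}$, whose singularity exponent $a+1$ exceeds $1$. Writing $\eta_N(x) = \min_{0\le n < N}|x + n\a - x_0|$ for the closest approach of the orbit of length $N$ to $x_0$, the single term realizing this minimum dominates the sum --- the contributions from farther orbit points form a summable tail, by the three-distance theorem --- so that $|A_N'(x)| \asymp \eta_N(x)^{-a-1}$. For typical $x$ the orbit of length $N$ is roughly $1/N$-dense, so $\eta_N(x) \lesssim 1/N$ and $|A_N'(x)| \gtrsim N^{a+1}$; meanwhile integrability of $A$ (guaranteed by $a<1$) gives $A_N(x) \approx N \int_\T A$, relating $N$ to the flow time by $N \approx t / \int_\T A$. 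Crucially, $\eta_N(x)$ is non-increasing in $N$: once the orbit has come within distance $\rho$ of $x_0$ it stays at least that close forever, so the stretch persists for all subsequent times and no cancellation can undo it. This monotonicity is precisely what the power singularity buys us and is the source of the uniform stretch for all $N \to \infty$, in sharp contrast to the symmetric cases covered by Theorem \ref{ThMixVsDKP}.

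With the stretch estimate in hand, I would partition $J_1$ into subintervals on each of which $N(x,t)$ is constant and $A_N'$ varies only by a bounded factor, thereby controlling the curvature (bounded distortion) of the image strips. On each such subinterval the image of $J_1 \times \{s\}$ is a long, nearly affine strip whose fiber length is $\asymp |A_N'|\cdot(\text{subinterval length}) \gg 1$; unique ergodicity of the rotation $T_\a$ forces the base coordinates of these strips to equidistribute, while the large fiber stretch forces equidistribution transverse to the base. Integrating over $s$ and summing over the partition then yields $\mu(T^t_{\a,A} B_1 \cap B_2) \to \mu(B_1)\,\mu(B_2)$, i.e. mixing.

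The main obstacle, and the part requiring the most care, is the uniform control near the singularity: the very points whose orbits approach $x_0$ most closely are those for which both $A_N$ and $A_N'$ are largest, and one must show that the associated ``bad'' set of base points --- where the strips are too distorted or where $N(x,t)$ jumps --- has measure tending to $0$ and contributes negligibly, while the complementary ``good'' set still carries enough stretch. Quantifying this trade-off, namely proving that the set of $x$ on which both the stretch estimate and the bounded-distortion estimate hold has measure tending to $1$ uniformly in $t$, is the technical heart of the argument; the remainder follows in a fairly standard way from unique ergodicity of $T_\a$.
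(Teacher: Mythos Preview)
Your proposal is correct and follows the same approach the paper sketches (and attributes to Kocergin \cite{Kc1}): control the stretch via $A_N'=\left(A'\right)_N$, observe that $A'$ has a non-integrable power singularity of exponent $a+1>1$ so that the closest encounter with the singularity dominates the ergodic sum of the derivative (cf.\ the paper's reference to Theorem~\ref{ThPole2}), and then run the standard ``long strips equidistribute'' argument of Figure~\ref{FigFlowMix}. The paper gives only this heuristic paragraph and no detailed proof, so your outline in fact goes further than the text does.

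One small point of exposition: the monotonicity of $\eta_N$ is trivial and is not ``what the power singularity buys us.'' What the power singularity buys is that the single closest-approach term dominates the entire sum $\sum_n |x+n\alpha-x_0|^{-(a+1)}$, because the exponent exceeds~$1$; this is what rules out the cancellations that occur in the logarithmic case and is exactly the mechanism the paper highlights. Your invocation of the three-distance theorem to control the remaining terms is the right idea, though in practice one organizes the sum along the Ostrowski/continued-fraction scales of $\alpha$ rather than appealing to three-distance directly.
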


The reason why the case of power singularities is easier than the logarithmic case (corresponding to non-degenerate flows
on $\T^2$) is the following. The standard approach for obtaining the stretching of ergodic sums is to control 
$\frac{\partial A_n}{\partial x}=\left(\frac{\partial A}{\partial x}\right)_n$ 
For $A$ as in theorem \ref{ThDegSF}, 
$\frac{\partial A}{\partial x}$ has singularities of the type $x^{-a}$ with $a>1.$
In this case the main contribution to ergodic sums comes from the closest encounter with the singularity
(cf. Theorem \ref{ThPole2}) making the control of the stretch easier.
Moreover, the strength of the singularity allows to obtain speed of mixing estimates.

\begin{theorem}(\cite{bsmf}) If $\alpha$ is Diophantine and $A$ has a (integrable) power singularity then 
$T_{\a, A}^t$ is power mixing.
\end{theorem}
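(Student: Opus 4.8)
The plan is to make the geometric stretching mechanism behind mixing of special flows (Figure \ref{FigFlowMix} and the proof of Theorem \ref{ThMixVsDKP}) fully quantitative. First I would reduce power mixing to a polynomial decay rate for correlations of smooth observables on the flow space $\T\times\R/\!\sim$. Expanding an observable in a Fourier series in the fiber variable, and using that the flow at time $t$ sends $(x,s)$ to $(x+n\a,\,t+s-A_n(x))$ with $n=n(x,s,t)\approx t/\bar A$ by \eqref{D-C}, this reduces to exhibiting, for each fixed nonzero fiber frequency $\lambda$, a bound $|\int_{\T} e^{2\pi i\lambda A_n(x)}\,\phi(x)\,dx|\le C\,n^{-\delta}$ for some $\delta>0$ and smooth $\phi$. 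Equivalently, one wants the pushforward of Lebesgue measure on a base interval under the map $x\mapsto A_n(x)\bmod 1$ to equidistribute at a polynomial rate. Throughout, mixing itself follows from showing $\mu(T^t_{\a,A}\Omega_i\cap\Omega_f)-\mu(\Omega_i)\mu(\Omega_f)\to 0$ with a power-law error for a generating family of sets.

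The analytic heart is a stretching estimate for the shear $(A')_n(x)=\sum_{k=0}^{n-1}A'(x+k\a)$. Writing $A=\tA(x)+c\,|x-x_0|^{-a}$ with $a\in(0,1)$, the derivative $A'$ has a \emph{non-integrable} singularity $\sim |x-x_0|^{-a-1}$, so the main contribution to $(A')_n(x)$ comes from the single closest visit of the orbit to $x_0$, exactly as in the localization used to prove Theorem \ref{ThPole2}. Using $\a\in\cD(\sigma)$ to bound $\min_{0\le k<n}\|x+k\a-x_0\|$ from below (and three-distance–type control from above), I would show that on a set of $x$ of measure tending to $1$ one has $|(A')_n(x)|\ge c\,n^{\gamma}$ for some $\gamma>0$, and simultaneously control $(A'')_n$ so that $x\mapsto A_n(x)$ has bounded distortion on suitable Diophantine-adapted subintervals. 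This polynomial growth of the shear is precisely the feature absent in the logarithmic case, where the stretch is only logarithmic.

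To assemble the rate, I would partition the base interval into pieces on which $A_n$ is monotone with bounded distortion; on each such piece the pushforward of Lebesgue measure under $A_n\bmod 1$ equidistributes with an error controlled by the reciprocal of the stretch, hence $\lesssim n^{-\gamma}$, giving fiber equidistribution at a polynomial rate. Combining this with the polynomial rate of equidistribution of $\{x+n\a\}$ in the base — again a consequence of the Diophantine condition — yields joint equidistribution of the image strips in $\T\times\R/\!\sim$, and therefore a power-law bound on $\mu(T^t_{\a,A}\Omega_i\cap\Omega_f)-\mu(\Omega_i)\mu(\Omega_f)$.

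The main obstacle I expect is the uniform-stretch and bounded-distortion control near the singularity: one must isolate the base points whose orbit approaches $x_0$ anomalously closely, where both $A'$ and $A''$ blow up and the image strip can be sharply curved, show that these form a set of polynomially small measure, and estimate their contribution separately so that the good stretch dominates \emph{uniformly over all} $n$ (equivalently all $t$). Calibrating the Diophantine exponent $\sigma$ against the singularity exponent $a$ so that this domination persists is the technical core; the natural tools are quantitative Denjoy–Koksma–type bounds together with the continued-fraction renormalization of $\a$.
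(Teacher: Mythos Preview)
The paper does not give a proof of this theorem; it is quoted from \cite{bsmf}, and the surrounding text only records the heuristic that the stretch $(\partial A/\partial x)_n=(A')_n$ is controlled by a non-integrable power singularity whose ergodic sums are dominated by the closest visit to the singularity (cf.\ Theorem~\ref{ThPole2}), which is what makes the mixing rate quantitative. Your plan is precisely an elaboration of that heuristic: Fourier decomposition in the fiber variable, polynomial lower bounds on $|(A')_n|$ coming from the closest encounter with $x_0$, bounded distortion via $(A'')_n$, a decomposition of the base into monotonicity intervals, and the Diophantine condition both to guarantee the stretch bound and to provide polynomial equidistribution in the base. This is the approach of \cite{bsmf}, so your outline is on target.

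One remark on your last paragraph: the ``calibration of $\sigma$ against $a$'' you anticipate is indeed where the exponent $\beta=\beta(\alpha)$ comes from, and the paper explicitly flags that the resulting $\beta$ is likely non-optimal, so you should not expect a clean formula. Also, for the reduction step, it is slightly cleaner to work directly with rectangles (as in \eqref{PowerMix}) rather than via a full Fourier expansion in the fiber, since the fiber is an interval of variable length; the argument in \cite{bsmf} proceeds geometrically with the image strips rather than spectrally.
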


More precisely, there exists a constant $\beta=\beta(\alpha)$ such that if $R_1, R_2$ are rectangles in $\T\times \R$ then
\begin{equation}
\label{PowerMix}
\left|\mu(R_1 \cap T^t R_2)-\mu(R_1) \mu(R_2)\right|\leq C t^{-\beta}. 
\end{equation}
The exponent $\beta$ in \cite{bsmf} seems to be non optimal.

\begin{question}
For $\alpha$ Diophantine find the asymptotics of the LHS of \eqref{PowerMix}.
\end{question}

It is interesting to surpass 
the threshold $\beta=1/2$.  In particular, one would like to answer the following question. 

\begin{question} \cite{LemEnc}
Can a smooth area preserving flow on $\T^2$ have Lebesgue spectrum?
\end{question}

 On the other hand for logarithmic singularities there might be cancelations in ergodic sums
of $\frac{\partial A}{\partial x},$ making the question of mixing more tricky.

\begin{theorem}
\label{ThLogMix}
Let $A$ be as in Question \ref{q.log}.

(a) (\cite{Kc2}) If $\sum_j a_j^+=\sum_j a_j^-$ then $T_{\a, A}^t$ is not mixing for any $\a\in\R-\Q.$

(b) (\cite{SK, Kc3}) If $\sum_j a_j^+ \neq \sum_j a_j^-$ then $T_{\a, A}^t$ is mixing for almost every $\a\in\R-\Q.$

(c) (\cite{Kc3}) If $a_j^+-a_j^-$ has the same sign for all $j$ then  $T_{\a, A}^t$ is mixing for each $\a\in\R-\Q.$
\end{theorem}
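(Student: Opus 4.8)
The plan is to separate the symmetric case (a) from the asymmetric cases (b), (c), since the two rest on opposite mechanisms: (a) reduces to the Denjoy--Koksma property while (b) and (c) rest on uniform stretching of Birkhoff sums. For part (a) I would invoke Theorem \ref{ThMixVsDKP}, which already yields non-mixing once one knows that $A$ has the DKP, so everything reduces to verifying \eqref{EqDK} along the sequence $n_k=q_k$ of continued-fraction denominators of $\a$. Decompose the singular part of $A$ into a symmetric piece $\sum_j \tfrac{a_j^++a_j^-}{2}\ln|x-x_j|$ and an antisymmetric piece $\sum_j\tfrac{a_j^+-a_j^-}{2}\,\mathrm{sgn}(x-x_j)\ln|x-x_j|$. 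Using the three-distance theorem, the orbit $\{x+k\a\}_{k=0}^{q_n-1}$ is $\asymp 1/q_n$-separated, so with probability bounded below the closest approach to each $x_j$ is $\asymp 1/q_n$; on that event the symmetric piece contributes $q_n\int A+O(1)$. The antisymmetric piece is the one that can drift: a computation with the near-singularity points shows its Birkhoff sum is of order $\big(\sum_j(a_j^+-a_j^-)\big)\ln q_n$, so it stays $O(1)$ precisely when $\sum_j a_j^+=\sum_j a_j^-$. Hence in the symmetric case \eqref{EqDK} holds with some $\delta>0$ and (a) follows.

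For parts (b) and (c) the goal is mixing, which I would obtain from the stretching criterion described before Theorem \ref{ThMixVsDKP}: if the Birkhoff sums $A_N$ are uniformly stretched above small base intervals for all large $N$, then the $T_{\a,A}^t$-image of a rectangle decomposes, via \eqref{D-C}, into long narrow strips that equidistribute in the fiber and, by unique ergodicity of $T_\a$, in the base. The shear is governed by $\partial_x A_N=(A')_N$, so the heart of the matter is to show $|(A')_N(x)|\to\infty$ as $N\to\infty$ for most $x$ and all large $N$. Write $A'=S+U+(\text{smooth})$, where $S(x)=\sum_j\tfrac{(a_j^++a_j^-)/2}{x-x_j}$ is the odd (principal-value) part with simple poles and $U(x)=\sum_j\tfrac{(a_j^+-a_j^-)/2}{|x-x_j|}$ is the even part with a non-integrable, definite-sign singularity. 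The sums $S_N$ are of size $O(N)$ with sign changes --- exactly the regime of Theorem \ref{SU} --- whereas $U_N$ is of size $\asymp\big(\sum_j(a_j^+-a_j^-)\big)N\ln N$ with a definite sign, the leading contribution coming from the closest orbit approaches to the $x_j$, estimated through the affine-lattice picture of Theorem \ref{ThDO-Pois}. Since $N\ln N\gg N$, the even part dominates and forces $|(A')_N|\to\infty$.

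To pass from the model computation at $N=q_n$ to the required estimate for every large $N$ I would expand $N$ in the Ostrowski representation associated to $\a$ and sum the contributions block by block, controlling the oscillating part $S_N$ against the monotone growth of $U_N$. Here the two cases diverge. In case (c), all the increments $a_j^+-a_j^-$ carry the same sign, so the contributions of the individual singularities reinforce one another and the lower bound on $U_N$ is robust for \emph{every} irrational $\a$; this yields mixing for all $\a\in\R-\Q$. In case (b) only the aggregate $\sum_j(a_j^+-a_j^-)$ is nonzero, so for exceptional (Liouville-type) $\a$ the singularities could conspire to cancel the stretch along bad blocks; to exclude this one imposes metric Diophantine conditions and uses a Borel--Cantelli argument to show that the resonances destroying the stretch occur for a measure-zero set of $\a$, giving mixing for almost every $\a$.

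The main obstacle is precisely this last step: establishing the uniform stretch $|(A')_N(x)|\to\infty$ for \emph{all} large $N$ simultaneously, rather than along the rigidity subsequence $q_n$. The difficulty is that the even sums $U_N$, while of definite sign, have very large fluctuations driven by the closest approaches to the singularities (the same $1/|x|$-type small-divisor behavior that underlies the Poisson and Cauchy limit laws of Sections \ref{sec.poisson} and \ref{sec.discrepancy}), and these must be shown to beat the sign-changing sums $S_N$ uniformly in $N$ and on a base set of nearly full measure. Quantifying this competition --- and, in case (b), isolating the full-measure set of $\a$ for which it succeeds --- is the crux of the Sinai--Khanin and Kochergin arguments.
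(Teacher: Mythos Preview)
The paper does not prove Theorem \ref{ThLogMix}; this is a survey and the three parts are simply attributed to \cite{Kc2}, \cite{SK}, and \cite{Kc3}. The only hint the paper gives toward a mechanism is the remark after Theorem \ref{thFL} that ``special flows under functions with symmetric logarithmic singularities are not mixing \dots\ because of the DKP,'' together with the general discussion of stretching before Theorem \ref{ThMixVsDKP}. So there is no in-paper proof to compare against, but your sketch does align with the strategies of the cited sources: (a) via DKP and Theorem \ref{ThMixVsDKP}, (b)--(c) via uniform growth of $(A')_N$.

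Two technical remarks on your outline. For (a), your claim that the antisymmetric Birkhoff sum at time $q_n$ is $\big(\sum_j(a_j^+-a_j^-)\big)\ln q_n$ up to $O(1)$ presupposes that the closest approaches to the various $x_j$ are all at comparable distances $\asymp 1/q_n$; what the three-distance structure actually gives is a contribution $-\big(\sum_j c_j\big)\ln q_n+\sum_j c_j\ln e_j$ with $c_j=a_j^+-a_j^-$ and nearest-visit distances $d_j=e_j/q_n$. After the leading cancellation you must still argue that $\sum_j c_j\ln e_j=O(1)$ on a set of positive measure, which is true but needs to be said. For (b)--(c), your decomposition $A'=S+U$ with $S_N$ governed by Theorem \ref{SU} and $U_N\asymp\big(\sum_j(a_j^+-a_j^-)\big)N\ln N$ is precisely the Sinai--Khanin mechanism; note however that the ``definite sign'' you attribute to $U_N$ holds only under the hypothesis of (c), where all $a_j^+-a_j^-$ share a sign. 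In (b) the individual singularities can carry opposite asymmetries and the sign of $U_N$ is only fixed at leading order, which is exactly why (c) holds for every irrational $\a$ while (b) requires a full-measure Diophantine restriction to rule out resonant cancellations --- you identify this correctly in your final paragraph.
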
 

\begin{question} 
(\cite{Kc4}) Does the condition that $\sum_j a_j^+ \neq \sum_j a_j^-$ imply $T_{\a, A}^t$ is mixing {\bf for every $\a\in\R-\Q$}?
\end{question}

\begin{question}
(\cite{Kc4}) Under the conditions of Theorems \ref{ThDegSF} and \ref{ThLogMix} is
$T_{\a, A}^t$ mixing of all orders?
\end{question}

In higher dimensions much less is known. Note that  for smooth ceiling functions Theorems \ref{ThDKPRes}, \ref{th.kol} and
\ref{ThMixVsDKP}
precludes mixing for a set of rotation vectors of full measure that also contains a residual set.

%For example, the Denjoy-Koksma inequality stating that $A_{q_n} - q_n \int_\T A$ is bounded if $A$ is a real valued bounded variation function on $\T$ and $q_n$ is the sequence of denominators of the best rational approximations of $\a$ precludes mixing of $T^t_{\a,A}$. Indeed, suppose to simplify that  $A_{q_n} - q_n \int_\T A$ is very small on a set of points $E_n$ of the base that has a lower bounded measure, then the image of the set $\Delta =\T \times [0,\delta]$ for a small $\delta > 0$  satisfies $T^{q_n} \Delta \cap \Delta \gg \delta^2$, which contradicts mixing. The case  $A_{q_n} - q_n \int_\T A$ bounded on $E_n$ is similar.  In the case of $A \in C^1(\T, \R_+^*)$, we even get that $T^{q_n} \to {\rm Id}$ in the uniform topology (we say that the flow is rigid). As a consequence, we get that any $C^1$ reparametrization of a translation flow of $\T^2$ is not mixing (it is actually rigid) \cite{Kc0}. 

The following was shown in \cite{etds2}. Recall the definition of the set $Y$ used in Theorem \ref{ThSmNonRec}.
Define the following real analytic  complex valued  function on ${\T}^2$:             
$$ \cA(x,y)=  \left( \sum_{k=2}^{\infty} { e^{i2 \pi kx}
\over e^{k}} +
\sum_{k=2}^{\infty} {e^{i2 \pi ky} \over e^{k}} \right). $$

\begin{theorem} \label{nnnspecial} For any $(\a',\a'') \in Y$, the special flow constructed over the translation 
$T_{\a',\a''}$ on
${\T}^2$, with the ceiling function $1+\mathrm{Re}\cA$ is mixing.
\end{theorem}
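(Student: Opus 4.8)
The plan is to exploit the product structure of the ceiling function together with the interlacing of denominators built into $Y$, and to prove mixing by the classical ``stretching'' mechanism described before Theorem \ref{ThMixVsDKP}. Writing $\mathrm{Re}\,\cA(x,y)=\phi(x)+\psi(y)$ with $\phi(x)=\sum_{k\ge 2}e^{-k}\cos(2\pi kx)$ and $\psi$ the same function of $y$, the ceiling function is $A(x,y)=1+\phi(x)+\psi(y)$, so the ergodic sums split:
$$A_n(x,y)=n+\Phi_n(x)+\Psi_n(y),\qquad \Phi_n(x)=\sum_{j=0}^{n-1}\phi(x+j\a'),\quad \Psi_n(y)=\sum_{j=0}^{n-1}\psi(y+j\a'').$$
First I would invoke the standard reduction (Kochergin's criterion, in the form used in \cite{fskew,etds2}) that reduces mixing of $T^t_{\a,A}$ to showing, for every $\lambda\in\R-\{0\}$ and every smooth $h$ on $\T^2$, that $\int_{\T^2}h(x,y)\,e^{2\pi i\lambda A_n(x,y)}\,dx\,dy\to 0$ as $n\to\infty$; here one uses that $A_n=n+O(1)$ to replace the $x$-dependent return time $n(x,y,T)$ by a constant $n\approx T$ on each element of a fine partition of the base. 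Expanding $h$ in Fourier series, it then suffices to prove, for each fixed $(p,q)\in\Z^2$, that the product $\bigl(\int_\T e^{2\pi i(px+\lambda\Phi_n(x))}dx\bigr)\bigl(\int_\T e^{2\pi i(qy+\lambda\Psi_n(y))}dy\bigr)$ tends to $0$ as $n\to\infty$, the summation over $(p,q)$ being dominated by $\sum|\widehat h(p,q)|<\infty$.

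The heart of the argument is the following uniform stretching statement: \emph{for every sufficiently large $n$, at least one of $\Phi_n$, $\Psi_n$ has a single dominant Fourier mode whose amplitude tends to infinity with $n$.} To see this I would analyze the Fourier expansion of $\Phi_n$, whose $k$-th coefficient has modulus $\tfrac12 e^{-k}|D_n(k)|$ with $D_n(k)=\frac{\sin(\pi kn\a')}{\sin(\pi k\a')}$ satisfying $|D_n(k)|\le\min(n,\|k\a'\|^{-1})$, and similarly for $\Psi_n$. Because the coefficients decay like $e^{-k}$, the only modes that can be large are $k=q_m'$ (a best denominator of $\a'$), for which $\|q_m'\a'\|\approx 1/q_{m+1}'$; when $q_m''\le n\le q_{m+1}'$ one has $|D_n(q_m')|\approx n$, so this mode of $\Phi_n$ has amplitude $\approx e^{-q_m'}n\ge e^{-q_m'}q_m''\ge e^{2q_m'}\to\infty$, while the doubly-exponential gaps in $Y$ render every other mode relatively negligible. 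Symmetrically, when $q_m'\le n\le q_m''$ the coordinate $\a''$ takes over: the mode $k=q_{m-1}''$ of $\Psi_n$ has amplitude $\approx e^{-q_{m-1}''}n\ge e^{-q_{m-1}''}q_m'\ge e^{2q_{m-1}''}\to\infty$, using $q_m'\ge e^{3q_{m-1}''}$. Since the intervals $[q_m',q_m'']$ and $[q_m'',q_{m+1}']$ exhaust all large $n$, at every scale exactly one coordinate stretches. This is precisely where the interlacing is essential: over a single rotation the Denjoy--Koksma inequality forces $\Phi_n$ to stay bounded along $n=q_m'$, producing the rigidity subsequence that obstructs mixing (Theorem \ref{ThMixVsDKP}), and the second coordinate is arranged to supply stretching at exactly those times.

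Given a coordinate, say the first, with a dominant mode $c_n\cos(2\pi(q_m'x+\theta))$ of amplitude $c_n:=e^{-q_m'}n\to\infty$, the corresponding oscillatory integral is estimated by van der Corput: the phase $px+\lambda\Phi_n(x)$ has derivative dominated by $\lambda\Phi_n'$, whose principal part is $2\pi\lambda q_m'c_n\,\sin(2\pi(q_m'x+\theta))$, so splitting $\T$ into the neighborhoods of the $O(q_m')$ zeros of $\Phi_n'$ and their complement, and applying the first- and second-derivative tests, yields a bound of order $(e^{q_m'}/n)^{1/2}\to 0$ (uniformly for fixed $p,\lambda$, once $c_n\gg|p|$). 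Hence for each $(p,q)$ one of the two factors tends to $0$ while the other is bounded by $1$, their product tends to $0$, and summing against $\widehat h$ gives mixing. The places where genuine care is needed are twofold: making the first-paragraph reduction rigorous, i.e.\ controlling the fluctuation of the return time $n(x,y,T)$ across the base so that a single $n$ may be used (the standard but delicate step common to all such proofs), and controlling the non-dominant modes so that the single-frequency approximation of $\Phi_n$ entering the van der Corput estimate is legitimate. I expect the latter to be the main obstacle in spirit, though both become tractable precisely because the gaps in the definition of $Y$ make the dominant mode overwhelmingly larger than everything else, and, crucially, guarantee stretching for \emph{all} large $n$ rather than merely along a subsequence.
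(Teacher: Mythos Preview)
Your central insight---that the interlacing of the denominators in $Y$ forces at least one of $\Phi_n$, $\Psi_n$ to stretch for \emph{every} large $n$, with the role of the two coordinates alternating according to the position of $n$ relative to the sequences $\{q'_m\}$ and $\{q''_m\}$---is exactly the mechanism the paper uses. Your identification of the dominant mode and the lower bounds $e^{-q'_m}n\ge e^{2q'_m}$ (resp.\ $e^{-q''_{m-1}}n\ge e^{2q''_{m-1}}$) on the relevant ranges are correct.

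The genuine gap is in your reduction step. You write ``one uses that $A_n=n+O(1)$ to replace the $x$-dependent return time $n(x,y,T)$ by a constant $n\approx T$'', but this is false and in fact contradicts the rest of your argument: you yourself show $|\Phi_n|$ reaches order $e^{2q'_m}\to\infty$, so $A_n-n=\Phi_n+\Psi_n$ is wildly unbounded. Consequently the return time $n(x,y,T)$ genuinely varies across the base by an amount comparable to the stretching itself, and one cannot pass to a single $n$. The criterion ``$\int h\,e^{2\pi i\lambda A_n}\to 0$ for all $\lambda\neq 0$ and smooth $h$'' is therefore not what needs to be checked; there is no clean reduction of mixing of the special flow to decay of these fixed-$n$ integrals without precisely the return-time control you are missing. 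You flag this as ``delicate'' at the end, but it is not a technicality to be patched: it is the place where the argument as written breaks.

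The paper sidesteps this by arguing geometrically rather than through oscillatory integrals. One fixes a large $t$ and, depending on $t$, covers the base by short intervals $J$ along whichever axis is currently stretching. The derivative of the ergodic sum along $J$ being large means the image $T^t(J)$ consists of many nearly vertical strips whose base projections lie along an orbit segment of $T_{\alpha',\alpha''}$; unique ergodicity equidistributes these projections, and Fubini (first across the transverse base direction, then along the fibers) promotes this to equidistribution of $T^t$ applied to an arbitrary set. In this picture the variation of $n(x,y,T)$ across $J$ is not an obstacle to be suppressed---it is the very source of the fiberwise spreading that produces mixing. Your van~der~Corput estimate is morally computing the same stretching, but packaged into a reduction that does not hold; recasting it along the geometric lines above would make the argument go through.
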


 Because of the
disposition of the best approximations of $\a'$ and $\a''$ the  ergodic
sums $\varphi_m$ of the function $\varphi$, for any  $m$ sufficiently
large, will be always stretching (i.e. have big derivatives), in one
or in the other of the two directions, $x$ or $y$, depending on
whether $m$ is far from $\{q'_n\}$ or far from $\{q''_n\}$. And this stretch
will increase when $m$ goes to infinity. So when time goes from $0$ to
$t$, $t$ large, the image of a small typical interval $J$ from the basis
${\T}^2$ (depending on $t$ the intervals should be taken along the $x$
or the $y$ axis) will be more and more distorted and stretched  in the
fibers' direction, until the image of $J$ at time $t$ will consist of
a lot of almost vertical curves whose projection on the basis lies
along a piece of a trajectory under the translation $T_{\a',\a''}$. By
unique ergodicity these projections become more and more uniformly
distributed, and so will $T^t(J)$. For each $t$, and except for
increasingly small subsets of it (as function of $t$), we will be able
to cover the basis with such ``typical'' intervals. Besides, what is
true for $J$ on the basis is true for $T^s(J)$ at any height $s$ on the
fibers. So applying Fubini Theorem in two directions, first along the other direction on the basis (for a time $t$ all typical intervals are in the same direction),
and second along the fibers, we will obtain the asymptotic uniform
distribution of any measurable subset, which is, by definition, the
mixing property. %(see Figure \ref{FigFlowMix}).

\begin{question} Are the flows obtained in Theorem \ref{nnnspecial} mixing of all orders?
\end{question}

\begin{question} For which vectors $\a \in \R^d$, there exist special flows above $T_\a$ with smooth functions $A$ such that $T^t_{\a,A}$ is mixing?
\end{question}

The foregoing discussion demonstrates that both ergodicity of cylindrical cascades and mixing of special flows require a 
detailed analysis of ergodic sums \eqref{ErgSum}. However, the estimates needed in those two cases are quite different 
and somewhat conflicting. Namely, for ergodicity we need to bound from below the probability
that ergodic sums hit certain intervals,
while for mixing one needs to rule out too much concentration. For this reason it is difficult to construct functions $A$ such that
$W_{\a, A}$ is ergodic while $T_{\a, c+A}^t$ is mixing. In fact, so far this has only been achieved for smooth functions with asymmetric logarithmic singularities.
However, it seems that in higher dimensions there is more flexibility so such examples should be more common.

\begin{question} \label{q.mixing}
  Is it true that for (almost) every polyhedron $\Omega \in \T^d$, $d\geq 2$,  and almost every $a>0$, and almost every $\a \in \T^d$, the special flow  above $\a$ and under the function $a+ \chi_\Omega$ is mixing?
\end{question} 

Note that a positive answer to both this question and Question \ref{q.ergodicity} will give a large class of 
interesting examples  where ergodicity of  
$W_{\a, A}$ and mixing for $T_{\a, c+A}$ (for any $c$ such that $c+A>0$) hold simultaneously.

\begin{question}
Answer Questions  \ref{q.ergodicity} and \ref{q.mixing}   in the case $\Omega$ is a strictly convex analytic set. 
\end{question}

%Power like singularities  of the form $|x|^{-a}$ with 
%$a \in (0,1)$ are interesting because they give the simplest examples of mixing flows on $\T^2$ that are time changes of linear flows with a stopping point \cite{Kc1}. In \cite{bsmf} the control of the speed of growth of the ergodic sums for a singularity of the type $|x|^{-a}$, $a \in (0,1)$ allowed to compute the speed of mixing of the special flows above irrational rotations 
%of adequate Diophantine type. The speed was shown to be bounded by $t^{-b}$ for some $b$ related to $a$ and to the Diophantine property of the base rotation. The method used there is bound to yield a speed slower than $1/\sqrt{t}$, which is a threshold that has to be bypassed if one wants to obtain an absolutely continuous spectral measure for the special flow using speed of mixing. 

%\begin{question} Can the speed $1/\sqrt{t}$ be superseded for special flows above translations?
%\end{question}

%As we see in the following question, as $a$ tends to $1$ the size of the fluctuations (or stretching) of the ergodic sums become large which can accelerate mixing, but in the same time the measure of the neighborhood of the stopping point becomes more important which can have a downside on the speed of mixing. 

\subsection{An application.}
\label{SSAppl}
Here we show how the geometry of special flows above cylindrical cascades can be used to study the ergodic sums. 

\begin{proof}[Proof of Theorem \ref{ThChi2}.]

\begin{figure}[htb]
    \centering
    \resizebox{!}{8cm}{\includegraphics{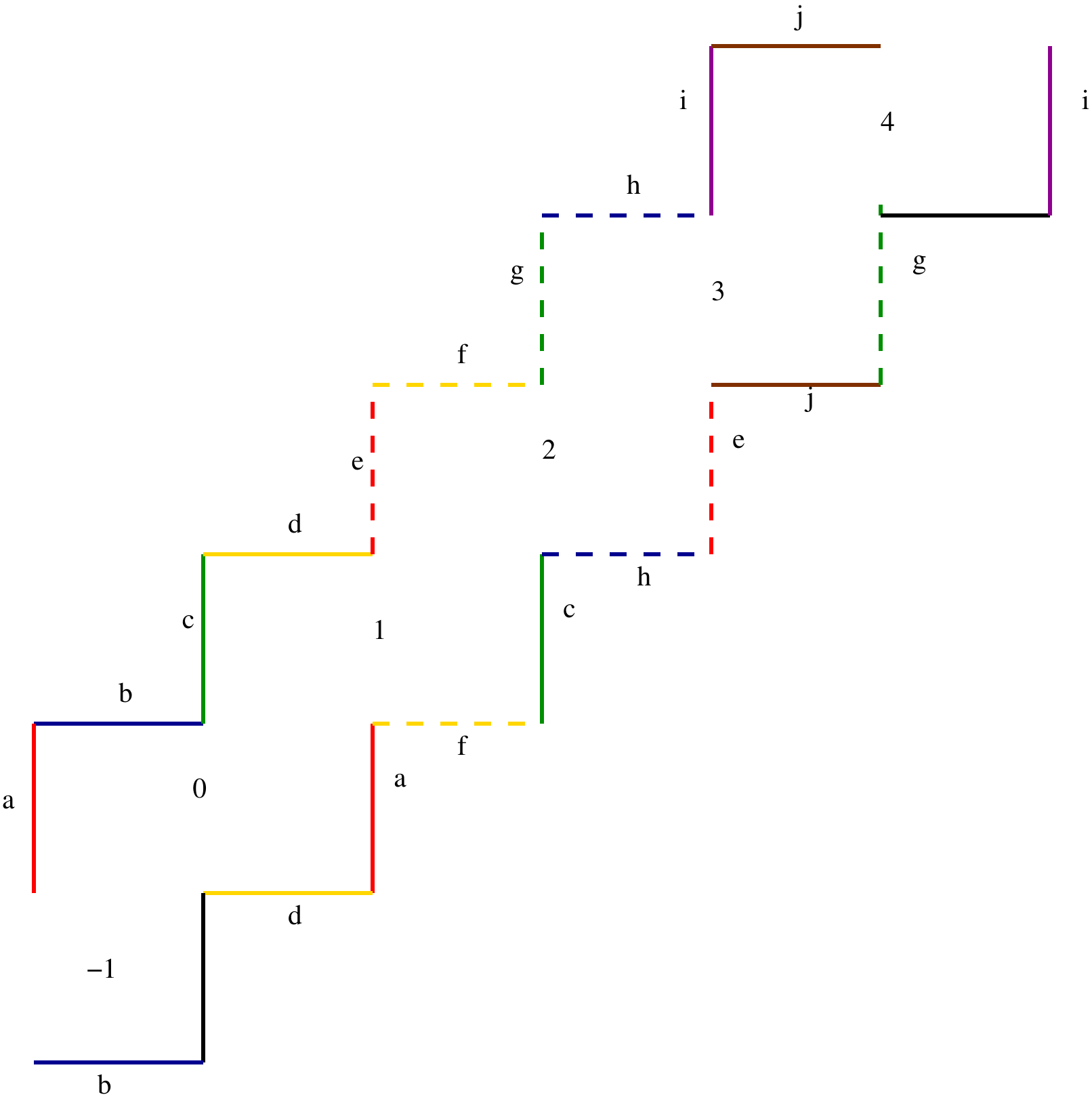}}
    \caption{Staircase surfaces. The sides marked by the same symbol are identified.}
    \label{FigStair}
\end{figure}

The proof uses the properties of the staircase surface $St$ shown on Figure \ref{FigStair}. The staircase is
an infinite pile of $2\times 1$ rectangles so that the left bottom corner of the next rectangle is attached to the center of the
top of the previous one. The sides which are differ by two units in either horizontal or vertical direction are identified.
We  number all the rectangles from $-\infty$ to $+\infty$ as shown on Figure \ref{FigStair}. There is a translational symmetry
given by $G(x,y)=(x+1, y+1)$ and $St/G$ is a torus. We shall use coordinates $\brp=(p,z)$ on the staircase
where $p$ are coordinates on the torus which is the identified with rectangle zero and $z\in\Z$ is the index of rectangle.
Thus we have $(p,z)=G^z (p, 0).$ 

The key step in the proof is an observation of \cite{HHW} that $St$ is a Veech surface.
Namely, given $A\in SL_2(\integers)$ such that $A\equiv I \mod 2$ there exists
unique automorphism $\phi_A$ of $St$ which 
commutes with $G,$ fixes the singularities of $St,$ has derivative $A$ at the non-singular points
and has drift $0.$ That is, in our coordinates 
\begin{equation}
\label{ZAuto}
\phi(p,z)=(Ap, z+\tau(p))
\end{equation}
 and the drift condition means that 
$\int_{\T^2} \tau(p) dp=0.$

\begin{figure}[htb]
    \centering
   \resizebox{!}{5cm}{\includegraphics{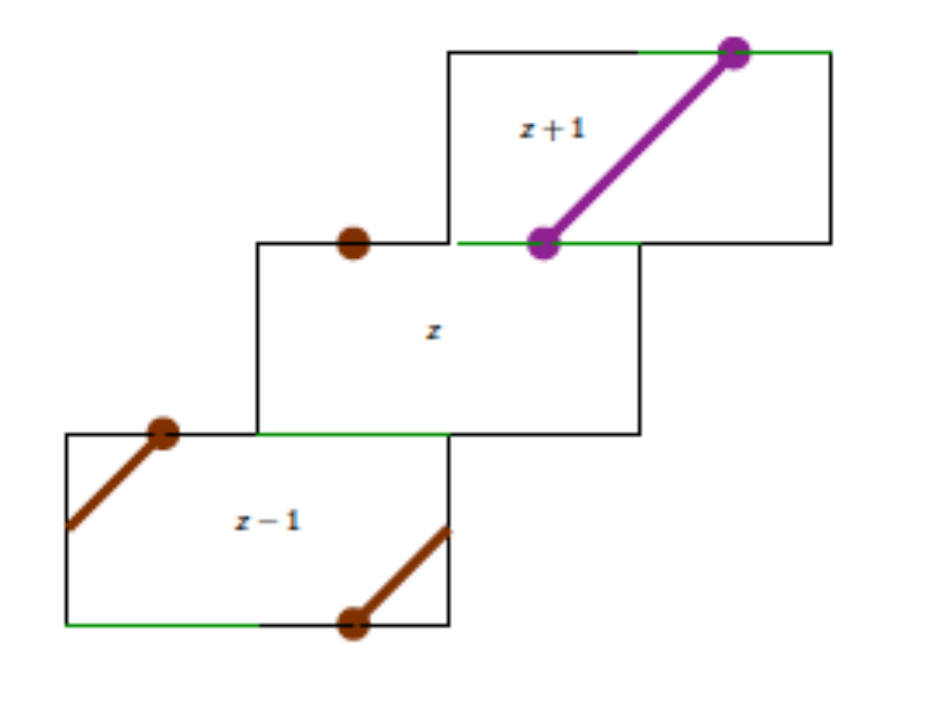}}
    \caption{Poincare map for a linear flow on the staircase. Orbits starting from $[1/2,1 ]$ go up while 
    orbits starting from $[0, 1/2)$ have to go down due to the gluing conditions.}
    \label{StairPoincare}
\end{figure}

Consider the linear flow on $St$ with slope $\theta$ which is locally given by $T^t(x,y)=(x+t\cos\theta, y+t\sin \theta).$
Let $\Pi$ be the union of the top sides of the rectangles in $St.$ We identify $\Pi$ with $\T\times \Z$ using the map
$\eta:\T\times\Z\to \Pi$ such that $\eta(x,z)$ is the point on the top side of rectangle $z$ at the distance  
$2x$ from the left corner. It is easy to check (see Figure~\ref{StairPoincare})
that under this identification the Poincare map for $T^t$  takes form
$$ (x, z)=(x+\alpha, z+\chi_{[1/2, 1]}(x)-\chi_{[0, 1/2)}(z)) \text{ where }
\alpha=\frac{\tan \theta+1}{2}. $$

Now suppose that $\alpha$ and hence $\tan \theta$ is a quadratic surd.
By Lagrange theorem there is $A\in SL_2(\integers)$ such that
$ A\left(\begin{array}{c}\cos \theta \\ \sin\theta \end{array}\right)=
\lambda \left(\begin{array}{c}\cos \theta \\ \sin\theta \end{array}\right).$
By replacing $A$ by $A^k$ for a suitable 
(positive or negative) $k$
we may assume that $A\equiv I \mod 2$ and that $\lambda<1.$ 
Let $\Gamma_{N}(x)$ be the ray starting from $\eta(x, 0)$  having slope $\theta$ and length $\frac{N}{\sin\theta}.$ 
$$ \LL_N= \frac{\mes(\brp\in \Gamma_N(x) : z(\brp)=0)}
{\length(\Gamma_N(x)}=
\frac{\mes(\brq\in \tGamma(x): z(\phi_A^{-m})=0)}{\length(\tGamma)}$$
$$=\Prob_x(z(\phi_A^{-m} \brq)=0)$$
where $\tilde\Gamma=\phi_A^m \Gamma_N(x)$ and
$\Prob_x$ is computed under the assumption that $\brq$ is uniformly distributed on $ \tGamma.$
Choose $m$ to be the smallest number such that 
$\length(\phi^m_A \Gamma_N(x))=\lambda^m \frac{N}{\sin\theta} \leq 1.$
Note that $m\approx \frac{\ln N}{\ln \lambda}.$
By our choice of $m,$ $\tGamma$ is either contained in a single rectangle or intersects two of them.
Let us consider the first case, the second one is similar. So we assume that $\tGamma$ is in the
rectangle with index $a$ so that $\brq=(q,a).$
Due to \eqref{ZAuto}
$ z(\phi^{-m} \brq)=a-\sum_{j=1}^{m} \tau(\phi_A^{-j} q).$
Thus
$$\Prob_x\left(z(\phi_A^{-m} \brq)=0\right)=\Prob_x\left(\sum_{j=1}^m \tau(\phi_A^{-j}q=a)\right). $$
Now we apply  the Local Limit Theorem for linear toral automorphisms 
(see \cite[Section 4]{PP} or \cite{G-Bern}) which says that there is a constant $\sigma^2$
$$\Prob_x\left(\sum_{j=1}^n \tau(\phi_A^{-j}q)\right)\approx \frac{1}{\sqrt{2\pi m} \sigma} e^{-a^2/2\sigma^2m} .$$

It remains to note that
$$a(x)=\sum_{j=0}^{m-1} \tau(\phi_A^j \eta(x, 0)) $$
so applying the Central Limit Theorem for linear toral automorphisms we see that
if $x$ is uniformly distributed on $\T^1$ then
$\frac{a(x)}{\sqrt{m}}$ is approximately normal with zero mean and variance $\sigma^2.$
\end{proof}

Next we discuss the proof of Theorem \ref{ThNRandom}(b) in case $l=\frac{1}{2}.$ The proof proceeds the same way as
the proof of Theorem \ref{ThChi2} with the following changes.

(I) Instead of estimating the probability that $z(\phi^{-m}_A\brq)=0$ we need to estimate  the probability that
$z(\phi^{-m}_A \brq)$ belongs to an interval of length $\sqrt{m}$ so we use the Central Limit Theorem instead 
of the Local Limit Theorem. 

(II) Instead of taking $x$ random we take $x$ fixed at the origin. Note that the origin is fixed by $A$
so $\tau(\phi^m_A (0,0))=C m.$ (More precisely $\tau$ is multivalued at the origin since it belong to several 
rectangles so by $\tau(\phi^m_A(0,0))$ we
mean the limit of $\tau(\phi^m_A(\brp))$ as $\brp$ approaches the origin inside $\Gamma_N(0).$)

\section{Higher dimensional actions}
\label{ScHD}
\begin{question}
Generalize the results presented in Sections \ref{ScErgSum}-\ref{ScFlows}  to higher dimensional actions. 
\end{question} 

The orbits of commuting shifts $T^n x=x+\sum_{j=1}^q n_j \alpha_j$ are much less studied than their one-dimensional counterparts.
We expect that some of the results of Sections \ref{ScErgSum}-\ref{ScFlows} admit straightforward extensions while in other cases significant new
ideas will be necessary. Below we discuss two areas of research where multidimensional actions appear naturally.

\subsection{Linear forms.} \label{sec.linearforms}
Statements about orbits of a single translation can be interpreted as results about joint distribution of
fractional part of inhomogenuous linear forms of one variable evaluated over $\Z$. From the point of view of Number Theory it is
natural to study linear forms of several variables  evaluated over $\Z^d$. Let 
$$l_i(n)=x_i+\sum_{j=1}^q \alpha_{ij} n_j,\quad i=1\dots d.$$
Thus it is of interest to study the discrepancy
\begin{multline*} \mathbb{D}_N(\Omega, \a,x)=\\
\Card(0\leq n_j<N, j=1,\ldots,q:
(\{l_1(n)\},\dots \{l_d(n)\})\in \Omega)-N^q \text{Vol}(\Omega). \end{multline*} 
The latter problem is a classical subject in Number Theory, and there are several important results related to it.
In particular, the Poisson regime is well understood (\cite{M-ETDS}). The following result generalizes Theorem
\ref{ThDO-Pois} 
and can be proven by a similar argument.

\begin{theorem}
\label{ThZqPois}
Let $(\a,x)$ be uniformly distributed on $\T^{d(q+1)}.$ Then,  for any cube $\Sigma \subset \R^q$, the distribution of
$$ \Card(n: \frac{n}{N}\in \Sigma \text{ and }
(\{l_1(n)\},\dots \{l_d(n)\})\in N^{-q/d} \Omega) $$
converges as $N\to\infty$ to
$$\cN(\Omega, \Sigma):=\Card(e\in L, e=(x,y) : x(e)\in \Omega, y(e)\in \Sigma)$$
where $L$ is a random affine lattice in $\R^{d+q}.$
\end{theorem}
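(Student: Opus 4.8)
The plan is to follow exactly the template established in the proof of Theorem~\ref{ThDO-Pois}(a), since Theorem~\ref{ThZqPois} is its natural multidimensional analogue with $q$ free integer variables instead of one. First I would set up the Dani-type correspondence. The key point is that the orbit $\{l_1(n),\dots,l_d(n)\}$ evaluated over the box $n/N\in\Sigma$, $n\in\Z^q$, should be encoded as a lattice-counting problem in a space of affine lattices of rank $d+q$. Concretely, I would consider the lattice $\Z^{d+q}$ acted on by the unipotent matrix carrying the rows of $(\alpha_{ij})$ in the appropriate off-diagonal block together with the translation vector $\bar x=(x_1,\dots,x_d,0,\dots,0)$, and then apply the diagonal flow $g^{\ln N}$ that expands the $d$ ``spatial'' coordinates by $e^{(\ln N)/d}=N^{1/d}$ and contracts the $q$ ``time'' coordinates by $N^{-1}$. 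The scaling is chosen precisely so that $N^{-q/d}\Omega\times\Sigma$ is the image of a fixed target window $\Omega\times\Sigma$ under $g^{\ln N}$: the product of the expansion factors $N^{q/d}$ (on the $d$ spatial directions) and contraction factors $N^{-q}\cdot N^{q}$ must match the determinant-one condition, which fixes the exponents.

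Next I would verify the correspondence lemma in the style of \eqref{eq.dani}: a point $n$ with $n/N\in\Sigma$ and $\{l_i(n)\}\in N^{-q/d}\Omega$ corresponds bijectively to a lattice point $e\in g^{\ln N}(\Lambda_\a\Z^{d+q}+\bar x)$ lying in the fixed window $\Omega\times\Sigma$, where the last $q$ coordinates record $n$ itself and the first $d$ coordinates record the integer parts $m_i(n)$ making $l_i(n)$ land in the fundamental domain. As before, one checks that the map $n\mapsto(m_1(n),\dots,m_d(n),n_1,\dots,n_q)$ and its inverse are well defined, so that $\cN$ is literally $f(g^{\ln N}(\Lambda_\a\Z^{d+q}+\bar x))$ for the lattice function $f(L)=\sum_{e\in L}\chi_{\Omega\times\Sigma}(e)$. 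Then the limiting distribution follows, exactly as in \eqref{eq.equidist}, from the equidistribution of the translated unstable leaf $n_+(\a,\bar x)$ under the partially hyperbolic action of $g^t$ on the space of affine lattices $\bar G=(SL_{d+q}(\R)\ltimes\R^{d+q})/(SL_{d+q}(\Z)\ltimes\Z^{d+q})$, which gives convergence to $\mu_{\bar G}(L:f(L)=l)$, i.e. to $\cN(\Omega,\Sigma)$ for a random affine lattice $L$ in $\R^{d+q}$.

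The step I expect to be the main obstacle is checking that the relevant orbit really is a genuine unstable leaf for which the equidistribution statement applies with the full box $\Sigma$ of time-variables, rather than a single time direction. In the $q=1$ case the unstable manifold is the one-parameter horocycle $\Lambda_\a$; here the $q$ integer variables $n_1,\dots,n_q$ all get contracted by $g^{\ln N}$, so one must confirm that the $(d\cdot q)$-dimensional unipotent subgroup generated by the frequency matrix $(\alpha_{ij})$ together with the $\bar x$-translation indeed sweeps out the unstable horospherical subgroup of $g^t$, and that its image equidistributes when $(\a,x)$ carries an absolutely continuous measure on $\T^{d(q+1)}$. Once this identification of the expanding horospherical subgroup is in place, the equidistribution is standard (mixing/thickening of expanding leaves, cf.\ the references cited after \eqref{InCond}), and no Ratner-theoretic input is needed because $(\a,x)$ ranges over a full-dimensional set with smooth density. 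The only remaining routine checks are that $\partial\Omega$ and $\partial\Sigma$ have measure zero so that $f$ is a.e.\ continuous on the space of lattices, allowing passage from weak convergence to convergence of the integer-valued counts.
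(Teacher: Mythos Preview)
Your approach is correct and is exactly what the paper intends: it states that Theorem~\ref{ThZqPois} ``generalizes Theorem~\ref{ThDO-Pois} and can be proven by a similar argument,'' and your Dani correspondence in $\bar G=(SL_{d+q}(\R)\ltimes\R^{d+q})/(SL_{d+q}(\Z)\ltimes\Z^{d+q})$ followed by equidistribution of the full unstable leaf is precisely that argument.

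One small slip to fix: the spatial expansion factor should be $N^{q/d}$, not $N^{1/d}$ as you wrote at first (you correct yourself two lines later when invoking the determinant-one condition). Concretely, the right diagonal element is
\[
g^{\ln N}=\mathrm{diag}\bigl(\underbrace{N^{q/d},\dots,N^{q/d}}_{d},\underbrace{N^{-1},\dots,N^{-1}}_{q}\bigr),
\]
whose determinant is $(N^{q/d})^d(N^{-1})^q=1$, and which sends $N^{-q/d}\Omega\times N\Sigma$ to $\Omega\times\Sigma$ as required. With this correction your dimension count is right: the expanding horospherical subgroup is the $dq$-dimensional block of $\Lambda_\alpha$ together with the $d$ expanding translation coordinates $\bar x$, for a total of $d(q+1)$ parameters, matching exactly the $\T^{d(q+1)}$ on which $(\alpha,x)$ is uniformly distributed. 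Hence no Ratner input is needed and the mixing/thickening argument for full unstable leaves applies directly.
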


Thus the Poisson regime for the rotations exhibits more regular behavior comparing to standard Poisson processes.
However then the number of rotations becomes large the limiting distribution approaches the Poisson. Namely,
the following is the special case of the result proven in \cite{V}.

\begin{theorem}
\label{ThLargeQ}
If $\Sigma_q$ are unit cubes in $\R^q$ then 
$\Omega\to \cN(\Omega, \Sigma_q)$ converges as $q\to\infty$ to the Poisson measure
$ \bmu(\Omega)=\Card(\mathfrak{P}\cap \Omega)$ where $\mathfrak{P}$ is a Poisson process on
$\R^d$ with constant intensity.
\end{theorem}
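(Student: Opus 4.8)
The plan is to peel off the dynamics entirely and work with the random affine lattice furnished by Theorem \ref{ThZqPois}. Writing $n=d+q$, that theorem identifies $\cN(\Omega,\Sigma_q)$ with the number of points $e=(x,y)$ of a Haar-random affine unimodular lattice $L$ in $\R^{n}=\R^{d}\times\R^{q}$ for which $x(e)\in\Omega$ and $y(e)\in\Sigma_q=[0,1]^{q}$. Thus it suffices to prove that, as $q\to\infty$, the point process $P_q=\{x(e):e\in L,\ y(e)\in\Sigma_q\}$ on $\R^{d}$ converges in distribution to $\Pois(\R^{d},\Leb)$. I would establish this by the method of factorial moments: a sequence of simple point processes converges to a Poisson process of intensity $\nu$ as soon as, for every finite family of disjoint bounded continuity sets $\Omega_1,\dots,\Omega_m\subset\R^{d}$ and all exponents $k_1,\dots,k_m$, the mixed falling-factorial moments $\EXP\big[\prod_i (P_q(\Omega_i))_{(k_i)}\big]$ (where $(m)_{(k)}=m(m-1)\cdots(m-k+1)$) converge to $\prod_i \nu(\Omega_i)^{k_i}$, together with a uniform-integrability bound. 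Here $\nu$ will turn out to be Lebesgue measure: Siegel's mean value theorem for affine lattices gives $\EXP[P_q(\Omega)]=\Vol(\Omega\times\Sigma_q)=\Vol(\Omega)$, so the candidate limit is the unit-intensity Poisson process, exactly as claimed.

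The computation of the higher moments is where the structure of a \emph{random} lattice enters. The $K$-th factorial moment, $K=\sum_i k_i$, is the average over $L$ of $\sum \prod_j \chi_{\Omega_{i(j)}\times\Sigma_q}(e_j)$ taken over ordered $K$-tuples of \emph{distinct} points of $L$. I would expand this using the Rogers--Schmidt integration formula for the space of affine lattices, which writes the average as a sum of terms indexed by the pattern of $\Z$-linear relations satisfied by the difference vectors $e_j-e_1$ --- equivalently, by the sublattice and partition structure of the tuple. The \emph{principal} term is the one in which the $K$ points are in general position (affinely independent, the differences spanning a primitive rank-$(K-1)$ sublattice); by unfolding it factorizes and contributes precisely $\prod_i \Vol(\Omega_i)^{k_i}$, matching the Poisson answer. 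All remaining terms are \emph{degenerate}: they force the tuple onto a proper affine sublattice, and in particular force the linear part $g\Z^{n}$ to contain a nonzero vector equal to some difference $e_i-e_j$.

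The hard part will be to show that every degenerate contribution tends to $0$ as $q\to\infty$. The issue is genuinely quantitative rather than soft: since $\Sigma_q$ is a unit cube in $\R^{q}$, the slab $\Omega\times\Sigma_q$ has diameter of order $\sqrt{q}=\sqrt{n-d}$, which is precisely the scale of the shortest nonzero vector of a Haar-random unimodular lattice in $\R^{n}$, so a crude ``no short vectors'' bound is unavailable and one must instead use the exact asymptotics of the Rogers-formula terms in growing dimension. Concretely, I would bound the contribution of each sublattice type of rank $r\ge 1$ by the expected number of sublattice configurations of bounded covolume meeting the slab, and show, via the explicit densities appearing in the affine Rogers formula, that these expectations decay (by a factor polynomially small in $q$ for each fixed relation) as $n\to\infty$. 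Summing the finitely many relation types for fixed $K$ then gives convergence of the factorial moments to those of $\Pois(\R^{d},\Leb)$, and the method of moments yields Theorem \ref{ThLargeQ}; this quantitative high-dimensional control of the degenerate terms, carried out in \cite{V}, is the essential step.
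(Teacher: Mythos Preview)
The paper does not actually prove Theorem \ref{ThLargeQ}: it merely records the statement as a special case of the result proven in \cite{V} and gives no argument of its own. Your proposal is therefore being compared against a citation rather than against an in-text proof.

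That said, your strategy is the natural one and is, as far as I can tell, precisely the line taken in \cite{V}: reduce to the random affine lattice via Theorem \ref{ThZqPois}, compute factorial moments of the projected point process using the affine Rogers/Schmidt integration formula, observe that the principal (generic-position) term reproduces the Poisson factorial moments $\prod_i \Vol(\Omega_i)^{k_i}$, and then show that each degenerate term (corresponding to a nontrivial $\Z$-linear relation among the differences) is $o(1)$ as $q\to\infty$. Your identification of the difficulty is also accurate: because the slab $\Omega\times[0,1]^q$ has diameter $\asymp\sqrt{q}$, which matches the typical shortest-vector scale in dimension $d+q$, one cannot dispose of the degenerate terms by a crude short-vector argument and must instead use the explicit densities in the Rogers formula to extract decay. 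This is exactly the content of the quantitative estimates in \cite{V}, so your outline is correct and essentially coincides with the cited proof.
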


Next we present extensions of Theorems \ref{CLTShrinkBad}, \ref{DFVsBC}, \ref{dfv1} and \ref{dfv2} to the context of homogeneous and inhomogeneous linear forms. Let again $l_i(n)=x_i+\sum_{j=1}^q \alpha_{ij} n_j,$ $i=1\dots d.$
Consider
$$V_N(\a,x , c)=\Card(0\leq n_i<N: 
(\{l_1(n)\}, \dots \{l_d(n)\})\in B(c |n|^{-q/d})). $$

More generally given a function $\psi:\reals^+\to\reals^+$ define
$$V^\psi_N(\a,x ) =\Card(0\leq n_i<N: 
(\{l_1(n)\}, \dots \{l_d(n)\})\in B(\psi(|q|)). $$

We also let $U_N(\a, c)=V_N(0, \a, c)$ and $U_N^\psi(\a)=V_N^\psi(0, \a)$
be the quantities measuring the rate of recurrence.

In particular we call the matrix $\a$ 
{\bf badly approximable} if there exists  $c>0$ such that for, $V_N(0,\a, c)$ is bounded. 
On the other hand,
if $U_N^\psi(\a) \to\infty$ where $\psi(r)=r^{-(d/q+\eps)}$ 
then $\a$ is called {\bf very well approximable  (VWA)}.

The following result is known as Khinchine--Groshev Theorem. Almost sure is considered relative to Lebesgue measure on the space of matrices $\a \in \T^{dq}$. 
\begin{theorem} \cite{Kh, Gr, DS, schmidt0, BV, BBV}
(a) If $\sum_ {\Z^q} | \psi^d(|n|)<\infty$ then $U_N^\psi$ is bounded almost surely.

(b) If $\sum_{\Z^q}  \psi^d(|n|)=+\infty$ and either $\psi$ is decreasing or $dq>1$
then $\lim_{N\to\infty} U_N^\psi(\a)=+\infty$ almost surely.

(c) For $d=q=1$ there exists $\psi$ such that $\sum_{n\in Z} \psi(|n|)=+\infty$ but
$U_N^\psi$ is bounded almost surely.

(d) If $\psi$ is decreasing and $\sum_{n\in Z^d} \psi^d(|n|)=+\infty$ then
$V_N^\psi(\a, x)\to\infty$ almost surely.
\end{theorem}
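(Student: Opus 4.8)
The plan is to treat the theorem as a metric Borel--Cantelli statement for the family of events
\[
E_n=\{\a:\ (\{l_1(n)\},\dots,\{l_d(n)\})\in B(\psi(|n|))\},\qquad n\in\Z^q\setminus\{0\},
\]
in the homogeneous case $x=0$, on the probability space $\T^{dq}$ with Lebesgue measure. First I would record the elementary probability computation: for fixed $n\neq 0$ each form $\sum_j\alpha_{ij}n_j$ is, as $\alpha_i=(\alpha_{ij})_j$ ranges uniformly over $\T^q$, equidistributed on $\T$, and the $d$ rows are independent; hence $\Prob(E_n)=c_d\,\psi(|n|)^d$ once $\psi(|n|)<1/2$, with $c_d=\Vol(B(1))$. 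Part (a) is then immediate from the convergence (first) Borel--Cantelli lemma: if $\sum_{n\in\Z^q}\psi^d(|n|)<\infty$ then $\sum_n\Prob(E_n)<\infty$, so for almost every $\a$ only finitely many $E_n$ occur and $U_N^\psi(\a)$ is eventually constant, hence bounded.

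For the divergence parts (b) and (d) I would use the second-moment (quantitative Borel--Cantelli) method. Write $S_N=\sum_{0\le n_i<N}\chi_{E_n}$, so that $U_N^\psi=S_N$ and $\Phi(N):=\EXP(S_N)=c_d\sum_{0\le n_i<N}\psi^d(|n|)\to\infty$ by hypothesis. The goal is the Sprind\v{z}uk-type variance bound
\[
\sum_{0\le n_i,n'_i<N}\bigl[\Prob(E_n\cap E_{n'})-\Prob(E_n)\Prob(E_{n'})\bigr]\ \ll\ \Phi(N),
\]
which by the standard quantitative Borel--Cantelli lemma yields $S_N=\Phi(N)(1+o(1))$ almost surely, and in particular $U_N^\psi\to\infty$. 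The diagonal $n=n'$ already contributes $\Phi(N)$; the heart of the matter is an off-diagonal \emph{quasi-independence} bound $\Prob(E_n\cap E_{n'})\le \Prob(E_n)\Prob(E_{n'})+R(n,n')$ with $\sum_{n\neq n'}R(n,n')\ll\Phi(N)$. This rests in turn on equidistribution of the $\T^{2d}$-valued map $\alpha\mapsto(\sum_j\alpha_{ij}n_j,\ \sum_j\alpha_{ij}n'_j)_i$, which factorizes whenever $n$ and $n'$ are sufficiently independent over $\Q$.

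The main obstacle is exactly the pairs $(n,n')$ that fail this independence, i.e. the nearly parallel pairs $n'\approx\lambda n$, for which $E_n$ and $E_{n'}$ are strongly correlated. Controlling their total contribution is where the two alternative hypotheses of part (b) enter: when $dq>1$ there is enough averaging over the extra variables to render the parallel pairs negligible, while monotonicity of $\psi$ lets one compare $\psi(|n'|)$ with $\psi(|n|)$ along rays and sum the correlation error geometrically. For the inhomogeneous statement (d) the point $x$ supplies independent uniform randomness in each form (already at $n=0$), which is what permits the decreasing hypothesis alone to close the identical variance estimate; I would simply rerun the second-moment argument on $\T^{dq}\times\T^d$.

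Finally, part (c) is a genuine counterexample showing monotonicity cannot be dropped when $dq=1$. Here the quasi-independence of the previous paragraph breaks down irreparably: one constructs $\psi$ supported on a sparse sequence of highly composite integers, arranged in blocks so that within each block the events $\{\|n\alpha\|<\psi(n)\}$ share common small-denominator rational approximations and hence overlap almost completely. The divergence of $\sum_n\psi(|n|)$ is then produced by heavily redundant events, so that $\limsup_n E_n$ has measure zero and $U_N^\psi$ stays bounded for almost every $\alpha$. This is the classical Duffin--Schaeffer construction, which I would reproduce verbatim.
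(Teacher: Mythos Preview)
The paper does not supply a proof of this theorem; it is quoted as a classical result with references to Khintchine, Groshev, Duffin--Schaeffer, Schmidt, and Beresnevich--Velani, so there is no in-paper argument to compare against. Your outline follows exactly the approach of those references: first Borel--Cantelli for (a), Schmidt's quasi-independence / second-moment method for (b) and (d), and the Duffin--Schaeffer construction for (c). That is the correct strategy.

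One point in your sketch is inaccurate and would not go through as written. In part (d) you say that the extra randomness in $x$ ``supplies independent uniform randomness in each form'' and that this is what lets the variance estimate close. It does not: for two indices $n,n'$ the vectors $(l_i(n))_i$ and $(l_i(n'))_i$ differ by the \emph{same} shift $x$, so conditioning on $\alpha$ the events $E_n$ and $E_{n'}$ are just translates of one another and are no more independent than in the homogeneous case. The decorrelation in (d) still has to come from the $\alpha$-variables, exactly as in (b); what the inhomogeneous setting buys is that the problematic contribution from $n=0$ (and, more generally, from divisibility-induced overlaps along rays) is smoothed out, which is why monotonicity alone suffices there. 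Apart from this mis-attribution, your plan matches the literature proofs and the non-monotone $dq>1$ case indeed requires the finer overlap estimate of Beresnevich--Velani rather than just ``more averaging''.
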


In particular, both badly approximable and very well approximable $\a$s have zero measure.

When the number of hits is infinite, it is natural to consider the  question of the sBC property.

\begin{theorem} \label{th.schmidt0} \cite{schmidt0} (a) For almost all $\a$
$$ U_N^\psi(\a)=\EXP(U_N^\psi)+O\left(\sqrt{\Gamma(N) \ln^3 \Gamma(N)}\right) $$
where
$$\Gamma(N)=\sum_{|n|\leq N} \psi(|n|)^d D(gcd(n_1\dots n_q))$$
and $D$ denotes the number of divisors.

(b) $\Gamma(N)\leq C \EXP(U_N^\psi)$ if either $q>3$ or $q=2$ and $n\psi^2(n)$ is decreasing.

(c) If $q=1$ and $\psi(n)$ is decreasing then for each $\delta$
$$ U_N^\psi(\a)=\EXP(U_N^\psi)+O\left(\sqrt{\tGamma(N) \EXP(U_N^\psi)} \ln^{2+\delta} (\EXP(U_N^\psi))\right) $$
where
$$ \tGamma(N)=\sum_{n=1}^N \frac{\psi(n)}{n}. $$  
\end{theorem}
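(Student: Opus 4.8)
The plan is to run the classical second-moment method of Schmidt \cite{schmidt0}: estimate the variance of $U_N^\psi$ and then convert this into an almost-everywhere asymptotic via a Gal--Koksma type variance lemma. Treating the homogeneous case $x=0$ (the inhomogeneous one is identical), for $n\in\Z^q\setminus\{0\}$ set $A_n(\a)=\chi_{B(\psi(|n|))}\big((\{l_1(n)\},\dots,\{l_d(n)\})\big)$, so that $U_N^\psi(\a)=\sum_{0\le n_i<N}A_n(\a)$. First I would record the mean: for fixed $n\ne 0$ the map $\a\mapsto(\{l_1(n)\},\dots,\{l_d(n)\})$ pushes Lebesgue measure on $\T^{dq}$ to Lebesgue measure on $\T^d$, since for each $i$ the character satisfies $\EXP\,e^{2\pi i p\langle\alpha_i,n\rangle}=\mathbf 1[pn=0]$, which vanishes unless $p=0$. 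Hence $\EXP(A_n)=\Vol(B(\psi(|n|)))=c_d\,\psi(|n|)^d$ and $\EXP(U_N^\psi)=c_d\sum_{n}\psi(|n|)^d$.

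The main work, and the main obstacle, is the block variance bound
$$\EXP\Big[\big(\textstyle\sum_{M<|n|\le N}(A_n-\EXP A_n)\big)^2\Big]\le C\,(\Gamma(N)-\Gamma(M)).$$
The key structural fact is that $\mathrm{Cov}(A_m,A_n)=0$ unless $m$ and $n$ are proportional over $\Q$: the joint character $\EXP\,e^{2\pi i\langle\alpha_i,\,pm+p'n\rangle}=\mathbf 1[pm+p'n=0]$ shows that if $m,n$ are $\Q$-independent then $\big((l_i(m))_i,(l_i(n))_i\big)$ is uniform on $\T^{2d}$, so $A_m,A_n$ are independent. For a commensurable pair one reduces to the primitive direction $w$, writing $m=aw$, $n=bw$, so that $l_i(m)=a\,t_i$ and $l_i(n)=b\,t_i$ with $t_i=\langle\alpha_i,w\rangle$ independent and uniform; the event $A_mA_n=1$ forces $t$ near the $\gcd(a,b)$ common near-solutions of $\{a t_i\}\approx 0\approx\{b t_i\}$, giving $\EXP(A_mA_n)\le C\,\gcd(a,b)^d\min\big(\psi(|m|)/|a|,\psi(|n|)/|b|\big)^d$. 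Summing these correlations over all partners $m$ commensurable with a fixed $n$, grouped by common divisor, is exactly where the divisor function enters: the $\gcd$-weighted count of resonant partners is $O(D(\gcd(n_1,\dots,n_q)))$, producing the weight $\psi(|n|)^dD(\gcd(n))$ and hence the bound by $\Gamma(N)$. With this estimate in hand, Schmidt's variance lemma (the quasi-independent Borel--Cantelli/Gal--Koksma inequality) upgrades it to $U_N^\psi=\EXP(U_N^\psi)+O\big(\sqrt{\Gamma(N)\ln^3\Gamma(N)}\big)$ almost everywhere, proving (a).

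Part (b) is a purely arithmetic comparison of $\Gamma(N)=\sum_{|n|\le N}\psi(|n|)^dD(\gcd(n))$ with $\EXP(U_N^\psi)=c_d\sum_{|n|\le N}\psi(|n|)^d$. Decomposing by $h=\gcd(n_1,\dots,n_q)$ and $n=h\tilde n$ with $\tilde n$ primitive gives $\Gamma(N)=\sum_h D(h)\,S_h(N)$ and $\EXP(U_N^\psi)=c_d\sum_h S_h(N)$ with $S_h(N)=\sum_{\tilde n\ \mathrm{prim},\,h|\tilde n|\le N}\psi(h|\tilde n|)^d$; since $S_h(N)\approx h^{-q}\int\psi^d$ and $\sum_h D(h)h^{-q}<\infty$ for $q\ge 2$, the divisor weight only inflates the sum by a bounded factor. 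The hypotheses $q>3$, or $q=2$ with $n\psi^2(n)$ decreasing, are what make this comparison rigorous, by controlling the error in $S_h(N)\approx h^{-q}\int\psi^d$ uniformly in $h$, and yield $\Gamma(N)\le C\,\EXP(U_N^\psi)$. Part (c) is the delicate endpoint $q=1$: here $\gcd(n_1)=|n|$, so $D(\gcd(n))=D(n)$ and $\Gamma(N)\sim\sum_n\psi(|n|)^dD(n)$ is genuinely larger than $\EXP(U_N^\psi)$, reflecting the fact that for $q=1$ \emph{every} pair $(m,n)$ is commensurable and the variance cannot be reduced by discarding $\Q$-independent pairs. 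One must then carry out a finer analysis of the correlated sums, exploiting monotonicity of $\psi$ to control the divisor sums by the hyperbola method; this replaces $\Gamma(N)$ by the mixed quantity $\tGamma(N)\,\EXP(U_N^\psi)$ under the root and gives the error $O\big(\sqrt{\tGamma(N)\,\EXP(U_N^\psi)}\,\ln^{2+\delta}\EXP(U_N^\psi)\big)$. I expect the variance estimate of part (a), and within it the bookkeeping that extracts the weight $D(\gcd(n))$ from the commensurable correlations, to be the crux; parts (b) and (c) are then comparatively routine, if technical, number-theoretic estimates.
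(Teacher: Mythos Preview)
The paper does not prove this theorem; it is a survey and Theorem~\ref{th.schmidt0} is simply quoted from Schmidt~\cite{schmidt0} with no argument given. Your outline is the classical second-moment/Gal--Koksma method of Schmidt's original paper, so there is nothing in the present paper to compare it against, and your plan matches the standard proof of the cited result.
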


%In particular, for $\psi(r)=r^{-(d/q)}$ the sBC property is satisfied, namely,  $\frac{U_N^\psi(\a)}{\EXP(U_N^\psi(\a))}\to 1.$ Theorem \ref{DFVsBC} follows as a particular case when $d=1$. 

\begin{question} %\margem{have to check the paper of schmidt} 
Does a similar formula as that of Theorem \ref{th.schmidt0} hold for $V^\psi$? 
\end{question}
Some partial results are obtained in \cite{schmidt}.

It follows from the same arguments as the proof of Theorem~\ref{DFVsBC} 
sketched  in Section \ref{tsp.proofs} that the sBC property holds for $\psi(r)=r^{-(d/q)}$ for almost every  $(\a,x)$, that is  %\margem{check if we cite schmidt or DFV here} 
$$\lim_{N \to \infty}  \frac{V_N(\a,x)}{\EXP(V_N(\a,x))}=1.$$

%Similarly to Theorem \ref{DFVsBC} we also have typically the sBC property for the sequence $\psi(r)=r^{-d/q}$ as follows. 

%\margem{this also should be  known...}
%\begin{theorem} (\cite{DFV})Ê\label{sBCgeneral} For almost every $\a \in \T^{dq}$, we have that 
 %$$\lim_{N \to \infty}  \frac{U_N}{\EXP(U_N)}=1$$ 
%and for almost every  $(\a,x)$ 
%$$\lim_{N \to \infty}  \frac{V_N(\a,x)}{\EXP(V_N(\a,x))}=1$$ 
%\end{theorem}

For badly approximable $\a$ we have the following. 
\begin{theorem}
\cite{M-ST}  Let  $x$ be uniformly distributed on $\T^d.$ %\margem{ in (b) is it $\liminf$?}
If $\a$ is badly approximable, there exists a constant $K$ such that all limit points of
$\frac{V_N-\EXP(V_N)}{\sqrt{\ln N}}$  are normal random variables with zero mean and variance
$\sigma^2$ where $0\leq \sigma^2\leq K.$
\end{theorem}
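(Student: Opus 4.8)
The plan is to mirror the proofs of Theorem \ref{CLTShrinkBad} and Theorem \ref{DFVsBC}: transport the counting problem into the space of affine lattices via the Dani correspondence, and then prove a central limit theorem for the resulting ergodic sum of a diagonal flow by a martingale argument. I would decompose the range of $n$ into the dyadic shells $e^m \le |n| < e^{m+1}$, $1 \le m \lesssim \ln N$, and count in each shell the solutions of $(\{l_1(n)\},\dots,\{l_d(n)\}) \in B(c|n|^{-q/d})$. Just as in \eqref{eq.danistp}, the number of hits in shell $m$ equals $f\left(g_{t_m}(\Lambda_\a\Z^{d+q}+\bar x)\right)$, where $g_t=\mathrm{diag}(e^{t/d}I_d,\,e^{-t/q}I_q)$ is the diagonal flow on the space of affine lattices in $\R^{d+q}$, $\Lambda_\a$ is the unipotent matrix encoding the system $\a$, the sampling times satisfy $t_m\sim q\,m$, and $f$ counts the lattice points landing in a fixed bounded region $B(c)\times\Sigma$. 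Setting $\tilde f_m(x)=f(g_{t_m}(\Lambda_\a\Z^{d+q}+\bar x))-\EXP_x f(g_{t_m}(\Lambda_\a\Z^{d+q}+\bar x))$, one gets
$$ V_N-\EXP(V_N)=\sum_{1\le m\lesssim \ln N}\tilde f_m(x)+o(\sqrt{\ln N}), $$
so the whole question becomes a limit theorem for these sums, the only source of randomness being $x$ uniform on $\Tor^d$, i.e. a random affine shift along a fixed lattice orbit.

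The decisive use of the hypothesis is the Dani correspondence for bounded orbits: $\a$ being badly approximable (equivalently, $V_N(0,\a,c)$ bounded) is exactly the statement that the orbit $\{g_{t_m}\Lambda_\a\Z^{d+q}\}_{m\ge0}$ stays in a fixed compact subset of the space of lattices, since by Mahler's criterion this means the systole of every translate is bounded below. As the rescaled counting region $B(c)\times\Sigma$ is fixed and bounded, $f$ is then uniformly bounded along the orbit, so the $\tilde f_m$ form a uniformly bounded sequence and no extra logarithmic factor is needed in the normalization (in contrast with the recurrence statement of Theorem \ref{dfv2} for $d=1$, where the relevant function is not square integrable). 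Next I would exploit the expansion of $g_t$ in the $d$ unstable directions carrying the shift $\bar x$: the effective equidistribution of translated unstable leaves (cf. \cite{KM1} and the discussion in Section \ref{notations.lattices}) yields exponential decay of the covariances $\EXP_x[\tilde f_m\tilde f_{m'}]$ in $|m-m'|$. This summable-correlation structure allows a martingale approximation of $\sum_m\tilde f_m$ relative to the filtration generated by $x$ read at scale $e^{-t_m/d}$, along the lines of \cite{dSL,LB,D-Lim}.

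With the martingale difference array at hand I would invoke a martingale central limit theorem. Uniform boundedness of $\tilde f_m$ gives the Lindeberg condition at once, and, together with the exponential mixing, bounds the normalized conditional variance $\frac1M\sum_{m\le M}\EXP[\Delta_m^2\mid\mathcal F_{m-1}]$ by a constant $K$ independent of $M$, which is the origin of the uniform bound $\sigma^2\le K$. The main obstacle, and the reason the conclusion is phrased in terms of limit points rather than a single limit, is that for a fixed $\a$ the deterministic lattice orbit $\{g_{t_m}\Lambda_\a\Z^{d+q}\}$ need not be Birkhoff generic for the diagonal flow, so $\frac1M\mathrm{Var}(S_M)$ has no reason to converge and only accumulates in $[0,K]$. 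Thus, given a subsequence $N_j$ along which $(V_{N_j}-\EXP(V_{N_j}))/\sqrt{\ln N_j}$ converges in distribution, I would extract a further subsequence along which $\frac1M\mathrm{Var}(S_M)\to\sigma^2\le K$; the martingale CLT then identifies the limit law as $\sN(\sigma^2)$. Since every limit point arises this way, all limit points are centered Gaussians of variance at most $K$, as claimed. The delicate technical points are to show that the genuine variances and the martingale conditional variances differ by $o(1)$ uniformly in $M$, and to absorb the shell-edge and tail errors into the $o(\sqrt{\ln N})$ term — both of which rely on the quantitative mixing estimates made available by the compactness of the orbit.
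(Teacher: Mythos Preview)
The paper does not give its own proof of this statement; it is quoted from \cite{M-ST}. That said, your outline matches both the general scheme the survey sketches in Section~\ref{tsp.proofs} for the companion Theorems~\ref{dfv1}--\ref{dfv2} and the method indicated by the very title of \cite{M-ST}: Bakhtin's CLT for hyperbolic sequences is precisely a limit theorem for bounded observables along orbits with exponential correlation decay, which is what your martingale argument amounts to. You have correctly isolated the one additional ingredient needed when $\alpha$ is fixed rather than random: the Dani correspondence identifies badly approximable matrices with bounded diagonal orbits, so by Mahler's criterion $f$ is uniformly bounded along the trajectory, giving both the Lindeberg condition and the uniform bound $\sigma^2\le K$. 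Your explanation of why only subsequential limits are claimed --- the fixed orbit need not be Birkhoff-generic, so the normalized variance need not converge --- is exactly right.
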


\begin{question} 
\label{QSTAE} 

(a) Show that there exist a constant $\bar{\sigma}^2>0$ such that for almost all $\a$   
$\dfrac{V_N-\EXP(V_N)}{\sqrt{\ln N}}$ converges to  $\sN(\bar{\sigma}^2).$

(b) Does there exist $\a$ such that $\lim\inf_{N\to\infty} \dfrac{V_N-\EXP(V_N)}{\sqrt{\ln N}}=0$ (that is, $\sqrt{\ln N}$ is not a correct normalization
for such $\a$)?
\end{question}

For random $\a$ we have the following. 
\begin{theorem} (\cite{DFV})Ê \label{DFVgeneral} There exists $\sigma$ such that 
If  $\a_1,\ldots, \a_r$ and $x_1,\ldots,x_d$ are randomly distributed on $\T^{dr+d}$ then 
$\frac{V_N-\EXP(V_N)}{\sqrt{\ln N}}$  converges in distribution to a normal random variables with zero mean and variance
$\sigma^2$. A similar convergence holds if $d+r>2$, $(x_1,\ldots,x_r)=(0,\ldots,0)$ and only the $\a_i$'s are random. 
\end{theorem}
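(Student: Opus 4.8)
The plan is to follow the strategy used for Theorems \ref{DFVsBC}, \ref{dfv1} and \ref{dfv2} sketched in Section \ref{tsp.proofs}, replacing the space of (affine) lattices in $\R^{d+1}$ by the space of (affine) lattices in $\R^{d+q}$, where $q=r$ is the number of integer variables (so the columns $\a_1,\dots,\a_r$ assemble into a $d\times q$ matrix and the hypothesis $d+r>2$ reads $d+q\ge 3$). First I would set up the Dani correspondence adapted to the $q$ integer variables. Embed the matrix $\a=(\alpha_{ij})$ as the unipotent block
$$\Lambda_\a=\begin{pmatrix} I_d & \a \\ 0 & I_q\end{pmatrix}\in SL_{d+q}(\R),$$
set $\bar x=(x,0)\in\R^{d+q}$, and let $g_t=\mathrm{diag}(e^{t/d}I_d,\,e^{-t/q}I_q)$ be the (determinant one) diagonal flow. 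A point $n\in\Z^q$ with $|n|\approx e^{s}$ satisfies the hit condition $(\{l_1(n)\},\dots,\{l_d(n)\})\in B(c|n|^{-q/d})$ exactly when the corresponding vector of $g_t(\Lambda_\a\Z^{d+q}+\bar x)$, with $t=qs$, lands in the fixed slab $B(0,c)\times S$, where $S=\{y\in\R^q:1\le|y|<e^{1/q}\}$; indeed the form coordinates are scaled by $e^{t/d}=e^{sq/d}$ and the integer coordinates by $e^{-t/q}=e^{-s}$, which matches the ball radius and the annulus radius respectively. A short computation with these rates shows that the expected number of hits with $|n|$ in one such annulus is $O(1)$ and that there are $\Theta(\ln N)$ of them with $|n|\le N$. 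Thus, up to an error that is negligible after normalisation,
$$V_N(\a,x)=\sum_{m=1}^{T} F\bigl(g_m(\Lambda_\a\Z^{d+q}+\bar x)\bigr)+O(1),\qquad T\asymp\ln N,$$
where $F(L)=\Card\bigl(L\cap(B(0,c)\times S)\bigr)$ is the lattice--point count on the relevant homogeneous space $X$ (affine lattices when $x$ is random, lattices centred at $0$ when $x=0$). This reduces the theorem to a central limit theorem for the ergodic sums of $F$ along $g_t$, exactly as in the proofs of Theorems \ref{dfv1} and \ref{dfv2}.

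Next I would dispose of the initial condition. When $(\a,x)$, respectively $\a$, is random with a smooth density the point $(\Lambda_\a,\bar x)$, respectively $\Lambda_\a$, ranges over a piece of the unstable horosphere $n_+(\a,x)$ carrying a smooth measure. Since displacing the base point in the stable and neutral directions changes the ergodic sums of $F$ only by a bounded amount, a standard thickening argument reduces the problem to ergodic sums whose initial distribution has smooth density with respect to Haar measure on all of $X$. At this point the central limit theorem is obtained by the martingale method (cf.\ \cite{LB, dSL, DFV}): the $g_t$--action on $X$ enjoys exponential mixing of smooth observables, so one constructs a martingale approximation for $\sum_m F\circ g_m$ and applies the martingale central limit theorem. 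The limiting variance is the Green--Kubo sum $\sigma^2=\mathrm{Var}(F)+2\sum_{m\ge1}\mathrm{Cov}(F,F\circ g_m)$, which is finite by exponential mixing and strictly positive because $F$ is not a measurable coboundary; normalising by $\sqrt{T}\asymp\sqrt{\ln N}$ yields $\sN(\sigma^2)$.

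The delicate point, and the place where the hypotheses separating the two cases enter, is the integrability of $F$. The function $F$ is a count of lattice points in a fixed box, hence unbounded, and the martingale central limit theorem with the standard $\sqrt{\ln N}$ normalisation requires $F\in L^2(X)$. For lattices centred at $0$ (the case $x=0$) the second moment of such a Siegel transform is finite precisely when $d+q\ge 3$, which is exactly the hypothesis $d+r>2$; when $d+q=2$ the tails of $F$ coming from cusp excursions are too heavy, $F\notin L^2$, and the normalisation degenerates into the $\ln N\,\ln\ln N$ behaviour already seen in Theorem \ref{dfv2}. For affine lattices (the case of random $x$) there is no forced short vector, the Siegel transform lies in $L^2$ for every $d+q\ge 2$, and hence no dimension restriction is needed, paralleling Theorem \ref{dfv1}. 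I therefore expect the main work to be twofold: first, carrying out the $q$-dimensional Dani correspondence together with the control of the error terms produced by the annular decomposition of $\Z^q$ and by the mismatch between the box $[0,N)^q$ and the radial shells; and second, justifying the martingale approximation for the unbounded observable $F$ through a truncation compatible with the exponential mixing rate. The finiteness and positivity of $\sigma^2$ then follow from the integrability threshold $d+r>2$.
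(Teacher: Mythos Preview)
Your proposal is correct and follows essentially the same approach as the paper. The paper does not give a separate proof of Theorem~\ref{DFVgeneral}; it is stated with a citation to \cite{DFV}, and the intended argument is precisely the extension of the sketch in Section~\ref{tsp.proofs} for Theorems~\ref{dfv1} and~\ref{dfv2} to the space of (affine) lattices in $\R^{d+q}$, including the explanation that the condition $d+r>2$ is exactly the $L^2$ threshold for the Siegel transform on the space of lattices centred at~$0$ (paralleling the remark after the proof of Theorem~\ref{dfv2} about $f\notin L^2$ when $d=1$).
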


Still there are many open questions. We provide several examples.
\begin{question}
\label{QKestenLF}
Extend Theorems  \ref{ThConvex} and \ref{ThBox} to the case $q>1.$
\end{question}
We note that in the case of Theorem \ref{ThBox}, even the case $d=1$ seems quite difficult.
One can attack this question using the method of \cite{DF2} but it runs into the problem of lack of parameters
described after Question \ref{QSubMani}.

\begin{question}
Let $l, \hl: \R^d\to \R, $ be linear forms with random coefficients, $Q:\R^d\to \R$ be a positive definite quadratic form.
Investigate limit theorems, after adequate renormalization, for the number of solutions to

(a) $\{l(n)\}Q(n) \leq c, |n|\leq N;$ 

(b) $\{l(n)\} |\hl(n)|\leq c, |n|\leq N;$

(c) $|l(n)Q(n)|\leq c, |n|\leq N;$ 

(d) $|l(n)\hl(n)|<c, |n|\leq N.$
\end{question}
While (a) and (b) have obvious interpretation as shrinking target problems for toral translations, such interpretation for
(c) and (d) is less straightforward. Consider for example (c). Let $l(n)=\sum_{j=1}^q \alpha_j n_j.$ Dividing the distribution
of $\alpha$ into thin slices we may assume that $\alpha_d$ is almost constant. If $\alpha_d\approx a$ then we can compare 
our problem with $|\left(\sum_{j=1}^{q-1} \talpha_j n_j\right)+n_q| Q(n)<\tc$ where $\talpha_j=\alpha_j/a, \tc=c/a.$
Since $|l(n)|$ should be small we must have 
$|\left(\sum_{j=1}^{q-1} \talpha_j n_j\right)+n_q|=\{\sum_{j=1}^q \talpha_j\}$ in which case
$ Q(n_1, \dots, n_{q-1}, n_q)$ is well approximated by
$$ Q(n_1, \dots, n_{q-1}, -\sum_{j=1}^{q-1} \talpha_j n_j) $$
so we have a shrinking target problem in lower dimensions. In fact as we saw in Section \ref{ScProofs} 
typically the proof proceeds in the
opposite direction by getting rid of fractional part at the expense of increasing dimension since problems (c) and (d) have more
symmetry and so should be easier to analyze.

We note that part (d) deals with degenerate quadratic form. The case of non-degenerate forms is discussed in
\cite[Sections 5 and 6]{E}.

\subsection{Cut-and-project sets.} 
\label{SSCutProject}
Cut-and-project sets are used in physics literature to model quasicrystals. To define them we need the following data:
a lattice in $\R^d,$ a decomposition $\R^d=E_1\oplus E_2$ and a compact set (a window) $\cW\subset E_2.$ Let
$P_1$ and $P_2$ be the projections to $E_1$ and $E_2$ respectively. The cut-and-project set is defined by
$$\cP=\{P_1(e), e\in L \text{ and } P_2(e)\in \cW\}. $$
We suppose in the following discussion that 
$$ \overline{E_1+L}=\R^d \text{ and } L\cap E_2=\emptyset. $$
Then $\cP$ is a discrete subset of $E_1$ sharing many properties of lattices but having a more complicated structure.
Note that the limiting distributions in Theorems \ref{ThDO-Pois}  and \ref{ThZqPois} are described in terms of cut-and-project sets.
We refer the reader to \cite[Sections 16 and 17]{M-Bern} for more discussion of cut-and-project set.
Here we only mention the fact that such sets have asymptotic density.
Let $\cP_R=\{t\in \cP: |t|\leq R\}.$

\begin{theorem}
\label{ThDensity}
Suppose that $\cW$ is an open subset of $E_2$ with a  piecewise smooth boundary. Then
$$ \lim_{R\to\infty} \frac{\Card(\cP_R)}{\Vol(B(0,R))}=
\frac{\Vol(\cW)}{\mathrm{covol}(L)} \frac{\Vol_{\R^d}}{\Vol_{E_1} \Vol_{E_2}}. $$
\end{theorem}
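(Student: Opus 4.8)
The plan is to count lattice points of $L$ in a thin slab and to evaluate that count by Poisson summation over $L$, the role of the two standing hypotheses being completely transparent in this approach. First, since $0$ is the only element of $L\cap E_2$, the projection $P_1$ is injective on $L$, so distinct lattice points yield distinct points of $\cP$ and
\[
\Card(\cP_R)=\#\{e\in L:\ P_2(e)\in\cW,\ |P_1(e)|\le R\}=\sum_{e\in L}\chi_{B_R}\!\big(P_1(e)\big)\,\chi_\cW\!\big(P_2(e)\big),
\]
where $B_R\subset E_1$ is the ball of radius $R$. Writing $v=v_1+v_2$ with $v_i\in E_i$ one has $P_i(v)=v_i$, so the summand is a product $\psi(v_1)\phi(v_2)$ in the $(E_1,E_2)$-coordinates; note that the change of variables from $\R^d$ to $E_1\times E_2$ has constant Jacobian $\kappa:=\Vol_{\R^d}/(\Vol_{E_1}\Vol_{E_2})$.

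The heart of the argument is Poisson summation. Approximating $\chi_\cW$ from inside and outside by smooth, compactly supported $\phi^-\le\chi_\cW\le\phi^+$ with $\int_{E_2}(\phi^+-\phi^-)<\eps$ (possible since $\partial\cW$ has measure zero), and smoothing $\chi_{B_R}$ by convolution with a fixed bump into $\psi_R^-\le\chi_{B_R}\le\psi_R^+$ supported within $O(1)$ of $\partial B_R$, I would trap $\Card(\cP_R)$ between $\sum_{e\in L}\psi_R^{\mp}(P_1 e)\,\phi^{\mp}(P_2 e)$. Each such sum is of a Schwartz function $g(v)=\psi_R^{\pm}(P_1 v)\phi^{\pm}(P_2 v)$, so Poisson summation gives $\sum_{e\in L}g(e)=\mathrm{covol}(L)^{-1}\sum_{k\in L^*}\widehat g(k)$ with absolute convergence, and because $g$ factors, $\widehat g(k)=\kappa\,\widehat{\psi_R^{\pm}}(\xi_k)\,\widehat{\phi^{\pm}}(\eta_k)$, where $\xi_k=k|_{E_1}$ and $\eta_k=k|_{E_2}$ are the restrictions of $k$. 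The $k=0$ term equals
\[
\frac{\kappa}{\mathrm{covol}(L)}\Big(\int_{E_1}\psi_R^{\pm}\Big)\Big(\int_{E_2}\phi^{\pm}\Big),
\]
and since $\int_{E_1}\psi_R^{\pm}=\Vol(B_R)\,(1+O(1/R))$ while $\int_{E_2}\phi^{\pm}\to\Vol(\cW)$ as $\eps\to0$, dividing by $\Vol(B(0,R))=\Vol(B_R)$ and letting first $R\to\infty$ and then $\eps\to0$ produces exactly the asserted limit $\kappa\,\Vol(\cW)/\mathrm{covol}(L)$.

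It remains to show $\sum_{k\neq0}\widehat g(k)=o\big(R^{d_1}\big)$, with $d_1:=\dim E_1$, and this is where the density hypothesis enters decisively: a character $e^{2\pi i\langle k,\cdot\rangle}$ with $k\in L^*$ is trivial on $E_1$ exactly when $k\in E_1^\perp$, so $\overline{E_1+L}=\R^d$ is equivalent to $L^*\cap E_1^\perp=\{0\}$, i.e. $\xi_k\neq0$ for every $k\neq0$, so that no nonzero frequency can compete with the main term. To make this quantitative I would use the bound $|\widehat{\psi_R^{\pm}}(\xi)|\le C_m\,R^{(d_1-1)/2}|\xi|^{-(d_1+1)/2}(1+|\xi|)^{-m}$ (Bessel asymptotics of the ball, improved by the convolution) together with the rapid decay of $\widehat{\phi^{\pm}}$, splitting the sum at $|\xi_k|=R^{-1/2}$. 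For $|\xi_k|\ge R^{-1/2}$ the weight $|\xi_k|^{-(d_1+1)/2}$ is at most $R^{(d_1+1)/4}$, so this part is $O\big(R^{(3d_1-1)/4}\big)=o(R^{d_1})$ once one checks, using $|k|\asymp|(\xi_k,\eta_k)|$, that $\sum_{k\neq0}(1+|\xi_k|)^{-m}\,|\widehat{\phi^{\pm}}(\eta_k)|<\infty$; for $0<|\xi_k|<R^{-1/2}$ the crude bound $R^{d_1}$ is multiplied by $\sum_{0<|\xi_k|<R^{-1/2}}|\widehat{\phi^{\pm}}(\eta_k)|$, which tends to $0$ by dominated convergence precisely because $\bigcap_R\{k:0<|\xi_k|<R^{-1/2}\}=\{k\neq0:\xi_k=0\}=\emptyset$. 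I expect this last estimate to be the only real obstacle: since the projection $k\mapsto\xi_k$ of the dual lattice can accumulate at $0$ with no Diophantine lower bound available, the smallness of the contribution of the tiny-$\xi_k$ modes cannot be obtained termwise and must instead be read off from the shrinking of these slabs — which is exactly what the density hypothesis guarantees.
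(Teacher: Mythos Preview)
Your approach via Poisson summation is correct and genuinely different from the paper's proof. The paper (following Hof) argues dynamically: it observes that $t\in\cP$ iff $-t\in\cW\pmod L$, so $\cP_R$ counts visits of the $E_1$-orbit of the origin in the torus $\R^d/L$ to a $\delta$-thickening $\cW_\delta$ of $\cW$; the density hypothesis $\overline{E_1+L}=\R^d$ is exactly minimality, hence unique ergodicity, of this translation action, and the result follows from the ergodic theorem applied to $\chi_{\cW_\delta}$ together with the elementary volume relation $\Vol(\cW_\delta)=\Vol(\cW)\Vol(B(0,\delta))\,\kappa$. The whole argument is three displayed equations.

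Your Fourier-analytic route trades the soft appeal to unique ergodicity for explicit control of the dual-lattice sum, the density hypothesis entering in its dual form $L^*\cap E_1^\perp=\{0\}$. What you gain is a method that, with bookkeeping, yields quantitative error terms---which is precisely the open Question the paper poses immediately after this theorem. What you pay is that the small-$|\xi_k|$ regime needs a real argument: to justify the dominated-convergence step you should note that for fixed $\delta<1$ the number of $k\in L^*$ with $|\xi_k|<\delta$ and $|\eta_k|\le M$ is $O((M+C)^{d_2})$ (elementary lattice-point count in a slab), so $\sum_{0<|\xi_k|<1}|\widehat{\phi^{\pm}}(\eta_k)|<\infty$ serves as the dominating series. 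With that detail supplied your proof is complete.
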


\begin{proof}
(Following \cite{Ho}). Note that $t\in \cP$ iff there exists $e\in L$ such that $-t+e\in \cW,$ that is
$-t\in \cW \text{ mod } L.$ Consider the action of $E_1$ on $\R^d/L$ given by $T^t(x)=x+t.$ Then 
$\cP_R$ counts the number of intersections of the orbit of the origin of size $R$ with $\cW.$
Pick a small $\delta$ and let 
$ \cW_\delta=\{\cW+t, |t|\leq\delta\}.$ Then $\cW_\delta$ is a subset of $\R^d/L$ and  for small $\delta$ 
\begin{equation}
\label{VolFat}
\Vol(\cW_\delta)=\Vol(\cW) \Vol(B(0,\delta))
\dfrac{\Vol_{\R^d}}{\Vol_{E_1} \Vol_{E_2}} . 
\end{equation} 
Next,
\begin{equation}
\label{CountCross}
\int_{|t|<R} \chi_{\cW_\delta}(T^t 0) dt=\Vol(B(0, \delta)) \Card(\cP_R)+O(R^{q-1})
\end{equation}
where $q=\dim(E_1)$ and the second term represents boundary contribution. On the other hand by 
unique ergodicity of 
$T^t$
\begin{equation}
\label{RdErg}
\int_{|t|<R} \chi_{\cW_\delta}(T^t 0) dt=\Vol(B(0, R)) \frac{\Vol(\cW_\delta)}{\mathrm{covol}(L)}+o(R^{q}).
\end{equation}
Combining \eqref{VolFat}, \eqref{CountCross} and \eqref{RdErg} we get the result.
\end{proof}

\begin{question}
Describe the error term in the asymptotics of Theorem~\ref{ThDensity}.
\end{question}

If $q=1$ then the error term in \eqref{CountCross} is negligible and so \eqref{CountCross} can be used to describe the
deviations (see \cite{DF1} for the case where $\cW$ is convex). If $q>1$ more work is needed to control both the LHS
and the RHS of \eqref{CountCross}.

While the methods of \cite{DF1, DF2} deal with the case where $\dim(E_1)$ is as small as possible, the most classical 
case is the opposite one when $\dim(E_2)$ is as large as possible, that is, studying lattice points in large regions.
Here we can not attempt to survey this enormous topic, so we refer the reader to the specialized literature on the
subject (\cite{Hux, IKKN, Kr}. We just mention that  limit theorems similar in spirit to the results discussed in this paper
are obtained in \cite{HB, BB, BCDL, Pet2}. More generally, instead of considering large balls one can count the number of lattice points in
$R D$ where $D$ is a fixed regular set. As in Section \ref{ScErgSum} the order of the error term is sensitive to the geometry of $D$
(see e.g. \cite{Bl, KN, Lev, MN,  Pet3, R1, R2, Sk, SS} and references wherein). 
In fact, one can also consider the varying shapes $R D_R$ which includes both the Poisson regime where
$\Vol(R D_R)$ does not grow (see \cite{BL} and references wherein) and the intermediate regime where 
$\Vol(R D_R)$ grows but
at the rate slower than $R^d$ (see \cite{HR, Wig}). 

This motivates the following question 

\begin{question}
Extend the results of the above mentioned papers to cut-and-project sets.
\end{question}

%Similarly  to the proof of Theorem \ref{dimd}, Question \ref{linearform} would involve the study of the point process  
%$$\left( (\ln N)^d  \Pi_i k  \| k \a_i \|, Nk\a_1 {\rm mod} (2), \ldots,  Nk\a_d {\rm mod} (2)  \right).$$

%\vskip5mm
%One can also investigate analogues of the Shrinking Targets Theorems \ref{dfv1}--\ref{dfv2} : for $(\a,x) \in \T^d \times \T^d$ study the distribution  of the number of hits 
%$$S_R(\a,x)=\sum_{|k|\leq R} \chi_{B_{r_k}(x)}(\|(k,\a)\|)$$ for the sequence $r_k=1/|k|^d$.

%\begin{question} Let $B_R = 2 \sum_{|k|\leq R} \frac{1}{|k|^d}$. There exists $\sigma>0$ such that 
%$$(S_R-B_R)/A_R \longrightarrow N(0,\sigma).$$
%\end{question}

%We note that some of the results discussed in this note have already been extended to the higher dimensional setting. For example, in \cite{M-ETDS} it is shown that the $n$-point correlations between the values of the sequence $\{m_1 \a_1+\ldots+m_d \a_d (1)\}_{m_j \in \Z}$  converges in distribution if $(\a_1,\ldots,\a_d)$ is a random vector in $\T^d$.  {\color{Orange} more details. I could not find the paper \cite{M-ST}}

%\begin{figure}
%\resizebox{!}{15cm}{\includegraphics{penat.pdf}}
%\end{figure}

\end{document}